\newtheorem{theorem}{Theorem}[subsection]
\newtheorem{lemma}[theorem]{Lemma}
\newtheorem{corollary}[theorem]{Corollary}
\newtheorem{definition}[theorem]{Definition}
\newtheorem{proposition}[theorem]{Proposition}
\newtheorem{remark}[theorem]{Remark}
\newtheorem{example}[theorem]{Example}
\newtheorem{deflem}[theorem]{Definition + Lemma}
\newcommand{\Ad}[0]{\operatorname{Ad}}
\newcommand{\C}[0]{\mathcal{C}}
\newcommand{\core}[0]{\operatorname{core}}
\newcommand{\diam}[0]{\operatorname{diam}}
\renewcommand{\dim}[0]{\operatorname{dim}}
\newcommand{\dist}[0]{\operatorname{dist}}
\newcommand{\ev}[0]{\operatorname{ev}}
\newcommand{\her}[0]{\operatorname{her}}
\newcommand{\id}[0]{\operatorname{id}}
\renewcommand{\Im}[0]{\operatorname{Im}}
\newcommand{\image}[0]{\operatorname{im}}
\newcommand{\Lim}[0]{\operatorname{Lim}}
\newcommand{\M}[0]{\mathbb{M}}
\newcommand{\order}[0]{\operatorname{order}}
\newcommand{\prim}[0]{\operatorname{Prim}}
\newcommand{\rank}[0]{\operatorname{rank}}
\renewcommand{\Re}[0]{\operatorname{Re}}
\newcommand{\topdim}[0]{\operatorname{topdim}}
\newcommand{\tr}[0]{\operatorname{tr}}
\newcommand{\Z}[0]{\mathbb{Z}}
\title[Semiprojectivity for subhomogeneous $C^*$-algebras]{A characterization of semiprojectivity \\ for subhomogeneous $C^*$-algebras}
\date{July 14, 2014}
\author{Dominic Enders}
\thanks{This work was supported by the SFB 878 {\itshape Groups, Geometry and Actions} and the Danish National Research Foundation through the {\itshape Centre for Symmetry and Deformation} (DNRF92).}
\subjclass[2010]%
{Primary
46L05 
 ; Secondary
46L80, 
46L85, 
54C55, 
54F50, 
}
\address{Dominic Enders
\newline Department of Mathematical Sciences, University of Copenhagen
\newline Universitetsparken 5, DK-2100 Copenhagen \O, Denmark}
\email{d.enders@math.ku.dk}
\begin{document}

\begin{abstract}

We study semiprojective, subhomogeneous $C^*$-algebras and give a detailed description of their structure. In particular, we find two characterizations of semiprojectivity for subhomogeneous $C^*$-algebras: one in terms of their primitive ideal spaces and one by means of special direct limit structures over one-dimensional NCCW complexes. These results are obtained by working out several new permanence results for semiprojectivity, including a complete description of its behavior with respect to extensions by homogeneous $C^*$-algebras.
\end{abstract}

\maketitle

\section{Introduction}

The concept of semiprojectivity is a type of perturbation theory for $C^*$-algebras which has become a frequently used tool in many different aspects of $C^*$-algebra theory. 
Due to a certain kind of rigidity, semiprojective $C^*$-algebras are technically important in various situations.
In particular, the existence and comparison of limit structures via approximate interwinings, which is an integral part of the Elliott classification program, often relies on perturbation properties of this type. This is one of the reasons why direct limits over semiprojective $C^*$-algebras, e.g., AF- or A$\mathbb{T}$-algebras, are particularly tractable and one therefore constructs models preferably from semiprojective building blocks. The most popular of those are without doubt the non-commutative CW-complexes (NCCWs) introduced by Eilers, Loring and Pedersen. These are in fact semiprojective in dimension one (\cite{ELP98}, but see also \cite{End14}). In this paper, we study semiprojectivity for general subhomogeneous $C^*$-algebras and see whether there exist more interesting examples, i.e., besides the one-dimensional NCCW complexes (1-NCCWs), that could possibly serve as useful building blocks in the construction of ASH-algebras. In Theorem \ref{thm structure}, we give two characterizations of semiprojectivity for subhomogenous $C^*$-algebras: an abstract one in terms of primitive ideal spaces and a concrete one by means of certain limit structures. These show that it is quite a restriction for a subhomogeneous $C^*$-algebra to be semiprojective, though many examples beyond the class of 1-NCCWs exist. On the other hand, a detailed study of the structure of these algebras further reveals that they can always be approximated by 1-NCCWs in a very strong sense, see Corollary \ref{cor approx nccw}, and hence essentially share the same properties.

The work of this paper is based on the characterization of semiprojectivity for commutative $C^*$-algebras, which was recently obtained by S{\o}rensen and Thiel in \cite{ST12}. They showed that a commutative $C^*$-algebra $\C(X)$ is semiprojective if and only if $X$ is an absolute neighborhood retract of dimension at most 1 (a 1-ANR), thereby confirming a conjecture of Blackadar and generalizing earlier work of Chigogidze and Dranishnikov on the projective case (\cite{CD10}). Their characterization further applies to trivially homogeneous $C^*$-algebras, i.e. to algebras of the form $\C(X,\M_n)$. In a first step, we generalize their result to general homogeneous $C^*$-algebras. The main difficulty, however, is to understand which ways of 'gluing together' several homogeneous $C^*$-algebras preserve semiprojectivity, or more precisely: Which extensions of semiprojective, homogeneous $C^*$-algebras are again semiprojective? Conversely, is semiprojectivity preserved when passing to a homogeneous subquotient? These questions essentially ask for the permanence behavior of semiprojectivity along extensions of the form $0\rightarrow\C_0(X,\M_n)\rightarrow A\rightarrow B\rightarrow 0$. While it is known that the permanence properties of semiprojectivity with respect to extensions are rather bad in general, we are able to work out a complete description of its behavior in the special case of extensions by homogeneous ideals, see Theorem \ref{thm 2 out of 3}. With this permanence result at hand, it is then straightforward to characterize semiprojectivity for subhomogeneous $C^*$-algebras in terms of their primitive ideal spaces. In particular, it is a necessary condition that the subspaces corresponding to a fixed dimension are all 1-ANRs. Combining this with the structure result for one-dimensional ANR-spaces from \cite{ST12}, we further obtain a more concrete description of semiprojective, subhomogeneous $C^*$-algebras by identifying them with certain special direct limits of 1-NCCWs.\\

\noindent
This paper is organized as follows. In section \ref{section preliminaries}, we briefly recall some topological definitions and results that will be used troughout the paper. We further remind the reader of some facts about semiprojectivity, subhomogeneous $C^*$-algebras and their primitive ideal spaces. We then start by constructing a lifting problem which is unsolvable for strongly quasidiagonal $C^*$-algebras. This lifting problem then allows us to extend the results of \cite{ST12} from the commutative to the homogeneous case.

Section \ref{section semiprojectivity} contains a number of new contructions for semiprojective $C^*$-algebras. We first introduce a technique to extend lifting problems, a method that can be used to show that in certain situations semiprojectivity passes to ideals. After that, we introduce a class of maps which give rise to direct limits that preserve semiprojectivity. Important examples of such maps are given and discussed.

Section \ref{section extensions} is devoted to the study of extensions by homogeneous $C^*$-algebras, i.e. extensions of the form $0\rightarrow\C_0(X,\M_n)\rightarrow A\rightarrow B\rightarrow 0$. In \ref{section retract maps}, we define and study a certain set-valued retract map $R\colon \prim(A)\rightarrow 2^{\prim(B)}$ associated to such an extension. We discuss regularity concepts for $R$, i.e. continuity and finiteness conditions, and show how regularity of $R$ relates to lifting properties of the corresponding Busby map and, by that, to splitting properties of the extension itself. In particular, we identify conditions under which regularity of $R$ implies the existence of a splitting map $s\colon B\rightarrow A$ with good multiplicative properties. After that, we verify the required regularity properties for $R$ in the case of a semiprojective extension $A$. In section \ref{section existence limit structures} it is shown how certain limit structures for the space $X$ give rise to limit structures for the extension $A$, again provided that the associated retract map $R$ is sufficiently regular. Putting all these results together in \ref{section keeping track}, we find a '2 out of 3'-type statement, Theorem \ref{thm 2 out of 3}, which gives a complete description for the behavior of semiprojectivity along extensions of the considered type. 

In section \ref{section main}, we use this permanence result to work out two characterizations of semiprojectivity for subhomogeneous $C^*$-algebras. These are presented in Theorem \ref{thm structure}, the main result of this paper. Based on this, we find a number of consequences for the structure of these algebras, e.g. information about their $K$-theory and dimension. Further applications, such as closure and approximation properties, are discussed in \ref{section applications}. We finish by illustrating how this also gives a simple method to exclude semiprojectivity and show that the higher quantum permutation algebras are not semiprojective.

\section{Preliminaries}\label{section preliminaries}
\subsection{The structure of 1-dimensional ANR-spaces}\label{section 1-ANR}
We are particularly interested in ANR-spaces of dimension at most one. The structure of these spaces has been studied and described in detail in \cite[section 4]{ST12}. Here we recall the most important notions and results. More information about ANR-spaces can be found in \cite{Bor67}. For proofs and further reading on the theory of continua, we refer the reader to Nadler's book \cite{Nad92}. 

\begin{definition}\label{def anr}
A compact, metric space $X$ is an absolute retract (abbreviated AR-space) if every map $f\colon Z\rightarrow X$ from a closed subspace $Z$ of a compact, metric space $Y$ extends to a map $g\colon Y\rightarrow X$,
i.e. $g\circ\iota=f$ with $\iota\colon Z\rightarrow Y$ the inclusion map:
\[\xymatrix{
  & Y \ar@{-->}[dl]_g \\
  X & Z \ar[l]^f \ar[u]_\iota
}\]
\noindent If every map $f\colon Z\rightarrow X$ from a closed subspace $Z$ of a compact, metric space $Y$ extends to a map $g\colon V\rightarrow X$ on a closed neighborhood $V$ of $Z$
\[\xymatrix{
& Y \\ 
& V \ar@{-->}[dl]_g \ar[u] \\ 
X & Z \ar[l]^f \ar[u]_\iota
}\]
then $X$ is called an absolute neighborhood retract (abbreviated ANR-space).
\end{definition}

A compact, locally connected, metric space is called a Peano space. A connected Peano space is called a Peano continuum. Now given an ANR-space $X$, we can embed it into the Hilbert cube $\mathcal{Q}$ and obtain a retract from a neighborhood of $X$ in $\mathcal{Q}$ onto $X$. Hence an ANR-space inherits all local properties of the Hilbert cube which are preserved under retracts. These properties include local connectedness, so that all ANR-spaces are Peano spaces. The converse, however, is not true in general. But as we will see, it is possible to identify the ANR-spaces among all Peano spaces, at least in the one-dimensional case.

A closed subspace $Y$ of a space $X$ is a retract of $X$ if there exists a continuous map $r\colon X\rightarrow Y$ such that $r_{|Y}=\id_Y$. If the retract map $r\colon X\rightarrow Y$ regarded as a map to $X$ is homotopic to the identity, then $Y$ is called a deformation retract of $X$. It is a strong deformation retract if in addition the homotopy can be chosen to fix the subspace $Y$.
The following concept of a core continuum is due to Meilstrup. It is crucial for understanding the structure of one-dimensional ANR-spaces. 

\begin{deflem}[\cite{Mei05}]\label{def core}
Let $X$ be a non-contractible one-dimensional Peano continuum. Then there exists a unique strong deformation retract which contains no further proper deformation retract. We call it the core of $X$ and denote it by $\core(X)$. 
\end{deflem}

As in \cite{ST12}, we define the core of a contractible, one-dimensional Peano continuum to be any fixed point. Many questions about one-dimensional Peano continua can be reduced to questions about their cores. This reduction step uses a special retract map, the so-called first point map: 

\begin{deflem}[{\cite[4.14-16]{ST12}}]\label{def fpm}
Let $X$ be a one-dimensional Peano continuum and $Y$ a subcontinuum with $\core(X)\subset Y$. For each $x\in X\backslash Y$ there is a unique point $r(x)\in Y$ such that $r(x)$ is a point of an arc in $X$ from $x$ to any point of $Y$. Setting $r(x)=x$ for all $x\in Y$, we obtain a map $r\colon X\rightarrow Y$. This map is called the first point map, it is continuous and a strong deformation retract from $X$ onto $Y$. 
\end{deflem}

The following follows directly from the proof of \cite[Lemma 4.14]{ST12}.

\begin{lemma}\label{lemma arc to core}
Let $X$ be a one-dimensional Peano continuum, $Y\subseteq X$ a subcontinuum containing $\core(X)$ and $r\colon X\rightarrow Y$ the first point map onto $Y$. Then the following is true:
\renewcommand{\labelenumi}{(\roman{enumi})}
\begin{enumerate} 
  \item For every point $x\in X\backslash Y$ there exists an arc from $x$ to $r(x)\in Y$ which is unique up to reparametrization.
  \item If $\alpha$ is a path from $x\in X\backslash Y$ to $y\in Y$, then $r(\image(\alpha))\subseteq\image(\alpha)$.
\end{enumerate} 
\end{lemma}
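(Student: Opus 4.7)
The plan is to read both items off the construction of the first point map $r$ given in the proof of \cite[Lemma 4.14]{ST12}, where the assertion that $r(x)$ is well-defined already encodes most of what is needed. For the existence part of~(i), I would use that $X$ is a Peano continuum and hence arcwise connected: any arc from $x$ to a chosen point of $Y$ has an initial sub-arc ending at its first intersection with $Y$, and by the defining property of $r$ this endpoint is exactly $r(x)$.

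For~(ii) I would argue pointwise on $\image(\alpha)$. If $z=\alpha(t)\in Y$, then $r(z)=z\in\image(\alpha)$. If $z\in X\setminus Y$, then $\alpha|_{[t,1]}$ is a path from $z$ to $y\in Y$; invoking the standard fact that in a Peano continuum every path between two points contains an arc between those points, I would extract an arc $\gamma$ from $z$ to $y$ whose image is contained in $\image(\alpha)$. By the defining property of $r$, the point $r(z)$ lies on $\gamma$, and hence in $\image(\alpha)$.

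The uniqueness part of~(i) is the main obstacle, and this is where the argument most directly leans on the techniques used in the proof of \cite[Lemma 4.14]{ST12}. My strategy would be: if $\alpha,\beta$ were two arcs from $x$ to $r(x)$ with distinct images, then their union would contain a simple closed curve $C$ passing through the point $x\in X\setminus Y$. Since $\image(\alpha)\cap Y=\image(\beta)\cap Y=\{r(x)\}$, part~(ii) applied to both arcs would force $r$ to collapse all of $C$ onto the single point $r(x)$. But $r\colon X\rightarrow Y\subseteq X$ is a strong deformation retract, hence a homotopy equivalence, so it cannot collapse an essential loop; this contradiction forces $\image(\alpha)=\image(\beta)$, from which uniqueness up to reparametrization follows since both arcs have the same endpoints. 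Making this collapsing step precise is the delicate part, and it is the very analysis already carried out in the proof of \cite[Lemma 4.14]{ST12}; for this reason I would present the lemma as a direct extraction from that proof rather than a fully self-contained argument.
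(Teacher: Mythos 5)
The paper gives no argument for this lemma beyond the remark that it follows directly from the proof of \cite[Lemma 4.14]{ST12}, and your reconstruction extracts exactly the intended content: existence and part (ii) come from the defining first-point property together with arcwise connectedness (cutting a path down to an arc and taking its first intersection with $Y$), while uniqueness comes from the observation that two distinct arcs from $x$ to $r(x)$ would yield an embedded circle which $r$ collapses to the point $r(x)$ --- impossible since $r$ is a strong deformation retract and embedded circles in one-dimensional Peano continua are essential. The only cosmetic slips --- the simple closed curve need not pass through $x$ itself, and the claim $\image(\alpha)\cap Y=\{r(x)\}$ deserves its one-line justification via injectivity of the arc --- do not affect correctness.
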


The simplest example of a one-dimensional Peano space is a graph, i.e. a finite, one-dimensional CW-complex. The order of a point $x$ in a graph $X$ is defined as the smallest number $n\in\mathbb{N}$ such that 
for every neighborhood $V$ of $x$ there exists an open neighborhood $U\subseteq V$ of $x$ with $|\partial U|=|\overline{U}\backslash U| \leq n$. We denote the order of $x$ in $X$ by $\order(x,X)$.

Given a one-dimensional Peano continuum $X$, one can reconstruct the space $X$ from its core by 'adding' the arcs which connect points of $X\backslash\core(X)$ with the core as described in \ref{lemma arc to core}. This procedure yields a limit structure for one-dimensional Peano spaces which first appeared as Theorem 4.17 of \cite{ST12}. In the case of one-dimensional ANR-spaces, the core is a finite graph and hence the limit structure entirely consists of finite graphs. 

\begin{theorem}[{\cite[Theorem 4.17]{ST12}}]\label{thm ST}
Let $X$ be a one-dimensional Peano continuum. Then there exists a sequence $\{Y_k\}_{k=1}^\infty$ such that
\renewcommand{\labelenumi}{(\roman{enumi})}
\begin{enumerate}
  \item each $Y_k$ is a subcontinuum of $X$. \item $Y_k\subset Y_{k+1}$. 
  \item $\lim_kY_k=X$. \item $Y_1=\core(X)$ and for each $k$, $Y_{k+1}$ is obtained from $Y_k$ by attaching a line segment at a single point, i.e., $\overline{Y_{k+1}\backslash Y_k}$ is an arc with end point $p_k$ such that $\overline{Y_{k+1}\backslash Y_k}\cap Y_k=\{p_k\}$.
  \item letting $r_k\colon X\rightarrow Y_k$ be the first point map for $Y_k$ we have that $\{r_k\}_{k=1}^\infty$ converges uniformly to the identity map on $X$.
\end{enumerate}
If $X$ is also an ANR, then all $Y_k$ are finite graphs. If $X$ is even contractible (i.e. an AR), then $\core(X)$ is just some point and all $Y_k$ are finite trees.
\end{theorem}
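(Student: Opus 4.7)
The plan is to construct the sequence $\{Y_k\}$ inductively by successively attaching single arcs to $\core(X)$, using the first point maps from \ref{def fpm} to guarantee that these arcs glue on in the correct way. Concretely, fix a countable dense sequence $(x_n)_{n\geq 1}$ in $X$ and set $Y_1=\core(X)$. Given $Y_k$ with $\core(X)\subseteq Y_k\subseteq X$, if $Y_k=X$ let $Y_{k+j}=Y_k$ for all $j\geq 0$; otherwise let $n(k)$ be the smallest index with $x_{n(k)}\notin Y_k$, let $r_k\colon X\to Y_k$ be the first point map, take the arc $\alpha_k$ from $x_{n(k)}$ to $r_k(x_{n(k)})$ provided by Lemma \ref{lemma arc to core}(i), and set $Y_{k+1}=Y_k\cup\image(\alpha_k)$.

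Properties (i), (ii) and (iv) are then essentially built in. Property (ii) holds by construction, and for (iv) it only remains to verify that $\image(\alpha_k)\cap Y_k=\{r_k(x_{n(k)})\}$: this is precisely the content of the first point map, since any other intersection point would provide an arc from $x_{n(k)}$ meeting $Y_k$ strictly before $r_k(x_{n(k)})$, contradicting \ref{def fpm}. From this, (i) follows by induction, because $Y_{k+1}$ is the union of the continuum $Y_k$ and the arc $\image(\alpha_k)$ meeting it in a single point. For (iii), the choice rule ensures $n(k)$ is strictly increasing (each $x_{n(k)}$ lies in $Y_{k+1}$), so $\bigcup_kY_k\supseteq\{x_n\}_{n\geq 1}$ is dense, and since $(Y_k)$ is an increasing sequence of subcontinua of the compact set $X$, this forces $Y_k\to X$ in the Hausdorff metric.

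The main technical point is (v), the uniform convergence $r_k\to\id_X$, and this is the step I expect to be the principal obstacle. The naive bound $d(r_k(x),x)\leq\diam(Y_k\cup\{x\})$ is too weak, because arcs from $x$ to $Y_k$ might have to travel far even when $Y_k$ is Hausdorff-close to $X$. The right tool is uniform local connectedness of the Peano continuum $X$ (property S): given $\epsilon>0$, there exists $\delta>0$ such that any two points of $X$ at distance less than $\delta$ lie in a subcontinuum of diameter less than $\epsilon$, and in fact (since $X$ is a one-dimensional Peano continuum, so arcwise connected in a uniform sense) in an arc of diameter less than $\epsilon$. Choose $K$ large enough that $Y_k$ is $\delta$-dense in $X$ for all $k\geq K$. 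Then for any $x\in X$ pick $y\in Y_k$ with $d(x,y)<\delta$, join them by an arc $\beta$ of diameter less than $\epsilon$, and apply Lemma \ref{lemma arc to core}(ii): $r_k(x)\in r_k(\image(\beta))\subseteq\image(\beta)$, whence $d(r_k(x),x)\leq\diam(\image(\beta))<\epsilon$. Since this holds uniformly in $x$, we obtain $\|r_k-\id_X\|_\infty\to 0$.

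Finally, the addenda for the ANR and AR cases are essentially formal. If $X$ is a one-dimensional ANR then $\core(X)$ is known to be a finite graph, and adding an arc at a single point to a finite graph yields a finite graph, so by induction all $Y_k$ are finite graphs. If $X$ is an AR it is contractible, so by the convention following \ref{def core} we may take $\core(X)$ to be a single point; the successive arc attachments then produce a finite tree at every stage.
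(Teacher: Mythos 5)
Your proposal is correct, but note that the paper does not prove Theorem \ref{thm ST} at all: it is quoted verbatim from \cite[Theorem 4.17]{ST12}, so there is no in-paper argument to compare against. Your construction --- attaching, one at a time, the first-point-map arcs from a countable dense sequence, verifying $\image(\alpha_k)\cap Y_k=\{r_k(x_{n(k)})\}$ via the defining property in \ref{def fpm}, and deducing (v) from $\delta$-density of $Y_k$ together with uniform local arcwise connectedness of the Peano continuum and Lemma \ref{lemma arc to core}(ii) --- is essentially the argument given in \cite{ST12}, and in particular your identification of (v) as the only nontrivial step, and your way of handling it, are exactly right.
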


We will need a local criterion for identifying one-dimensional ANR-spaces among general Peano spaces. It was observed by Ward how to get such a characterization in terms of embeddings of circles. 

\begin{definition}\label{def small circles}
Let $X$ be a compact, metric space, then $X$ does not contain small circles if there is an $\epsilon>0$ such that $\diam(\iota(S^1))\geq\epsilon$ for every embedding $\iota\colon S^1\rightarrow X$.
\end{definition}

Note that the property of containing arbitrarily small circles does not depend on the particular choice of metric. 

\begin{theorem}[\cite{War60}]\label{thm ward}
For a Peano space $X$ the following are equivalent: 
\renewcommand{\labelenumi}{(\roman{enumi})}
\begin{enumerate}
  \item $X$ does not contain small circles. 
  \item $X$ is an ANR-space of dimension at most one.
\end{enumerate}
\end{theorem}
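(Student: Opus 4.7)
The plan is to prove both implications. Direction (ii) $\Rightarrow$ (i) follows from the structure theorem \ref{thm ST}, while direction (i) $\Rightarrow$ (ii) is the harder one, drawing on classical Peano continuum theory.

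For (ii) $\Rightarrow$ (i), the key claim is that every embedded circle $C$ in a $1$-ANR $X$ lies in $\core(X)$. Write $r\colon X\to\core(X)$ for the first point map. If $C\cap\core(X)\neq\emptyset$ but $C\not\subseteq\core(X)$, I would pick $c_0\in C\setminus\core(X)$ and follow $C$ from $c_0$ in its two directions until each first meets $\core(X)$; by Lemma \ref{lemma arc to core}(i) both first-meeting points must equal $r(c_0)$, so $C$ decomposes into two distinct arcs from $c_0$ to $r(c_0)$, contradicting the uniqueness clause of the same lemma. If instead $C\cap\core(X)=\emptyset$, then for any $c_0,c_0'\in C$ I would concatenate a path in $C$ from $c_0$ to $c_0'$ with the first-point-arc from $c_0'$ to $r(c_0')$ and apply Lemma \ref{lemma arc to core}(ii) to obtain $r(c_0)=r(c_0')$. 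Thus $r|_C$ is constant, say equal to $p$, and $C$ lies in the component of $X\setminus\core(X)$ attached to $p$; by the inductive arc-attachment in Theorem \ref{thm ST} this component together with $p$ is a dendrite and contains no embedded circle -- contradiction. Once $C\subseteq\core(X)$ is established, the fact that the core is a finite graph (whose embedded circles have diameter bounded below by a positive constant depending on the combinatorial structure) yields the absence of small circles in $X$.

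For (i) $\Rightarrow$ (ii), let $X$ be a Peano space without small circles. The first step is the dimension reduction $\dim X\leq 1$: a Peano continuum of dimension at least two contains arbitrarily small embedded circles near its $2$-manifold points (via Mazurkiewicz's structure theory for $2$-dimensional Peano continua), contradicting the hypothesis. Once $\dim X\leq 1$, I would apply the cyclic-element decomposition. Each non-degenerate cyclic element is a cut-point-free $1$-dim Peano continuum and hence contains an embedded circle, so by "no small circles" its diameter is at least the uniform lower bound $\epsilon>0$; compactness then forces only finitely many non-degenerate cyclic elements. A further argument shows that each such element is itself a finite graph, and the remaining pieces of $X$ consist of arcs attached to this finite skeleton with diameters tending to $0$. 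This matches the limit structure of Theorem \ref{thm ST}, and one verifies directly (via a Hilbert-cube embedding together with staged retractions first onto the core, then collapsing attached arcs in order of decreasing diameter) that such an $X$ admits a neighborhood retract, hence is an ANR.

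The main obstacle is the second step of the reverse direction: showing that each non-degenerate cyclic element is a finite graph requires bounding both the number and the local order of its branch points purely from the "no small circles" hypothesis, which is the technical heart of Ward's original argument in \cite{War60}. The dimension reduction, while intuitively clear, also relies on nontrivial input from classical Peano-space theory.
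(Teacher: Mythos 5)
The paper offers no proof of this statement at all: it is quoted directly from Ward's paper \emph{On local trees} and used as a black box, so there is nothing internal to compare your argument against; I can only judge the sketch on its own merits. Your direction (ii)$\Rightarrow$(i) is essentially sound. Reducing to the claim that every embedded circle lies in $\core(X)$ is the right move, the two-internally-disjoint-arcs-to-$r(c_0)$ contradiction is a correct use of the uniqueness clause of Lemma \ref{lemma arc to core}(i), and the constancy of $r|_C$ when $C\cap\core(X)=\emptyset$ follows from part (ii) as you say. The one soft spot is the appeal to Theorem \ref{thm ST} to conclude that the fibre attached at $p$ is a dendrite: the circle $C$ need not be contained in any $Y_k$, nor even in $\bigcup_k Y_k$, so this needs a direct argument (for instance, two points $a\neq b$ of a circle in $r^{-1}(p)$ yield, after concatenating with the unique arcs to $p$, two distinct arcs from $a$ to $p$, contradicting uniqueness again).

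The reverse direction has two genuine problems. First, the dimension reduction is justified by finding small circles ``near 2-manifold points,'' but a 2-dimensional Peano continuum need not have any manifold points (the 2-dimensional Menger compactum is the standard example). The statement is nonetheless true: one should instead use that a Peano continuum containing no simple closed curve is a dendrite, hence at most one-dimensional, applied to the arbitrarily small Peano-continuum neighborhoods furnished by \cite[Theorem 8.10]{Nad92} around a point of local dimension $\geq 2$. Second, and decisively, the step you yourself flag as ``the technical heart of Ward's original argument'' --- that a cut-point-free cyclic element with no circles of diameter below $\epsilon$ is a finite graph, and that the resulting skeleton with a null family of attached arcs satisfies the neighborhood-extension property --- is left entirely unproved. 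That step \emph{is} the theorem: bounding the number and order of branch points from the uniform lower bound on circle diameters, and then actually constructing the neighborhood retraction, is where all the work lies. As written, the proposal proves the easy implication and gives a plausible outline, but not a proof, of the hard one.
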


This statement can also be interpreted as follows. As was shown independently by Bing (\cite{Bin49}) and Moise (\cite{Moi49}), every Peano continuum $X$ admits a geodesic metric $d$. Now non-embeddability of circles into $X$ is the same as uniqueness of geodesics in $X$. More precisely, a Peano continuum is a one-dimensional AR-space if and only if there is no embedding $S^1\hookrightarrow X$ if and only if $X$ admits unique geodesics. Similarly, Theorem \ref{thm ward} can be read as: A Peano continuum $X$ is a one-dimensional ANR-space if and only if it has locally unique geodesics, meaning that there exists $\epsilon>0$ such that any two points with distance smaller then $\epsilon$ can be joined by a unique geodesic.

\subsection{Subhomogeneous $C^*$-algebras}\label{section subhomogeneous}

In this section we collect some well known results on subhomogeneous $C^*$-algebras. In particular, we recall some facts on their primitive ideal spaces. More detailed information can be found in \cite[Chapter 3]{Dix77} and \cite[Section IV.1.4]{Bla06}.

\begin{definition}
Let $N\in\mathbb{N}$. A $C^*$-algebra $A$ is $N$-homogeneous if all its irreducible representations are of dimension $N$. $A$ is $N$-subhomogeneous if every irreducible representation of $A$ has dimension at most $N$.
\end{definition}

The standard example of a $N$-homogeneous $C^*$-algebra is $\C_0(X,\M_N)$ for some locally compact space $X$. As the next proposition shows, subhomogeneous $C^*$-algebras can be characterized as subalgebras of such. 
A proof of this fact can be found in \cite[IV.1.4.3-4]{Bla06}. 

\begin{proposition}\label{prop sub homogeneous}
A $C^*$-algebra $A$ is $N$-subhomogeneous if and only if it is isomorphic to a subalgebra of some $N$-homogeneous $C^*$-algebra $\C(X,\M_N)$. If $A$ is separable, we may choose $X$ to be the Cantor set $K$.
\end{proposition}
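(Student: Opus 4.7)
My plan is to handle the two directions of the biconditional separately, and then refine the construction in the separable case to bring $X$ all the way down to the Cantor set.

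For the \emph{if} direction, suppose $A\subseteq\C(X,\M_N)$ and let $\pi\colon A\to B(H)$ be an irreducible representation with cyclic unit vector $\xi$. The vector functional $\varphi=\langle\pi(\cdot)\xi,\xi\rangle$ is a pure state on $A$. Using Hahn--Banach to extend $\varphi$ to a state on $\C(X,\M_N)$ and then Krein--Milman on the (nonempty, weak-$*$ compact, convex) set of such extensions, I would extract an extremal extension $\tilde\varphi$. A short averaging argument, exploiting purity of $\varphi$, shows that $\tilde\varphi$ is then pure as a state of $\C(X,\M_N)$. Since pure states of $\C(X,\M_N)$ are of the form $f\mapsto\langle f(x)v,v\rangle$ for some $x\in X$ and unit vector $v\in\mathbb{C}^N$, their GNS representations have dimension at most $N$. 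By uniqueness of GNS, $(\pi,H)$ embeds as the cyclic subrepresentation of $\tilde\pi|_A$ generated by the GNS cyclic vector, so $\dim H\leq N$.

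For the \emph{only if} direction, pick for every $P\in\prim(A)$ a representative irreducible representation $\pi_P\colon A\to\M_{N_P}$ with kernel $P$ and $N_P\leq N$, and pad each $\pi_P$ into $\M_N$ via the upper-left corner. Since the intersection of all primitive ideals of any $C^*$-algebra vanishes, the direct-sum representation $\bigoplus_P\pi_P\colon A\to\ell^\infty(\prim(A),\M_N)\cong\C(\beta\prim(A),\M_N)$ is faithful, proving the general (non-separable) case with $X=\beta\prim(A)$, the Stone--\v{C}ech compactification of the discretized primitive ideal space.

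For the separable refinement, second countability of $\prim(A)$ lets me replace the indexing set by a countable dense family of primitive ideals, whose intersection still vanishes by density; this produces an embedding $\iota\colon A\hookrightarrow\C(\beta\N,\M_N)$. To cut $\beta\N$ down to a metric space, let $B\subseteq\C(\beta\N)$ be the unital $C^*$-subalgebra generated by all matrix entries of $\iota(A)$ with respect to a fixed system of matrix units in $\M_N$. Separability of $\iota(A)$ forces $B$ to be separable, so Gelfand duality gives $B\cong\C(Y)$ for some compact metric quotient $Y$ of $\beta\N$, and by construction $\iota(A)\subseteq B\otimes\M_N=\C(Y,\M_N)$. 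Finally, the Hausdorff--Alexandroff theorem furnishes a continuous surjection from the Cantor set $K$ onto $Y$, inducing an inclusion $\C(Y,\M_N)\hookrightarrow\C(K,\M_N)$ and completing the argument. The main obstacle I anticipate is the pure-state extension step: one must combine Hahn--Banach, Krein--Milman, and a purity-preservation argument, and then carefully invoke uniqueness of GNS triples to recognize $\pi$ as a cyclic subrepresentation of $\tilde\pi|_A$; everything else reduces to standard facts about spectra of $C^*$-algebras and universality properties of the Stone--\v{C}ech compactification and the Cantor set.
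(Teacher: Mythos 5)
Your proof is correct, and it is essentially the standard argument: the paper does not prove this proposition itself but cites \cite[IV.1.4.3-4]{Bla06}, where the same strategy is used (pure-state extension plus uniqueness of GNS for the ``if'' direction; a padded direct sum of irreducible representations landing in $\prod\M_N\cong\C(\beta S,\M_N)$ for the ``only if'' direction; and, in the separable case, passing to the separable commutative $C^*$-algebra generated by the matrix entries followed by the Hausdorff--Alexandroff theorem).
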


\begin{example}[1-NCCWs]\label{ex 1-nccw}
One of the most important examples of subhomogeneous $C^*$-algebras is the class of non-commutative CW-complexes (NCCWs) defined by Eilers, Loring and Pedersen in \cite{ELP98}. The one-dimensional NCCWs, which we will abbreviate by 1-NCCWs, are defined as pullbacks of the form
\[\xymatrix{
  \text{1-NCCW} \ar@{-->}[r] \ar@{-->}[d] & G \ar[d] \\
  \C([0,1],F) \ar[r]^(.58){\ev_0\oplus\ev_1} & F\oplus F
}\]
with $F$ and $G$ finite-dimensional $C^*$-algebras. These are particularly interesting since they are semiprojective by \cite[Theorem 6.2.2]{ELP98}.
\end{example}

For a subhomogeneous $C^*$-algebra $A$, the primitive ideal space $\prim(A)$, i.e. the set of kernels of irreducible representations endowed with the Jacobson topology, contains a lot of information. Another useful decription of the topology on $\prim(A)$ is given by the folllowing lemma which will make use of regularly. For an ideal $J$ in a $C^*$-algebra $A$ we write $\|x\|_J$ to denote the norm of the image of the element $x\in A$ in the quotient $A/J$.

\begin{lemma}[{\cite[II.6.5.6]{Bla06}}]\label{lemma top prim}
Let $A$ be a $C^*$-algebra.
\begin{enumerate}
  \item If $x\in A$, define $\check{x}\colon \prim(A)\rightarrow\mathbb{R}_{\geq 0}$ by $\check{x}(J)=\|x\|_J$. Then $\check{x}$ is lower semicontinuous.
  \item If $\{x_i\}$ is a dense set in the unit ball of $A$, and $U_i=\{J\in\prim(A)\colon\check{x_i}(J)>1/2\}$, then $\{U_i\}$ forms a base for the topology of $\prim(A)$.
  \item If $x\in A$ and $\lambda>0$, then $\{J\in\prim(A)\colon\check{x}(J)\geq\lambda\}$ is compact (but not necessarily closed) in $\prim(A)$.
\end{enumerate}
\end{lemma}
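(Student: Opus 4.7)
The plan is to prove all three items by direct manipulation, using that closed sets in the Jacobson topology are exactly the hulls $h(I)=\{J:J\supseteq I\}$ of ideals, together with continuous functional calculus to fabricate elements with prescribed values in the quotients $A/J$.

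For (1), the idea is to express sublevel sets of $\check x$ as hulls. Setting $y:=(x^*x-\lambda^2)_+$ via functional calculus, one computes $\|y\|_J=(\|x\|_J^2-\lambda^2)_+$ for each primitive ideal $J$, so $y\in J$ if and only if $\|x\|_J\leq\lambda$. Hence $\check{x}^{-1}([0,\lambda])$ coincides with the hull of the closed two-sided ideal generated by $y$, and is therefore closed; this is precisely lower semicontinuity.

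For (2), each $U_i$ is open by (1). Given an open $V\ni J_0$, the plan is to write $V^c=h(I)$ with $I\not\subseteq J_0$, pick $b\in I\setminus J_0$, pass to the positive element $a:=b^*b\in I\setminus J_0$, and apply the continuous function $g(t):=\min(t/\|a\|_{J_0},1)$ (vanishing at $0$) to produce $d:=g(a)\in I$ with $\|d\|\leq 1$ and $\|d\|_{J_0}=1$. Choosing $x_i$ from the dense set with $\|x_i-d\|<1/4$, the reverse triangle inequality at $J_0$ yields $\|x_i\|_{J_0}>3/4$, while for $J\supseteq I$ we have $d\in J$ and so $\|x_i\|_J<1/4$. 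Hence $J_0\in U_i\subseteq V$.

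For (3), which I expect to be the main technical step, the plan is to take an open cover $\{U_\alpha\}_\alpha$ of $S:=\check{x}^{-1}([\lambda,\infty))$ with $U_\alpha=\prim(A)\setminus h(I_\alpha)$ and set $I:=\overline{\sum_\alpha I_\alpha}$. The covering condition translates into $\|\bar x\|_K<\lambda$ for every $K\in\prim(A/I)$, where $\bar x:=x+I$. The crucial point is that the supremum
\[\|\bar x\|_{A/I}=\sup_{K\in\prim(A/I)}\|\bar x\|_K\]
is \emph{attained}: by Hahn-Banach applied in the unitization, one produces a pure state $\phi$ on $A/I$ with $\phi(\bar x^*\bar x)=\|\bar x\|^2$, and the GNS representation of $\phi$ then realizes $\|\bar x\|$ on its kernel. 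Consequently $\|\bar x\|_{A/I}<\lambda$, so there exists $i_0\in I$ with $\|x-i_0\|<\lambda$. Approximating $i_0$ within the algebraic sum $\sum_\alpha I_\alpha$ yields $i_1\in I_{\alpha_1}+\cdots+I_{\alpha_n}$ still satisfying $\|x-i_1\|<\lambda$, and every primitive ideal $K$ containing $I_{\alpha_1}+\cdots+I_{\alpha_n}$ then has $\|x\|_K<\lambda$, placing it outside $S$. Hence $S\subseteq U_{\alpha_1}\cup\cdots\cup U_{\alpha_n}$. The delicate point is precisely the norm-attainment step: without it one could only conclude $\|\bar x\|_{A/I}\leq\lambda$, which is insufficient for extracting a \emph{strictly} norm-decreasing approximant.
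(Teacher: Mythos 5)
The paper does not prove this lemma at all---it is quoted verbatim from Blackadar \cite[II.6.5.6]{Bla06}---so there is no in-paper argument to compare against; your proof is a correct, self-contained rendition of the standard textbook argument. All three steps check out: the identity $\|(x^*x-\lambda^2)_+\|_J=(\|x\|_J^2-\lambda^2)_+$ really does convert the sublevel sets of $\check{x}$ into hulls of ideals, the $3/4$ versus $1/4$ separation in (2) is sound, and the norm-attainment via a pure state (an extreme point of the weak-$*$ compact quasi-state space, realized by an irreducible GNS representation) is exactly what yields the strict inequality $\|\bar x\|_{A/I}<\lambda$ needed to extract a finite subcover in (3).
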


Since we will mostly be interested in finite-dimensional representations, we consider the subspaces 
\[\prim_n(A)=\{\ker(\pi)\in\prim(A)\colon\dim(\pi)=n\}\] 
for each finite $n$. Similarly, we write 
\[\prim_{\leq n}(A)=\{\ker(\pi)\in\prim(A)\colon\dim(\pi)\leq n\}=\bigcup_{k\leq n} \prim_k(A).\] 
The following theorem describes the structure of these subspaces of $\prim(A)$ and the relations between them.

\begin{theorem}[{\cite[3.6.3-4]{Dix77}}]\label{thm prim}
Let $A$ be a $C^*$-algebra. The following holds for each $n\in\mathbb{N}$:
\renewcommand{\labelenumi}{(\roman{enumi})}
\begin{enumerate}
\item $\prim_{\leq n}(A)$ is closed in $\prim(A)$. 
\item $\prim_n(A)$ is open in $\prim_{\leq n}(A)$. 
\item $\prim_n(A)$ is locally compact and Hausdorff.
\end{enumerate}
\end{theorem}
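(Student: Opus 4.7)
The plan is to detect finite-dimensional representations via polynomial identities and then bootstrap from the closed subset structure of $\prim(A)$. The key tool is the Amitsur--Levitzki theorem, which says that the standard polynomial $s_{2n}$ in $2n$ non-commuting variables vanishes identically on $\M_n$ but not on any $\M_m$ with $m>n$. Consequently, for an irreducible representation $\pi$ of $A$ with kernel $J$, one has $\dim(\pi)\leq n$ if and only if $s_{2n}(a_1,\ldots,a_{2n})\in J$ for all $a_1,\ldots,a_{2n}\in A$ (for the ``if'' direction one uses that a primitive $C^*$-algebra satisfying a polynomial identity of degree $2n$ is embeddable in $\M_n$-valued functions, so its irreducible representations have dimension at most $n$).

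With this characterization, (i) follows by writing
\[\prim_{\leq n}(A)=\bigcap_{a_1,\ldots,a_{2n}\in A}\bigl\{J\in\prim(A)\colon \check{y}(J)=0\bigr\},\qquad y=s_{2n}(a_1,\ldots,a_{2n}),\]
since each set on the right is closed by the lower semicontinuity part of Lemma \ref{lemma top prim}(i) (the set $\{\check{y}\leq 0\}$ is closed). Then (ii) is immediate: applying (i) to $n-1$ shows that $\prim_{\leq n-1}(A)$ is closed in $\prim(A)$, hence closed in $\prim_{\leq n}(A)$, and $\prim_n(A)=\prim_{\leq n}(A)\setminus\prim_{\leq n-1}(A)$ is therefore open there.

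For (iii), my idea is to identify $\prim_n(A)$ as the primitive ideal space of a genuinely $n$-homogeneous $C^*$-algebra. The closed subset $\prim_{\leq n}(A)\subseteq\prim(A)$ corresponds to a quotient $A/I$ whose primitive ideal space is exactly $\prim_{\leq n}(A)$; inside $A/I$, the open subset $\prim_n(A)$ corresponds to an ideal $B\trianglelefteq A/I$ with $\prim(B)=\prim_n(A)$. By construction, every irreducible representation of $B$ has dimension exactly $n$, so $B$ is $n$-homogeneous. It is a classical result (Fell, Tomiyama--Takesaki) that the primitive ideal space of an $n$-homogeneous $C^*$-algebra is a locally compact Hausdorff space, since such $B$ is locally isomorphic to $\C_0(U,\M_n)$ over $\prim(B)$. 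This yields (iii).

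The main obstacle is really packaged into two classical inputs: the Amitsur--Levitzki theorem for (i), and the Fell/Tomiyama--Takesaki structure theorem for (iii). The polynomial-identity approach in (i) is the conceptual heart, because without it one has no way to translate ``$\dim\pi\leq n$'' into a closed condition on $\prim(A)$; everything else (the ideal--closed set correspondence, extraction of the $n$-homogeneous piece, and the Hausdorff/local compactness conclusion) then falls into place by standard manipulations, much as in \cite[3.6.3--4]{Dix77}.
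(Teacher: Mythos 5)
This theorem is not proved in the paper at all --- it is quoted from Dixmier \cite{Dix77}, 3.6.3--4, as a classical fact --- so the only comparison available is with the standard argument in that reference. Your proof is correct, and for (i)--(ii) it takes a genuinely different route: Dixmier detects $\dim(\pi)\leq n$ via the lower semicontinuity of the trace functions $\pi\mapsto\operatorname{Tr}(\pi(a))$, $a\in A^+$, writing $\prim_{\leq n}(A)$ as an intersection of sets of the form $\{\operatorname{Tr}(\pi(a))\leq n\}$, whereas you detect it via the standard polynomial identity $s_{2n}$, using Amitsur--Levitzki for one direction and Kaplansky's theorem on primitive PI-algebras for the other; both arguments then reduce closedness to the lower semicontinuity recorded in Lemma \ref{lemma top prim}, and (ii) is immediate either way. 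Your route has the merit of being exactly the algebraic characterization of subhomogeneity that the paper itself invokes later (cf.\ the appeal to \cite[IV.1.4.6]{Bla06} in the proof of Lemma \ref{lemma Prim_max}). The one caveat concerns (iii): the assertion that an $n$-homogeneous $C^*$-algebra has Hausdorff primitive ideal space \emph{is} essentially the content of Dixmier 3.6.4, and it is what Fell and Tomiyama--Takesaki must establish before constructing the locally trivial bundle, so citing that structure theorem for (iii) replaces the statement by an equivalent classical fact rather than proving it. That is a reasonable thing to do for a result the paper itself only cites, but if you want (iii) to be self-contained you still owe the separation argument (e.g.\ on $\prim_n(A)$ the functions $\pi\mapsto\operatorname{Tr}(\pi(a))$ become continuous, by lower semicontinuity combined with the closedness from (i), and they separate inequivalent finite-dimensional irreducibles); local compactness, by contrast, costs nothing extra, since the primitive ideal space of any $C^*$-algebra is locally quasi-compact and your $\prim_n(A)$ is realized as $\prim(B)$ for an ideal $B$ of $A_{\leq n}$.
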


Now assume that $A$ is a $N$-subhomogeneous $C^*$-algebra. In this case Theorem \ref{thm prim} gives a set-theoretical (but in general not a topological) decomposition of its primitive spectrum
\[\prim(A)=\bigsqcup_{n=1}^N \prim_n(A).\]
While each subspace in this decomposition is nice, in the sense that it is Hausdorff, $\prim(A)$ itself is typically non-Hausdorff. In the subhomogeneous setting it is at least a $T_1$-space, i.e. points are closed. 
If we further assume $A$ to be separable and unital, the space $\prim(A)$ will also be separable and quasi-compact.

Given a general $C^*$-algebra $A$, there is a one-to-one correspondence between (closed) ideals $J$ of $A$ and closed subsets of $\prim(A)$. More precisely, one can identify $\prim(A/J)$ with the closed subset $\{K\in\prim(A)\colon J\subseteq K\}$. In particular, we can consider the quotient $A_{\leq n}$ corresponding to the closed subset $\prim_{\leq n}(A)\subseteq\prim(A)$. This quotient is the maximal $n$-subhomogeneous quotient of $A$ and has the following universal property: Any $^*$-homomorphism $\varphi\colon A\rightarrow B$ to some $n$-subhomogeneous $C^*$-algebra $B$ factors uniquelythrough $A_{\leq n}$: 
\[\xymatrix{
  A \ar[rr]^\varphi \ar@{->>}[dr]&& B \\
  & A_{\leq n} \ar@{-->}[ur]
}\]

\subsection{Semiprojective $C^*$-algebras}\label{section sp}
We recall the definition of semiprojectivity for $C^*$-algebras, the main property of study in this paper. More detailed information about lifting properties for $C^*$-algebras  can be found in Loring's book \cite{Lor97}.

\begin{definition}[{\cite[Definition 2.10]{Bla85}}]\label{def sp}
A separable $C^*$-algebra $A$ is semiprojective if for every $C^*$-algebra $B$ and every increasing chain of ideals $J_n$ in $B$ with $J_\infty=\overline{\bigcup_n J_n}$, and for every $^*$-homomorphism $\varphi\colon A\rightarrow B/J_\infty$ there exists $n\in\mathbb{N}$ and a $^*$-homomorphism $\overline{\varphi}\colon A\rightarrow B/J_n$ making the following diagram commute:
\[\xymatrix{
  & B \ar@{->>}[d]^{\pi_0^n} \\ & B/J_n \ar@{->>}[d]^{\pi_n^\infty} \\ 
  A \ar[r]^\varphi \ar@{-->}[ur]^{\overline{\varphi}} & B/J_\infty
}\] 
In this situation, the map $\overline{\varphi}$ is called a partial lift of $\varphi$. The $C^*$-algebra $A$ is projective if, in the situation above, we can always find a lift $\overline{\varphi}\colon A\rightarrow B$ for $\varphi$.

Let $\mathcal{C}$ be a class of $C^*$-algebras. A $C^*$-algebra $A$ is (semi)projective with respect to $\mathcal{C}$ if it satisfies the definitions above with the restriction that the $C^*$-algebras $B,B/J_n$ and $B/J_\infty$ all belong to the class $\mathcal{C}$. 
\end{definition}

\begin{remark}
One may also define semiprojectivity as a lifting property for maps to certain direct limits: an increasing sequence of ideals $J_n$ in $B$ gives an inductive system $(B/J_n)_n$ with surjective connecting maps $\pi_n^{n+1}\colon B/J_n\rightarrow B/J_{n+1}$ and limit (isomorphic to) $B/J_\infty$. On the other hand, it is easily seen that every such system gives an increasing chain of ideals $(\ker(\pi_1^n))_n$. Hence, semiprojectivity is equivalent to being able to lift maps to $\varinjlim D_n$ to a finite stage $D_n$ provided that all connecting maps of the system are surjective. It is sometimes more convenient to work in this picture.
\end{remark}

\subsubsection{An unsolvable lifting problem}\label{section unsolvable}

In order to show that a $C^*$-algebra does not have a certain lifting property, we need to construct unsolvable lifting problems. One such construction by Loring (\cite[Proposition 10.1.8]{Lor97}) uses the fact that normal elements in quotient $C^*$-algebras do not admit normal preimages in general, e.g. Fredholm operators of non-zero index. Here, we generalize Loring's construction and obtain a version which also works for almost normal elements. Combining this with Lin's theorem on almost normal matrices, we are able to construct unsolvable lifting problems not only for commutative $C^*$-algebras, as in Loring's case, but for the much larger class of strongly quasidiagonal $C^*$-algebras.

First we observe that almost normal elements in quotient $C^*$-algebras always admit (almost as) almost normal preimages. Given an element $x$ of some $C^*$-algebra and $\epsilon>0$, we say that $x$ is $\epsilon$-normal if $\|x^*x-xx^*\|\leq\epsilon\|x\|$ holds. 

\begin{lemma}\label{lemma preimage}
Let $A$, $B$ be $C^*$-algebras and $\pi\colon A\rightarrow B$ a surjective $^*$-homomorphism. Then for every $\epsilon$-normal element $y\in B$ there exists a $(2\epsilon)$-normal element $x\in A$ with $\pi(x)=y$ and $\|x\|=\|y\|$.
\end{lemma}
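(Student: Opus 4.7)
The plan is to first produce some norm-preserving lift $x_0$ of $y$, and then perturb it along the ideal $I = \ker\pi$ via a quasi-central approximate unit so as to reduce the size of the self-commutator (essentially) down to the quotient norm of $x_0^*x_0 - x_0 x_0^*$, which is the same as $\|y^*y - yy^*\| \le \epsilon M$. For the first step, pick any $z \in A$ with $\pi(z) = y$, set $M = \|y\|$, and define $x_0 := z \cdot h(z^*z)$ where $h(t) = \min(1, M/\sqrt{t})$ is extended continuously at $t = 0$. The polynomial identity $z \cdot p(z^*z) = p(zz^*) \cdot z$ (hence by continuity also for $h$) gives $\pi(x_0) = y \cdot h(y^*y) = y$, since $\sigma(y^*y) \subseteq [0, M^2]$ and $h \equiv 1$ there, while $x_0^* x_0 = g(z^*z)$ with $g(t) = t h(t)^2 = \min(t, M^2)$ shows $\|x_0\| \le M$; together with $\|x_0\| \ge \|\pi(x_0)\| = M$ this yields $\|x_0\| = M$.

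Now let $(e_\lambda)$ be a quasi-central approximate unit for $I$ in $A$, set $f_\lambda := (1 - e_\lambda)^{1/2}$ (interpreted in the unitization of $A$), and put $x_\lambda := x_0 f_\lambda$. Since $\pi(f_\lambda) = 1$ one still has $\pi(x_\lambda) = y$, while $\|x_\lambda\| \le \|x_0\| \|f_\lambda\| \le M$ combined with $\|x_\lambda\| \ge \|\pi(x_\lambda)\| = M$ forces $\|x_\lambda\| = M$. Continuous functional calculus together with quasi-centrality of $(e_\lambda)$ yields $\|[f_\lambda, x_0]\| \to 0$. Writing $c := x_0^* x_0 - x_0 x_0^*$ and expanding, one obtains
\[ x_\lambda^* x_\lambda - x_\lambda x_\lambda^* \;=\; f_\lambda c f_\lambda + E_\lambda, \]
where $E_\lambda$ consists of a bounded number of terms each containing a factor $[f_\lambda, x_0]$ or $[f_\lambda, x_0^*]$, so that $\|E_\lambda\| \to 0$.

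For the leading term, $\|f_\lambda c f_\lambda\| \ge \|\pi(f_\lambda c f_\lambda)\| = \|\pi(c)\|$ holds automatically, while for the upper bound any self-adjoint $k \in I$ gives $\|f_\lambda c f_\lambda\| \le \|c - k\| + \|f_\lambda k f_\lambda\|$ with $\|f_\lambda k f_\lambda\| \to 0$ (since $k \in I$ and $(e_\lambda)$ is an approximate unit of $I$); taking the infimum over $k \in I_{\mathrm{sa}}$ yields $\limsup_\lambda \|f_\lambda c f_\lambda\| \le \|\pi(c)\| = \|y^*y - yy^*\| \le \epsilon M$. Combining both estimates, for $\lambda$ sufficiently large one has $\|x_\lambda^* x_\lambda - x_\lambda x_\lambda^*\| \le 2\epsilon M = 2\epsilon \|x_\lambda\|$, so $x := x_\lambda$ has all required properties. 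The main technical step is the identification of the leading term $f_\lambda c f_\lambda$ in the commutator expansion and the verification that the remainder $E_\lambda$ vanishes in norm; everything else is routine manipulation with quasi-central approximate units and functional calculus.
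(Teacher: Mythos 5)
Your proof is correct and is essentially the paper's own argument: the paper's (one-line) proof also takes a norm-preserving preimage $x_0$ and multiplies by $1-u_{\lambda_0}$ for a quasicentral approximate unit $(u_\lambda)$ of $\ker\pi$, exactly the perturbation you carry out (your use of $(1-e_\lambda)^{1/2}$ on the right rather than $1-u_{\lambda_0}$ on the left is immaterial). Your write-up just supplies the details -- the explicit norm-preserving lift, the commutator expansion, and the estimate $\limsup_\lambda\|f_\lambda c f_\lambda\|\le\|\pi(c)\|$ -- that the paper leaves implicit.
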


\begin{proof}
Let $(u_\lambda)_{\lambda\in\Lambda}$ denote an approximate unit for $\ker(\pi)$ which is quasicentral for $A$. Pick any preimage $x_0$ of $x$ with $\|x_0\|=\|x\|$ and set $x:=(1-u_{\lambda_0})x_0$ for a suitable $\lambda_0\in\Lambda$.
\end{proof}

The next lemma is due to Halmos. A short proof using the Fredholm alternative can be found in \cite[Lemma 2]{BH74}. 

\begin{lemma}[Halmos]\label{lemma Halmos}
Let $S\in\mathcal{B}(H)$ be a proper isometry, then 
\[\dist\left(S,\{N+K\,|\,N,K\in \mathcal{B}(H),\; N\,\text{normal},\; K\,\text{compact}\}\right)=1.\]
\end{lemma}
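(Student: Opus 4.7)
The upper bound $\dist(S,\{N+K\}) \leq 1$ is immediate from $\|S\|=1$, so the plan is to establish the lower bound by contradiction: assume $\|S - (N+K)\| < 1$ with $N$ normal and $K$ compact, and derive a contradiction via Fredholm theory.

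First, I would show that, under this hypothesis, both $S$ and $N$ must be Fredholm. Indeed,
\[\|I - S^*(N+K)\| = \|S^*(S - (N+K))\| \leq \|S - (N+K)\| < 1,\]
so $S^*(N+K)$ is invertible in $\mathcal{B}(H)$ by the Neumann series. Passing to the Calkin algebra $\mathcal{B}(H)/\mathcal{K}(H)$ via the quotient map $\pi$, and using $\pi(K)=0$, this shows that $\pi(S)^*\pi(N) = \pi(S^*(N+K))$ is invertible in the Calkin algebra. Combined with $\pi(S)^*\pi(S)=1$, this forces $\pi(S)$ itself to be invertible, so $S$ is Fredholm and $P := I - SS^*$ is compact. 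The same factorization makes $\pi(N)$ left-invertible in the Calkin algebra, and a left-invertible normal element of a unital $C^*$-algebra is invertible (since for normal $a$ one has $a^*a = aa^*$, so $a$ is left-invertible iff $a^*a$ is invertible iff $aa^*$ is invertible iff $a$ is right-invertible), so $\pi(N)$ is invertible and $N$ is Fredholm.

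Next, I would compute the indices. Normality yields $\kernel N = \kernel N^*$, so $\operatorname{ind}(N) = 0$, preserved under the compact perturbation $K$, giving $\operatorname{ind}(N+K) = 0$. On the other hand, compactness of the projection $P$ forces it to have finite rank, non-zero because $S$ is proper, so $\operatorname{ind}(S) = \dim \kernel S - \dim \kernel S^* = -\dim PH \leq -1$. The sharp form of the Fredholm-stability theorem --- valid with the explicit distance bound $1$ because the isometry $S$ admits the parametrix $S^*$ with $\|S^*\|=1$ --- then guarantees $\operatorname{ind}(N+K) = \operatorname{ind}(S)$ whenever $\|(N+K) - S\| < 1$, giving $0 = -\dim PH \leq -1$, the desired contradiction.

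The main obstacle I anticipate is securing the sharp bound $1$ in the index-stability step, as any weaker bound would yield a lower bound strictly less than $1$ and miss the precise value in the lemma. The sharpness reflects $\|S^*\|=1$ for an isometry; equivalently, $\pi(S)$ is a \emph{unitary} in the Calkin algebra, and the invertibles in any unital $C^*$-algebra contain the open ball of radius $\|u^{-1}\|^{-1}$ around every unitary $u$. Applied to $u = \pi(S)$, the straight-line homotopy $t \mapsto (1-t)S + t(N+K)$ has Calkin image staying within distance $< 1$ of the unitary $\pi(S)$ and therefore remains invertible for all $t \in [0,1]$, placing the entire segment in the Fredholm operators and forcing the index to be constant along it.
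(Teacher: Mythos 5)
Your argument is correct and complete, whereas the paper itself offers no proof of this lemma and simply refers to \cite{BH74}; since that reference is described as using the Fredholm alternative, your index-theoretic proof is very much in the intended spirit. The only blemish is one mis-ordered deduction: from the invertibility of $\pi(S)^*\pi(N)$ together with $\pi(S)^*\pi(S)=1$ alone you cannot yet conclude that $\pi(S)$ is invertible --- those two facts only say that $\pi(S)^*$ is right-invertible and $\pi(N)$ is left-invertible, and an isometry in the Calkin algebra need not be a unitary (an isometry whose range projection has infinite-dimensional complement shows the deduction fails without using normality). The correct order is the one you supply in your very next sentence: normality upgrades the left-invertibility of $\pi(N)$ to invertibility, and only then does $\pi(S)^*=\bigl(\pi(S)^*\pi(N)\bigr)\pi(N)^{-1}$ become invertible, making $S$ Fredholm and $P=I-SS^*$ compact. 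With that rearrangement the rest goes through exactly as written: $\operatorname{ind}(N+K)=\operatorname{ind}(N)=0$ by normality and compact stability, $\operatorname{ind}(S)=-\operatorname{dim}(PH)\leq -1$, and the straight-line segment from $S$ to $N+K$ stays within Calkin distance strictly less than $1$ of the Calkin unitary $\pi(S)$, hence inside the Fredholm operators, forcing the two indices to agree --- the desired contradiction, and with exactly the sharp constant $1$ that the statement requires.
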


It is a famous result by H. Lin that in matrix algebras almost normal elements are uniformly close to normal ones (\cite{Lin97}). A short, alternative proof involving semiprojectivity arguments can be found in \cite{FR01}.

\begin{theorem}[Lin]\label{thm Lin}
For every $\epsilon>0$, there is a $\delta>0$ so that, for any $d$ and any $X$ in $\M_d$ satisfying 
\[\|XX^*-X^*X\|\leq\delta\;\;\;\text{and}\;\;\;\|X\|\leq 1\]
there is a normal $Y$ in $\M_d$ such that 
\[\|X-Y\|\leq\epsilon.\]
\end{theorem}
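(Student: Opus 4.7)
The plan is to argue by contradiction using the asymptotic sequence algebra combined with the semiprojectivity of $C(K)$ for $K$ a one-dimensional ANR, established by S\o rensen and Thiel (\cite{ST12}).

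Suppose the conclusion fails. Then one can find $\epsilon_0 > 0$ and matrices $X_k \in \M_{d_k}$ with $\|X_k\| \leq 1$, $\|X_k X_k^* - X_k^* X_k\| \to 0$, yet $\dist(X_k, \mathcal{N}_k) \geq \epsilon_0$ for every $k$, where $\mathcal{N}_k \subseteq \M_{d_k}$ is the set of normal matrices. Form the product $B := \prod_k \M_{d_k}$ with the increasing ideal chain $J_n := \bigoplus_{k \leq n} \M_{d_k}$ and closed union $J_\infty := \bigoplus_k \M_{d_k}$; then $B/J_n \cong \prod_{k > n} \M_{d_k}$, while $B/J_\infty$ is the asymptotic sequence algebra, whose norm is the pointwise $\limsup$. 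The image $\bar{x}$ of $(X_k)$ in $B/J_\infty$ is normal of norm $\leq 1$, and continuous functional calculus yields an isometric $\ast$-homomorphism $\phi \colon C(\sigma(\bar{x})) \to B/J_\infty$ sending the coordinate function to $\bar{x}$.

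The next step is to factor $\phi$ approximately through a semiprojective $C^*$-algebra. I would construct a finite graph $K \subseteq \mathbb{C}$ (a one-dimensional ANR by Theorem \ref{thm ward}) together with a continuous map $r \colon \sigma(\bar{x}) \to K$ satisfying $|r(z) - z| < \epsilon_0/2$ for all $z \in \sigma(\bar{x})$. Pulling back yields $\psi := \phi \circ C(r) \colon C(K) \to B/J_\infty$ with $\|\psi(z) - \bar{x}\| = \|r - z\|_\infty < \epsilon_0/2$. By \cite{ST12}, $C(K)$ is semiprojective, so Definition \ref{def sp} provides a partial lift $\tilde{\psi} \colon C(K) \to B/J_n$ for some $n$. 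Writing $\tilde{\psi}(z) = (N_k)_{k > n} \in \prod_{k > n} \M_{d_k}$, each $N_k \in \M_{d_k}$ is normal, and the $\limsup$ description of the quotient norm forces $\limsup_k \|N_k - X_k\| = \|\psi(z) - \bar{x}\| < \epsilon_0/2$. Hence $\|N_k - X_k\| < \epsilon_0$ for all sufficiently large $k$, contradicting $\dist(X_k, \mathcal{N}_k) \geq \epsilon_0$.

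The main obstacle I expect is the construction of $K$ and $r$. Since $\sigma(\bar{x})$ can be genuinely two-dimensional (in principle all of $\overline{\mathbb{D}}$), $r$ cannot be a retraction onto a one-dimensional graph with nontrivial topology, and one must work with approximations instead of retractions. A workable recipe is to take an $\epsilon_0/8$-dense finite set $\{z_1, \ldots, z_N\} \subseteq \sigma(\bar{x})$, form $K$ by joining pairs $(z_i, z_j)$ with $|z_i - z_j| < \epsilon_0/4$ by straight-line edges, and define $r$ piecewise via an interpolation along the edge adjoining the two vertices nearest to $z$, with a careful convention at the ``triple points'' where three or more vertices are simultaneously nearly closest. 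Verifying continuity of $r$ and the required norm estimate is the technical heart of the proof; everything else is a routine application of Definition \ref{def sp}.
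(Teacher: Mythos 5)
The paper does not prove this statement; it quotes it from Lin \cite{Lin97} and points to the semiprojectivity-based proof of Friis and R{\o}rdam \cite{FR01}, which your argument resembles in outline. However, your proposal has a genuine gap at exactly the step you flag as the ``technical heart,'' and it is not a repairable technicality: the map $r$ you need does not exist in general. Suppose $\sigma(\bar{x})$ contains a closed disk $\overline{D}(a,\rho)$ with $\rho>\epsilon_0/2$ --- nothing in your setup rules this out, and for the hard sequences the spectrum of $\bar{x}$ is typically two-dimensional. If $r\colon\sigma(\bar{x})\rightarrow K$ were continuous with $K$ a finite graph and $|r(z)-z|<\epsilon_0/2$, then $r(\overline{D}(a,\rho))$ is nowhere dense, so one can choose $w_0\in D(a,\rho-\epsilon_0/2)\setminus r(\overline{D}(a,\rho))$; on $\partial D(a,\rho)$ the maps $z\mapsto r(z)-w_0$ and $z\mapsto z-w_0$ are joined by the straight-line homotopy inside $\mathbb{C}\setminus\{0\}$, so $r-w_0$ has winding number $1$ on $\partial D(a,\rho)$, contradicting the fact that it extends continuously to $\overline{D}(a,\rho)$ without hitting $0$. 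This is why your nearest-vertex interpolation cannot be made continuous at the triple points. The failure is essential rather than cosmetic: if such an $r$ always existed, the identical argument run in the Calkin algebra would show that every essentially normal operator is a compact perturbation of a normal one, which the unilateral shift refutes.

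What is missing is the step where the matrix setting (absence of index obstructions) is actually used. In \cite{FR01} one first perturbs the normal element $\bar{x}$ itself inside $\prod_k\M_{d_k}/\bigoplus_k\M_{d_k}$: this quotient has real rank zero and stable rank one, so each $\bar{x}-\lambda$ is a limit of invertibles, and this permits excising a small disk around an interior point of each square of a grid from the spectrum, yielding a normal $\bar{x}'$ with $\|\bar{x}-\bar{x}'\|$ small and $\sigma(\bar{x}')$ contained in the one-skeleton of the grid. Only then does one apply semiprojectivity of $\C(K)$ for the finite graph $K$; the remainder of your argument (lifting to $\prod_{k>n}\M_{d_k}$, reading off normal $N_k$ with $\limsup_k\|N_k-X_k\|$ small) then goes through verbatim. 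So your frame --- asymptotic sequence algebra, semiprojectivity of one-dimensional $\C(K)$, the $\limsup$ description of the quotient norm --- is the right one, but the reduction of the spectrum to dimension one must be achieved by perturbing the element, not by composing with a continuous map defined on the spectrum.
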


The following is the basis for most of our unsolvable lifting problems appearing in this paper. Recall that a $C^*$-algebra $A$ is strongly quasidiagonal if every representation of $A$ is quasidiagonal. See \cite[Section V.4.2]{Bla06} or \cite{Bro00} for more information on quasidiagonality.

In the following, let $\mathcal{T}$ denote the Toeplitz algebra $C^*(S|S^*S=1)$ and $\varrho\colon\mathcal{T}\rightarrow\C(S^1)$ the quotient map given by mapping $S$ to the canonical generator $z$ of $\C(S^1)$.

\begin{proposition}\label{prop nslp}
There exists $\delta>0$ such that the following holds for all $n\in\mathbb{N}$: If $A$ is strongly quasidiagonal and $\varphi\colon A\rightarrow\C(S^1)\otimes\M_n$ is any $^*$-homomorphism with $\dist(z\otimes 1_n,\image(\varphi))<\delta$, then $\varphi$ does not lift to a $^*$-homomorphism from $A$ to $\mathcal{T}\otimes\M_n$:
\[\xymatrix{
  && \mathcal{T}\otimes \M_n \ar@{->>}[d]^(0.4){\varrho\otimes\id}\\
  A \ar[rr]^\varphi \ar@{-->}@/^1pc/[urr]^(.4)\nexists && \C(S^1)\otimes \M_n
}\]
\end{proposition}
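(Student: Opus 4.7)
The plan is to argue by contradiction. Suppose $\psi\colon A\to\mathcal{T}\otimes\M_n$ is a lift of $\varphi$ and choose $a\in A$ with $\|\varphi(a)-z\otimes 1_n\|<\delta$. Since $(\varrho\otimes\id)(S\otimes 1_n)=z\otimes 1_n$ and the kernel of $\varrho\otimes\id$ is the compact ideal $\mathcal{K}\otimes\M_n\subseteq\mathcal{T}\otimes\M_n$, the quotient norm of $\psi(a)-S\otimes 1_n$ is less than $\delta$, so there exists a compact element $k_1$ with $\|\psi(a)-S\otimes 1_n-k_1\|<\delta$. Separately, since $z\otimes 1_n$ is unitary, writing $\varphi(a)=z\otimes 1_n+e$ with $\|e\|<\delta$ and expanding the self-commutator gives $\|[\varphi(a)^*,\varphi(a)]\|\leq 4\delta+2\delta^2$, so the essential norm of $[\psi(a)^*,\psi(a)]$ in $\mathcal{T}\otimes\M_n$ is at most $5\delta$ once $\delta<1/2$.

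The strategy is to show that $\psi(a)$ lies arbitrarily close to the set $\{N+K:N\text{ normal},\,K\text{ compact}\}$, which contradicts a Halmos-type lower bound $\dist(S\otimes 1_n,\{N+K\})\geq 1$ valid uniformly in $n$. For the latter I would adapt the Fredholm index proof of Lemma \ref{lemma Halmos}: if $\|S\otimes 1_n-N-K\|<1$ with $N$ normal and $K$ compact, then $\pi(N)$ is within norm $<1$ of the unitary $z\otimes 1_n$ in the Calkin algebra and hence invertible, and a straight-line homotopy in the Calkin algebra shows that $N+K$ is Fredholm of the same index as $S\otimes 1_n$; since normal Fredholm operators have index zero, this forces $0=-n$, a contradiction.

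To produce the approximation, I would invoke the strong quasidiagonality of $A$, which makes $\psi$ a quasidiagonal representation. Hence there exist increasing finite-rank projections $P_m\to 1$ strongly with $\|[P_m,\psi(a)]\|<\eta_m$ for any prescribed summable sequence $\eta_m$. Setting $Q_m=P_m-P_{m-1}$ and $D:=\bigoplus_m Q_m\psi(a)Q_m$, a standard telescoping estimate bounds $\|\psi(a)-D\|$ by a constant multiple of $\sum_m\eta_m$, which can be made smaller than any prescribed $\varepsilon>0$. Since $D$ is block-diagonal with finite blocks $D_m$, the essential norm of $[D^*,D]$ equals $\limsup_m\|[D_m^*,D_m]\|$, and this quantity is at most $5\delta+O(\varepsilon)$ by the estimate on $[\psi(a)^*,\psi(a)]$. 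Choosing $\delta$ so that $5\delta$ lies strictly below the threshold from Theorem \ref{thm Lin}, each block $D_m$ for sufficiently large $m$ can be perturbed by at most $\varepsilon_L$ to a normal matrix $N_m$; gathering the tail normals into $N:=\bigoplus_{m\geq m_0}N_m\oplus 0$ and the finitely many head blocks into a compact operator yields $\psi(a)=N+K+r$ with $\|r\|\leq\varepsilon+\varepsilon_L$. Combining with $\psi(a)=S\otimes 1_n+k_1+r_1$ gives $\dist(S\otimes 1_n,\{N+K\})<\varepsilon+\varepsilon_L+\delta$, and a small enough choice of $\delta$ (and then $\varepsilon$) drives this below $1$, delivering the contradiction.

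The main obstacle is the block-diagonalization step: ensuring simultaneously that $\|\psi(a)-D\|$ is small in norm and that the blockwise self-commutators $\|[D_m^*,D_m]\|$ genuinely inherit the essential near-normality of $\psi(a)$. The key tool is the identity $\|T\|_{\mathrm{ess}}=\limsup_m\|Q_m T Q_m\|$ for a block-diagonal $T$, which lets us push the finitely many head blocks into a compact correction while the infinite tail is uniformly controlled by Lin's theorem. All constants in the argument depend only on $\delta$ and on the universal bounds of Theorem \ref{thm Lin}---neither involves $n$---so the same $\delta$ works for every $n\in\N$, as required.
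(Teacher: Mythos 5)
Your argument is correct and follows the same overall architecture as the paper's proof (contradiction; quasidiagonality of $\psi$ gives a block-diagonal approximant; Lin's theorem produces a normal operator near $\psi(a)$; Halmos's lemma gives the contradiction), but it handles the key technical step --- getting the blocks to be almost normal --- in a genuinely different way. The paper first invokes Lemma \ref{lemma preimage} (via a quasicentral approximate unit) to replace the chosen preimage by an element $a\in A$ that is $(10\delta)$-normal \emph{in norm}; then the block-diagonal approximant is almost normal in norm, so Lin's theorem applies uniformly to \emph{every} block. You skip this lifting step entirely: you only control the \emph{essential} self-commutator of $\psi(a)$ (which equals $\|[\varphi(a)^*,\varphi(a)]\|$ since $(\mathcal{T}\otimes\M_n)/(\mathcal{K}\otimes\M_n)$ sits isometrically in the Calkin algebra), use the identity $\|{\bigoplus_m T_m}\|_{\mathrm{ess}}=\limsup_m\|T_m\|$ to see that only the \emph{tail} blocks are almost normal, apply Lin there, and absorb the finitely many head blocks into the compact correction. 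Both routes work and yield an $n$-independent $\delta$; yours trades the quasicentral-approximate-unit argument for the head/tail bookkeeping, which is arguably more self-contained but requires slightly more care with the block-diagonalization estimates (note that with $\|[P_m,\psi(a)]\|\leq\eta_m$ the off-diagonal part is controlled by $2(\sum_m\eta_m^2)^{1/2}$, which suffices). One small simplification available to you: no Fredholm-index adaptation of Lemma \ref{lemma Halmos} is needed, since $S\otimes 1_n$ is itself a proper isometry on $\mathcal{H}^{\oplus n}$ and the lemma applies verbatim --- this is exactly how the paper concludes.
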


\begin{proof}
Choose $\delta'>0$ corresponding to $\epsilon=1/6$ as in Theorem \ref{thm Lin} and set $\delta=\delta'/14$. Let $a'\in A$ be such that $\|\varphi(a')-z\otimes 1_n\|<\delta$, then $\|[\varphi(a'),\varphi(a')^*]\|\leq 2\delta(\|\varphi(a')\|+1)<5\delta\|\varphi(a')\|$. Hence by Lemma \ref{lemma preimage} there exists a $(10\delta)$-normal element $a\in A$ with $\varphi(a)=\varphi(a')$ and $5/6<\|a\|=\|\varphi(a')\|<6/5$. Now if $\psi$ is a $^*$-homomorphism with $(\varrho\otimes\id)\circ\psi=\varphi$ as indicated, we regard $\psi$ as a representation on $\mathcal{H}^{\oplus n}$ with $\mathcal{T}$ generated by the unilateral shift $S$ on $\mathcal{H}$. By assumption, $\psi$ is then a quasidiagonal representation. In particular, $\psi(a)$ can be approximated arbitrarily well by block-diagonal operators (\cite[Theorem 5.2]{Bro00}). We may therefore choose a $(11\delta)$-normal block-diagonal operator $B$ with $5/6\leq\|B\|\leq 6/5$ within distance at most $1/3$ from $\psi(a)$. Applying Lin's Theorem to the normalized, $(14\delta)$-normal block-diagonal operator $\|B\|^{-1} B$ shows the existence of a normal element $N\in\mathcal{H}^{\oplus n}$ with $\|\psi(a)-N\|\leq 2/3$. But then we find
\[\begin{array}{rl}
  & \|(N-S\otimes 1_n)+\mathcal{K}(\mathcal{H}^{\oplus n})\| \\
  \leq & \|N-\psi(a)\|+\|(\varrho\otimes\id)(\psi(a)-S\otimes 1_n)\| \\
  \leq & \frac{2}{3}+\|\varphi(a')-z\otimes 1_n\| \\
  \leq & \frac{2}{3}+\delta<1
\end{array}\]
in contradiction to Lemma \ref{lemma Halmos}.
\end{proof}

\subsubsection{The homogeneous case}\label{section homogeneous}

In \cite{ST12}, A. S\o rensen and H. Thiel characterized semiprojectivity for commutative $C^*$-algebras. Moreover, they gave a description of semiprojectivity for homogeneous trivial fields, i.e. $C^*$-algebras of the form $\C_0(X,\M_N)$. Note that the projective case was settled earlier by A. Chigogidze and A. Dranishnikov in \cite{CD10}. Their result is as follows.

\begin{theorem}[\cite{ST12}]\label{thm comm case}
Let $X$ be a locally compact, metric space and $N\in\mathbb{N}$. Then the following are equivalent:
\begin{enumerate}
      \item $\C_0(X,\M_N)$ is (semi)projective.
      \item The one-point compactification $\alpha X$ is an A(N)R-space and $\dim(X)\leq 1$.
\end{enumerate}
\end{theorem}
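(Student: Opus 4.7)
\noindent
My plan is to reduce both directions of the equivalence to the commutative case $N=1$ treated in \cite{ST12}, using matrix stability of (semi)projectivity and the identification $\C_0(X,\M_N)\cong \C_0(X)\otimes\M_N$.

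For $(2)\Rightarrow(1)$: if $\alpha X$ is a one-dimensional A(N)R, \cite{ST12} gives that $\C_0(X)$ is (semi)projective. I would then verify that matrix amplification preserves (semi)projectivity. Given a lifting problem $\psi\colon \C_0(X)\otimes\M_N\to B/J_\infty$, the system of matrix units $\{\psi(1\otimes e_{ij})\}$ in $B/J_\infty$ lifts to matrix units in some $B/J_{n_0}$ by projectivity of $\M_N$. Cutting down to the corresponding corner and applying (semi)projectivity of $\C_0(X)$ to the restriction $a\mapsto\psi(a\otimes e_{11})$ produces a partial lift at a further stage $n_1\geq n_0$, which combines with the lifted matrix units into a partial lift of $\psi$ itself.

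For $(1)\Rightarrow(2)$: assume $\C_0(X,\M_N)$ is (semi)projective. Given a lifting problem $\varphi\colon \C_0(X)\to B/J_\infty$, amplify it to $\varphi\otimes\id\colon \C_0(X)\otimes\M_N\to(B/J_\infty)\otimes\M_N$. Nuclearity of $\M_N$ identifies the target with $(B\otimes\M_N)/\overline{\bigcup_n(J_n\otimes\M_N)}$, so $(1)$ supplies a partial lift $\overline{\psi}$ to some $(B/J_n)\otimes\M_N$. Compressing by $1\otimes e_{11}$ and identifying $(B/J_n)\otimes e_{11}\cong B/J_n$ yields a partial lift of $\varphi$; applying \cite{ST12} to $\C_0(X)$ then gives $(2)$.

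The main obstacle I foresee is in the forward direction $(2)\Rightarrow(1)$: the matrix-unit lift and the commutative partial lift arise from separate applications of semiprojectivity, so the finite stages at which they terminate must be aligned and the reassembly verified to produce a genuine $^*$-homomorphism rather than an approximate one. Proposition \ref{prop nslp} offers an alternative, obstruction-based route via the strongly quasidiagonal structure of $\C_0(X,\M_N)$ that is likely more robust for the fully subhomogeneous extensions considered later in the paper, where the trivial-bundle splitting that makes the above compression argument work is no longer available.
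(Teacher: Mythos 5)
First, a point of comparison: the paper gives no proof of this statement at all. It is quoted verbatim from \cite{ST12}, where the matrix-coefficient case is established directly (the generators-and-relations and limit-structure arguments there work uniformly in $N$), not by reduction to the scalar case. So your reduction strategy is genuinely different from the source; unfortunately, as written it has concrete gaps in both directions.

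In $(2)\Rightarrow(1)$ you lift ``the matrix units $\psi(1\otimes e_{ij})$ \dots by projectivity of $\M_N$.'' Two problems. First, $\M_N$ is not projective --- even $\mathbb{C}$ is not, since a projection in a quotient need not lift to a projection (consider $\C[0,1]\twoheadrightarrow\mathbb{C}^2$) --- it is only semiprojective. This is harmless for the semiprojective half (lift the matrix units to a finite stage $B/J_{n_0}$, cut to the corner, and apply semiprojectivity of $\C(\alpha X)$ there; the ``alignment'' you worry about is a non-issue since the two liftings are applied sequentially), but it breaks the projective half: one cannot in general lift a system of matrix units all the way to $B$. Second, when $X$ is noncompact --- which is automatic in the AR case, since $\alpha X$ of a nonempty compact $X$ is disconnected --- the element $1\otimes e_{ij}$ does not lie in $\C_0(X)\otimes\M_N$ but only in its multiplier algebra, so $\psi(1\otimes e_{ij})$ is undefined; you must first pass to unitizations or argue in multiplier algebras, where lifting along $\mathcal{M}(B/J_n)\rightarrow\mathcal{M}(B/J_\infty)$ is exactly the delicate point this paper devotes Section \ref{section semiprojectivity} to. In $(1)\Rightarrow(2)$, the compression $a\mapsto(1\otimes e_{11})\overline{\psi}(a\otimes e_{11})(1\otimes e_{11})$ is not a $^*$-homomorphism: the partial lift $\overline{\psi}$ only satisfies $(\pi_n\otimes\id)\circ\overline{\psi}=\varphi\otimes\id$ and need not carry $\C_0(X)\otimes e_{11}$ into the $(1,1)$-corner of $(B/J_n)\otimes\M_N$ (conjugating any lift by a unitary that mixes the corners produces another lift), so cutting by $1\otimes e_{11}$ destroys multiplicativity. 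Both directions are true and can be obtained from the matrix-stability results for (semi)projectivity in Loring's book \cite{Lor97} and Blackadar's shape-theory paper \cite{Bla85}, but the proofs there are not the ones you sketch. Your closing remark is apt, though: for the non-trivial bundles treated in Theorem \ref{thm homogeneous case} the paper indeed argues via the obstruction of Proposition \ref{prop nslp} together with Lemmas \ref{lemma ideal Peano} and \ref{lemma trivial bundles}, and only then quotes the present theorem as a black box.
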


The work of S{\o}rensen and Thiel will be the starting point for our analysis of semiprojectivity for subhomogeneous $C^*$-algebras. In this section, we reduce the general $N$-homogeneous case to their result by showing that semiprojectivity for homogeneous, locally trivial fields implies global triviality. We further obtain some information about parts of the primitive ideal space for general semiprojective $C^*$-algebras.

\begin{lemma}\label{lemma ideal Peano}
Let $I$ be a $N$-homogeneous ideal in a $C^*$-algebra $A$. If $A$ is semiprojective with respect to $N$-subhomogeneous $C^*$-algebras, then the one-point compactification $\alpha\prim(I)$ is a Peano space. If $A$ is semiprojective, we further have $\dim(\alpha\prim(A))\leq 1$.
\end{lemma}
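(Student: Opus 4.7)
The immediate reductions are that $A$ (hence $I$) is separable, and $\prim(I)$ is second countable, locally compact, Hausdorff (since $I$ is $N$-homogeneous), so $\alpha\prim(I)$ is automatically compact metrizable. Both claims of the lemma therefore reduce to topological regularity of the primitive ideal spaces involved: local connectedness of $\alpha\prim(I)$ for the first, and $\dim(\alpha\prim(A)) \le 1$ for the second. In each case I would argue by contradiction via Proposition \ref{prop nslp}, relying on the fact that every subhomogeneous $C^*$-algebra is residually finite-dimensional and hence strongly quasidiagonal.

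For local connectedness, suppose it fails at some point $x_0 \in \alpha\prim(I)$. The standard convergent-continua characterization for compact metric spaces yields $\epsilon > 0$ and a sequence of pairwise disjoint continua $C_k \subseteq \alpha\prim(I)$ with $\diam(C_k) \geq \epsilon$ converging (in Hausdorff distance) to a continuum through $x_0$, with $x_0 \notin C_k$ for all $k$. Using that $I$ is locally trivial as an $\M_N$-bundle over the Hausdorff space $\prim(I)$, I would extract, for each $k$, a pair of orthogonal positive norm-one elements $p_k^\pm \in I$ supported near two chosen points $y_k^\pm \in C_k$ with $y_k^\pm \to x_0$. These orthogonal pairs are then to be assembled into a direct system $(B_k)$ of $N$-subhomogeneous quotients of $A$ whose limit identifies the pairs $y_k^\pm$ through $x_0$, producing a $\C(S^1) \otimes \M_N$ summand in $\varinjlim B_k$ and hence an induced $*$-homomorphism $\varphi \colon A \to \C(S^1) \otimes \M_N$ (after projection) with $z \otimes 1_N$ in its image. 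Semiprojectivity of $A$ with respect to $N$-subhomogeneous algebras would lift $\varphi$ to some $B_k$; by strong quasidiagonality of $A$, further composition with a natural $\mathcal{T} \otimes \M_N$-resolution at stage $k$ then contradicts Proposition \ref{prop nslp}.

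For the dimension bound, if $\dim(\alpha\prim(A)) \geq 2$, then Theorem \ref{thm prim} together with classical dimension theory provides a stratum $\prim_n(A)$ of dimension at least $2$ containing embedded circles of bounded-below diameter, from which the same type of unsolvable lifting problem can be constructed, now using full semiprojectivity of $A$ (as opposed to only with respect to $N$-subhomogeneous algebras) so that the Toeplitz extension $\mathcal{T} \otimes \M_n \twoheadrightarrow \C(S^1) \otimes \M_n$ enters the picture directly. The main technical obstacle throughout is constructing the direct systems so that (a) all connecting maps are surjective, (b) the limits coincide with natural ideal-theoretic quotients of $A$, and (c) the $\delta$-closeness hypothesis of Proposition \ref{prop nslp} is verified for the induced $*$-homomorphism; the case $x_0 = \infty$, where $I$ is non-unital and $x_0$ is not itself a primitive ideal, needs additional bookkeeping.
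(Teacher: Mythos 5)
Your treatment of the dimension bound is essentially the paper's: one finds arbitrarily small circles accumulating at a point of $\prim(I)$, assembles the corresponding surjections onto $\C(S^1)\otimes\M_N$ into a single map to $\left(\bigoplus_\N\C(S^1)\right)^+\otimes\M_N$, and uses full semiprojectivity against the quotients $\left(\bigoplus_\N\mathcal{T}\right)^+/\left(\bigoplus_1^n\mathbb{K}\right)$ to contradict Proposition \ref{prop nslp}. (Two caveats: the circles must shrink to a point --- not merely have diameter bounded below --- for the coordinate surjections to assemble into a well-defined $^*$-homomorphism, and the paper's proof in fact bounds $\dim(\alpha\prim(I))$, working inside the homogeneous ideal where Fell's local triviality is available, rather than stratifying all of $\prim(A)$.)

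The first claim is where your proposal breaks down. Proposition \ref{prop nslp} is an index-theoretic obstruction: it applies only after you have produced a $^*$-homomorphism whose image nearly contains $z\otimes 1_N$ in $\C(S^1)\otimes\M_N$, i.e., an honest circle. Failure of local connectedness does not produce circles. Since your $B_k$ are quotients of $A$, the limit $\varinjlim B_k$ is again a quotient of $A$, so a $\C(S^1)\otimes\M_N$ summand of it would force a circle to sit as a relatively clopen subset of a closed subset of $\prim(A)$; a convergence continuum $C_k\to C$ provides nothing of the sort, because there is no topological identification of $y_k^+$ with $y_k^-$ ``in the limit'' --- only metric convergence. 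Concretely, for $X$ the closed topologist's sine curve, $\alpha X$-type spaces are compact, connected, not locally connected, contain no simple closed curve, and satisfy $K_1(\C(X,\M_N))=0$, so no surjection of the kind required by Proposition \ref{prop nslp} exists; your strategy could never detect this failure. The paper's argument is direct rather than by contradiction: embed the maximal $N$-subhomogeneous quotient $A_{\leq N}$ (which still contains $I$ as an ideal) into $\C(K,\M_N)$ with $K$ the Cantor set, write $\C(K,\M_N)=\varinjlim_k\C(\bigsqcup^{2^k}[0,1],\M_N)$ with surjective connecting maps --- a system that stays inside the class of $N$-subhomogeneous algebras, which is exactly why the weak hypothesis ``semiprojective with respect to $N$-subhomogeneous $C^*$-algebras'' suffices --- lift the embedding to a finite stage, and note that its restriction to $I$ induces a continuous surjection $\bigsqcup^{2^k}[0,1]\twoheadrightarrow\alpha\prim(I)$, whence the Peano property by Hahn-Mazurkiewicz. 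Some argument of this positive, factorization type is needed; the circle obstruction alone cannot do the job.
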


\begin{proof}
Let $A_{\leq N}$ be the maximal $N$-subhomogeneous quotient of $A$, then $I$ is also an ideal in $A_{\leq N}$. Being $N$-homogeneous, the ideal $I$ is isomorphic to the section algebra $\Gamma_0(E)$ of a locally trivial $\M_N$-bundle $E$ over the locally compact, second countable, metrizable Hausdorff space $\prim(I)$ by \cite[Theorem 3.2]{Fel61}. Since $A_{\leq N}$ is separable and $N$-subhomogenous, we can embed it into $C(K,\M_N)$ with $K$ the Cantor set by Proposition \ref{prop sub homogeneous}. Using the well known middle-third construction of $K=\varprojlim_k (\bigsqcup^{2^k} [0,1])$, we can apply semiprojectivity of $A_{\leq N}$ with respect to $N$-subhomogenous $C^*$-algebras to obtain an embedding of $A_{\leq N}$ into $C([0,1]^{\oplus 2^k},\M_N)$ for some $k$. The restriction of this embedding to $I$ induces a continuous surjection $\pi$ of $\bigsqcup^{2^k} [0,1]$ onto $\alpha\prim(I)$. By the Hahn-Mazurkiewicz Theorem (\cite[Theorem 8.18]{Nad92}), this shows that $\alpha\prim(I)$ is a Peano space. Furthermore, we find a basis of compact neighborhoods consisting of Peano continua for any point $x$ of $\alpha\prim(I)$ by \cite[Theorem 8.10]{Nad92}.
 
Now let $A$ be semiprojective and assume that $\dim(\prim(I))=\dim(\alpha\prim(I))>1$. Arguing precisely as in \cite[Proposition 3.1]{ST12}, we use our basis of neighborhoods for points of $\prim(I)$ to find arbitrarily small circles around a point $x\in\prim(I)$. Using triviality of $E$ around $x$, we obtain a lifting problem for $A$:
\[\xymatrix{
  & A \ar[d] \ar@{-->}[r] & \left(\left(\bigoplus_\mathbb{N}\mathcal{T}\right)^+/\left(\bigoplus_1^n\mathbb{K}\right)\right)\otimes\M_N \ar@{->>}[d] \\ 
  I \ar[r] \ar[ur]^\subseteq&\left(\bigoplus_\mathbb{N}\C(S^1)\right)^+\otimes\M_N \ar@{=}[r] & \left(\left(\bigoplus_\mathbb{N}\mathcal{T}\right)^+/\left(\bigoplus_\mathbb{N}\mathbb{K}\right)\right)\otimes\M_N
}\]
Semiprojectivity of $A$ allows us to solve this lifting problem. Now restrict a partial lift to the ideal $I$ and consider its coordinates to obtain a commutative diagram
\[\begin{xy}\xymatrix{
  & \hspace{0.65cm}\mathcal{T}\otimes\M_N \ar@<0.15cm>@{->>}[d] \\
  I \ar@{->>}[r] \ar@{-->}[ur] & \C(S^1)\otimes\M_N.
}\end{xy}\]
The map on the bottom is surjective since it is induced by the inclusion of one of the circles around $x$. But a diagram like this does not exist by Proposition \ref{prop nslp} because $I$ is homogeneous and by that strongly quasidiagonal. 
\end{proof}

\begin{corollary}\label{cor sp peano}
Let $A$ be a semiprojective $C^*$-algebra, then $\alpha\prim_n(A)$ is a Peano space for every $n\in\mathbb{N}$.
\end{corollary}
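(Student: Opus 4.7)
The plan is to reduce to Lemma \ref{lemma ideal Peano} by passing to the maximal $n$-subhomogeneous quotient $A_{\leq n}$ and realising $\prim_n(A)$ as the primitive ideal space of a natural $n$-homogeneous ideal inside $A_{\leq n}$.

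The first step is to verify that $A_{\leq n}$ is semiprojective with respect to $n$-subhomogeneous $C^*$-algebras. Given any lifting problem $\varphi\colon A_{\leq n}\to B/J_\infty$ with $B$ (and hence every $B/J_k$) in this class, one precomposes $\varphi$ with the canonical surjection $A\twoheadrightarrow A_{\leq n}$, invokes semiprojectivity of $A$ to obtain a partial lift $A\to B/J_n$, and then uses the universal property of $A_{\leq n}$ recalled in section \ref{section subhomogeneous} to factor this partial lift through $A_{\leq n}$, yielding the required partial lift of $\varphi$. This argument relies precisely on the fact that the class of $n$-subhomogeneous $C^*$-algebras is closed under passage to subquotients.

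The second step is to identify $\prim_n(A)$ as a primitive ideal space. By Theorem \ref{thm prim}, $\prim_{\leq n}(A)$ is closed in $\prim(A)$, so we may identify $\prim(A_{\leq n})$ with $\prim_{\leq n}(A)$; moreover $\prim_n(A)$ is open in $\prim_{\leq n}(A)$. Let $I_n\subseteq A_{\leq n}$ be the ideal corresponding to this open subset. Its irreducible representations are exactly the irreducible representations of $A_{\leq n}$ not vanishing on $I_n$, namely those of dimension exactly $n$, so that $I_n$ is $n$-homogeneous with $\prim(I_n)\cong\prim_n(A)$ as topological spaces.

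Applying Lemma \ref{lemma ideal Peano} to the $n$-homogeneous ideal $I_n$ inside $A_{\leq n}$ (which is semiprojective with respect to $n$-subhomogeneous $C^*$-algebras by the first step) then yields that $\alpha\prim_n(A)=\alpha\prim(I_n)$ is a Peano space. The only nontrivial ingredient is the restricted semiprojectivity of $A_{\leq n}$; once that is in place, the rest is a direct application of the structure theory from section \ref{section subhomogeneous}.
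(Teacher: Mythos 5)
Your proposal is correct and follows essentially the same route as the paper: the ideal $I_n$ you construct is exactly $\ker(A_{\leq n}\rightarrow A_{\leq n-1})$, and the paper likewise passes to $A_{\leq n}$, notes its semiprojectivity with respect to $n$-subhomogeneous $C^*$-algebras, and applies Lemma \ref{lemma ideal Peano} to this $n$-homogeneous ideal. You simply spell out the factorization argument for the restricted semiprojectivity of $A_{\leq n}$, which the paper leaves implicit.
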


\begin{proof}
If $A$ is semiprojective, each $A_{\leq n}$ is semiprojective with respect to $n$-subhomogeneous $C^*$-algebras. Hence we can apply Lemma \ref{lemma ideal Peano} to the $n$-homogeneous ideal $\ker(A_{\leq n}\rightarrow A_{\leq n-1})$ in $A_{\leq n}$ whose primitive ideal space is homeomorphic to $\prim_n(A)$.
\end{proof}

It is known to the experts that there are no non-trivial $\M_n$-valued fields over one-dimensional spaces and we are indebted to L. Robert for pointing this fact out to us. Since we couldn't find a proof in the literature, we include one here.

\begin{lemma}\label{lemma trivial bundles}
Let $E$ be a locally trivial field of $C^*$-algebras over a separable, metrizable, locally compact Hausdorff space $X$ with fiber $\M_N$ and $\Gamma_0(E)$ the corresponding section algebra. If $\dim(X)\leq 1$, then $\Gamma_0(E)$ is $\C_0(X)$-isomorphic to $\C_0(X,\M_N)$.
\end{lemma}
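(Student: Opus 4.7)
The plan is to reduce a $\C_0(X)$-isomorphism $\Gamma_0(E)\cong \C_0(X,\M_N)$ to triviality of the bundle $E$ itself, and then to invoke the cohomological classification of $\M_N$-bundles. Since a $\C_0(X)$-isomorphism of this form is the same data as an isomorphism of $E$ with the trivial bundle $X\times\M_N$, the problem is equivalent to showing that the principal $PU(N)$-bundle associated to $E$ is trivial.

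My first step would be to run the Dixmier-Douady exact sequence coming from $1\to U(1)\to U(N)\to PU(N)\to 1$:
\[ \check{H}^1(X,\underline{U(N)})\to \check{H}^1(X,\underline{PU(N)})\xrightarrow{\delta} \check{H}^2(X,\underline{U(1)})\cong \check{H}^3(X,\Z), \]
the last identification coming from the exponential sequence $0\to\Z\to\mathbb{R}\to U(1)\to 0$. Since $X$ is a paracompact Hausdorff space of covering dimension at most one, its Cech cohomology with integer coefficients vanishes in degrees $\geq 2$, so the Dixmier-Douady class of $E$ is zero. Hence $E$ lifts to a principal $U(N)$-bundle, equivalently to a rank-$N$ complex vector bundle $V$ over $X$, with $E\cong\operatorname{End}(V)$.

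The second step is to prove that $V$ is trivial. Repeating the same technique with the sequence $1\to SU(N)\to U(N)\xrightarrow{\det} U(1)\to 1$, together with $\check{H}^1(X,\underline{U(1)})\cong \check{H}^2(X,\Z)=0$, reduces triviality of $V$ to the vanishing of $\check{H}^1(X,\underline{SU(N)})$, i.e., to triviality of principal $SU(N)$-bundles over $X$. Since $BSU(N)$ is $2$-connected and $\dim(X)\leq 1$, standard obstruction theory rules out any non-nullhomotopic classifying map, so every such bundle is trivial. Combining these facts, $V$ is trivial, $E\cong X\times\M_N$ as a bundle, and consequently $\Gamma_0(E)\cong \C_0(X,\M_N)$ as $\C_0(X)$-algebras.

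The main technical point requiring care is the passage from classical CW-obstruction theory to general separable metrizable locally compact Hausdorff spaces of covering dimension at most one. The cleanest route is the sheaf-theoretic one: for such spaces, the covering-dimension bound controls Cech cohomological dimension with coefficients in any abelian sheaf, which is all that the above argument needs. Alternatively, and perhaps more in keeping with the spirit of the rest of the paper, one can invoke Ostrand's theorem to produce an open cover of $X$ of order $\leq 2$, so that all triple intersections are empty. On such a cover the $PU(N)$-cocycle defining $E$ collapses to purely one-dimensional data, and a global trivialization can be constructed directly from path-connectedness of $PU(N)$ on the two-fold intersections.
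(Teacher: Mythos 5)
Your argument is correct, but it takes a genuinely different route from the paper. You trivialize the underlying $PU(N)$-bundle directly: the vanishing of $\check{H}^3(X,\Z)$ lifts the structure group to $U(N)$, the vanishing of $\check{H}^2(X,\Z)$ kills the determinant line bundle and reduces further to $SU(N)$, and $2$-connectedness of $BSU(N)$ together with $\dim(X)\leq 1$ finishes the job. The paper instead stays on the $C^*$-algebraic side: vanishing of the Dixmier--Douady class gives only a \emph{stable} $\C(X)$-isomorphism $\Gamma(E)\otimes\mathcal{K}\cong\C(X,\M_N)\otimes\mathcal{K}$, which is then destabilized using the fact that projections over a one-dimensional space are Murray--von Neumann equivalent iff they have the same rank (Phillips), producing a partial isometry that conjugates the corner $\Gamma(E)$ onto $\C(X,\M_N)$; the non-compact case is then handled by an exhaustion argument using innerness of $\C(X)$-automorphisms (again from $\check{H}^2=0$) and extension of unitaries over one-dimensional spaces. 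Your approach is more uniform (compact and non-compact $X$ are treated identically) and avoids the stabilization detour, while the paper's approach reuses cancellation results it needs elsewhere and sidesteps classifying-space theory over non-CW spaces. One caveat on your handling of the technical point you flag: the sheaf-theoretic remedy (covering dimension bounds \v{C}ech cohomological dimension for abelian sheaves) covers the two abelian steps but not the final one, since triviality of $SU(N)$-bundles is a statement about a \emph{non-abelian} $\check{H}^1$, i.e.\ a homotopy classification over a possibly non-CW space; there you really do need either the Ostrand-type cover argument you sketch, an inverse-limit/continuity argument, or the classical fact that maps from an $n$-dimensional paracompact space to an $n$-connected ANR are nullhomotopic. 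This is standard but is the one place where a citation or a few more lines would be needed to make the proof airtight.
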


\begin{proof}
First assume that $X$ is compact. One-dimensionality of $X$ implies that that the Dixmier-Douady invariant $\delta\in \check{H}^3(X,\mathbb{Z})$ corresponding to $\Gamma_0(E)$ vanishes. Therefore $\Gamma_0(E)$ is stably $\C(X)$-isomorphic to $\C(X,\M_N)$ by Dixmier-Douady classification (see e.g. \cite[Corollary 5.56]{RW98}). Let $\psi\colon\Gamma(E)\otimes\mathcal{K}\rightarrow \C(X,\M_N)\otimes\mathcal{K}$ be such an isomorphism and note that $\Gamma(E)\cong \her(\psi(1_{\Gamma(E)}\otimes e))$ via $\psi$ with $e$ a minimal projection in $\mathcal{K}$. Equivalence of projections over one-dimensional spaces is completely determined by their rank by \cite[Proposition 4.2]{Phi07}. Since $\psi(1_{\Gamma(E)}\otimes e)$ and $1_{\C(X,\M_N)}\otimes e$ share the same rank $N$ everywhere we therefore find $v\in \C(X,\M_N)\otimes\mathcal{K}$ with $v^*v=\psi(1_{\Gamma(E)}\otimes e)$ and $vv^*=1_{\C(X,\M_N)}\otimes e$. But then $\Ad(v)$ gives a $C(X)$-isomorphism from $\her(\psi(1_{\Gamma(E)}\otimes e))$ onto $\her(1_{\C(X,\M_N)}\otimes e)=\C(X,\M_N)$.

Now consider the case of non-compact $X$. Since $X$ is $\sigma$-compact, it clearly suffices to prove the following: Given compact subsets $X_1\subseteq X_2$ of $X$ and a $\C(X_1)$-isomorphism $\varphi_1\colon \Gamma(E_{|X_1})\rightarrow \C(X_1,\M_N)$ there exists a $\C(X_2)$-isomorphism $\varphi_2\colon\Gamma(E_{|X_2})\rightarrow \C(X_2,\M_N)$ extending $\varphi_1$. By the first part of the proof there is a $\C(X_2)$-isomorphism $\psi_2\colon\Gamma(E_{|X_2})\rightarrow \C(X_2,\M_N)$. One-dimensionality of $X_1$ implies $\check{H}^2(X_1,\mathbb{Z})=0$, which means that every $\C(X_1)$-automorphism of $\C(X_1,\M_N)$ is inner by \cite[Theorem 5.42]{RW98}. In particular, $\varphi_1\circ(\psi^{-1}_2)_{|X_1}$ is of the form $\Ad(u)$ for some unitary $u\in \C(X_1,\M_N)$. It remains to extend $u$ to a unitary in $\C(X_2,\M_N)$. This, however, follows from one-dimensionality of $X$ and \cite[Theorem VI.4]{HW48}.
\end{proof}

We are now able to extend the results of \cite{ST12} to the case of general $N$-homogeneous $C^*$-algebras:

\begin{theorem}\label{thm homogeneous case}
Let $A$ be a $N$-homogeneous $C^*$-algebra. The following are equivalent:
\begin{enumerate} 
  \item $A$ is (semi)projective.
  \item $A\cong\C_0(\prim(A),\M_N)$ and $\alpha\prim(A)$ is an A(N)R-space of dimension at most 1.
\end{enumerate}
\end{theorem}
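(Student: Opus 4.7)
The plan is to derive this as a direct consequence of the three preceding ingredients: Theorem \ref{thm comm case} (the commutative/trivial-field case), Lemma \ref{lemma ideal Peano} (semiprojectivity forces the primitive ideal space to be a low-dimensional Peano space), and Lemma \ref{lemma trivial bundles} (low dimension forces global triviality of homogeneous fields).

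The easy direction is (2) $\Rightarrow$ (1): if $A\cong\C_0(\prim(A),\M_N)$ and $\alpha\prim(A)$ is an A(N)R of dimension at most 1, then Theorem \ref{thm comm case} directly yields (semi)projectivity of $A$.

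For (1) $\Rightarrow$ (2), I would first apply Lemma \ref{lemma ideal Peano} to $A$ itself, which is an $N$-homogeneous ideal in itself. In the semiprojective case this immediately yields $\dim(\alpha\prim(A))\leq 1$; in the projective case, projectivity implies semiprojectivity, so the same bound holds. By Fell's theorem \cite[Theorem 3.2]{Fel61}, $A$ is isomorphic to the section algebra $\Gamma_0(E)$ of a locally trivial $\M_N$-bundle over the locally compact, second countable, metrizable Hausdorff space $\prim(A)$. Since we now know $\dim\prim(A)\leq 1$, Lemma \ref{lemma trivial bundles} applies and gives a $\C_0(\prim(A))$-isomorphism $A\cong\C_0(\prim(A),\M_N)$.

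Having identified $A$ with a trivial homogeneous field, the remaining claim about $\alpha\prim(A)$ being an A(N)R-space of dimension at most 1 is exactly what Theorem \ref{thm comm case} provides in the reverse direction: since $\C_0(\prim(A),\M_N)\cong A$ is (semi)projective, the corresponding one-point compactification must be an A(N)R-space of dimension at most 1.

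I do not expect a serious obstacle here, since all the substantive work is packaged in the lemmas above; the theorem is essentially a bookkeeping statement assembling them. The only subtlety is making sure the correct flavor of semiprojectivity (unrestricted, rather than just ``with respect to $N$-subhomogeneous $C^*$-algebras'') is used where needed: Lemma \ref{lemma ideal Peano} requires genuine semiprojectivity to get the dimension bound (the Peano conclusion would already follow from the weaker restricted version, but triviality of the field via Lemma \ref{lemma trivial bundles} requires the dimension bound), and this is precisely what the hypothesis of (1) provides.
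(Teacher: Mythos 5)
Your proposal is correct and takes essentially the same route as the paper: Lemma \ref{lemma ideal Peano} (applied to $A$ as an ideal in itself) together with Fell's theorem and Lemma \ref{lemma trivial bundles} yield $A\cong\C_0(\prim(A),\M_N)$, and Theorem \ref{thm comm case} supplies all remaining implications. The paper's own proof is exactly this two-step citation, so your more detailed write-up (including the remark on which flavor of semiprojectivity is needed where) is just a careful unpacking of it.
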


\begin{proof}
By Lemma \ref{lemma ideal Peano} and Lemma \ref{lemma trivial bundles}, we know that $(1)$ implies $A\cong\C_0(\prim(A),\M_N)$. The remaining implications are given by Theorem \ref{thm comm case}.
\end{proof}

\section{Constructions for semiprojective $C^*$-algebras}\label{section semiprojectivity}
Unfortunately, the class of semiprojective $C^*$-algebras lacks good permanence properties. In fact, semiprojectivity is not preserved by most $C^*$-algebraic standard constructions and the list of positive permanence results, most of which can be found in \cite{Lor97}, is surprisingly short. Here, we extend this list by a few new results.

\subsection{Extending lifting problems}

In this section, we introduce a technique to extend lifting problems from ideals to larger $C^*$-algebras. This technique can be used to show that in many situations lifting properties of a $C^*$-algebra pass to its ideals.

\begin{lemma}\label{lemma extended lifting}
Given a surjective inductive system of short exact sequences 
\[\xymatrix{
  0\ar[r] & C_n \ar[r]^{\iota_n} \ar@{->>}[d]_{\pi_n^{n+1}}& D_n \ar[r]^{\varrho_n} \ar@{->>}[d]_{\overline{\pi}_n^{n+1}}& E_n \ar[r] \ar@{->>}[d]_{\overline{\overline{\pi}}_n^{n+1}}& 0 \\
  0 \ar[r] & C_{n+1} \ar[r]^{\iota_{n+1}} & D_{n+1} \ar[r]^{\varrho_{n+1}} & E_{n+1} \ar[r] & 0
}\]
and a commutative diagram of extensions
\[\xymatrix{
  0 \ar[r] & \varinjlim C_n \ar[r]^{\iota_\infty} & \varinjlim D_n \ar[r]^{\varrho_\infty} & \varinjlim E_n \ar[r] & 0 \\ 
  0 \ar[r] & I \ar[r]^i \ar[u]^\varphi & A \ar[r]^p \ar[u]^{\overline{\varphi}} & B \ar[r] \ar[u]^{\overline{\overline{\varphi}}} & 0
}\]
the following holds: If both $A$ and $B$ are semiprojective, then $\varphi$ lifts to $C_n$ for some $n$. If both $A$ and $B$ are projective, then $\varphi$ lifts to $C_1$.
\end{lemma}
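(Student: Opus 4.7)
The plan is a two-step lifting: first use (semi)projectivity of the quotient $B$ to partially lift $\overline{\overline{\varphi}}$, then build a pullback inductive system that encodes this chosen partial lift as a constraint, so that (semi)projectivity of $A$ automatically produces a lift respecting both the quotient and the ideal.

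Step 1 (lifting the quotient): Since $\varinjlim E_n$ is the inductive limit of the surjective system $(E_n)$, apply (semi)projectivity of $B$ to obtain a partial lift $\overline{\overline{\psi}}\colon B\rightarrow E_n$ with $\overline{\overline{\pi}}_n^\infty\circ\overline{\overline{\psi}}=\overline{\overline{\varphi}}$; in the projective case we may take $n=1$. Set $\overline{\overline{\psi}}_m := \overline{\overline{\pi}}_n^m\circ\overline{\overline{\psi}}\colon B\rightarrow E_m$ for $m\geq n$ (and $m\geq 1$ in the projective case).

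Step 2 (constructing the constrained system): For $m\geq n$, form the pullback
\[D_m' \;=\; D_m\times_{E_m}B \;=\; \{(d,b)\in D_m\oplus B \colon \varrho_m(d)=\overline{\overline{\psi}}_m(b)\}.\]
Projection to the second coordinate gives a surjection $D_m'\twoheadrightarrow B$ with kernel $C_m$ (via $c\mapsto(\iota_m(c),0)$), so each $D_m'$ fits into a short exact sequence $0\rightarrow C_m\rightarrow D_m'\rightarrow B\rightarrow 0$. The maps $(d,b)\mapsto(\overline{\pi}_m^{m+1}(d),b)$ make $(D_m')_{m\geq n}$ into an inductive system whose connecting maps are surjective: given $(d',b)\in D_{m+1}'$, pick any preimage $d_0\in D_m$ of $d'$; the snake lemma (applied to the two rows with vertical maps $\pi_m^{m+1},\overline{\pi}_m^{m+1},\overline{\overline{\pi}}_m^{m+1}$) ensures $\varrho_m$ sends $\ker(\overline{\pi}_m^{m+1})$ onto $\ker(\overline{\overline{\pi}}_m^{m+1})$, so we may correct $d_0$ within its fiber to satisfy the pullback relation.

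Step 3 (assembling the lift for $A$ and restricting): A direct identification shows $\varinjlim_m D_m'\cong\varinjlim_m D_m\times_{\varinjlim_m E_m}B$, and by commutativity of the original diagram the map $a\mapsto(\overline{\varphi}(a),p(a))$ defines a $*$-homomorphism $A\rightarrow\varinjlim_m D_m'$. Applying (semi)projectivity of $A$ to this map and to the inductive limit of $(D_m')_{m\geq n}$ yields a partial lift $\widetilde{\varphi}\colon A\rightarrow D_{m_0}'$ for some $m_0\geq n$ (or $m_0=1$ in the projective case). Write $\widetilde{\varphi}(a)=(\overline{\psi}(a),b(a))$. Since the projection $D_{m_0}'\rightarrow B$ is unchanged in the limit, the second coordinate satisfies $b(a)=p(a)$. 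In particular, for $y\in I$ we have $p(i(y))=0$, hence $\varrho_{m_0}(\overline{\psi}(i(y)))=\overline{\overline{\psi}}_{m_0}(0)=0$, so $\overline{\psi}\circ i$ factors through $\iota_{m_0}\colon C_{m_0}\hookrightarrow D_{m_0}$ to give the desired $\psi\colon I\rightarrow C_{m_0}$ (respectively $\psi\colon I\rightarrow C_1$). Compatibility $\pi_{m_0}^\infty\circ\psi=\varphi$ is then immediate from chasing the commutative diagram: the image of $\psi(y)$ in $\varinjlim C_m\subseteq\varinjlim D_m'$ agrees with $\widetilde{\varphi}(i(y))=(\iota_\infty\circ\varphi(y),0)$.

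The only step that is not a formal diagram chase is the surjectivity of the connecting maps $D_m'\rightarrow D_{m+1}'$ (to justify invoking the inductive-limit form of semiprojectivity), which is the snake lemma argument above; once this is in place, everything else is bookkeeping.
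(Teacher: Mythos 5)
Your proof is correct and follows essentially the same strategy as the paper's: first lift $\overline{\overline{\varphi}}$ using semiprojectivity of $B$, cut the middle system down to a subsystem governed by that lift, lift $\overline{\varphi}$ into the subsystem using semiprojectivity of $A$, and restrict to the ideal. Your pullbacks $D_m\times_{E_m}B$ are precisely what the paper's subalgebras $\varrho_m^{-1}\bigl((\overline{\overline{\pi}}_{n_0}^m\circ\psi)(B)\bigr)$ become after its preliminary reduction to injective $\overline{\overline{\varphi}}$ (achieved by replacing $D_m$, $E_m$ with $D_m\oplus B$, $E_m\oplus B$), so the two arguments coincide up to bookkeeping; your second-coordinate tracking replaces the paper's injectivity argument for why the restricted lift lands in $C_{m_0}$.
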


\begin{proof}
First observe that we may assume the $^*$-homomorphism $\overline{\overline{\varphi}}$ to be injective since otherwise we simply pass to the system of extensions 
\[\xymatrix{0\ar[r]&C_n\ar[r]^(.4){\iota_n}&D_n\oplus B\ar[r]^{\varrho_n\oplus\id}&E_n\oplus B\ar[r]&0}\]
and replace $\overline{\varphi}$ by $\overline{\varphi}\oplus p$ and $\overline{\overline{\varphi}}$ by $\overline{\overline{\varphi}}\oplus\id$. Using semiprojectivity of $B$, we can find a partial lift $\psi\colon B\rightarrow E_{n_0}$ of $\overline{\overline{\varphi}}$ for some $n_0$, i.e. $\overline{\overline{\pi}}_{n_0}^\infty\circ\psi=\overline{\overline{\varphi}}$. Now consider the $C^*$-subalgebras 
\[D_n':=\varrho_n^{-1}((\overline{\overline{\pi}}_{n_0}^n\circ\psi)(B))\subseteq D_n\]
and observe that the restriction of $\overline{\pi}_n^{n+1}$ to $D_n'$ surjects onto $D_{n+1}'$. We also find that the direct limit $\varinjlim D_n'=\overline{\pi}_{n_0}^\infty(D'_{n_0})$ of this new system contains $\overline{\varphi}(A)$. Hence semiprojectivity of $A$ allows us to lift $\overline{\varphi}$ (regarded as a map to $\varinjlim D_n'$) to $D_n'$ for some $n\geq n_0$. Let $\sigma\colon A\rightarrow D_n'$ be a suitable partial lift, i.e. $\overline{\pi}_n^{\infty}\circ\sigma=\overline{\varphi}$, then the restriction of $\sigma$ to the ideal $I$ will be a solution to the original lifing problem for $\varphi$: The only thing we need to check is that the image of $I$ under $\sigma$ is in fact contained in $C_n$. But we know that $\overline{\overline{\pi}}_n^\infty$ is injective on $(\varrho_n\circ\sigma)(A)\subseteq (\overline{\overline{\pi}}_{n_0}^n\circ\psi)(B)$ since $\overline{\overline{\varphi}}=\overline{\overline{\pi}}_n^\infty\circ(\overline{\overline{\pi}}_{n_0}^n\circ\psi)$ was assumed to be injective. Hence the identity 
\[(\overline{\overline{\pi}}_n^\infty\circ\varrho_n\circ\sigma)(i(I))=(\varrho_\infty\circ\overline{\pi}_n^\infty\circ\sigma)(i(I))=(\varrho_\infty\circ\overline{\varphi})(i(I))=(\varrho_\infty\circ\iota_\infty)(\varphi(I))=0\]
confirms that $\sigma(i(I))\subseteq i_n(C_n)$ holds.
\end{proof}

Now assume that we are given an inductive system 
\[\xymatrix{
  \cdots \ar@{->>}[r] & C_n \ar@{->>}[r]^{\pi_n^{n+1}} & C_{n+1} \ar@{->>}[r] & \cdots
}\] 
of separable $C^*$-algebras with surjective connecting homomorphisms. Then each connecting map $\pi_n^{n+1}$ canonically extends to a surjective $^*$-homomorphism $\overline{\pi}_n^{n+1}$ on the level of multiplier $C^*$-algebras (\cite[Theorem 2.3.9]{WO93}), i.e., we automatically obtain a surjective inductive system of extensions 
\[\xymatrix{
  0\ar[r] & C_n \ar[r] \ar@{->>}[d]_{\pi_n^{n+1}}& \mathcal{M}(C_n) \ar[r] \ar@{->>}[d]_{\overline{\pi}_n^{n+1}}& \mathcal{Q}(C_n) \ar[r] \ar@{->>}[d]_{\overline{\overline{\pi}}_n^{n+1}}& 0 \\
  0 \ar[r] & C_{n+1} \ar[r] & \mathcal{M}(C_{n+1}) \ar[r] & \mathcal{Q}(C_{n+1}) \ar[r] & 0
}.\]
We would like to apply Lemma \ref{lemma extended lifting} to such a system of extensions. However, the reader should be really careful when working with multipliers and direct limits at the same time since these constructions are not completely compatible: Each $\pi_n^\infty\colon C_n\rightarrow\varinjlim C_n$ extends to a $^*$-homomorphism $\mathcal{M}(C_n)\rightarrow\mathcal{M}(\varinjlim C_n)$. The collection of these maps induces a $^*$-homomor\-phism $p_\mathcal{M}\colon \varinjlim\mathcal{M}(C_n)\rightarrow\mathcal{M}(\varinjlim C_n)$ which is always surjective but only in trivial cases injective. The same occurs for the quotients, i.e. for the system of corona algebras $\mathcal{Q}(C_n)$. The situation can be summarized in the commutative diagram with exact rows 
\[\xymatrix{
  0 \ar[r] & C_n \ar@{->>}[d] \ar[r] & \mathcal{M}(C_n) \ar@{->>}[d] \ar[r] & \mathcal{Q}(C_n) \ar@{->>}[d] \ar[r] & 0\\
  0 \ar[r] &\varinjlim C_n \ar[r] \ar@{=}[d] & \varinjlim\mathcal{M}(C_n) \ar@{->>}[d]^{p_\mathcal{M}} \ar[r] & \varinjlim\mathcal{Q}(C_n) \ar@{->>}[d]^{p_\mathcal{Q}} \ar[r] & 0 \\
  0 \ar[r]& \varinjlim C_n \ar[r] & \mathcal{M}(\varinjlim C_n) \ar[r] & \mathcal{Q}(\varinjlim C_n) \ar[r] & 0
}\] 
where the quotient maps $p_\mathcal{M}$ and $p_\mathcal{Q}$ are the obstacles for an application of Lemma \ref{lemma extended lifting}. The following proposition makes these obstacles more precise.

\begin{proposition}\label{prop busby lifting}
Let $A$ and $B$ be semiprojective $C^*$-algebras and
\[\xymatrix{
  0\ar[r]&I\ar[r]&A\ar[r]&B\ar[r]&0&[\tau]
}\]
a short exact sequence with Busby map $\tau\colon B\rightarrow\mathcal{Q}(I)$. Let $I\xrightarrow{\sim}\varinjlim C_n$ be an isomorphism from $I$ to the limit of an inductive system of separable $C^*$-algebras $C_n$ with surjective connecting maps. If the Busby map $\tau$ can be lifted as indicated
\[\xymatrix{
  &\hspace{1.3cm}\varinjlim\mathcal{Q}(C_n) \ar@<0.7cm>@{->>}[d]^{p_\mathcal{Q}}\\
  B \ar[r]^(0.32)\tau \ar@{-->}[ur]&\mathcal{Q}(I)\cong\mathcal{Q}(\varinjlim C_n)
},\]
then $I\rightarrow\varinjlim C_n$ lifts to $C_n$ for some $n$. If both $A$ and $B$ are projective, we can obtain a lift to $C_1$.
\end{proposition}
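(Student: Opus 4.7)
The plan is to reduce the statement to Lemma \ref{lemma extended lifting} by manufacturing a surjective inductive system of short exact sequences $0 \to C_n \to D_n \to B \to 0$ whose inductive limit recovers the given extension $0 \to I \to A \to B \to 0$ in a way compatible with the isomorphism $I \xrightarrow{\sim} \varinjlim C_n$.

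First I would observe that surjectivity of the connecting maps $\pi_n^{n+1}$ passes to the multiplier and corona systems (the multiplier case being \cite[Theorem 2.3.9]{WO93}), and that in such a surjective inductive system each canonical map $\bar{\bar\pi}_n^\infty \colon \mathcal{Q}(C_n) \to \varinjlim \mathcal{Q}(C_n)$ is surjective. Semiprojectivity of $B$ then guarantees that the hypothesized lift $\tilde\tau \colon B \to \varinjlim \mathcal{Q}(C_n)$ factors through some $\tilde\tau_N \colon B \to \mathcal{Q}(C_N)$; after reindexing we may take $N=1$ and set $\tilde\tau_n := \bar{\bar\pi}_1^n \circ \tilde\tau_1$. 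In the projective case, projectivity of $B$ applied to the surjection $\mathcal{Q}(C_1) \twoheadrightarrow \varinjlim \mathcal{Q}(C_n)$ produces $\tilde\tau_1$ at the initial stage directly.

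Next I would form the pullbacks $D_n := \mathcal{M}(C_n) \times_{\mathcal{Q}(C_n)} B$ along $\tilde\tau_n$ and the canonical quotient, obtaining extensions $0 \to C_n \to D_n \to B \to 0$ with Busby maps $\tilde\tau_n$. The connecting maps $D_n \to D_{n+1}$ turn out to be surjective by a short snake lemma argument that uses surjectivity of the multiplier connecting maps $\bar\pi_n^{n+1}$. Taking inductive limits---exact on short exact sequences with surjective connecting maps---yields the SES $0 \to \varinjlim C_n \to \varinjlim D_n \to B \to 0$ whose Busby map is $p_\mathcal{Q} \circ \tilde\tau = \tau$. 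Since extensions with fixed ideal and quotient are classified by their Busby maps, this identifies $\varinjlim D_n$ canonically with $A$ via an isomorphism $\bar\varphi \colon A \xrightarrow{\sim} \varinjlim D_n$ that restricts to $\varphi$ on the ideal.

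The proposition then follows by feeding the surjective inductive system $0 \to C_n \to D_n \to B \to 0$ (with constant quotient system $E_n = B$) and the commutative diagram given by $(\varphi, \bar\varphi, \id_B)$ into Lemma \ref{lemma extended lifting}: semiprojectivity of $A$ and $B$ yields a lift of $\varphi$ to $C_n$ for some $n$, and in the projective case the lemma delivers a lift to $C_1$. The main obstacle---and the real content of the proof beyond invoking the lemma---is the construction of this intermediate system: one must verify that the $D_n$ genuinely form a surjective inductive system and that its limit reproduces the Busby map $\tau$ on the nose, so that Lemma \ref{lemma extended lifting} becomes directly applicable to the original extension.
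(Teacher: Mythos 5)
Your proof is correct and follows essentially the same route as the paper: the paper likewise reduces the statement to Lemma \ref{lemma extended lifting} via the lifted Busby map, simply citing Theorem 2.2 of \cite{ELP99} to produce the required morphism of extensions into the surjective tower $0\to C_n\to\mathcal{M}(C_n)\to\mathcal{Q}(C_n)\to 0$ and applying the lemma to that tower directly. Your explicit pullback construction of the intermediate system $0\to C_n\to D_n\to B\to 0$ with constant quotient $B$ (invoking semiprojectivity of $B$ up front to descend the lift of $\tau$ to a finite stage) is a sound, self-contained substitute for that citation.
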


\begin{proof}
Keeping in mind that $p_{\mathcal{Q}}$ is the Busby map associated to the extension $0\rightarrow\varinjlim C_n\rightarrow\varinjlim\mathcal{M}(C_n)\rightarrow\varinjlim\mathcal{Q}(C_n)\rightarrow0$, the claim follows by combining Theorem 2.2 of \cite{ELP99} with Lemma \ref{lemma extended lifting}.
\end{proof}

One special case, in which the existence of a lift for the Busby map $\tau$ as in Proposition \ref{prop busby lifting} is automatic, is when the quotient $B$ is a projective $C^*$-algebra. Hence we obtain a new proof for the permanence result below which has the advantage that it does not use so-called corona extendability (cf. \cite[Section 12.2]{Lor97}).

\begin{corollary}[\cite{LP98}, Theorem 5.3]\label{cor projective ideal}
Let $0\rightarrow I\rightarrow A\rightarrow B\rightarrow 0$ be short exact. If $A$ is (semi)projective and $B$ is projective, then $I$ is also (semi)projective.
\end{corollary}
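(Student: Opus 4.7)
The plan is to verify the direct-limit formulation of semiprojectivity of $I$ (from the remark after Definition~\ref{def sp}) by applying Lemma~\ref{lemma extended lifting} directly to the original extension $0\to I\to A\to B\to 0$ and the canonical surjective system of multiplier extensions
\[0\to C_n\to\mathcal{M}(C_n)\to\mathcal{Q}(C_n)\to 0\]
associated to any given lifting problem $\varphi\colon I\to\varinjlim C_n$ with surjective connecting maps. Both $A$ and $B$ are semiprojective (the latter because projective), so the real work lies in extending $\varphi$ to a commutative diagram between extensions.

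Projectivity of $B$ is used twice. First, it yields a $^*$-homomorphic splitting $s\colon B\to A$ of the quotient map, so that $A=I+s(B)$. Combining $s$ with $\varphi$ via the prescription $\sigma_0(b)\cdot\varphi(i):=\varphi(s(b)i)$ (and its right-sided analogue) uniquely defines a $^*$-homomorphism $\sigma_0\colon B\to\mathcal{M}(\varinjlim C_n)$, modulo a standard reduction to the nondegenerate case. Second, projectivity of $B$ lifts $\sigma_0$ through the canonical surjection $p_{\mathcal{M}}\colon\varinjlim\mathcal{M}(C_n)\twoheadrightarrow\mathcal{M}(\varinjlim C_n)$ to a $^*$-homomorphism $\sigma\colon B\to\varinjlim\mathcal{M}(C_n)$. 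Setting $\overline\varphi(i+s(b)):=\iota\varphi(i)+\sigma(b)$, where $\iota$ denotes the inclusion, produces the required $\overline\varphi\colon A\to\varinjlim\mathcal{M}(C_n)$ extending $\varphi$, and $\overline{\overline\varphi}\colon B\to\varinjlim\mathcal{Q}(C_n)$ arises as the induced map on the quotient.

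With the commutative diagram established, Lemma~\ref{lemma extended lifting} produces a partial lift of $\varphi$ to some $C_n$, giving semiprojectivity of $I$; if $A$ is in addition projective, the lemma even gives a lift to $C_1$, yielding projectivity of $I$. The main technical obstacle is the construction of $\overline\varphi$: one has to ensure that the action of $s(B)$ on $I$ inside $A$ matches the lifted multiplier action of $B$ on $\varinjlim C_n$, i.e., that the proposed formula for $\overline\varphi$ really is multiplicative. The nondegeneracy subtlety arising when $\varphi(I)$ fails to generate an essential ideal of $\varinjlim C_n$ can be handled by first restricting to the ideal generated by $\varphi(I)$ and noting that a lift into this sub-ideal solves the original lifting problem.
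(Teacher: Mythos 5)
Your overall strategy is sound and stays close to the paper's: both proofs reduce to Lemma \ref{lemma extended lifting} applied to the canonical surjective system of multiplier extensions $0\to C_n\to\mathcal{M}(C_n)\to\mathcal{Q}(C_n)\to 0$. The paper packages this as Proposition \ref{prop busby lifting}, using projectivity of $B$ exactly once --- to lift the Busby map $\tau\colon B\to\mathcal{Q}(\varinjlim C_n)$ through the surjection $p_{\mathcal{Q}}$ --- and then invokes Theorem 2.2 of \cite{ELP99} to turn that lift into the required morphism of extensions. You instead build the morphism by hand, using projectivity of $B$ twice (to split the sequence and to lift $\sigma_0$ through $p_{\mathcal{M}}$). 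That variant does work where it matters: since $\sigma(b)\iota\varphi(i)$ lies in the ideal $\varinjlim C_n$, on which $p_{\mathcal{M}}$ restricts to an injection, the identity $\sigma(b)\iota\varphi(i)=\iota\varphi(s(b)i)$ follows from the corresponding identity for $\sigma_0$, and multiplicativity of $\overline\varphi$ comes out.

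The genuine gap is in your nondegeneracy reduction. Passing to the ideal $J\subseteq\varinjlim C_n$ generated by $\varphi(I)$ does not make $\varphi$ act nondegenerately: an approximate unit of $\varphi(I)$ need not be one for $J$ (take $\varphi(I)=\mathbb{C}e_{11}\subset\M_2=J$), so $\overline{\varphi(I)J}$ can be a proper right ideal of $J$ and the prescription $\sigma_0(b)\varphi(i)=\varphi(s(b)i)$ does not determine $\sigma_0(b)$ as a multiplier of $J$; ``generates an essential ideal'' is not the relevant condition here. The standard repair is to replace the system $(C_n)$ by the pullbacks $E_n=\{(c,i)\in C_n\oplus I\colon\pi_n^\infty(c)=\varphi(i)\}$, which again form a surjective system with $\varinjlim E_n\cong I$, and whose coordinate projections convert a partial lift of this isomorphism into a partial lift of $\varphi$. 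This reduces everything to the case where $\varphi$ is an isomorphism --- precisely the hypothesis under which Proposition \ref{prop busby lifting} is stated, so the same reduction is implicit in the paper as well --- and then $\sigma_0=\mathcal{M}(\varphi)\circ\mu\circ s$ (with $\mu\colon A\to\mathcal{M}(I)$ the canonical map) is well defined and the rest of your argument goes through.
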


Another very specific lifting problem for which Proposition \ref{prop busby lifting} applies, is the following mapping telescope contruction due to Brown.

\begin{lemma}\label{lemma telescope}
Let a sequence $(C_k)_k$ of separable $C^*$-algebras be given and consider the telescope system $(T_n,\varrho_n^{n+1})$ associated to $\bigoplus_{k=0}^\infty C_k=\varinjlim_n \bigoplus_{k=0}^n C_k$, i.e. 
\[T_n=\left\{f\in\C\left([n,\infty],\bigoplus_{k=0}^\infty C_k\right)\colon\;t\leq m\Rightarrow\; f(t)\in\bigoplus_{k=1}^mC_k\right\}\]
with $\varrho_n^{n+1}\colon T_n\rightarrow T_{n+1}$ the (surjective) restriction maps, so that $\varinjlim_n(T_n,\varrho_n^{n+1})\cong\bigoplus_{k=1}^\infty C_k$. Then both canonical quotient maps in the diagram 
\[\xymatrix{
  0 \ar[r] & \varinjlim T_n \ar[r] \ar@{=}[d] & \varinjlim\mathcal{M}(T_n) \ar[r] \ar@{->>}[d]^{p_\mathcal{M}} & \varinjlim\mathcal{Q}(T_n) \ar[r] \ar@{->>}[d]^{p_\mathcal{Q}} & 0 \\
  0 \ar[r] & \varinjlim T_n \ar[r] & \mathcal{M}(\varinjlim T_n) \ar[r] \ar@/^1pc/@{..>}[u]& \mathcal{Q}(\varinjlim T_n) \ar[r] \ar@/^1pc/@{..>}[u] & 0
}\] 
split.
\end{lemma}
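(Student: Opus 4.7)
My plan is to construct an explicit $^*$-homomorphism splitting $s_\mathcal{M}\colon \mathcal{M}(\varinjlim T_n)\to\varinjlim\mathcal{M}(T_n)$ of $p_\mathcal{M}$ which maps the ideal into the ideal, so that it will automatically descend to a splitting $s_\mathcal{Q}$ of $p_\mathcal{Q}$. Identifying $\varinjlim T_n$ with $\bigoplus_k C_k$ via the evaluation-at-infinity maps, I will define the candidate on a multiplier $m\in\mathcal{M}(\bigoplus_k C_k)$ by pointwise multiplication in $T_1$: set $L_m(f)(t):=m\cdot f(t)$ and $R_m(f)(t):=f(t)\cdot m$. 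Since each $\bigoplus_{k\leq \ell}C_k$ is an ideal in $\bigoplus_k C_k$, left and right multiplication by $m$ preserve the truncation constraint defining $T_1$, so $(L_m,R_m)$ is a two-sided multiplier of $T_1$. Setting $s_\mathcal{M}(m):=[(L_m,R_m)]\in\varinjlim\mathcal{M}(T_n)$ will give a $^*$-homomorphism, and $p_\mathcal{M}\circ s_\mathcal{M}=\mathrm{id}$ will follow by evaluating $L_m f$ at $t=\infty$.

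The heart of the argument is to verify that $s_\mathcal{M}$ sends the ideal $\bigoplus_k C_k$ into $\varinjlim T_n$. For $m=(m_k)\in\bigoplus_k C_k$, the naive attempt of implementing $L_m$ as left multiplication by a single telescope function fails: the ``right'' choice $t\mapsto (m_k\mathbf{1}_{[k,\infty]}(t))_k$ has jumps at each integer and therefore does not lie in $T_1$. I will get around this by smoothing the jumps. Pick continuous cut-offs $\phi_k\colon [1,\infty]\to[0,1]$ with $\phi_k|_{[1,k-1]}\equiv 0$ and $\phi_k|_{[k,\infty]}\equiv 1$, and set $g_m(t):=(\phi_k(t)m_k)_k$. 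Then $g_m\in T_1$: its $k$-th coordinate vanishes for $t\leq k-1$ by construction, only finitely many $\phi_k$ are non-constant on any compact interval (so $g_m$ is norm-continuous on $[1,\infty)$), and continuity at $\infty$ uses the $c_0$-decay $\|m_k\|\to 0$.

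The multiplier $L_m$ and (the multiplier given by) $g_m$ will not agree in any fixed $\mathcal{M}(T_n)$, but they do become equal in the $C^*$-inductive limit. Indeed, restricted to $T_N$, the difference $L_m-g_m$ is pointwise multiplication by $((1-\phi_k(t))m_k)_k$, whose $k$-th coordinate vanishes for $k\leq N$ (since then $t\geq N\geq k$ forces $\phi_k(t)=1$); hence its operator norm in $\mathcal{M}(T_N)$ is dominated by $\sup_{k>N}\|m_k\|$, which tends to $0$ as $N\to\infty$. So $[L_m]=[g_m]$ in $\varinjlim\mathcal{M}(T_n)$, and the latter visibly lies in the ideal $\varinjlim T_n$. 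This shows $s_\mathcal{M}(\bigoplus_k C_k)\subseteq\varinjlim T_n$, and passing to quotients then yields the desired splitting $s_\mathcal{Q}$ of $p_\mathcal{Q}$.

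The main obstacle is precisely this identification of $L_m$ with an honest telescope element in the limit. The tension is intrinsic: the truncation constraint forces any representing function to be ``turned off'' below the diagonal $t\approx k$, while a genuine multiplier must act by the full $m_k$ on every non-zero $k$-component. The splitting exists only because the $C^*$-inductive limit forgives errors that are concentrated in arbitrarily high coordinates, and the ideal $\bigoplus_k C_k$ is exactly the class of multipliers for which this tail-forgiveness is enough. The remaining verifications -- well-definedness, the $^*$-homomorphism property, and the splitting identity $p_\mathcal{M}\circ s_\mathcal{M}=\mathrm{id}$ -- are essentially formal.
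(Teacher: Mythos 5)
Your proof is correct and is essentially the paper's argument: the splitting you define (pointwise multiplication by a multiplier of $\bigoplus_k C_k$) is literally the same map as the paper's ``sum of embeddings as constant functions'' into $\prod_k\C([\max\{n,k\},\infty],\mathcal{M}(C_k))\subset\mathcal{M}(T_n)$. The only cosmetic difference is that you verify that the ideal is carried into the ideal via the smoothed representative $g_m$ and a tail estimate in the limit, whereas the paper checks this on finitely supported elements and invokes density.
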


\begin{proof}
It suffices to produce a split for $p_\mathcal{M}$ which is the identity on $\varinjlim T_n$. Under the identification $\varinjlim T_n \cong \bigoplus_{k=0}^\infty C_k$ we have $\mathcal{M}(\varinjlim T_n)\cong\prod_{k=0}^\infty\mathcal{M}(C_k)$. One checks that 
\[T_n=\bigoplus_{k=0}^n\C([n,\infty],C_k)\oplus\bigoplus_{k>n}\C_0((k,\infty],C_k)\]
and hence 
\[\prod_{k=0}^\infty\C([\max\{n,k\},\infty],\mathcal{M}(C_k))\subset\mathcal{M}(T_n).\]
It follows that the sum of embeddings as constant functions 
\[\prod_{k=0}^\infty\mathcal{M}(C_k)\rightarrow\prod_{k=0}^\infty\C([\max\{n,k\},\infty],\mathcal{M}(C_k))\subset\mathcal{M}(T_n)\]
defines a split for the quotient map $\varinjlim\mathcal{M}(T_n)\rightarrow\mathcal{M}(\varinjlim T_n)$. It is easily verified that this split is the identity on $\bigoplus_{k=1}^\infty C_k$.
\end{proof}

\begin{remark}[Lifting the Busby map]
Given an extension $0\rightarrow I\rightarrow A\rightarrow B\rightarrow 0$ with both $A$ and $B$ semiprojective, the associated Busby map does in general not lift as in \ref{prop busby lifting}. However, there are a number of interesting situations where it does lift and we therefore can use Propostion \ref{prop busby lifting} to obtain lifting properties for the ideal $I$. One such example is studied in \cite{End14}, where it is (implicitly) shown that the Busby map lifts if $B$ is a finite-dimensional $C^*$-algebra. This observation leads to the fact that semiprojectivity passes to ideals of finite codimension. Further examples will be given in section \ref{section extensions}, where we study Busby maps associated to extensions by homogeneous ideals and identify conditions which guarantee that \ref{prop busby lifting} applies. 
\end{remark}

\subsection{Direct limits which preserve semiprojectivity}\label{section limits}
\subsubsection{Weakly conditionally projective homomorphisms}\label{section wcp}

The following definition characterizes $^*$-homo\-morphisms along which lifting solutions can be extended in an approximate manner. This type of maps is implicitly used in \cite{CD10} and \cite{ST12} in the special case of finitely presented, commutative $C^*$-algebras. 

\begin{definition}\label{def wcp}
A $^*$-homomorphism $\varphi\colon A\rightarrow B$ is weakly conditionally projective if the following holds: Given $\epsilon>0$, a finite subset $F\subset A$ and a commuting square
\[\xymatrix{
  A \ar[d]_\varphi \ar[r]^\psi & D\ar@{->>}[d]^\pi \\ 
  B \ar[r]^\varrho & D/J,
}\]
there exists a $^*$-homomorphism $\psi'\colon B\rightarrow D$ as indicated 
\[\xymatrix{
  A \ar[d]_\varphi \ar[r]^\psi & D\ar@{->>}[d]^\pi \\ 
  B \ar[r]^\varrho \ar@{-->}[ur]^{\psi'}& D/J
}\] 
which satisfies $\pi\circ\psi'=\varrho$ and $\|(\psi'\circ\varphi)(a)-\psi(a)\|<\epsilon$ for all $a\in F$.
\end{definition}

The definition above is a weakening of the notion of conditionally projective morphisms, as introduced in section 5.3 of \cite{ELP98}, where one asks the homomorphism $\psi'$ in \ref{def wcp} to make both triangles of the lower diagram to commute exactly. While conditionally projective morphisms are extremely rare (even when working with projective $C^*$-algebras, cf. the example below), there is a sufficient supply of weakly conditionally projective ones, as we will show in the next section. 

\begin{example}
The inclusion map $\id\oplus\, 0\colon\C_0(0,1]\rightarrow\C_0(0,1]\oplus\C_0(0,1]$ is weakly conditionally projective but not conditionally projective. This can be illustrated by considering the commuting square
\[\xymatrix{
  \C_0(0,1] \ar[r]^\psi \ar[d]_{\id\oplus\, 0} & \C_0(0,3) \ar@{->>}[d]^\pi \\
  \C_0(0,1]\oplus\C_0[2,3) \ar@{=}[r] & \C_0(0,1]\oplus\C_0[2,3)
}\]
where $\pi$ is the restriction map and $\psi$ is given by sending the canonical generator $t$ of $\C_0(0,1]$ to the function
\[(\psi(t))(s)=\begin{cases}s & \text{if}\quad s\leq 1\\ 1-s & \text{if}\quad 1<s\leq 2\\ 0 & \text{if}\quad 2\leq s\end{cases}.\]
It is clear that there is no lift for the generator of $\C_0[2,3)$ which is orthogonal to $\psi(t)$. This shows that the map $\id\oplus\, 0$ is not conditionally projective. However, after replacing $\psi(t)$ with $(\psi(t)-\epsilon)_+$ for any $\epsilon>0$, finding an orthogonal lift for the generator of the second summand is no longer a problem. Using this idea, it will be shown in Proposition \ref{prop wcp examples} that $\id\oplus\, 0$ is in fact weakly conditionally projective, 
\end{example}

If $A$ is a (semi)projective $C^*$-algebra and $\varphi\colon A\rightarrow B$ is weakly conditionally projective, then $B$ is of course also (semi)projective. The next lemma shows that (semi)projectivity is even preserved along a sequence of such maps. Its proof is of an approximate nature and relies on a one-sided approximate intertwining argument (cf. section 2.3 of \cite{Ror02}), a technique borrowed from the Elliott classification program.  

\begin{lemma}\label{lemma limit criterium}
Suppose  $\xymatrix{A_1\ar[r]^{\varphi_1^2} & A_2 \ar[r]^{\varphi_2^3} & A_3 \ar[r]^{\varphi_3^4} & \cdots}$ is an inductive system of separable $C^*$-algebras. If $A_1$ is (semi)projective and all connecting maps $\varphi_n^{n+1}$ are weakly conditionally projective, then the limit $A_\infty=\varinjlim (A_n,\varphi_n^{n+1})$ is also (semi)projective.
\end{lemma}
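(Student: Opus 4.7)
The plan is to iteratively apply weak conditional projectivity to build compatible lifts along the tower $(A_n)$ and then pass to the limit via the one-sided approximate intertwining technique of Elliott (\cite[Section 2.3]{Ror02}). The key feature making this work is that weak conditional projectivity requires only \emph{approximate} compatibility with the previous lift, which lets the target $C^*$-algebra remain fixed throughout the induction while only the source advances along the tower.

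Given a lifting problem for $A_\infty$ --- an increasing chain of ideals $J_n\subset D$ with $J_\infty=\overline{\bigcup_n J_n}$ and a $^*$-homomorphism $\varphi\colon A_\infty\to D/J_\infty$ --- I would first apply (semi)projectivity of $A_1$ to the composition $\varphi\circ\varphi_1^\infty$ to obtain an index $N$ and a partial lift $\psi_1\colon A_1\to D/J_N$ with $\pi_N^\infty\circ\psi_1=\varphi\circ\varphi_1^\infty$; the projective case is analogous, with the lift going all the way to $D$. Next, choose a summable sequence $\epsilon_n>0$ and finite sets $F_n\subset A_n$, obtained by pulling back a countable dense sequence of $A_\infty$ through the tower, arranged so that $\varphi_n^m(F_n)\subseteq F_m$ for all $m\geq n$ and so that $\bigcup_n\varphi_n^\infty(F_n)$ is dense in $A_\infty$. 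Then inductively construct $\psi_n\colon A_n\to D/J_N$ with $\pi_N^\infty\circ\psi_n=\varphi\circ\varphi_n^\infty$ by applying weak conditional projectivity of $\varphi_n^{n+1}$ to the commuting square formed by $\psi_n$, $\varphi\circ\varphi_{n+1}^\infty$ and the quotient $\pi_N^\infty\colon D/J_N\to D/J_\infty$, with parameters $(F_n,\epsilon_n)$. This produces $\psi_{n+1}$ with $\pi_N^\infty\circ\psi_{n+1}=\varphi\circ\varphi_{n+1}^\infty$ and $\|\psi_{n+1}\circ\varphi_n^{n+1}(a)-\psi_n(a)\|<\epsilon_n$ for every $a\in F_n$.

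For each $a\in F_n$, summability of the $\epsilon_k$ together with the nesting $\varphi_n^k(a)\in F_k$ will force $(\psi_m\circ\varphi_n^m(a))_{m\geq n}$ to be Cauchy in $D/J_N$, and standard intertwining arguments show that the limit is independent of the approximation chosen. Extending by continuity should then yield a well-defined $^*$-homomorphism $\tilde\varphi\colon A_\infty\to D/J_N$, and the identity $\pi_N^\infty\circ\tilde\varphi=\varphi$ holds on the dense set $\bigcup_n\varphi_n^\infty(F_n)$, and hence everywhere by continuity. The main subtlety, and precisely the reason why the weak version of conditional projectivity suffices here, is the fixed-target feature: without it, the partial lifts $\psi_n$ would land in progressively smaller quotients $D/J_{N_n}$ and no stable ambient $C^*$-algebra would remain in which to form the intertwining limit. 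Verifying that $\tilde\varphi$ is linear, multiplicative and well-defined is then routine bookkeeping within the standard Elliott framework, and I do not expect any real obstacle there.
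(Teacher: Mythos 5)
Your proposal is correct and follows essentially the same route as the paper: fix finite sets $F_n$ with $\varphi_n^{n+1}(F_n)\subseteq F_{n+1}$ and a summable sequence $(\epsilon_n)$, lift once using (semi)projectivity of $A_1$, iterate weak conditional projectivity with error $\epsilon_n$ on $F_n$ while keeping the target quotient fixed, and conclude by a one-sided approximate intertwining. The only difference is that you spell out the semiprojective case (stabilizing the target at $D/J_N$ after the first lift), which the paper leaves as an "obvious modification" of the projective argument it writes out.
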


\begin{proof}
We will only consider the projective case, the statement for the semiprojective case is proven analogously with obvious modifications. Choose finite subsets $F_n\subset A_n$ with $\varphi_n^{n+1}(F_n)\subseteq F_{n+1}$ such that the union $\bigcup_{m=n}^\infty (\varphi_n^m)^{-1}(F_m)$ is dense in $A_n$ for all $n$. Further let $(\epsilon_n)_n$ be a sequence in $\mathbb{R}_{>0}$ with $\sum_{n=1}^\infty\epsilon_n<\infty$.
Now let $\varrho\colon A_\infty\rightarrow D/J$ be a $^*$-homomorphism to some quotient $C^*$-algebra $D/J$. By projectivity of $A_1$ there is a $^*$-homomorphism $s_1\colon A_1\rightarrow D$ with $\pi\circ s_1=\varrho\circ\varphi_1^\infty$. Since the maps $\varphi_n^{n+1}$ are weakly conditionally projective, we can inductively choose $s_{n+1}\colon A_{n+1}\rightarrow D$ with $\pi\circ s_{n+1}=\varrho\circ\varphi_{n+1}^\infty$ such that 
\[\|s_n(a)-(s_{n+1}\circ\varphi_n^{n+1})(a)\|<\epsilon_n\]
holds for all $a\in F_n$. It is now a standard computation (and therefore ommited) to check that $((s_m\circ\varphi_n^m)(x))_m$ is a Cauchy sequence in $D$ for every $x\in F_n$. Furthermore, the induced map $\varphi_n^\infty(x) \mapsto \lim_m(s_m\circ\varphi_n^m)(x)$ extends from the dense subset $\bigcup_n\varphi_n^\infty(F_n)$ to a $^*$-homomorphism $s\colon A_\infty\rightarrow D$.
\[\xymatrix{
  A_n \ar[r]^{s_n} \ar[d]_{\varphi_n^{n+1}} & D \ar@{->>}[ddd]^\pi \\
  A_{n+1} \ar[dd]_{\varphi_{n+1}^\infty} \ar@{-->}[ur]^{s_{n+1}} \\ \\ 
  A_\infty \ar[r]^\varrho \ar@{..>}[uuur]^s & D/J
}\]
Since each $s_n$ lifts $\pi$, the same holds for their pointwise limit, i.e. the limit map $s$ satisfies $\pi\circ s=\varrho$. This shows that $A_\infty$ is projective.
\end{proof}

\subsubsection{Adding non-commutative edges}\label{section adding edges}

In order to make Lemma \ref{lemma limit criterium} a useful tool for constructing semiprojective $C^*$-algebras, we have to ensure the existence of weakly projective $^*$-homomorphisms as defined in \ref{def wcp}. The examples we work out in this section arise in special pullback situations where one 'adds a non-commutative edge' to a given $C^*$-algebra $A$. By this we mean that we form the pullback of $A$ and $\C([0,1])\otimes \M_n$ over a $n$-dimensional representation of $A$ and the evaluation map $\ev_0$. In the special case of $A=\C(X)$ being a commutative $C^*$-algebra and $n=1$ this pullback construction already appeared in \cite{CD10} and \cite{ST12} where it indeed corresponds to attaching an egde $[0,1]$ at one point to the space $X$. 
Here we show that the map obtained by extending elements of $A$ as constant functions onto the attached non-commutative edge gives an example of a weakly conditionally projective $^*$-homomorphism. As an application, we observe that the AF-telescopes studied in \cite{LP98} arise from weakly projective $^*$-homomorphisms and hence projectivity of these algebras is a direct consequence of Lemma \ref{lemma limit criterium}.\vspace{.5cm}

Adapting notation from \cite{ELP98}, we set
\[T(\mathbb{C},G)=\{f\in\C_0((0,2],G):\quad t\leq 1\Rightarrow f(t)\in\mathbb{C}\cdot 1_G\},\]
\[S(\mathbb{C},G)=\{f\in\C_0((0,2),G):\quad t\leq 1\Rightarrow f(t)\in\mathbb{C}\cdot 1_G\}\]
for $G$ a unital $C^*$-algebra. We further write
\[T(\mathbb{C},G,F)=\left\{f\in\C_0((0,3],F):\begin{array}{l} t\leq 2\Rightarrow f(t)\in G \\ t\leq 1\Rightarrow f(t)\in\mathbb{C}\cdot 1_G \end{array}\right\}\]
with respect to a fixed inclusion $G\subseteq F$. We have the diagram 
\[\xymatrix{
  T(\mathbb{C},G,F) \ar[r] \ar[d] & \C([2,3],F) \ar[d]^{\ev_2} \\ 
  T(\mathbb{C},G) \ar[r]^{\ev_2} & F
}\]
which is a special case of the pullback situation considered in the next proposition. However, this example is in some sense generic and implementing it into the general situation is an essential part of proving the following.

\begin{proposition}\label{prop wcp examples}
Given a (semi)projective $C^*$-algebra $Q$ and a $^*$-homomorphism $\tau\colon Q\rightarrow \M_n$, the following holds:
\begin{enumerate}
  \item The pullback $P$ over $\tau$ and $\ev_0\colon\C([0,1],\M_n)\rightarrow\M_n$, i.e.
  \[P=\{(q,f)\in Q\oplus\C([0,1],\M_n):\;\tau(q)=f(0)\},\]
  is (semi)projective. 
  \item The canonical split $s\colon Q\rightarrow P$, $q\mapsto (q,\tau(q)\otimes 1_{[0,1]})$ is weakly conditionally projective. 
\end{enumerate}
\end{proposition}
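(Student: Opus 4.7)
The proof naturally splits into two parts: I would establish (2) first, and then deduce (1) as a corollary.

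\textbf{Deduction of (1) from (2).} Once (2) is proved, (1) follows from Lemma \ref{lemma limit criterium} applied to the inductive system
\[Q \stackrel{s}{\longrightarrow} P \stackrel{\id_P}{\longrightarrow} P \stackrel{\id_P}{\longrightarrow} \cdots\]
whose limit is $P$. Since $Q$ is (semi)projective by hypothesis, $s$ is weakly conditionally projective by (2), and each $\id_P$ is trivially so, the limit $P$ is (semi)projective.

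\textbf{Setup for (2).} The key structural fact to exploit is that $P$ fits into a split short exact sequence
\[0 \longrightarrow I \longrightarrow P \stackrel{p}{\longrightarrow} Q \longrightarrow 0, \qquad I := \C_0((0,1], \M_n) \cong C\M_n,\]
with split $s$ (so $p \circ s = \id_Q$). Every element of $P$ decomposes uniquely as $s(q) + (0, g)$ with $g \in I$. Consequently, a $*$-homomorphism $\psi'\colon P \to D$ is exactly the data of a pair of $*$-homomorphisms $\alpha\colon Q \to D$ and $\beta\colon I \to D$ subject to the $Q$-module compatibility
\[\alpha(q)\beta(g) = \beta(\tau(q)g), \qquad \beta(g)\alpha(q) = \beta(g\tau(q))\]
for all $q \in Q$ and $g \in I$, with $\psi'$ recovered by $\psi'(s(q) + (0,g)) = \alpha(q) + \beta(g)$. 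This follows from the multiplication rule $s(q) \cdot (0, g) = (0, \tau(q)g)$ in $P$.

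\textbf{Construction for (2).} Given the commuting square, $F \subset Q$ finite, and $\epsilon > 0$, I would proceed in three steps. First, projectivity of the cone $I \cong C\M_n$ (Loring) yields an initial lift $\beta_0\colon I \to D$ of $\varrho|_I$; the pair $(\psi, \beta_0)$ satisfies the module compatibility only modulo $J$, since $\pi(\psi(q)\beta_0(g) - \beta_0(\tau(q)g)) = \varrho(s(q)(0,g)) - \varrho((0,\tau(q)g)) = 0$. Second, introduce a quasicentral approximate unit $(u_\lambda) \subset J$ for the separable $C^*$-subalgebra of $D$ generated by $\psi(Q) \cup \beta_0(I)$; since $\pi(1 - u_\lambda) = 1$, the hereditary subalgebra $D_\lambda := \overline{(1-u_\lambda)D(1-u_\lambda)}$ still surjects onto $D/J$, and a second application of projectivity of $I$ produces a lift $\beta_\lambda\colon I \to D_\lambda$ of $\varrho|_I$ whose image approximately commutes with $\psi(Q)$, so that the compatibility defect can be made arbitrarily small on the finite data determined by $F$. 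Third, one manufactures a compatible $\tilde\alpha_\lambda\colon Q \to D$ from $\beta_\lambda$ and $\psi$ by exploiting the multiplier-algebra inclusion $\M_n \subset \mathcal{M}(I)$ and the identification of the action of $s(q) \in P$ on $I$ with multiplication by $\tau(q) \otimes 1_{(0,1]} \in \mathcal{M}(I)$: the nondegenerate extension $\tilde\beta_\lambda\colon \mathcal{M}(I) \to \mathcal{M}(\overline{\beta_\lambda(I)D\beta_\lambda(I)})$ evaluated on $\tau(q) \otimes 1_{(0,1]}$, combined with cone approximate units of $\beta_\lambda(I)$ to ensure that the result lies in $D$, defines $\tilde\alpha_\lambda$. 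The pair $(\tilde\alpha_\lambda, \beta_\lambda)$ is compatible by construction, satisfies $\pi\tilde\alpha_\lambda = \varrho s$, and for $\lambda$ sufficiently large the quasicentrality of $(u_\lambda)$ forces $\|\tilde\alpha_\lambda(q) - \psi(q)\| < \epsilon$ on $F$, so $\psi' := (\tilde\alpha_\lambda, \beta_\lambda)$ is the desired lift.

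\textbf{Main obstacle.} The delicate point is the third step: ensuring simultaneously that $\tilde\alpha_\lambda$ is a genuine $*$-homomorphism with values in $D$ itself (and not merely in a multiplier algebra), and that its distance to $\psi$ on $F$ becomes arbitrarily small. As the preceding example of $\id \oplus 0\colon \C_0(0,1] \to \C_0(0,1] \oplus \C_0(0,1]$ illustrates, this is precisely the reason one must allow the perturbation $\psi \rightsquigarrow \tilde\alpha_\lambda$ and why conditional projectivity (without the ``weak'' qualifier) fails in general. Controlling this perturbation requires the combined use of projectivity of the cone $C\M_n$, the quasicentrality of $(u_\lambda)$, and a careful functional calculus manipulation of cone approximate units inside $\beta_\lambda(I)$.
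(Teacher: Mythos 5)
Your reduction of (1) to (2) is sound in principle, and your identification of $^*$-homomorphisms out of $P$ with compatible pairs $(\alpha,\beta)$ is correct; but note that the paper proves the two parts in the opposite order and genuinely uses (1) inside the proof of (2), so your plan stands or falls with a self-contained argument for (2) --- and that argument has a gap at its decisive step. Already in Step 2, ``a second application of projectivity of $I$'' only produces \emph{some} lift of $\varrho|_I$ into $D_\lambda$, with no control over its interaction with $\psi(Q)$; to get a homomorphism $\beta_\lambda$ approximately satisfying the module relation you would have to compress $\beta_0$ by $(1-u_\lambda)^{1/2}$ (which destroys multiplicativity) and then invoke weak stability of the relations of $C\M_n$ to correct the compression back to a homomorphism. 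That is repairable. The fatal problem is Step 3: as written, your formula for $\tilde\alpha_\lambda$ involves only $\beta_\lambda$, and the element $\tilde\beta_\lambda(\tau(q)\otimes 1_{(0,1]})$ lives in $\mathcal{M}\bigl(\her(\beta_\lambda(I))\bigr)$ and only ``sees'' the corner generated by $\beta_\lambda(I)$, so it cannot lift $\varrho\circ s$. The degenerate case $\tau=0$ makes this plain: there $s=\id\oplus 0$, your recipe returns $\tilde\alpha_\lambda=0$, yet $\varrho\circ s\neq 0$ in general. Any correct $\tilde\alpha$ must agree with $\psi$ away from the gluing locus and with the multiplier action of $\tau(q)$ only near it, and gluing these two regimes together is precisely where the real work lies; moreover, cutting $\tilde\beta_\lambda(\tau(q)\otimes 1_{(0,1]})$ down into $D$ by approximate units destroys either multiplicativity of $\tilde\alpha_\lambda$ or the exact compatibility $\tilde\alpha_\lambda(q)\beta_\lambda(g)=\beta_\lambda(\tau(q)g)$, and you need both simultaneously.

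The paper resolves exactly this tension differently: it keeps $\alpha=\psi$ \emph{unperturbed}, builds a collar around $\ker\tau$ inside $Q$ via the Urysohn-type result \cite[Theorem 2.3.3]{ELP98}, maps the attached cone into the hereditary subalgebra of $D$ orthogonal to that collar using conditional projectivity of the crushed telescope \cite[Proposition 6.1.1]{ELP98}, and accepts that the compatibility relation then holds only up to a tolerance $\delta$. The exact homomorphism is recovered by feeding this approximate data into a softened presentation $P_\delta$ of $P$ and using semiprojectivity of $P$ --- i.e.\ part (1) --- to obtain a partial lift $j_\delta\colon P\rightarrow P_\delta$; the $\epsilon$-perturbation of $\psi$ permitted by Definition \ref{def wcp} is exactly the discrepancy between $s_\delta$ and $j_\delta\circ s$. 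If you want to keep your order of proof, you would need a substitute for this last step that does not pass through (1); with your current outline, neither the exact compatibility nor the condition $\pi\circ\tilde\alpha_\lambda=\varrho\circ s$ is actually achieved.
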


\begin{proof}
(1) Semiprojectivity of the pullback $P$ follows from \cite[Corollary 3.4]{End14}. Since $P$ is homotopy equivalent to $Q$, the projective statement follows from the semiprojective one using \cite[Corollary 5.2]{Bla12}.\\

(2) For technical reasons we identify the attached interval $[0,1]$ with $[2,3]$ and consider the pullback 
\[\xymatrix{
  P \ar@{-->}[r] \ar@{-->}[d] & \C([2,3],\M_n) \ar[d]^{\ev_2} \\
  Q \ar[r]^\tau & \M_n
}\]
with $s\colon Q\rightarrow P$, $q\mapsto(q,\tau(q)\otimes 1_{[2,3]})$ instead. Denote by $G\subseteq\M_n$ the image of $\tau$. According to \cite[Theorem 2.3.3]{ELP98}, we can find a $^*$-homomorphism $\overline{\varphi}\colon T(\mathbb{C},G)\rightarrow Q$ such that
\[\xymatrix{
  0 \ar[r] & \ker(\tau) \ar[r] & Q \ar[r]^(0.6)\tau & G \ar[r] & 0 \\
  0 \ar[r] & S(\mathbb{C},G) \ar[r] \ar[u] & T(\mathbb{C},G) \ar[u]^{\overline{\varphi}} \ar[r]^(0.6){ev_2} & G \ar@{=}[u] \ar[r] & 0
}\] 
commutes and $\overline{\varphi}_{|S(\mathbb{C},G)}$ is a proper $^*$-homomorphism to $\ker(\tau)$ (meaning that the hereditary subalgebra generated by its image is all of $\ker(\tau)$). Using the pullback property of $P$, $\overline{\varphi}$ can be extended to $\varphi\colon T(\mathbb{C},G,\M_n)\rightarrow P$ such that 
\[\xymatrix{
  0 \ar[r] & \C_0((2,3],\M_n) \ar[r] & P \ar[r] & Q \ar[r] \ar@/_/[l]_s& 0 \\ 
  0 \ar[r] & \C_0((2,3],\M_n) \ar@{=}[u] \ar[r] & T(\mathbb{C},G,\M_n) \ar[u]^\varphi \ar[r] & T(\mathbb{C},G) \ar[u]^{\overline{\varphi}} \ar[r] \ar@/_/[l]_{s'}& 0
}\] 
commutes. In particular we have $\varphi\circ s'=s\circ\overline{\varphi}$, where $s'$ is the canonical split which simply extends functions constantly onto $[2,3]$. 

Choose generators $f_1,...,f_l$ of norm 1 for $\C_0((2,3],\M_n)$ and generators $g_1,...,g_k$ of norm 1 for $T(\mathbb{C},G)$. We need the following 'softened' versions of $P$: 
For $\delta>0$ we consider the universal $C^*$-algebra 
\[P_\delta=C^*\left(\left\{f^\delta,q^\delta:f\in \C_0((2,3],\M_n),q\in Q\right\}|\mathcal{R}_{\C_0((2,3],\M_n)}\&\mathcal{R}_Q\&\mathcal{R}_\delta\right)\]
which is generated by copies of $\C_0((2,3],\M_n)$ and $Q$ (here $\mathcal{R}_{\C_0((2,3],\M_n)},\mathcal{R}_Q$ denote all the relations from $\C_0((2,3],\M_n)$ resp. from $Q$) and additional, finitely many relations 
\[\mathcal{R}_\delta=\left\{\|f_i^\delta(\overline{\varphi}(g_j))^\delta-(f_i(g_j(2)\otimes 1_{[2,3]}))^\delta\|\leq\delta\right\}_{\begin{subarray}{l}1\leq i\leq l \\ 1\leq j\leq k \end{subarray}}.\] 
Note that $P=\varinjlim P_\delta$ with respect to the canonical surjections $p_{\delta,\delta'}\colon P_\delta\rightarrow P_{\delta'}$ (for $\delta>\delta'$) and denote the induced maps $P_\delta\rightarrow P,f^\delta\mapsto f,q^\delta\mapsto s(q)$ by $p_{\delta,0}$. Since $P$ is semiprojective by part (1) of this proposition, we can find a partial lift $j_\delta\colon P\rightarrow P_\delta$ for some $\delta>0$, i.e. $p_{\delta,0}\circ j_\delta=\id_P$.

Now let a finite set $F=\{x_1,...,x_m\}\subseteq Q$ and $\epsilon>0$ and be given. Denoting the inclusions $Q\rightarrow P_\delta,q\mapsto q^\delta$ by $s_\delta$, we can (after decreasing $\delta$ if necessary) assume that $\|s_\delta(x_i)-(j_\delta\circ s)(x_i)\|\leq\epsilon$ holds for all $1\leq i\leq m$. Now given any commuting square 
\[\xymatrix{
  Q \ar[d]^s \ar[r]^\psi &D \ar@{->>}[d]^\pi \\ 
  P \ar[r]^\varrho & D/J
}\]
it only remains to construct a $^*$-homomorphism $\psi_\delta\colon P_\delta\rightarrow D$ such that in the diagram 
\[\xymatrix{
  Q \ar[rr]^\psi \ar[dd]_s \ar[dr]^{s_\delta} & & D \ar@{->>}[dd]^\pi \\
  & P_\delta \ar@{-->}[ur]^{\psi_\delta} \ar[dl]_(0.4){p_{\delta,0}}\\ 
  P \ar@/_1pc/[ur]_(0.6){j_\delta} \ar[rr]_\varrho && D/J
}\]
the upper central triangle and the lower right triangle commute.

We consider the following subalgebras of $T(\mathbb{C},G)$ and $S(\mathbb{C},G)$ for any $\eta>0$:
\[\begin{array}{rl}
  T_\eta(\mathbb{C},G) & =\{f\in T(\mathbb{C},G):f\;\text{is constant on}\;(0,\eta]\cup[2-\eta,2]\} \\
  S_\eta(\mathbb{C},G) &=\{f\in S(\mathbb{C},G):f\;\text{is constant (=0) on}\;(0,\eta]\cup[2-\eta,2]\}
\end{array}\]
Since 
\[T(\mathbb{C},G)=\overline{\bigcup\limits_{\eta>0}T_\eta(\mathbb{C},G)}\]
we find $0<\eta<\frac{1}{2}$ and elements $\tilde{g}_j\in T_\eta(\mathbb{C},G)$ with $\tilde{g}_j(2)=g_j(2)$ and $\|g_j-\tilde{g}_j\|<\delta$ for every $1\leq j\leq k$. Let $h\in T(\mathbb{C},G)$ be the scalar-valued function which equals $1_G$ on $[\eta,2-\eta]$, satisfies $h(0)=h(2)=0$ and is linear in between. Consider the hereditary $C^*$-subalgebra $D'=\overline{(1-(\psi\circ\overline{\varphi})(h))D(1-(\psi\circ\overline{\varphi})(h))}$ and define 
\[D'':=(\psi\circ\overline{\varphi})(T_\eta(\mathbb{C},G))+ D'\subseteq D.\]
Then $(\psi\circ\overline{\varphi})(S_\eta(\mathbb{C},G))$ and $D'$ are orthogonal ideals in $D''$ because $h$ is central in $T(\mathbb{C},G)$. We further have $(\varrho\circ\varphi)(\C_0((2,3],\M_n))\subseteq\pi(D')$ and hence obtain a commutative diagram 
\[\scalebox{0.88}{\xymatrix{
  0\ar[r]&(\psi\circ\overline{\varphi})(S_\eta(\mathbb{C},G))\ar[r]\ar@{->>}[d]^\pi&D''\ar[r]\ar@{->>}[d]^\pi&H_D+D'\ar[r]\ar@{-->>}[d]&0\\
  0\ar[r]&(\varrho\circ\varphi\circ s')(S_\eta(\mathbb{C},G))\ar[r]&\pi(D'')\ar[r]&H_{D/J}+\pi(D')\ar[r]&0\\
  0\ar[r]&s'(S_\eta(\mathbb{C},G))\ar[r]\ar[u]^{\varrho\circ\varphi}&s'(T_\eta(\mathbb{C},G))+\C_0((2,3],\M_n)\ar[u]^{\varrho\circ\varphi}\ar[r]&\hat{T}(G,\M_n)\ar[r]\ar@{-->}[u]&0
}}\]
where $H_D$ and $H_{D/J}$ are finite-dimensional $C^*$-algebras given by
\[H_D=(\psi\circ\overline{\varphi})(T_\eta(\mathbb{C},G))/(\psi\circ\overline{\varphi})(S_\eta(\mathbb{C},G)),\]
\[H_{D/J}=(\varrho\circ\varphi\circ s')(T_\eta(\mathbb{C},G))/(\varrho\circ\varphi\circ s')(S_\eta(\mathbb{C},G))\]
and $\hat{T}(G,\M_n)$ denotes what is called a crushed telescope in \cite{ELP98}:
\[\hat{T}(G,\M_n)=\{f\in\C([2,3],\M_n)\colon f(2)\in G\}\]  
By \cite[Proposition 6.1.1]{ELP98}, the embedding $G\rightarrow\hat{T}(G,\M_n)$ as constant functions is a conditionally projective map (in the sense of \cite[Section 5.3]{ELP98}). It is hence possible to extend the map $\xymatrix{G\ar[r]^(0.22)\sim & T_\eta(\mathbb{C},G)/S_\eta(\mathbb{C},G)\ar[r] & H_D\subset H_D+D'}$ to a $^*$-homomorphism $\psi'\colon \hat{T}(G,\M_n)\rightarrow H_D+D'$ such that the diagram with exact rows
\[\xymatrix{
  0\ar[r]&D'\ar[r]\ar@{->>}[d]_\pi&H_D+D'\ar[r]\ar@{->>}[d]&H_D\ar[r]\ar@{->>}[d]\ar@/_/[l]&0\\
  0\ar[r]&\pi(D')\ar[r]&H_{D/J}+\pi(D')\ar[r]|(0.69)\hole&H_{D/J}\ar[r]&0\\
  0\ar[r]&\C_0((2,3],\M_n)\ar[r]\ar[u]^{\varrho\circ\varphi}&\hat{T}(G,\M_n)\ar[r]_{\ev_{2}}\ar@{-->}@/^2pc/[uu]^(0.7){\psi'}|(0.45)\hole|(0.5)\hole|(0.55)\hole \ar[u]&G\ar[u]\ar@/_/[l]\ar[r]\ar@/^2pc/[uu]&0
}\]
commutes. In particular, $\psi'$ restricts to a $^*$-homomorphism $\C_0((2,3],\M_n)\rightarrow D'$ which we will also denote by $\psi'$. But then a diagram chase confirms that 
\[\psi'(f_i)\cdot(\psi\circ\overline{\varphi})(\tilde{g}_j)=\psi'(f_i\cdot (\tilde{g}_j(2)\otimes 1_{[2,3]}))\]
holds for every $i,j$. Finally, define $\psi_\delta\colon P_\delta\rightarrow D$ by
\[\begin{tabular}{ccc}$q^\delta\mapsto\psi(q)$&and&$f_i^\delta\mapsto\psi'(f_i)$.\end{tabular}\]
It needs to be checked that $\psi_\delta$ is well-defined, i.e. that the elements $\psi_\delta(f_i^\delta)$ and $\psi_\delta(\overline{\varphi}(g_j)^\delta)$ satisfy the relations $\mathcal{R}_\delta$:
\[\begin{array}{rl}
  & \|\psi_\delta(f_i^\delta)\psi_\delta(\overline{\varphi}(g_j)^\delta)-\psi_\delta((f_i(g_j(2)\otimes 1_{[2,3]}))^\delta)\|\\
  =& \|\psi'(f_i)((\psi\circ\overline{\varphi})(g_j))-\psi'(f_i(g_j(2)\otimes 1_{[2,3]}))\| \\
  \leq & \|\psi'(f_i)(\psi\circ\overline{\varphi})(\tilde{g}_j)-\psi'(f_i(\tilde{g}_j(2)\otimes 1_{[2,3]}))\|+\|f_i\|\cdot\|g_j-\tilde{g}_j\|<\delta
\end{array}\]
Since we also have $\psi_\delta\circ s_\delta=\psi$ and $\pi\circ\psi_\delta=\varrho\circ p_{\delta,0}$, the proof is hereby complete.
\end{proof}

One example, where pullbacks as in \ref{prop wcp examples} show up, is the class of so-called AF-telescopes defined by Loring and Pedersen:

\begin{definition}[\cite{LP98}]\label{def af-telescope}
Let $A=\overline{\bigcup A_n}$ be the inductive limit of an increasing union of finite-dimensional $C^*$-algebras $A_n$. We define the AF-telescope associated to this AF-system as 
\[T(A)=\{f\in\C_0((0,\infty],A):\;t\leq n\Rightarrow f(t)\in A_n\}.\]
\end{definition}

We have an obvious limit structure for $T(A)=\varinjlim T(A_k)$ over the finite telescopes 
\[T(A_k)=\{f\in\C_0((0,k],A_k)):\;t\leq n\Rightarrow f(t)\in A_n\}.\]
Now the embedding of $T(A_k)$ into $T(A_{k+1})$ is given by extending the elements of $T(A_k)$ constantly onto the attached interval $[k,k+1]$. This is nothing but a finite composition of maps as in part (2) of \ref{prop wcp examples}. Hence the connecting maps in the system of finite telescopes are weakly conditionally projective and using Lemma \ref{lemma limit criterium} we recover \cite[Theorem 7.2]{LP98}:

\begin{corollary}\label{cor af-telescopes}
All $AF$-telescopes are projective.
\end{corollary}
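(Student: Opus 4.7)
The plan is to apply Lemma \ref{lemma limit criterium} to the factorization $T(A)=\varinjlim_k T(A_k)$ mentioned in the discussion above. This reduces the corollary to two points: that $T(A_1)$ is projective, and that every connecting map $\iota_k\colon T(A_k)\hookrightarrow T(A_{k+1})$ is weakly conditionally projective. The first is immediate since $A_1$ is finite-dimensional and $T(A_1)=\C_0((0,1],A_1)$ is its cone, which is projective; alternatively one can build this via Proposition \ref{prop wcp examples}(1) starting from $Q=0$ and iterating over the matrix block summands of $A_1$, so that no external input is needed.

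For the connecting maps, fix $k$, write $A_{k+1}=\bigoplus_{i=1}^m \M_{n_i}$, and let $\pi_i\colon A_{k+1}\to\M_{n_i}$ denote the projection onto the $i$-th summand. The key observation is that
\[T(A_{k+1})\;\cong\;\bigl\{(f,g_1,\dots,g_m)\colon f\in T(A_k),\;g_i\in\C([k,k+1],\M_{n_i}),\;\pi_i(f(k))=g_i(k)\bigr\},\]
which I realize as an iterated pullback $Q_0=T(A_k)\hookrightarrow Q_1\hookrightarrow\cdots\hookrightarrow Q_m\cong T(A_{k+1})$. At step $i$, take $Q_i$ to be the pullback of Proposition \ref{prop wcp examples} formed from $Q_{i-1}$, the homomorphism $\tau_i:=\pi_i\circ\ev_k\circ p_{i-1}\colon Q_{i-1}\to\M_{n_i}$ (where $p_{i-1}\colon Q_{i-1}\twoheadrightarrow T(A_k)$ is the canonical surjection induced by the iterated pullback), and $\ev_0\colon\C([0,1],\M_{n_i})\to\M_{n_i}$, with $[0,1]$ reparametrized as the $i$-th copy of the attached interval $[k,k+1]$. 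Part (1) of the proposition then inductively propagates (semi)projectivity from $Q_0$ through $Q_m$, and part (2) asserts that each canonical split $Q_{i-1}\hookrightarrow Q_i$ is weakly conditionally projective. Tracing through the construction shows that the composition $Q_0\hookrightarrow Q_m$ of these splits agrees with $\iota_k$.

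The one remaining general fact needed is that a composition of weakly conditionally projective $^*$-homomorphisms is again weakly conditionally projective; this is a routine $\epsilon/2$-argument from Definition \ref{def wcp}, applying the property of the outer map with tolerance $\epsilon/2$ on $\varphi_1(F)$ and then of the inner map with tolerance $\epsilon/2$ on $F$, and closing with the triangle inequality. Granting this, each $\iota_k$ is weakly conditionally projective and Lemma \ref{lemma limit criterium} delivers projectivity of $T(A)$. I expect no genuine obstacle beyond the bookkeeping required to verify that the iterated pullback really reproduces $T(A_{k+1})$ and that the composed canonical split matches $\iota_k$ on the nose.
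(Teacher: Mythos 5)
Your argument is correct and follows essentially the same route as the paper: the paper likewise writes $T(A)=\varinjlim_k T(A_k)$, observes that each connecting map is a finite composition of the canonical splits from Proposition \ref{prop wcp examples}(2) (one per matrix summand of $A_{k+1}$), and concludes via Lemma \ref{lemma limit criterium}, starting the system at the projective algebra $T(A_0)=0$. The details you supply --- the iterated pullback decomposition of $T(A_{k+1})$ over the summands $\M_{n_i}$ and the $\epsilon/2$-argument showing weak conditional projectivity is closed under composition --- are precisely the steps the paper leaves implicit.
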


In contrast to the original proof we didn't have to work out any description of the telescopes by generators and relations. Such a description would have to encode the structure of each $A_n$ as well as the inlusions $A_n\subset A_{n+1}$ (i.e., the Bratteli-diagram of the system). Showing that such an infinite set of generators and relations gives rise to a projective $C^*$-algebra is possible but complicated. Instead we showed that these algebras are build up from the projective $C^*$-algebra $T(A_0)$=0 using operations which preserve projectivity.

\section{Extensions by homogeneous $C^*$-algebras}\label{section extensions}
In this section we study extensions by (trivially) homogeneous $C^*$-algebras, i.e. extensions of the form
\[\xymatrix{0 \ar[r] & \C_0(X,\M_N) \ar[r] & A \ar[r] & B \ar[r] & 0.}\]
Our final goal is to understand the behavior of semiprojectivity along such extensions, and we will eventually achieve this in Theorem \ref{thm 2 out of 3}.

\subsection{Associated retract maps}\label{section retract maps}
Identifying $X$ with an open subset of $\prim(A)$, we make the following definition of an associated retract map. This map will play a key role in our study of extensions.

\begin{definition}\label{def R}
Let $X$ be locally compact space with connected components $(X_i)_{i\in I}$ and 
\[\xymatrix{0 \ar[r] & \C_0(X,\M_N) \ar[r] & A \ar[r] & B \ar[r] & 0}\]
a short exact sequence of $C^*$-algebras. We define the (set-valued) retract map $R$ associated to the extension to be the map
\[R\colon\prim(A)\rightarrow 2^{\prim(B)}\]
given by
\[R(z)=\begin{cases}
  \;z & \text{if}\;z\in\prim(B), \\
  \partial X_i=\overline{X_i}\backslash X_i & \text{if}\;z\in X_i\subseteq X.
\end{cases}\]
\end{definition}

Note that $R$ defined as above takes indeed values in $2^{\prim(B)}$ because the connected components $X_i$ are always closed in $X$. However, in our cases of interest the components $X_i$ will actually be clopen in $X$ (e.g. if $X$ is locally connected) so that we have a topological decomposition $X=\bigsqcup_i X_i$.

\subsubsection{Regularity properties for set-valued maps}
Let $X,Y$ be sets and $S\colon X \rightarrow 2^{Y}$ a set-valued map. We say that $S$ has pointwise finite image if $S(x)\subseteq Y$ is a finite set for every $x\in X$. If furthermore $X$ and $Y$ are topological spaces, we will use the following notion of semicontinuity for $S$ (cf. \cite[Section 1.4]{AF90}).

\begin{deflem}\label{def lsc set map}
Let $X,Y$ be topological spaces. A set-valued map $S\colon X\rightarrow 2^Y$ is lower semicontinuous if one of the following equivalent conditions holds:
\renewcommand{\labelenumi}{(\roman{enumi})}
\begin{enumerate}
  \item $\{x\in X\colon S(x)\subseteq B\}$ is closed in $X$ for every closed $B\subseteq Y$.
  \item For every neighborhood $N(\overline{y})$ of $\overline{y}\in S(\overline{x})$ there exists a neighborhood $N(\overline{x})$ of $\overline{x}$ 
  with $S(x)\cap N(\overline{y})\neq\emptyset$ for every $x\in N(\overline{x})$.
  \item For every net $(x_\lambda)_{\lambda\in\Lambda}\subset X$ with $x_\lambda\rightarrow x$ and every $y\in S(x)$ there exists a net $(y_\mu)_{\mu\in M}\subset\{S(x_\lambda)\colon\lambda\in\Lambda\}$ 
  such that $y_\mu\rightarrow y$.
\end{enumerate}
\end{deflem}

\begin{proof}
$(i)\Rightarrow (ii)$: Let $N(\overline{y})$ be an open neighborhood of $\overline{y}\in S(\overline{x})$. Then $\{x\in X: S(x)\subset Y\backslash N(\overline{y})\}$ is closed and does not contain $\overline{x}$. 
Hence we find an open neighborhood $N(\overline{x})$ of $\overline{x}$ in $X\backslash\{x\in X: S(x)\subset Y\backslash N(\overline{y})\}=\{x\in X:S(x)\cap N(\overline{y})\neq\emptyset\}$.

$(ii)\Rightarrow (iii)$: Denote by $\mathcal{N}$ the family of neighborhoods of $y$ ordered by reversed inclusion. Set $M=\{(\lambda, N)\in\Lambda\times\mathcal{N}\colon S(x_{\lambda'})\cap N\neq\emptyset\;\forall\;\lambda'\geq\lambda\}$, then by assumption $M$ is nonempty and directed with respect to the partial order $(\lambda_1,N_1)\leq(\lambda_2,N_2)$ iff $\lambda_1\leq\lambda_2$ and $N_2\subseteq N_1$. Now pick a $y_{(\lambda,N)}\in S(x_\lambda)\cap N$ for each $(\lambda,N)\in M$, then $(y_\mu)_{\mu\in M}$ constitutes a suitable net converging to $y$.

$(iii)\Rightarrow (i)$: Let a closed set $B\subseteq Y$ and $(x_\lambda)_{\lambda\in\Lambda}\subset\{x\in X:S(x)\subseteq B\}$ with $x_\lambda\rightarrow\overline{x}$ be given. Then for any $\overline{y}\in S(\overline{x})$ we find a net $y_\mu\rightarrow\overline{y}$ with $(y_\mu)\subset\{S(x_\lambda):\lambda\in\Lambda\}\subset B$. Since $B$ is closed we have $\overline{y}\in B$ showing that $S(\overline{x})\subset B$.
\end{proof}

\begin{remark}
An ordinary (i.e. a single-valued) map is evidently lower semicontinuous in the sense above if and only if it is continuous. If both spaces $X$ and $Y$ are first countable, we may use sequences instead of nets in condition $(iii)$.
\end{remark}

Examples of set-valued maps that are lower semicontinuous in the sense above arise from split extensions by homogeneous $C^*$-algebras as follows.

\begin{example}\label{ex split}
Let a split-exact sequence of separable $C^*$-algebras 
\[\xymatrix{
  0 \ar[r]&\C_0(X,\M_n)\ar[r]&A \ar[r]_\pi&B\ar[r]\ar@/_1pc/[l]_s&0
}\] 
be given and consider the set-valued map $R_s\colon \prim(A)\rightarrow 2^{\prim(B)}$ given by 
\[R_s(z)=\begin{cases} \hspace{1.2cm} z&\text{if}\;z\in \prim(B) \\ \left\{[\pi_{z,1}],...,[\pi_{z,r(z)}]\right\}& \text{if}\;z\in X \end{cases}\]
where $\pi_{z,1}\oplus ...\oplus\pi_{z,r(z)}$ is the decomposition of $B\xrightarrow{s} A\rightarrow\C_b(X,\M_n)\xrightarrow{\ev_z}\M_n$ into irreducible summands. Then $R_s$ is lower semicontinuous in the sense of \ref{def lsc set map}.
\end{example}

\begin{proof}
We verify condition $(ii)$ of \ref{def lsc set map}: 
Let $z_n\rightarrow \overline{z}$ in $\prim(A)$ and a neighborhood $N(\overline{y})$ of $\overline{y}\in R_s(\overline{z})$ in $\prim(B)$ be given. By Lemma \ref{lemma top prim} we may assume that $N(\overline{y})$ is of the form $\{z\in\prim(B)\colon\check{b}(z)>1/2\}$ for some $b\in B$. By definition of $R_s$, we find $\check{s(b)}(z)=\max_{y\in R_s(z)} \check{b}(y)$ for all $z\in\prim(A)$. Hence $N(\overline{z})=\{z\in\prim(A)\colon\check{s(b)}>1/2\}$ constitutes a neighborhood of $\overline{z}$ in $\prim(A)$ which satisfies \ref{def lsc set map} $(ii)$.
\end{proof}

Note that the retract map $R_s$ in \ref{ex split} highly depends on the choice of splitting $s$ while the retract map $R$ from \ref{def R} is associated to the underlying extension in a natural way. It is the goal of section \ref{section lifting busby} to find a splitting $s$ such that $R=R_s$ holds. This is, however, not always possible. It can even happen that the underlying extension splits while $R$ is not of the form $R_s$ for any splitting $s$ (cf. remark \ref{remark comm retract}). Under suitable conditions, we will at least be able to arrange $R=R_s$ outside of a compact set $K\subset X$, i.e. we can find a (not necessarily multiplicative) splitting map $s$ such that $B\xrightarrow{s} A\rightarrow\C_b(X,\M_n)$ is multiplicative on $X\backslash K$ so that $R_s(x)$ is still well-defined and coincides with $R(x)$ for all $x\in\prim(A)\backslash K$.

\subsubsection{Lifting the Busby map}\label{section lifting busby}
In this section we identify conditions on an extension
\[\xymatrix{
    0 \ar[r] & \C_0(X,\M_N) \ar[r] & A \ar[r] & B \ar[r] \ar@{..>}@/_1pc/[l]^? & 0 & [\tau]
}\]
which allow us to contruct a splitting $s\colon B\rightarrow A$. This is evidently the same as asking for a lift of the corresponding Busby map $\tau$ as indicated on the left of the commutative diagram
\[\xymatrix{
  & \C(\beta X,\M_N) \ar@{->>}[d]^\varrho \ar@{=}[r] & \prod\limits_{i\in I}\C(\beta X_i,\M_N) \ar@{->>}[d] \\ 
  B \ar[r]^(0.3)\tau \ar@/^1pc/@{..>}[ur]^s \ar@/_2pc/[rr]^{\oplus\tau_i} & \C(\chi(X),\M_N) \ar@{->>}[r] & \prod\limits_{i\in I}\C(\chi(X_i),\M_N).
}\]
We will produce a suitable lift of $\tau$ in two steps:
\begin{enumerate}
 \item For every component $X_i$ of $X$, we trivialize the map $\tau_i\colon B\rightarrow\C(\chi(X_i),\M_N)$, i.e. we conjugate it to a constant map, so that it can be lifted to $\C(\beta X_i,\M_N)$. This step requires the associated retract map $R$ from \ref{def R} to have pointwise finite image and the spaces $\chi(X_i)$ to be connected and low-dimensional.
 \item We extend the collection of lifts for the $\tau_i$'s to a lift for $\tau$. Here we need the associated retract map $R$ to be lower semicontinuous.
\end{enumerate}

In many cases of interest, the spaces $\chi(X_i)$ will not be connected, so that we have to modify the first step of the lifting process. This results in the fact that we cannot find a (multiplicative) split $s$ in general. Instead we will settle for a lift $s$ of $\tau$ with slightly weaker multiplicative properties.

First we give the connection between the retract map $R$ and the Busby map $\tau$ of the extension.

\begin{lemma}\label{lemma Busby vs boundary}
Let a short exact sequence 
\[\xymatrix{
  0 \ar[r] & I \ar[r] & A \ar[r]^\pi & B \ar[r] & 0
}\]
with Busby map $\tau\colon B\rightarrow \mathcal{Q}(I)$ be given. Identifying $\prim(I)$ with the open subset $\{J|I\nsubseteq J\}$ of $\prim(A)$, the following statements hold:
\renewcommand{\labelenumi}{(\roman{enumi})}
\begin{enumerate}
  \item $J\in\partial\prim(I)\Leftrightarrow I+I^\bot\subseteq J$ for every $J\in\prim(A)$,
  \item $\partial\prim(I)=\prim(\tau(B))$.
\end{enumerate}
If in addition $I$ is subhomogeneous, we further have
\begin{enumerate}\setcounter{enumi}{2}
  \item $\left|\partial\prim(I)\right|<\infty\Leftrightarrow\dim(\tau(B))<\infty$.
\end{enumerate}
\end{lemma}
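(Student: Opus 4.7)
The plan is to translate each of the three equivalences into a statement about ideals of $A$ and then to apply the standard bijection between ideals $L\lhd A$ and closed subsets $\widehat{L}=\{J\in\prim(A):L\subseteq J\}$ of $\prim(A)$.

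For (i), I would first compute the closure $\overline{\prim(I)}$ inside $\prim(A)$, using the identification $\prim(I)=\{J:I\not\subseteq J\}$. Under the ideal/closed-set correspondence this closure equals $\widehat{L}$ with $L=\bigcap_{J\in\prim(I)}J$, and the key claim is $L=I^\bot$. For $I^\bot\subseteq L$: if $\pi$ is an irreducible representation of $A$ with $\pi(I)\neq 0$, then $\pi(I)H_\pi$ is a nonzero $\pi(A)$-invariant subspace of $H_\pi$, hence equals $H_\pi$, so $a\in I^\bot$ forces $\pi(a)H_\pi=\pi(a)\pi(I)H_\pi=0$. For the reverse inclusion, I would use the standard bijection $J\leftrightarrow J\cap I$ between $\prim(I)$ and the primitive ideals of the $C^*$-algebra $I$: for $a\in L$ and any $b\in I$, the element $ab$ lies in every primitive ideal of $I$ and hence equals $0$ by semisimplicity of $I$, so $aI=0$ and $a\in I^\bot$. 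Then $\partial\prim(I)=\overline{\prim(I)}\cap\widehat{I}=\widehat{I^\bot}\cap\widehat{I}=\widehat{I+I^\bot}$, which is (i).

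For (ii), I would identify $\ker(\tau)$ explicitly. Recalling that $\tau(\pi(a))=[L_a]\in\mathcal{Q}(I)$, where $L_a\in M(I)$ denotes left multiplication by $a$ on $I$, one has $\tau(\pi(a))=0$ iff $L_a\in I\subseteq M(I)$, iff there exists $j\in I$ with $(a-j)\cdot i=0$ for all $i\in I$, iff $a-j\in I^\bot$. Hence $\ker(\tau)=\pi(I+I^\bot)=(I+I^\bot)/I$ and $\tau(B)\cong A/(I+I^\bot)$. Combined with (i), the primitive ideal space of $\tau(B)$ is exactly $\partial\prim(I)$, yielding (ii).

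For (iii), I would first argue that $\tau(B)$ is subhomogeneous whenever $I$ is. Since $M(I)$ embeds into the enveloping von Neumann algebra $I''$, which for $N$-subhomogeneous $I$ is a product of matrix algebras of size at most $N$ and thus satisfies the standard polynomial identity $s_{2N}$, the algebra $M(I)$ satisfies $s_{2N}$ as well, so it is $N$-subhomogeneous. The same then passes to the quotient $\mathcal{Q}(I)$ and to its subalgebra $\tau(B)$. To finish, I would use that a subhomogeneous $C^*$-algebra $C$ is finite-dimensional exactly when $\prim(C)$ is finite: in the subhomogeneous case points of $\prim$ are closed and the primitive ideals are maximal (so pairwise comaximal), so if they are finitely many the Chinese remainder theorem gives $C\cong\bigoplus_{J\in\prim(C)}C/J$, a finite direct sum of finite-dimensional matrix algebras. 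Combined with (ii), this yields (iii). I expect the step requiring the most care to be the identification $\ker(\tau)=(I+I^\bot)/I$, which is a bookkeeping exercise at the multiplier level; the passage of the polynomial identity from $I$ to $M(I)$ and $\tau(B)$ is standard but also worth a sanity check.
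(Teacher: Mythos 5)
Your proof is correct, and for parts (i) and (ii) it follows essentially the same route as the paper: the paper likewise identifies $\overline{\prim(I)}$ with the hull of $\bigcap_{J\in\prim(I)}J=I^\bot$ via the Jacobson topology and then uses $\ker(\tau\circ\pi)=I+I^\bot$ to get $\prim(\tau(B))=\partial\prim(I)$; you merely spell out the two facts the paper treats as standard (the computation of $\bigcap_{J\in\prim(I)}J$ and the multiplier-level identification of $\ker\tau$). The only genuine divergence is in (iii): the paper obtains subhomogeneity of $\tau(B)\cong A/(I+I^\bot)$ intrinsically, by noting that every irreducible representation of $A$ whose kernel lies in $\overline{\prim(I)}$ has dimension at most $N$ (implicitly using that $\prim_{\leq N}(A)$ is closed and contains $\prim(I)$), whereas you pass through $M(I)\subseteq I^{**}$ and the standard polynomial identity $s_{2N}$. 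Your route is heavier but proves more (all of $\mathcal{Q}(I)$ is $N$-subhomogeneous), and it does go through: $s_{2N}$ is multilinear, so it survives the weak$^*$ closure by separate weak$^*$-continuity of multiplication together with Kaplansky density. Your closing step, that a subhomogeneous $C^*$-algebra with finite primitive spectrum is finite-dimensional via maximality of primitive ideals and the Chinese remainder theorem, is exactly the point the paper leaves implicit in its final sentence.
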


\begin{proof}
For (i) it suffices to check that $\prim(I^\bot)=\prim(A)\backslash\overline{\prim(I)}$ where $I^\bot$ denotes the annihilator of $I$ in $A$. But this follows directly from the definition of the Jacobson topology on $\prim(A)$:
\[\begin{array}{rcl} 
  J\notin\overline{\prim(I)} & \Leftrightarrow & \bigcap\limits_{K\in \prim(I)}K\nsubseteq J \\
  & \Leftrightarrow & \exists x\in A\colon x\notin J\;\text{while}\;\left\|x\right\|_K=0\;\forall\; K\in\prim(I) \\ 
  & \Leftrightarrow & \exists x\in I^\bot\colon x\notin J \\ 
  & \Leftrightarrow & I^\bot\nsubseteq J \\ 
  & \Leftrightarrow & J\in\prim(I^\bot).
\end{array}\]
Now (i) and $\ker(\tau\circ\pi)=I+I^\bot$ imply
\[\begin{array}{rl} 
  \prim(\tau(B))= &\prim((\tau\circ\pi)(A)) \\
  = & \{J\in\prim(A)\colon\ker(\tau\circ\pi)\subseteq J\} \\ 
  = &\{J\in\prim(A)\colon I+I^\bot\subseteq J\} \\ 
  = &\partial\prim(I).
\end{array}\]
For the last statement note that if all irreducible representations of $I$ have dimension at most $n$, the same holds for all irreducible representations $\pi$ of $A$ with $\ker(\pi)$ contained in $\overline{\prim(I)}$. So by the correspondence described in (ii), irreducible representations of $\tau(B)$ are also at most $n$-dimensional. Hence, in this case, finitenesss of $\partial\prim(I)$ is equivalent to finite-dimensionality of $\tau(B)$.
\end{proof}

For technical reasons we would prefer to work with unital extensions. However, it is not clear whether unitization preserves the regularity of $R$, i.e. whether the retract map associated to a unitized extension $0\rightarrow \C_0(X,\M_N)\rightarrow A^+ \rightarrow B^+ \rightarrow 0$ is lower semicontinuous provided that the retract map associated to the original extension is. As the next lemma shows, this is true and holds in fact for more general extensions.

\begin{lemma}\label{lemma finite extension}
Let a locally compact space $X$ with clopen connected components and a commutative diagram
\[\xymatrix{
  && 0 \ar[d] & 0 \ar[d] \\
  0 \ar[r] & \C_0(X,\M_N) \ar[r] \ar@{=}[d] & A \ar[r] \ar[d] & B \ar[r] \ar[d] & 0 \\
  0 \ar[r] & \C_0(X,\M_N) \ar[r] & C \ar[r] \ar[d]^\pi & D \ar[r] \ar[d] & 0 \\
  && F \ar@{=}[r] \ar[d] & F \ar[d] \\
  && 0 & 0
}\]
of short exact sequences of separable $C^*$-algebras be given. Let $R\colon \prim(A)\rightarrow 2^{\prim(B)}$ (resp. $S\colon \prim(C)\rightarrow 2^{\prim(D)}$) be the set-valued retract map associated to the upper (resp. the lower) horizontal sequence as in \ref{def R}. If the quotient $F$ is a finite-dimensional $C^*$-algebra, then the following holds:
\begin{enumerate}
 \item If $R$ has pointwise finite image, then so does $S$.
 \item If $R$ is lower semicontinuous, then so is $S$.
\end{enumerate}
\end{lemma}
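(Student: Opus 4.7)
The proof compares the topologies of $\prim(A)$ and $\prim(C)$ through the vertical extension $0\to A\to C\to F\to 0$. Since $F$ is finite-dimensional, $\prim(F)$ is a finite closed subset of $\prim(C)$ with open complement $\prim(A)$, and analogously $\prim(D)=\prim(B)\sqcup\prim(F)$ with $\prim(B)$ open in $\prim(D)$. The key identity, which drives both parts, is that for each connected component $X_i$ of $X$ (viewed as a subset of both $\prim(A)$ and $\prim(C)$),
\[\overline{X_i}^{\prim(C)}=\overline{X_i}^{\prim(A)}\sqcup F_i,\qquad F_i:=\overline{X_i}^{\prim(C)}\cap\prim(F),\]
using that $\prim(A)$ carries the subspace topology from $\prim(C)$. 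Consequently $\partial_{C} X_i=\partial_{A} X_i\sqcup F_i$, so $S(z)=R(z)\sqcup F_i$ for every $z\in X_i$. Part (1) is then immediate: each $F_i\subseteq\prim(F)$ is finite, so $|S(z)|\le|R(z)|+|\prim(F)|$, and $S(z)=\{z\}$ is trivially finite for $z\in\prim(D)$.

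For part (2), I plan to verify condition (ii) of Definition-Lemma \ref{def lsc set map} by cases on where $z\in\prim(C)$ lies. When $z\in X_i$, the set $X_i$ is open in $\prim(C)$ (being clopen in the open subset $X$) and $S$ is constant on it, so the neighborhood $X_i$ itself handles this case trivially. When $z\in\prim(B)$, openness of $\prim(A)$ in $\prim(C)$ lets one confine the search to $\prim(A)$; applying LSC of $R$ at $z$ with target the trace of the given neighborhood on $\prim(B)$, combined with the containment $R(x')\subseteq S(x')$ valid for every $x'\in\prim(A)$, produces the required neighborhood of $z$ inside $\prim(C)$.

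The main obstacle is the remaining case $z\in\prim(F)$. Using finiteness of $\prim(F)$, I first choose an open $V\subseteq\prim(C)$ with $V\cap\prim(F)=\{z\}$ and $V\cap\prim(D)$ contained in the prescribed neighborhood $N$ of $y=z$. For $x'\in V\cap\prim(D)$ the condition is then automatic, and for $x'\in V\cap X_i$ with $z\in F_i$ one has $z\in S(x')\cap N$. The delicate sub-case arises when $V$ meets a component $X_i$ with $z\notin F_i$: satisfying the condition now requires a point of $\partial_A X_i\subseteq R(x')$ to lie in $N\cap\prim(B)$. The plan here is to shrink $V$ further, exploiting finite-dimensionality of $F$ (which restricts how components can accumulate around the finite set $\prim(F)$) together with LSC of $R$ along the common interface $\prim(B)$, so that every component meeting the shrunk $V$ either satisfies $z\in F_i$ or contributes a boundary point inside $N\cap\prim(B)$. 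Controlling this accumulation of components near $\prim(F)$ is the technical crux of the proof.
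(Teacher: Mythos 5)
Your part (1) and the first two cases of part (2) are fine: the identity $\partial_{\prim(C)}X_i=\partial_{\prim(A)}X_i\sqcup F_i$ holds because $\prim(A)$ carries the subspace topology from $\prim(C)$, and the cases $z\in X_i$ and $z\in\prim(B)$ go through exactly as you describe (the latter using $R(x)\subseteq S(x)$ together with lower semicontinuity of $R$). But the case $z\in\prim(F)$ --- which you yourself flag as the crux --- is not actually proved; you only announce a plan to ``shrink $V$ further,'' and it is precisely here that a purely topological argument runs out of steam. The danger is an accumulation phenomenon across infinitely many components: there could a priori be pairwise distinct components $X_{i_n}$ and points $x_n\in X_{i_n}$ with $x_n\to[\pi]$ in $\prim(C)$ even though $[\pi]\notin\overline{X_{i_n}}$ for every single $n$. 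Nothing in the topology of $\prim(C)$ alone, nor in lower semicontinuity of $R$ (which only sees $\prim(A)$ and $\prim(B)$), rules this out; and if it happens, your shrinking strategy fails, because such components need not contribute any boundary point to a small neighborhood $N$ of $[\pi]$ in $\prim(D)$.

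The paper closes exactly this gap with a $C^*$-algebraic, not topological, argument (after reducing to $F$ simple and verifying condition (i) of \ref{def lsc set map} rather than (ii)). One restricts to the offending components, identifies the boundary of $\bigcup_n X_{i_n}$ in $\prim(C)$ with the primitive spectrum of the image of $D$ in $\prod_n\C(\beta X_{i_n},\M_N)\big/\bigoplus_n\C_0(X_{i_n},\M_N)$ via Lemma \ref{lemma Busby vs boundary}, and observes that the induced map on $F$ lands in $\prod_n\C_0(X_{i_n},\M_N)\big/\bigoplus_n\C_0(X_{i_n},\M_N)$, which is projectionless; since $F$ is finite-dimensional and hence generated by projections, this map is zero, so $[\pi]$ does not lie in the boundary of $\bigcup_n X_{i_n}$. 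This is the step your proposal is missing, and it uses finite-dimensionality of $F$ in an essential algebraic way that ``restricting how components accumulate'' does not capture. To complete your proof you would need to import this argument (or an equivalent one) into your case $z\in\prim(F)$.
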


\begin{proof}
(1) This is immediate since $\prim(F)$ is a finite set and one easily verfies $S(x)\subseteq R(x)\cup\prim(F)$ for all $x\in X$.

(2) We may assume that $F$ is simple and hence $\pi$ is irreducible. 
Note that $S(J)=R(J)$ for all $J\in\prim(B)\subset\prim(D)$, while for $x\in X$ we have either $S(x)=R(x)$ or $S(x)=R(x)\cup\{[\pi]\}$. Given a closed subset $K\subseteq\prim(D)$, we need to verify that $\{J\in\prim(C)\colon S(J)\subseteq K\}$ is closed in $\prim(C)$. If $[\pi]\in K$, then $\{J\in\prim(C)\colon S(J)\subseteq K\}=\{J\in\prim(A)\colon R(J)\subseteq K\}\cup\{[\pi]\}$ is closed in $\prim(C)$ because $\{J\in\prim(A)\colon R(J)\subseteq K\}$ is closed in $\prim(A)$ by semicontinuity of $R$. Now if $[\pi]\notin K$, the only relevant case to check is a sequence $x_n\subset X$ converging to $\overline{x}\in\prim(D)$ with $S(x_n)\subseteq K$ for all $n$. We then need to show that $S(\overline{x})=\overline{x}\in K$ as well. Decompose $X=\bigcup_{i\in I}X_i$ into its clopen connected components and write $x_n\in X_{i_n}$ for suitable $i_n\in I$. We may assume that $i_n\neq i_m$ for $n\neq m$ since otherwise $\overline{x}\in\partial X_{i_n}=S(x_n)$ for some $n$. Since $R$ is lower semicontinuous, we know that the boundary of $\bigcup_n X_{i_n}$ in $\prim(A)$ is contained in $K\cap\prim(A)$ and hence $\partial\left(\bigcup_n X_{i_n}\right)\subset K\cup\{[\pi]\}$ in $\prim(C)$.

Let $p$ denote the projection of $\C_0(X,\M_N)$ onto $\C_0(\bigcup_n X_{i_n},\M_N)$. This map canonically extends to $\overline{p}$ and $\overline{\overline{p}}$ making the diagram 
\[\xymatrix{
    0 \ar[r] & \C_0(X,\M_k) \ar[r] \ar@{->>}[d]^p & C \ar[r] \ar[d]^{\overline{p}} & D \ar[r]\ar[d]^{\overline{\overline{p}}} & 0\\
    0 \ar[r] & \bigoplus\limits_n\C_0(X_{i_n},\M_N) \ar[r] \ar[d]^\subseteq & \prod\limits_n\C(\beta X_{i_n},\M_N) \ar[r] \ar@{=}[d]& \frac{\prod_n\C(\beta X_{i_n},\M_N)}{\bigoplus_n\C_0(X_{i_n},\M_N)} \ar[r] \ar@{->>}[d]^q&0\\
    0\ar[r] & \prod\limits_n\C_0(X_{i_n},\M_N) \ar[r] & \prod\limits_n\C(\beta X_{i_n},\M_N) \ar[r] & \prod\limits_n\C(\chi(X_{i_n}),\M_N) \ar[r] & 0
}\] 
commute. Using Lemma \ref{lemma Busby vs boundary}, we can indentify the boundary of $\bigcup_n X_{i_n}$ in $\prim(C)$ with $\prim\left(\overline{\overline{p}}(D)\right)$. We already know that $\overline{\overline{p}}$ factors through $D_K\oplus F$, where $D_K$ denotes the quotient corresponding to the closed subset $K$ of $\prim(D)$, and denote the induced map by $\varphi$:
\[\xymatrix{
    D \ar[rr]^{\overline{\overline{p}}} \ar[dr]_{\pi_K\oplus \pi}&& \frac{\prod_n\C(\beta X_{i_n},\M_N)}{\bigoplus_n\C_0(X_{i_n},\M_N)} \\ 
    & D_K\oplus F \ar@{-->}[ur]_\varphi
}\]
We further know that the composition $q\circ\varphi_{|F}\colon F\rightarrow\prod_n\C(\chi(X_{i_n}),\M_N)$ vanishes because $[\pi]\notin\partial X_{i_n}=R(x_n)\subseteq K$ for all $n$. Hence the image of $F$ under $\varphi$ is contained in $\ker(q)=\frac{\prod_n\C_0(X_{i_n},\M_N)}{\bigoplus_n\C_0(X_{i_n},\M_N)}$. But since this $C^*$-algebra is projectionless and $F$ is finite-dimensional, we find $\varphi_{|F}=0$. 
Consequently, $\overline{\overline{p}}$ factors through $D_K$ which means nothing but $\overline{x}\in\partial\left(\bigcup_n X_{i_n}\right)=\prim\left(\overline{\overline{p}}(D)\right)\subseteq K$.
\end{proof}

\begin{lemma}\label{lemma diagonal}
Let $X$ be a connected, compact space of dimension at most 1. For every finite-dimensional $C^*$-algebra $F\subseteq\C(X,\M_n)$ there exists a unitary $u\in\C(X,\M_n)$ such that $uFu^*$ is contained in the constant $\M_n$-valued functions on $X$.
\end{lemma}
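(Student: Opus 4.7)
The plan is to reduce the problem to trivializing a system of matrix units for $F$, and then to apply the classification of projections over 1-dimensional spaces (the same tool used in the proof of Lemma \ref{lemma trivial bundles}).

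First, I would reduce to the unital case by replacing $F$ with the enlarged finite-dimensional algebra $F+\mathbb{C}\cdot(1_n-1_F)$; a unitary diagonalizing this larger algebra automatically diagonalizes $F$ as well. Writing $F\cong\bigoplus_{k=1}^K\M_{n_k}$ and fixing matrix units $\{e_{ij}^{(k)}\}$ for $F$ viewed as elements of $\C(X,\M_n)$, the fiberwise rank $m_k(x):=\rank(e_{11}^{(k)}(x))$ is locally constant (standard for any continuous projection-valued function via functional calculus) and hence globally constant on the connected space $X$; call this constant $m_k$. Unitality of the embedding forces $\sum_k n_k m_k=n$.

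Second, I would choose constant pairwise orthogonal projections $p_i^{(k)}\in\M_n\subset\C(X,\M_n)$ of rank $m_k$ with $\sum_{k,i}p_i^{(k)}=1_n$, and fix a compatible system of matrix units $\{f_{ij}^{(k)}\}$ in the constant subalgebra of $\M_n$ they generate, normalized so that $f_{ii}^{(k)}=p_i^{(k)}$. Since $\dim X\leq 1$, Murray--von Neumann equivalence of projections in $\C(X,\M_n)$ is detected by fiberwise rank (\cite[Proposition 4.2]{Phi07}); applying this to each pair $(e_{11}^{(k)},p_1^{(k)})$ yields partial isometries $w^{(k)}\in\C(X,\M_n)$ with $(w^{(k)})^*w^{(k)}=e_{11}^{(k)}$ and $w^{(k)}(w^{(k)})^*=p_1^{(k)}$. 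Setting
\[u:=\sum_{k,i}f_{i1}^{(k)}w^{(k)}e_{1i}^{(k)}\in\C(X,\M_n),\]
a direct matrix-unit computation (using the identity $(w^{(k)})^*p_1^{(k)}w^{(k)}=e_{11}^{(k)}$ together with $\sum_{k,i}e_{ii}^{(k)}=1_n=\sum_{k,i}p_i^{(k)}$) shows that $u$ is unitary and that $ue_{ij}^{(k)}u^*=f_{ij}^{(k)}$ for all indices, so $uFu^*$ lies in the constant $\M_n$-valued functions as required.

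The crux of the argument is the application of \cite[Proposition 4.2]{Phi07}, which depends essentially on $\dim X\leq 1$; without this hypothesis, higher Dixmier--Douady-type obstructions could prevent the existence of the partial isometries $w^{(k)}$. Everything else is bookkeeping: the reduction to the unital case, the constancy of the fiberwise multiplicities on the connected $X$, and a standard matrix-unit calculation.
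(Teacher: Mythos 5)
Your argument is correct and follows essentially the same route as the paper: the key input in both cases is \cite[Proposition 4.2]{Phi07}, giving that fiberwise rank determines Murray--von Neumann equivalence of projections over a one-dimensional space, hence cancellation. The only difference is that the paper then simply cites \cite[Lemma 7.3.2]{RLL00} for the uniqueness (up to unitary equivalence) of embeddings of a finite-dimensional $C^*$-algebra into a $C^*$-algebra with cancellation, whereas you unfold that lemma explicitly via the matrix-unit computation; both versions are complete.
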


\begin{proof}
Since $\dim(X)\leq 1$, equivalence of projections in $\C(X,\M_n)$ is completely determined by their rank (\cite[Proposition 4.2]{Phi07}). In particular, the $C^*$-algebra $\C(X,\M_n)$ has cancellation. Hence \cite[Lemma 7.3.2]{RLL00} shows that the inclusion $F\subset\C(X,\M_n)$ is unitarily equivalent to any constant embedding $\iota\colon F\rightarrow\M_n\subseteq\C(X,\M_n)$ with $\rank(\iota(p))=\rank(p)$ for all minimal projections $p\in F$.
\end{proof}

\begin{lemma}\label{lemma lift unitary}
Let $X$ be a connected, locally compact, metrizable space of dimension at most 1. Then every unitary in $\C(\chi(X),\M_n)$ lifts to a unitary in $\C(\beta X,\M_n)$.
\end{lemma}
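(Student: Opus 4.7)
The strategy is to lift $u$ to any bounded element of $\C(\beta X,\M_n)$, observe that its failure to be unitary is concentrated on a compact subset of $X$, and then repair the defect using one-dimensionality of $X$.

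First I would pick an arbitrary lift $a\in\C(\beta X,\M_n)$ of $u$, which exists by surjectivity of the restriction map $\C(\beta X,\M_n)\to\C(\chi(X),\M_n)$. Since $u$ is unitary, the element $h:=a^*a-1$ lies in the kernel $\C_0(X,\M_n)$, so
\[
U:=\{z\in\beta X\colon\|h(z)\|<1/2\}
\]
is an open neighborhood of $\chi(X)$ in $\beta X$ whose complement $K=\{z\in X\colon\|h(z)\|\geq 1/2\}$ is a compact subset of $X$. On $U$ the element $a^*a$ is strictly positive and invertible, so continuous functional calculus produces a unitary $v:=a(a^*a)^{-1/2}\in\C(U,\M_n)$ which restricts to $u$ on $\chi(X)$.

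Using local compactness of $X$, I would next choose a compact neighborhood $K'$ of $K$ in $X$ with $K\subseteq\operatorname{int}(K')$, so that $\partial K'\subseteq X\setminus K\subseteq U$ and $v|_{\partial K'}$ is a well-defined continuous unitary. Since $K'$ is compact and metrizable with $\dim(K')\leq\dim(X)\leq 1$, the main step is to extend the unitary $v|_{\partial K'}\colon\partial K'\to U(n)$ to a unitary $w\colon K'\to U(n)$. This extension problem is solved by \cite[Theorem VI.4]{HW48} in exactly the same way as it was applied in the proof of Lemma \ref{lemma trivial bundles}.

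Finally I would glue: define $\tilde v\in\C(\beta X,\M_n)$ by $\tilde v=v$ on $\beta X\setminus\operatorname{int}(K')$ and $\tilde v=w$ on $K'$. These two definitions agree on the common boundary $\partial K'$ by construction, so $\tilde v$ is a continuous unitary on $\beta X$ whose restriction to $\chi(X)$ equals $v|_{\chi(X)}=u$, yielding the required lift. The main obstacle is the extension step on the compact metric one-dimensional space $K'$, which is precisely where the hypothesis $\dim(X)\leq 1$ is indispensable; the rest of the argument is essentially a polar-decomposition trick combined with a straightforward gluing.
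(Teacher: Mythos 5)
Your proof is correct, but it takes a genuinely different route from the paper's. The paper argues $K$-theoretically: by \cite[Proposition 4.2]{Phi07} the rank gives $K_0(\C(\alpha X,\M_n))\cong\Z$ (this is where connectedness of $X$ enters), hence $K_0(\C_0(X,\M_n))=0$, so the six-term exact sequence shows that $K_1(\C(\beta X,\M_n))\rightarrow K_1(\C(\chi(X),\M_n))$ is surjective; $K_1$-bijectivity of $\C(\beta X,\M_n)$, guaranteed by $\dim(\beta X)=\dim(X)\leq 1$ and \cite[Theorem 4.7]{Phi07}, then upgrades this $K_1$-statement to an honest unitary lift. You instead localize the unitarity defect of an arbitrary lift to a compact subset of $X$, repair it off that set by polar decomposition, and fill in over a compact neighbourhood $K'$ using the extension theorem for $U(n)$-valued maps on one-dimensional spaces --- the same device the paper itself uses in Lemma \ref{lemma trivial bundles}, so your appeal to \cite[Theorem VI.4]{HW48} (which literally treats sphere-valued maps; for $U(n)$ with $n\geq 2$ one should invoke that $U(n)$ is a path-connected ANR, e.g.\ via the Kuratowski--Dugundji extension theorem, or split off the determinant) is no more of a stretch here than it is there. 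The trade-off: your argument is elementary, avoids $K$-theory entirely, and never uses connectedness of $X$, so it actually proves a slightly more general statement; the paper's argument is shorter given Phillips's results and makes the vanishing of the index obstruction explicit. All the individual steps of your gluing (compactness of $K=\{\|a^*a-1\|\geq 1/2\}$, $\partial K'\subseteq U$, continuity of the pasted function on $\beta X=(\beta X\setminus\operatorname{int}K')\cup K'$) check out.
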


\begin{proof}
By \cite[Proposition 4.2]{Phi07}, we have $K_0(\C(\alpha X,\M_n))\cong\Z$ via $[p]\mapsto\rank(p)$. Using the 6-term exact sequence in $K$-theory, this shows that the induced map $K_1(\C(\beta X,\M_n))\rightarrow K_1(\C(\chi(X),\M_n))$ is surjective. Combining this with $K_1$-bijectivity of $\C(\beta X,\M_n)$, which is guaranteed by $\dim(\beta X)=\dim(X)\leq 1$ (\cite[Thm. 9.5]{Nag70}) and \cite[Theorem 4.7]{Phi07}, the claim follows.
\end{proof}

\begin{proposition}\label{prop diagonal Busby}
Let a short exact sequence of separable $C^*$-algebras 
\[\xymatrix{
  0 \ar[r] & \C_0(X,\M_N) \ar[r] & A \ar[r] & B \ar[r] & 0 & [\tau]
}\]
with Busby invariant $\tau$ be given. Assume that $X$ is at most one-dimensional, has clopen connected components $(X_i)_{i\in I}$ and that every corona space $\chi(X_i)$ has only finitely many connected components.
If the associated set-valued retract map $R$ as in \ref{def R} has pointwise finite image, then there is a unitary $U\in\C(\beta X,\M_N)$ such that for each $i\in I$ the composition
\[ B \xrightarrow{\tau} \C(\chi(X),\M_n) \xrightarrow{\Ad(\varrho(U))}\C(\chi(X),\M_N)\rightarrow \C(\chi(X_i),\M_n)\]
has image contained in the locally constant $\M_N$-valued functions on $\chi(X_i)$.
\end{proposition}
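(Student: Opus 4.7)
The plan is to diagonalize $\tau$ separately on each component of $X$ and then combine and lift the resulting unitaries to $\beta X$. Set $J_i:=\C_0(X_i,\M_N)$, let $\pi_i\colon \C(\chi(X),\M_N)\to \C(\chi(X_i),\M_N)$ be the natural projection, and write $\tau_i:=\pi_i\circ\tau$.

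\textbf{Step 1 (finite-dimensional image).} Since $X_i$ is clopen in $X$, $J_i$ is an ideal of $A$, and $I\subseteq J_i+J_i^\perp$ forces $B=A/I$ to surject onto $A/(J_i+J_i^\perp)$. The Busby map $\tilde\tau_i\colon A/(J_i+J_i^\perp)\to\mathcal{Q}(J_i)=\C(\chi(X_i),\M_N)$ of the sub-extension $0\to J_i\to A/J_i^\perp\to A/(J_i+J_i^\perp)\to 0$ factors $\tau_i$ through this surjection. By Lemma~\ref{lemma Busby vs boundary}(i)-(ii), $\prim(\tilde\tau_i(A/(J_i+J_i^\perp)))$ equals the boundary of $X_i$ inside $\prim(A/J_i^\perp)=\overline{X_i}$, i.e.\ $\partial X_i$. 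The pointwise-finiteness of $R$ gives $|\partial X_i|=|R(x)|<\infty$ for any $x\in X_i$, and since $J_i$ is $N$-subhomogeneous, Lemma~\ref{lemma Busby vs boundary}(iii) forces $\tilde\tau_i(A/(J_i+J_i^\perp))$, and hence $\tau_i(B)$, to be finite-dimensional.

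\textbf{Step 2 (componentwise conjugation).} Each corona $\chi(X_i)$ decomposes into finitely many (hence clopen) connected components $Y_{i,1},\ldots,Y_{i,k_i}$, each compact and of dimension at most one (using $\dim\beta X_i=\dim X_i\leq 1$ from \cite[Thm.~9.5]{Nag70}). The restriction $\tau_i(B)|_{Y_{i,j}}$ is a finite-dimensional subalgebra of $\C(Y_{i,j},\M_N)$, and Lemma~\ref{lemma diagonal} supplies unitaries $w_{i,j}\in\C(Y_{i,j},\M_N)$ with $w_{i,j}\tau_i(B)|_{Y_{i,j}}w_{i,j}^*\subseteq\M_N$. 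Assembling the $w_{i,j}$ over $j$ yields a unitary $v_i\in\C(\chi(X_i),\M_N)$ with $v_i\tau_i(B)v_i^*$ locally constant.

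\textbf{Step 3 (lift and assemble).} Since $X_i$ is connected, locally compact, metrizable (as $X$ is second-countable by separability of $\C_0(X,\M_N)$) and of dimension at most one, Lemma~\ref{lemma lift unitary} lifts $v_i$ to a unitary $u_i\in\C(\beta X_i,\M_N)$. Using $\C(\beta X,\M_N)=\prod_i\C(\beta X_i,\M_N)$ (valid since $X=\bigsqcup_i X_i$ has clopen components), set $U:=(u_i)_{i\in I}$. Then $\pi_i(\varrho(U))=v_i$, so $\pi_i\circ\Ad(\varrho(U))\circ\tau=\Ad(v_i)\circ\tau_i$ takes values in the locally constant $\M_N$-valued functions on $\chi(X_i)$, as required. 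The main obstacle is Step~1: translating the abstract pointwise-finiteness of $R$ into finite-dimensionality of each $\tau_i(B)$ requires identifying the Busby map of the appropriate sub-extension with $\pi_i\circ\tau$, so that the subhomogeneity of $J_i$ can be invoked through Lemma~\ref{lemma Busby vs boundary}(iii).
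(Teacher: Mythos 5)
Your proof is correct and follows essentially the same route as the paper: finite-dimensionality of each $\tau_i(B)$ via Lemma \ref{lemma Busby vs boundary} and the pointwise finiteness of $R$, diagonalization by Lemma \ref{lemma diagonal}, lifting of unitaries by Lemma \ref{lemma lift unitary}, and assembly over the product $\prod_i\C(\beta X_i,\M_N)=\C(\beta X,\M_N)$. You merely fill in two details the paper leaves implicit -- the identification of $\tau_i$ with the Busby map of the sub-extension determined by the ideal $\C_0(X_i,\M_N)$, and the componentwise application of Lemma \ref{lemma diagonal} on the finitely many components of $\chi(X_i)$ to get \emph{locally} constant image -- both of which are handled correctly.
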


\begin{proof}
By Lemma \ref{lemma Busby vs boundary}, the image of each $\tau_i\colon B \xrightarrow{\tau} \C(\chi(X),\M_N) \rightarrow \C(\chi(X_i),\M_N)$ is finite-dimensional. Since by \cite[Thm. 9.5]{Nag70} furthermore $\dim\chi(X_i)\leq \dim\beta X_i=\dim X_i\leq\dim X\leq 1$, we can apply Lemma \ref{lemma diagonal} to obtain unitaries $u_i\in\C(\chi(X_i),\M_N)$ such that $u_i\tau_i(B)u_i^*$ is contained in the locally constant functions on $\chi(X_i)$. These unitaries can be lifted to unitaries $U_i\in\C(\beta X_i,\M_N)$ by Lemma \ref{lemma lift unitary}. Now $U=\oplus_i U_i\in\prod_i\C(\beta X_i,\M_N)=\C(\beta X,\M_N)$ has the desired property.
\end{proof}

\begin{lemma}\label{lemma approx unitary}
Let a short exact sequence of separable $C^*$-algebras 
\[\xymatrix{
  0 \ar[r] & \C_0(X,\M_N) \ar[r] & A \ar[r] & B \ar[r] & 0 & [\tau]
}\] 
with Busby map $\tau$ be given. Assume that $X$ is at most one-dimensional and that the connected components $(X_i)_{i\in I}$ of $X$ are clopen. Further assume that the image of $\tau$ is constant on each $\chi(X_i)\subseteq\chi(X)$. Denote by $\iota\colon A \rightarrow \mathcal{M}(\C_0(X,\M_N))=\C(\beta X,\M_N)$ the canonical map. If the set-valued retract map $R\colon \prim(A)\rightarrow 2^{\prim(B)}$ as defined in \ref{def R} is lower semicontinuous, the following statement holds: 

For every finite set $\mathcal{G}\subset A$, every $\epsilon>0$ and almost every $i\in I$ there exists a unitary $U_i \in\C(\alpha X_i,\M_N)\subset\C(\beta X_i,\M_N)$ such that
\[\left\|(U_i\iota(a)_{|\beta X_i}U_i^*)(x)-\iota(a)(y)\right\|<\epsilon\]
holds for all $a\in\mathcal{G}$, $x\in\beta X_i$ and $y\in\chi(X_i)$.
\end{lemma}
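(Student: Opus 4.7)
The plan is to reduce the statement to the case $U_i = 1_{\M_N}$ by proving that for all but finitely many $i \in I$ the restriction $\iota(a)|_{\beta X_i}$ is already within $\epsilon$ of the constant matrix $V_{i,a} := \tau(\pi(a))|_{\chi(X_i)} \in \M_N$ in sup norm, simultaneously for every $a \in \mathcal{G}$. The difference $\iota(a)|_{\beta X_i} - V_{i,a}$ always belongs to $\C_0(X_i,\M_N)$ (since by hypothesis $\iota(a)$ coincides with $V_{i,a}$ on $\chi(X_i)$), so the content is that the sup norms of these differences exhibit $c_0$-behavior in $i$; once this is in place, taking $U_i = 1_{\M_N}$ works by continuity and density of $X_i$ in $\beta X_i$.

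To establish this $c_0$-behavior I would argue by contradiction: assume some $a \in \mathcal{G}$ and $\epsilon > 0$ admit distinct indices $i_n \in I$ and witnesses $x_n \in X_{i_n}$ with $\|\iota(a)(x_n) - V_{i_n,a}\| \geq \epsilon/2$. The triangle inequality forces that for infinitely many $n$ either $\check{a}(x_n) \geq \epsilon/4$ or $\|V_{i_n,a}\| \geq \epsilon/4$; in the second case some $y_n \in \partial X_{i_n}$ corresponding to a maximal irreducible summand of $V_{i_n,a}$ satisfies $\check{a}(y_n) \geq \epsilon/4$. Either way the resulting net lies in the compact set $\{w \in \prim(A) : \check{a}(w) \geq \epsilon/4\}$ provided by Lemma \ref{lemma top prim}(3), so a subnet converges to some point $z$ there. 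Pairwise disjointness of the clopen $X_{i_n}$ rules out $z \in X$, so $z \in \prim(B)$.

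Now I would invoke the lower semicontinuity of $R$: were $z$ not in $\overline{\bigcup_n \partial X_{i_n}}$, an open neighborhood $U$ of $z$ in $\prim(A)$ disjoint from this union would yield a closed set $K' := \prim(B) \setminus U$ in $\prim(B)$ with $R(x_n) = \partial X_{i_n} \subseteq K'$ for all $n$ yet $R(z) = z \notin K'$, contradicting condition (i) of Definition \ref{def lsc set map}. Passing to a further subnet therefore produces $y_n \in \partial X_{i_n}$ with $y_n \to z$. Refining once more so that the matrices $\iota(a)(x_n)$ and $V_{i_n,a}$ converge in $\M_N$ to $W$ and $V$ respectively with $\|W - V\| \geq \epsilon/2$, lower semicontinuity of $\check{x}$ applied to noncommutative polynomials $x = p(a,a^*)$ forces both $W$ and $V$ to realize $N$-dimensional representations of $B$ containing the irreducible representation $\pi_z$ as a direct summand.

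The main obstacle is the final spectral step, where one must upgrade ``$W$ and $V$ both contain $\pi_z$ as a summand'' to ``$W = V$''. This is extracted from the structure of $V_{i_n,a}$ as a direct sum of irreducibles indexed by the collapsing set $\partial X_{i_n} \to \{z\}$, the rigidity of representation multiplicities in the finite-dimensional $\M_N$, and a further use of the lower semicontinuity of $R$ to prevent extra non-$\pi_z$-summands from appearing in $W$ but not in $V$. The resulting equality $\|W - V\| = 0$ contradicts $\|W - V\| \geq \epsilon/2$ and closes the argument.
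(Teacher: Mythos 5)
Your opening reduction---that for all but finitely many $i$ the function $\iota(a)_{|\beta X_i}$ is already uniformly within $\epsilon$ of the constant $V_{i,a}$, so that one may take $U_i=1_{\M_N}$---is false, and the remainder of your argument is devoted to proving this false statement. The step you gloss over is the distinction between $\prod_i\C_0(X_i,\M_N)$ and $\bigoplus_i\C_0(X_i,\M_N)$: knowing that $\iota(a)_{|\beta X_i}-V_{i,a}$ vanishes on $\chi(X_i)$ for every $i$ only places the family of differences in the product, not in the direct sum, so nothing forces the sup norms to exhibit $c_0$-behaviour; moreover, even for a \emph{single} component the difference can have large norm. Concretely, take $B=\mathbb{C}^2$, $X=\bigsqcup_n(0,1)_n$, pick continuous unitary paths $u_n\colon(0,1)\rightarrow U(2)$ which equal $1$ near both endpoints and which interchange the two coordinates at $t=1/2$, and let $\tau\colon\mathbb{C}^2\rightarrow\mathcal{Q}(\C_0(X,\M_2))$ be the Busby map $b\mapsto\left[\left(u_n\diag(b_1,b_2)u_n^*\right)_n\right]$, with $A$ the associated pullback. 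Every hypothesis of the lemma holds: $\dim X=1$, the components are clopen, $\tau(b)$ restricts to the constant $\diag(b_1,b_2)$ on each $\chi(X_n)$ because $u_n\rightarrow 1$ at both ends, and $R$ (with $R(x)=\partial X_n=\{z_1,z_2\}$ for $x\in X_n$) is lower semicontinuous with pointwise finite image. Yet for the element $a\in A$ lying over $b=(1,0)$ with $\iota(a)_{|\beta X_n}=u_n\diag(1,0)u_n^*$ one gets $\|\iota(a)(1/2)-V_{n,a}\|=\|\diag(0,1)-\diag(1,0)\|=1$ for \emph{every} $n$. The lemma is still true here, but only via the nontrivial unitaries $U_n=u_n^*\in\C(\alpha X_n,\M_2)$; the conjugation cannot be dispensed with.

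A further warning sign is that your argument never invokes $\dim X\leq 1$, whereas the conclusion genuinely depends on it: over a two-dimensional component one can arrange a projection-valued function representing a nontrivial Bott class, which is not conjugate to a constant by any unitary in $\C(\alpha X_i,\M_N)$. In the paper's proof, lower semicontinuity of $R$ is used for a different purpose than the one you assign to it: it shows that for almost every $i$ the \emph{whole} component $X_i$ (not merely its corona) lies in a small neighbourhood on which $\ev_x\circ\iota$ approximately factors through a fixed finite-dimensional quotient of $A$. One then still has to conjugate the resulting finite-dimensional subalgebra of $\C(\alpha X_i,\M_N)$ into the constant functions, which is exactly Lemma \ref{lemma diagonal} and is where one-dimensionality enters. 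So the unitary $U_i$ is an essential feature of the statement, not a removable normalization, and the proposal does not establish the lemma.
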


\begin{proof}
We may assume that $A$ is unital by Lemma \ref{lemma finite extension}. Let a finite set $\mathcal{G}\subset A$ and $\epsilon>0$ be given. For each $x\in\beta X$, we write $F_x=\image(\ev_x\circ\iota)\subseteq\M_N$ and
\[T_1(F_x)=\{f\in\C([0,1],F_x)\colon f(0)\in\mathbb{C}\cdot 1_{F_x}\},\]
\[S_1(F_x)=\{f\in\C_0([0,1),F_x)\colon f(0)\in\mathbb{C}\cdot 1_{F_x}\}.\]
Further let $h_\eta\in\C_0[0,1)$ denote the function $t\mapsto \max\{1-t-\eta,0\}$. Using the Urysohn-type result \cite[Theorem 2.3.3]{ELP98}, we find for each $x\in\beta X$ a commuting diagram
\[\xymatrix{
    0 \ar[r] & J_x \ar[r] & A \ar[r]^{\ev_x\circ\iota} & F_x \ar[r] & 0 \\
    0 \ar[r] & S(\mathbb{C},F_x) \ar[u]^{\varphi_x} \ar[r] & T(\mathbb{C},F_x) \ar[u]^{\overline{\varphi}_x} \ar[r]_{\ev_1} & F_x \ar@{=}[u] \ar[r] \ar@/_1pc/@{..>}[l]_{s_x}& 0
}\]
such that $\overline{\varphi}_x$ is unital and $\varphi_x$ is proper. Let $s_x$ be any split for the lower sequence satisfiying $s_x(b)(t)=b$ for $t\geq 1/2$.

Now consider
\[V_{x,\delta}=\{y\in\beta X\colon\quad (\ev_y\circ\iota)(\overline{\varphi}_x(h_\delta))=0\}\]
which is, for $\delta>0$, a closed neighborhood of $x$ in $\beta X$. Note that by assumption $\chi(X_i)\cap V_{x,\delta}\neq\emptyset$ implies $\chi(X_i)\subseteq V_{x,\delta}$. We further claim the following: For almost every $i\in I$ the inclusion $\chi(X_i)\subseteq V_{x,\delta}$ implies $X_i\subset V_{x,2\delta}$. Assume otherwise, then we find pairwise different $i_n\in I$, points $x_n\in X_{i_n}$ and some $1\leq j\leq m$ such that $\chi(X_{i_n})\subseteq V_{x,\delta}$ while $x_n\notin V_{x,2\delta}$ for all $n$. We may assume that $\ev_{x_n}\circ\iota$ converges pointwise to a representation $\pi$. Then 
\[\|\pi(\overline{\varphi}_x(h_\delta))\|=\lim_n\|(\ev_{x_n}\circ\iota\circ\overline{\varphi}_x)(h_\delta)\|\geq\delta\]
since $x_n\neq V_{x,2\delta}$ implies that $\ev_{x_n}\circ\iota\circ\overline{\varphi}$ contains irreducible summands corresponding to evaluations at points $t$ with $t<1-2\delta$. On the other hand, since the retract map $R$ is lower semicontinuous, we find each irreducible summand of $\pi$ to be the limit of irreducible subrepresentations $\varrho_n$ of $\ev_{y_n}\circ\iota$ where $y_n\in\chi(X_{i_n})\subseteq V_{x,\delta}$. Hence
\[\|\pi(\overline{\varphi}_x(h_\delta))\|\leq\liminf_n\|\varrho_n(\overline{\varphi}_x(h_\delta))\|=0\]
by \ref{lemma top prim}, giving a contradiction and thereby proving our claim.

Since $\varphi_x$ is proper, we have $J_x=\overline{\bigcup_{\eta>0}\her(\varphi_x(h_\eta))}$. Hence there exists $1/2>\delta(x)>0$ such that 
\[\inf\left\{\|(a-(\overline{\varphi}_x\circ s_x\circ\ev_x\circ\iota)(a))-b\|\colon b\in\her(\varphi_x(h_{2\delta(x)}))\right\}<\frac{\epsilon}{2}\]
for all $a\in\mathcal{G}$. By compactness of $\chi(X)$, we find $x_1,...,x_m$ such that 
\[\chi(X)\subseteq\bigcup_{j=1}^mV_{x_j,\delta(x_j)}.\]
Then by the claim proved earlier, for almost every $i$ with $\chi(X_i)\subseteq V_{x_j,\delta(x_j)}$ we have a factorization as indicated
\[\xymatrix{
    A \ar[d]_{\ev_{x_j}\circ\iota} \ar@{->>}[drr]^{\pi_j} \ar[rr]^\iota && \prod_i\C(\alpha X_i,\M_N) \ar[r] & \C(\alpha X_i,\M_N), \\
    F_{x_j} \ar[rr]_{\pi_j\circ\overline{\varphi}_{x_j}\circ s_{x_j}} && A/\langle\overline{\varphi}_{x_j}(h_{2\delta(x_j)})\rangle \ar@{-->}[ur]_{\overline{\iota}_i}
}\]
where $\langle\overline{\varphi}_{x_j}(h_{2\delta(x_j)})\rangle$ denotes the ideal generated by $\overline{\varphi}_{x_j}(h_{2\delta(x_j)})$ and $\pi_j$ the corresponding quotient map. By the choice of $\delta(x_j)$, the lower left triangle commutes up to $\epsilon/2$ on the finite set $\mathcal{G}$. Also note that the map $\pi_j\circ\overline{\varphi}_{x_j}\circ s_{x_j}$ is multiplicative.

Finally, by Lemma \ref{lemma diagonal} there exists a unitary $U_i\in\C(\alpha X_i,\M_N)$ such that $\Ad(U_i)\circ(\overline{\iota}_i\circ\pi_j\circ\overline{\varphi}_{x_j}\circ s_{x_j})$ is a constant embedding. Of course, we may arrange $U(\infty)=1$. We then verify
\[\begin{array}{rl}
  & \|(U_i\iota(a)_{|\beta X_i}U_i^*)(x)-\iota(a)(y)\| \\
  \leq & \|(U_i(\overline{\iota}_i\circ\pi_j\circ\overline{\varphi}_{x_j}\circ s_{x_j})((\ev_{x_j}\circ\iota)(a))U_i^*)(x)-(\overline{\iota}_i\circ\pi_j)(a)(y)\| + \frac{\epsilon}{2}\\
  \leq & \|(U_i(\overline{\iota}_i\circ\pi_j\circ\overline{\varphi}_{x_j}\circ s_{x_j})((\ev_{x_j}\circ\iota)(a))U_i^*)(y)-(\overline{\iota}_i\circ\pi_j)(a)(y)\| + \frac{\epsilon}{2}\\
  = & \|(\overline{\iota}_i\circ(\pi_j\circ\overline{\varphi}_{x_j}\circ s_{x_j})\circ(\ev_{x_j}\circ\iota))(a)(y)-(\overline{\iota}_i\circ\pi_j)(a)(y)\| + \frac{\epsilon}{2}\\
  \leq & \epsilon. \\
\end{array}\]
Applying this procedure to each of the finitely many points $x_1,...,x_m$, the statement of the lemma follows.
\end{proof}

Using Lemma \ref{lemma approx unitary} we can now construct a split for our sequence of interest - at least in the case of $\tau(B)$ being constant on each $\chi(X_i)$.

\begin{corollary}\label{cor projective split}
If $0\rightarrow\C_0(X,\M_N)\rightarrow A\rightarrow B\rightarrow 0$ is a short exact sequence of separable $C^*$-algebras such that the assumptions of Lemma \ref{lemma approx unitary} hold, then this sequence splits.
\end{corollary}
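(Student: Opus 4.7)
To split the sequence, it suffices to construct a $*$-homomorphism $s \colon B \to \C(\beta X,\M_N) = \mathcal{M}(\C_0(X,\M_N))$ lifting the Busby map $\tau$; via the pullback description of the extension, such a lift corresponds to an honest splitting $B \to A$. Since $\tau(b)$ is constant on each corona component $\chi(X_i)$, write $\tau_i \colon B \to \M_N$ for its constant value there. The obvious first candidate is the ``constant lift''
\[
  s_0 \colon B \to \prod_i \C(\beta X_i,\M_N) = \C(\beta X,\M_N), \qquad s_0(b) := (\tau_i(b) \cdot 1_{\beta X_i})_i,
\]
which is a well-defined $*$-homomorphism of norm at most $\|b\|$.

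The issue is that $s_0$ need not lift $\tau$ exactly. For any $b \in B$ and any preimage $a \in A$, the $i$-th component of $\iota(a) - s_0(b)$ lies in $\C_0(X_i,\M_N)$ because it vanishes on $\chi(X_i)$ by the constancy hypothesis. Hence $\iota(a) - s_0(b) \in \prod_i \C_0(X_i,\M_N)$; but for $s_0$ to actually lift $\tau$ one needs the stronger statement that this difference lies in the direct sum $\bigoplus_i \C_0(X_i, \M_N) = \C_0(X,\M_N)$, i.e.\ that the sup norm of its $i$-th component tends to $0$ as $i \to \infty$. Lemma \ref{lemma approx unitary} provides precisely this kind of control \emph{after conjugation}: applied to $\mathcal{G} = \{a\}$ and any $\epsilon > 0$, it produces unitaries $U_i \in \C(\alpha X_i,\M_N)$ (with $U_i(\infty) = 1$ by the freedom in its proof) such that $\|(U_i \iota(a) U_i^*)(x) - \tau_i(b)\| < \epsilon$ for all $x \in \beta X_i$ and almost every $i$.

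To bootstrap these approximate conjugated estimates into a genuine lift, I plan to run a one-sided approximate intertwining in the spirit of the proof of Lemma \ref{lemma limit criterium}. Choose a dense sequence $(b_n)_n \subset B$ with lifts $(a_n)_n \subset A$, set $\mathcal{G}_n := \{a_1,\ldots,a_n\}$ and $\epsilon_n := 2^{-n}$, and inductively assemble unitaries $V^{(n)} = (V^{(n)}_i)_i \in \C(\beta X,\M_N)$ via successive applications of Lemma \ref{lemma approx unitary}, arranged so that consecutive $V^{(n)}$'s differ by small corrections on the new element $a_n$ without disturbing the earlier approximations. The conjugated maps $\Ad(V^{(n)}) \circ \iota \colon A \to \C(\beta X,\M_N)$ are then Cauchy on each $a_k$, and their pointwise limit descends to a well-defined $*$-homomorphism $s \colon B \to \C(\beta X,\M_N)$ lifting $\tau$, yielding the desired split. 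The main technical obstacle is the inductive refinement of the $V^{(n)}$'s: each new step must improve the approximation on $a_n$ while preserving those for $a_1, \ldots, a_{n-1}$. This is the standard cost of intertwining arguments and is controlled by the summability $\sum_n \epsilon_n < \infty$, together with the observation that the finitely many ``bad'' components tolerated at each stage are harmless because they can be absorbed into $\C_0(X,\M_N)$ in the limit.
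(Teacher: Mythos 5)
Your setup coincides with the paper's: the constant lift $s_0(b)=(\tau_i(b)\cdot 1_{\beta X_i})_i$ is the right candidate, and you correctly isolate the obstruction, namely that $\iota(a)-s_0(\pi(a))$ lies a priori only in $\prod_i\C_0(X_i,\M_N)$ and must be pushed into $\bigoplus_i\C_0(X_i,\M_N)$, which is exactly what Lemma \ref{lemma approx unitary} supplies after conjugation. The gap is in the assembly step. First, Lemma \ref{lemma approx unitary} gives no relation whatsoever between the unitaries produced for $(\mathcal{G}_n,\epsilon_n)$ and those produced for $(\mathcal{G}_{n+1},\epsilon_{n+1})$; there is no approximate-uniqueness ingredient available that would let you correct $V^{(n+1)}$ so as to agree with $V^{(n)}$ on $a_1,\dots,a_{n-1}$, so the Cauchy claim for $\Ad(V^{(n)})\circ\iota$ is unsupported. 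Second, even granting convergence, the limit of $\Ad(V^{(n)})\circ\iota$ cannot descend to $B$: each $V^{(n)}$ is a multiplier of $\C_0(X,\M_N)$, so $\Ad(V^{(n)})\circ\iota$ restricts to an automorphism of the ideal, and any pointwise limit remains nonzero there. The map that descends is $s_0$ itself; the unitaries are needed only to verify that $\varrho\circ\Ad(U^*)\circ s_0=\tau$.

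No intertwining is needed, because the unitaries for distinct components act on orthogonal summands of $\prod_i\C(\beta X_i,\M_N)$ and hence require no mutual coherence; this is how the paper proceeds. Fix a dense sequence $(a_n)_n\subset A$, let $I_n\subseteq I$ be a decreasing sequence of cofinite subsets on which Lemma \ref{lemma approx unitary} applies for $(\{a_1,\dots,a_n\},1/n)$, and choose each $U_i$ once and for all according to the last stage $n$ with $i\in I_n$ (with $U_i=1$ for $i\notin I_1$ and $U_i(\infty)=1$ throughout). For each fixed $a_k$ and each $\epsilon>0$, the $i$-th component of $U\iota(a_k)U^*-s_0(\pi(a_k))$ then has norm less than $\epsilon$ for all but finitely many $i$ and vanishes on every $\chi(X_i)$, so this difference lies \emph{exactly} in $\bigoplus_i\C_0(X_i,\M_N)=\C_0(X,\M_N)$; by density the same holds for every $a\in A$. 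Consequently $s:=\Ad(U^*)\circ s_0$ is a genuine $^*$-homomorphism with $\varrho\circ s=\tau$, and the pullback description of $A$ over $\varrho$ and $\tau$ converts it into a multiplicative splitting $B\rightarrow A$. Your closing remark that the finitely many bad components are harmless is precisely the right observation --- but it should be used to make a single diagonal choice of $U$, not to run an intertwining.
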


\begin{proof}
Let $\tau\colon B\rightarrow\mathcal{Q}(\C_0(X,\M_N))=\C(\chi(X),\M_N)$ denote the Busby map of the sequence. We have the canonical commutative diagram 
\[\xymatrix{
  0 \ar[r] & \bigoplus_i \C_0(X_i,\M_N) \ar[r] & \C(\beta X,\M_N) \ar[r]^\varrho & \C(\chi(X),\M_N) \ar[r] & 0 \\ 
  0 \ar[r] & \C_0(X,\M_N) \ar[r] \ar@{=}[u] & A \ar[r]^\pi \ar[u]^\iota & B \ar[r] \ar[u]^\tau & 0.
}\]
Choose points $y_i\in\chi(X_i)$ for every $i\in I$. Using separability of $A$ and Lemma \ref{lemma approx unitary}, we find a unitary $U\in\prod_i\C(\alpha X_i,\M_N)\subset\prod_i\C(\beta X_i,\M_N)=\C(\beta X,\M_N)$ with
\[U\iota(a)U^*-\prod_i\iota(a)(y_i)\cdot 1_{\alpha X_i}\in\bigoplus_i\C_0(X_i,\M_N)\]
for all $a\in A$ (where $\iota(a)(y_i)\cdot 1_{\alpha X_i}$ denotes the function on $\alpha X_i$ with constant value $\iota(a)(y_i)$). By setting $s(\pi(a))=U^*\left(\prod_i(\iota(a)(y_i)\cdot 1_{\alpha X_i}\right)U$ we find $s\colon B\rightarrow\C(\beta X,\M_N)$ with $(\varrho\circ s)(\pi(a))=(\varrho\circ\iota)(a)=\tau(\pi(a))$ by the formula above. Identifying $A$ with the pullback over $\varrho$ and $\tau$, we can regard $s$ as a map from $B$ to $A$ with $\pi\circ s=\id_B$, i.e. we have constructed a split for the sequence. 
\end{proof}

As the example $0 \rightarrow \C_0(0,1) \rightarrow \C[0,1] \rightarrow \mathbb{C}^2 \rightarrow 0$ shows, we cannot expect extensions by $\C_0(X,M_N)$ to split if the corona space of $X$ (or of one of its components) is not connected. We will now deal with these components and show that one can still obtain a split $s\colon B\rightarrow A$ which, though not multiplicative in general, has still good multiplicative properties.

 \begin{lemma}\label{lemma almost split}
Let $0 \rightarrow \C_0(X,\M_N) \rightarrow A \rightarrow B \rightarrow 0$ be a short exact sequence with Busby map $\tau$. Assume that the corona space $\chi(X)$ of $X$ has only finitely many connected components and that the image of $\tau$ is contained in the locally constant functions on $\chi(X)$. Then there exists a compact set $K\subset X$ and a completely positive split $s\colon B \rightarrow \C(\beta X,\M_N)$ which is multiplicative outside of an open set $U\subset K$.
\end{lemma}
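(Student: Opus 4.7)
The plan is to build $s$ explicitly as a sum of ``rank-one'' pieces, one per connected component of $\chi(X)$, glued together by a partition of unity on $\beta X$.

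First I would exploit the fact that $\chi(X)$ has only finitely many connected components $Y_1,\dots,Y_m$: being finitely many, each $Y_j$ is clopen in $\chi(X)$, and the locally-constant hypothesis on $\tau(B)$ then forces $\tau(b)|_{Y_j}$ to be a constant matrix $\tau_j(b)\in\M_N$, so that $\tau_j\colon B\to\M_N$ is a $^*$-homomorphism and $\tau=\bigoplus_{j=1}^m\tau_j$ under the obvious identification $\C(\chi(X),\M_N)\cong\prod_j\C(Y_j,\M_N)$. Using normality of $\beta X$ and the pairwise disjointness of the closed sets $Y_1,\dots,Y_m$, I would then choose open neighborhoods $W_j\supset Y_j$ with pairwise disjoint closures $\overline{W_j}$, inner neighborhoods $Y_j\subset V_j\subset\overline{V_j}\subset W_j$, and Urysohn functions $f_j\in\C(\beta X,[0,1])$ with $f_j\equiv 1$ on $\overline{V_j}$ and $\supp(f_j)\subset W_j$.

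With these ingredients in place, define
\[
s(b)\;:=\;\sum_{j=1}^m f_j\cdot \tau_j(b)\;\in\;\C(\beta X,\M_N).
\]
Complete positivity is clear pointwise, since for every $x\in\beta X$ the map $b\mapsto\sum_j f_j(x)\tau_j(b)$ is a nonnegative combination of c.p.\ maps into $\M_N$. That $s$ splits $\tau$ follows because for $x\in Y_k$ we have $f_j(x)=\delta_{jk}$, so $s(b)|_{Y_k}=\tau_k(b)=\tau(b)|_{Y_k}$, giving $\varrho\circ s=\tau$. For the multiplicativity defect, disjointness of the $\overline{W_j}$ yields, on each $W_j$,
\[
s(b_1b_2)(x)-s(b_1)(x)s(b_2)(x)\;=\;\bigl(f_j(x)-f_j(x)^2\bigr)\tau_j(b_1)\tau_j(b_2),
\]
which vanishes iff $f_j(x)\in\{0,1\}$, while outside $\bigcup_j W_j$ the map $s$ is identically zero and hence trivially multiplicative. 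Therefore $s$ is multiplicative outside the open set
\[
U\;:=\;\bigcup_{j=1}^m\bigl\{x\in W_j:0<f_j(x)<1\bigr\}\;\subset\;\bigcup_{j=1}^m(W_j\setminus\overline{V_j}).
\]

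The last point to verify, and the main subtlety, is that $U$ is enclosed in a compact subset $K$ \emph{of $X$}. I would take $K:=\bigcup_j\overline{W_j\setminus V_j}$, a finite union of closed subsets of the compact space $\beta X$, hence compact. The key geometric observation is that pairwise disjointness of the $\overline{W_k}$ together with $Y_k\subset W_k$ forces $\overline{W_j}\cap\chi(X)\subset Y_j$; consequently
\[
\overline{W_j\setminus V_j}\;\subset\;\overline{W_j}\setminus V_j\;\subset\;\overline{W_j}\setminus Y_j\;\subset\;X,
\]
so $K\subset X$ as required. This step is precisely where the finite-component hypothesis is essentially used: it is what makes ``locally constant'' amount to a finite decomposition $\tau=\bigoplus\tau_j$ and, correspondingly, permits the geometric separation of the $Y_j$ by sets with disjoint closures.
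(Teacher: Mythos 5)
Your proof is correct and follows essentially the same route as the paper's: both build $s(b)=\sum_j(\text{bump}_j)\cdot\tau_j(b)$ from the finite decomposition $\tau=\bigoplus_j\tau_j$ using $[0,1]$-valued functions with pairwise disjoint supports that equal $1$ on a neighborhood of each component $Y_j$, and both locate the multiplicativity defect in the open region where these functions take values strictly between $0$ and $1$, whose closure misses the corona and is therefore a compact subset of $X$. The only cosmetic difference is that the paper manufactures its bump functions by lifting the indicators $1_{Y_j}$ to pairwise orthogonal contractions $h_j\in\C(\beta X)$ and post-composing with a plateau function, whereas you invoke normality of $\beta X$ and Urysohn's lemma directly.
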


\begin{proof}
Let $\chi(X)=\bigcup_{k=1}^K Y_k$ be the decomposition of the corona space into its connected components. By assumption $\tau$ decomposes as $\oplus_{k=1}^K \tau_k$ with $\image(\tau_k)\subset\M_N\cdot 1_{Y_k}\subseteq\C(\chi(X),\M_N)$. Lift the indicator functions $1_{Y_1},\cdots,1_{Y_K}$ to pairwise orthogonal contractions $h_1,\cdots,h_K$ in $\C(\beta X,\mathbb{C}\cdot 1_{M_N})$ and let $f\colon[0,1]\rightarrow[0,1]$ be the continuous function which equals $1$ on $\left[\frac{1}{2},1\right]$, satisfies $f(0)=0$ and is linear in between. We define a completely positive map $s\colon B\rightarrow\C(\beta X,\M_N)$ by $s(b)(x)=\sum_{k=1}^K \tau_k(b)\cdot f(h_k)(x)$ and check that in the diagram 
\[\xymatrix{
  A \ar[r]^(0.3)\iota \ar[d] & \C(\beta X,\M_N) \ar[d] \\ 
  B \ar[r]^(0.3)\tau \ar@{-->}[ur]^s & \C(\chi(X),\M_N)
}\]
the right triangle commutes. Set $K=\bigcap_{k=1}^K h_k^{-1}([0,\frac{1}{2}])\subset X$, then $s$ is multiplicative outside of the open set $U=\bigcap_{k=1}^K h_k^{-1}([0,\frac{1}{2}))\subset K\subset X$.
\end{proof}

\begin{proposition}\label{prop almost split 2}
Let $0 \rightarrow \C_0(X,\M_N) \rightarrow A \rightarrow B \rightarrow 0$ be a short exact sequence of separable $C^*$-algebras with Busby map $\tau$. Assume that $X$ is at most one-dimensional and has clopen connected components $(X_i)_{i\in I}$. Further assume that each corona space $\chi(X_i)$ has only finitely many connected components and that $\chi(X_i)$ is connected for almost all $i\in I$. If for each $i\in I$ the image of $\tau_i\colon B \xrightarrow{\tau} \C(\chi(X),\M_N)\rightarrow\C(\chi(X_i),\M_N)$ is locally constant on $\chi(X_i)$ and the set-valued retract map $R\colon\prim(A)\rightarrow 2^{\prim(B)}$ as in \ref{def R} is lower semicontinuous, the following holds: There exists a compact set $K\subset X$ and a completely positive split $s\colon B\rightarrow\C(\beta X,\M_N)$ which is multiplicative outside of an open set $U\subset K$.
\end{proposition}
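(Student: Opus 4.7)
The plan is to split the extension into two pieces along a clopen partition of $X$. Let $I' \subseteq I$ be the finite set of indices for which $\chi(X_i)$ is disconnected, and set $Y' = \bigsqcup_{i \in I'} X_i$ and $Y'' = \bigsqcup_{i \in I \setminus I'} X_i$, both clopen in $X$. On the ``good'' piece $Y''$, every corona $\chi(X_i)$ is connected, so local constancy of $\tau_i(B)$ forces it to be genuinely constant, and an honest $^*$-homomorphism lift is available via Corollary \ref{cor projective split}. On the ``bad'' piece $Y'$, there are only finitely many components in total, the combined corona $\chi(Y')$ has only finitely many components as well, and Lemma \ref{lemma almost split} delivers a completely positive lift which is multiplicative outside a compact set. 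Taking the orthogonal sum of the two maps gives the claimed $s$.

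Since $Y'$ and $Y''$ are clopen in $X$, the ideal decomposes as $\C_0(X,\M_N) = \C_0(Y',\M_N) \oplus \C_0(Y'',\M_N)$, and because $\beta$ commutes with finite clopen disjoint unions one also has $\C(\chi(X),\M_N) = \C(\chi(Y'),\M_N) \oplus \C(\chi(Y''),\M_N)$; hence the Busby map splits as $\tau = \tau' \oplus \tau''$. For the $Y'$-part, $\chi(Y') = \bigsqcup_{i \in I'} \chi(X_i)$ is a finite disjoint union of spaces each with only finitely many connected components, and $\tau'(B)$ is locally constant on it, so Lemma \ref{lemma almost split} applied to the induced subextension $0 \rightarrow \C_0(Y',\M_N) \rightarrow A/\C_0(Y'',\M_N) \rightarrow B \rightarrow 0$ produces a completely positive lift $s' \colon B \rightarrow \C(\beta Y',\M_N)$ of $\tau'$ which is multiplicative outside an open set $U$ contained in some compact $K \subseteq Y'$.

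For the $Y''$-part, I would verify the hypotheses of Corollary \ref{cor projective split} for the induced subextension $0 \rightarrow \C_0(Y'',\M_N) \rightarrow A/\C_0(Y',\M_N) \rightarrow B \rightarrow 0$: the space $Y''$ inherits being at most one-dimensional with clopen components $(X_i)_{i \in I \setminus I'}$ from $X$; each $\tau_i(B)$, being locally constant on the connected space $\chi(X_i)$, is actually constant; and the associated retract map is the restriction of $R$ to the closed subset $\prim(A) \setminus Y'$ of $\prim(A)$ and so remains lower semicontinuous. Corollary \ref{cor projective split} then yields a $^*$-homomorphism lift $s''\colon B \rightarrow \C(\beta Y'',\M_N)$ of $\tau''$. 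Setting $s = s' \oplus s''\colon B \rightarrow \C(\beta X,\M_N)$ gives a completely positive lift of $\tau$ which is multiplicative on $X \setminus U$, because $s''$ is multiplicative globally on $Y''$ while $s'$ is multiplicative on $Y' \setminus U$; with this choice of $K$ and $U$ the conclusion follows.

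The mathematical substance of the argument is already encoded in the two preceding results, so the main task is to package the extension into its two natural pieces and check that the required hypotheses transfer to each. The only points needing a small check are the decomposition of the corona along the clopen partition $X = Y' \sqcup Y''$ (which fails for infinite clopen partitions but is harmless here since one of the two pieces is finite) and the preservation of lower semicontinuity of the restricted retract map; no deeper obstacle arises.
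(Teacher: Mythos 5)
Your argument is correct and is essentially identical to the paper's proof: both split $X$ into the finitely many components with disconnected corona and the cofinite remainder, apply Lemma \ref{lemma almost split} to the former and Corollary \ref{cor projective split} (via Lemma \ref{lemma approx unitary}) to the latter, and take the direct sum of the two lifts. Your checks that the corona and the Busby map decompose along the two-piece clopen partition and that lower semicontinuity of $R$ passes to the restricted retract map are exactly the points that need verifying, and they go through as you say.
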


\begin{proof}
Let $I_0\subseteq I$ be a finite set such that $\chi(X_i)$ is connected for every $i\in I_1:=I\backslash I_0$. We may then study the extensions ($*=0$ or 1)
\[\xymatrix{
  0 \ar[r] & \bigoplus\limits_{i\in I}\C_0(X_i,\M_N) \ar[r] \ar@{->>}[d]^{pr_{I_*}} & A \ar[r] \ar@{=}[d]& B \ar[r] \ar[d]^{\varphi_*} & 0 & [\tau] \\ 
  0 \ar[r] & \bigoplus\limits_{i\in I_*}\C_0(X_i,\M_N) \ar[r] & A \ar[r] \ar[d]^{\iota_*}& A/\bigoplus\limits_{i\in I_*}\C_0(X_i,\M_N) \ar[r] \ar[d]^{\tau_*} \ar@{-->}[dl]_{s_*} & 0 & [\tau_*] \\
  0 \ar[r] & \bigoplus\limits_{i\in I_*}\C_0(X_i,\M_N) \ar[r] \ar@{=}[u]& \prod\limits_{i\in I_*}\C(\beta X_i,\M_N) \ar[r] & \frac{\prod_{i\in I_*}\C(\beta X_i,\M_N)}{\bigoplus_{i\in I_*} \C_0(X_i,\M_N)} \ar[r] & 0
}\]
with Busby maps $\tau_*$. Denote the map $B\rightarrow A/\bigoplus_{i\in I_*}\C_0(X_i,\M_N)$ induced by the projection $pr_{I_*}$ by $\varphi_*$. It is now easy to check that for $*=1$ the short exact sequence in the middle row satisfies the assumptions of Lemma \ref{lemma approx unitary} and hence admits a splitting $s_1$ by Corollary \ref{cor projective split}. For $*=0$, we apply Lemma \ref{lemma almost split} to obtain a compact set $K\subset\bigsqcup_{i\in I_0}X_i$ and a completely positive split $s_0$ which is multiplicative outside of an open set $U\subset K\subset\bigsqcup_{i\in I_0}X_i$. Setting $s=s_0\circ\varphi_0\oplus s_1\circ\varphi_1$, we now get a split for the original sequence. In particular, $\varrho\circ s=\tau$ holds due to the commutative diagram
\[\xymatrix{
    0 \ar[r] & \C_0(X,\M_N) \ar[r] \ar@{=}[d] & A \ar[r] \ar[d]^{\iota_0\oplus\iota_1} & B \ar[r] \ar[d]_{\tau_0\oplus\tau_1} \ar@{-->}[dl]_s \ar@/^2pc/[dd]^(0.7)\tau|(0.37)\hole|(0.4)\hole|(0.45)\hole|(0.5)\hole|(0.55)\hole & 0 \\
    0 \ar[r] & \bigoplus\limits_{*=0,1}\bigoplus\limits_{i\in I_*}\C_0(X_i,\M_N) \ar[r] \ar@{=}[d] & \bigoplus\limits_{*=0,1}\prod\limits_{i\in I_*}\C(\beta X_i,\M_N) \ar[r] \ar@{=}[d] & \bigoplus\limits_{*=0,1}\frac{\prod_{i\in I_*}\C(\beta X_i,\M_N)}{\bigoplus_{i\in I_*}\C_0(X_i,\M_N)} \ar[r] \ar@{=}[d] & 0 \\ 
    0 \ar[r] & \bigoplus\limits_{i\in I}\C_0(X_i,\M_N) \ar[r] & \prod\limits_{i\in I}\C(\beta X_i,\M_N) \ar[r]^\varrho & \frac{\prod_{i\in I}\C(\beta X_i,\M_N)}{\bigoplus_{i\in I}\C_0(X_i,\M_N)} \ar[r] & 0.
}\] 
\end{proof}

Summarizing the results of this section, we obtain following.

\begin{theorem}\label{thm semiprojective split}
Let a short exact sequence of separable $C^*$-algebras 
\[\xymatrix{
    0 \ar[r] & \C_0(X,\M_N) \ar[r] & A \ar[r] & B \ar[r] & 0 & [\tau]
}\] 
with Busby map $\tau$ be given. Assume that $X$ satisfies the conditions
\begin{enumerate}
 \item $\dim X\leq 1$,
 \item the connected components $(X_i)_{i\in I}$ of $X$ are clopen,
 \item each $\chi(X_i)$ has finitely many connected components,
 \item almost all $\chi(X_i)$ are connected,
\end{enumerate}
then the following holds: If the associated set-valued retract map $R\colon \prim(A)\rightarrow 2^{\prim(B)}$ given as in \ref{def R} by
\[R(z)=
\begin{cases}
  z\;\text{if}\;z\in \prim(B) \\ 
  \partial X_i=\overline{X_i}\backslash X_i\;\text{if}\;z\in X_i\subseteq X
\end{cases}\]
is lower semicontinuous and has pointwise finite image, then there exists a compact set $K\subset X$ and a completely positive split $s\colon B\rightarrow A$ for the sequence such that the composition 
\[\xymatrix{
  B \ar[r]^s & A \ar[r] & \mathcal{M}(\C_0(X,\M_N))=\C_b(X,\M_N)
}\] 
is multiplicative outside of an open set $U\subset K$.
\end{theorem}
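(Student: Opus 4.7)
The plan is to combine the two preceding propositions: Proposition \ref{prop diagonal Busby} to diagonalize the Busby map so that each $\tau_i$ takes values in the locally constant functions on $\chi(X_i)$, and then Proposition \ref{prop almost split 2} to produce the desired almost-multiplicative completely positive split for the resulting extension.

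First I would verify the hypotheses of Proposition \ref{prop diagonal Busby}: the conditions $\dim X \leq 1$, clopenness of the components, and finiteness of the number of components of each $\chi(X_i)$ are assumed, while pointwise finite image of $R$ combined with Lemma \ref{lemma Busby vs boundary}(iii) guarantees that each $\tau_i\colon B \to \C(\chi(X_i), \M_N)$ has finite-dimensional image. Applying the proposition yields a unitary $U \in \C(\beta X, \M_N)$ such that the image of each component of $\tau' := \Ad(\varrho(U)) \circ \tau$ consists of locally constant functions on $\chi(X_i)$.

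Next I would identify the extension built from $\tau'$ with the original one. Since $\Ad(U)$ is a $*$-automorphism of $\mathcal{M}(\C_0(X, \M_N)) = \C(\beta X, \M_N)$ that restricts to an automorphism of the ideal $\C_0(X, \M_N)$ and induces $\Ad(\varrho(U))$ on the corona, the pullback $C^*$-algebra $A'$ associated to $\tau'$ is canonically isomorphic to $A$ in an extension-preserving way. In particular, the retract map associated to $A'$ coincides with $R$ and is therefore still lower semicontinuous of pointwise finite image.

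Finally I would apply Proposition \ref{prop almost split 2} to the extension $0 \to \C_0(X, \M_N) \to A' \to B \to 0$: conditions (1)--(4) on $X$ are part of our assumptions, $R$ is lower semicontinuous, and local constancy of each $\tau'_i$ was arranged in the first step. This produces a compact set $K \subset X$, an open set $U \subset K$, and a completely positive split $s'\colon B \to A'$ (equivalently, a completely positive lift $B \to \C(\beta X, \M_N)$ composing with $\varrho$ to give $\tau'$) whose image in $\C_b(X, \M_N)$ is multiplicative outside $U$. Transporting $s'$ back through the isomorphism $A \cong A'$, i.e.\ composing with $\Ad(U^*)$ on the multiplier level, yields the required split $s\colon B \to A$; since $\Ad(U^*)$ is a $*$-automorphism of $\C_b(X, \M_N)$, multiplicativity of $B \to A \to \C_b(X, \M_N)$ outside $U$ is preserved. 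No genuine obstacle appears at this final stage, as the substantive work has already been carried out in Propositions \ref{prop diagonal Busby} and \ref{prop almost split 2}.
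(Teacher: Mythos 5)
Your proposal is correct and follows essentially the same route as the paper: reduce to a strongly unitarily equivalent extension via Proposition \ref{prop diagonal Busby} (whose hypotheses you verify exactly as the paper does, using Lemma \ref{lemma Busby vs boundary} for finite-dimensionality of the $\tau_i$), observe that the retract map $R$ is unchanged, and then apply Proposition \ref{prop almost split 2}. Your extra detail on transporting the split back through $\Ad(U)$ just makes explicit what the paper compresses into the phrase ``replace the given extension by any strongly unitarily equivalent one.''
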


\begin{proof}
Note that we can replace the given extension by any strongly unitarily equivalent one (in sense of \cite[II.8.4.12]{Bla06}) without changing the retract map $R$. Hence, by Proposition \ref{prop diagonal Busby}, we may assume that the image of $\tau$ is locally constant on each $\chi(X_i)$. Now Proposition \ref{prop almost split 2} provides a split $s$ with the desired properties.  
\end{proof}

\subsubsection{Retract maps for semiprojective extensions}
We now verify the regularity properties for the set-valued retract map $R\colon \prim(A)\rightarrow 2^{\prim(B)}$ associated to an extension $0\rightarrow\C_0(X,\M_N)\rightarrow A\rightarrow B\rightarrow 0$ in the case that both the ideal $\C_0(X,\M_N)$ and the extension $A$ are semiprojective $C^*$-algebras.\\

First we need the following definition which is an adaption of \ref{def core} and \ref{def fpm} to the setting of pointed spaces.

\begin{definition}\label{def extended core}
Let $(X,x_0)$ be a pointed one-dimensional Peano continuum and $r\colon X\rightarrow \core(X)$ the first point map onto the core of $X$ as in \ref{def fpm} (where we choose $\core(X)$ to be any point $x\neq x_0$ if $X$ is contractible). Denote the unique arc from $x_0$ to $r(x_0)$ by $[x_0,r(x_0)]$, then we say that 
\[\core(X,x_0):=\core(X)\cup[x_0,r(x_0)]\]
is the core of $(X,x_0)$. It is the smallest subcontinuum of $X$ which contains both $\core(X)$ and the point $x_0$.
\end{definition}

Now let $X$ be a non-compact space with the property that its one-point compactification $\alpha X=X\cup\{\infty\}$ is a one-dimensional ANR-space. We are interested in the structure of the space $X$ at around infinitity (i.e. outside of large compact sets) which is reflected in its corona space $\chi(X)=\beta X\backslash X$. At least some information about $\chi(X)$ can be obtained by studying neighborhoods of the point $\infty$ in $\alpha X$. The following lemma describes some special neighborhoods which relate nicely to the finite graph $\core(\alpha X,\infty)$.

\begin{lemma}\label{lemma 1-ANR at infinity}
Let $X$ be a connected, non-compact space such that its one-point compactification $\alpha X=X\cup\{\infty\}$ is a one-dimensional ANR-space.Fix a geodesic metric $d$ on $\alpha X$, then for any compact set $C\subset X\backslash\{x_0\}$ there exists a closed neighborhood $V$ of $\infty$ with the following properties:
\renewcommand{\labelenumi}{(\roman{enumi})}
\begin{enumerate}
  \item $\{x\in X\colon d(x,\infty)\leq\epsilon\}\subseteq V\subseteq X\backslash C$ for some $\epsilon>0$.
  \item $V\cap \core(\alpha X,\infty)$ is homeomorphic to the space of $K$ many intervals $[0,1]$ glued together at the $0$-endpoints with $K=\order(\infty,\core(\alpha X,\infty))$. 
  The gluing point corresponds to $\infty$ under this identification.
\end{enumerate}
Let $D^{(k)}\subseteq V$ denote the $k$-th copy of $[0,1]$ under the identification described above and let $r$ be the first point map onto $\core(\alpha X,\infty)$. We can further arrange:
\renewcommand{\labelenumi}{(\roman{enumi})}
\begin{enumerate}\setcounter{enumi}{2}
  \item $V=\bigcup_{k=1}^K r^{-1}\left(D^{(k)}\right)$ and $r^{-1}\left(D^{(k\phantom{'})}\right)\cap r^{-1}\left(D^{(k')}\right)=\{\infty\}$ for $k\neq k'$.
  \item The connected components of $V\backslash\{\infty\}$ are given by $V^{(k)}:=r^{-1}\left(D^{(k)}\backslash\{\infty\}\right)$.
  \item Every path in $V$ from $x\in V^{(k)}$ to $x'\in V^{(k')}$ with $k\neq k'$ contains $\infty$.
\end{enumerate}
\end{lemma}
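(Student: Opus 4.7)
The plan is to construct $V$ as the preimage under the first point map $r\colon\alpha X\to\core(\alpha X,\infty)$ of a small star-shaped neighborhood $W$ of $\infty$ in the finite graph $\core(\alpha X,\infty)$. The main obstacle will be establishing the key fact that $r^{-1}(\infty)=\{\infty\}$; without this, properties (iii) and (iv) break down, since $r^{-1}(D^{(k)})\cap r^{-1}(D^{(k')})$ would be too large. I would prove $r^{-1}(\infty)=\{\infty\}$ by contradiction, crucially using that $X$ is connected. Suppose there is $p\in X$ with $r(p)=\infty$; then $S:=r^{-1}(\infty)\cap X$ is a nonempty closed subset of $X$. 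To show $S$ is also open, fix $b\in S$ and choose a path-connected open neighborhood $U$ of $b$ in $\alpha X$ with $U\cap\core(\alpha X,\infty)=\emptyset$ (possible because $b\notin\core(\alpha X,\infty)$ and $\alpha X$ is locally path-connected). For any $u\in U$, concatenating a path in $U$ from $u$ to $b$ with the arc from $b$ to $\infty$ supplied by Lemma~\ref{lemma arc to core} yields a path from $u$ to $\infty$ whose image meets $\core(\alpha X,\infty)$ only at $\infty$; Lemma~\ref{lemma arc to core}(ii) then forces $r(u)=\infty$, so $u\in S$. Thus $S$ is clopen in $X$. But $\core(\alpha X,\infty)\setminus\{\infty\}$ is a nonempty subset of $X\setminus S$, contradicting connectedness.

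Granted $r^{-1}(\infty)=\{\infty\}$, the rest is bookkeeping. By Theorem~\ref{thm ST} the extended core $\core(\alpha X,\infty)$ is a finite graph, and since $\infty$ has order $K$ in it I can choose a small closed star-neighborhood $W=D^{(1)}\cup\cdots\cup D^{(K)}$ of $\infty$ where the $D^{(k)}$ are arcs meeting pairwise only at $\infty$. Because $C$ is compact in $X$ and $r^{-1}(\infty)=\{\infty\}$, the image $r(C)$ is compact in $\core(\alpha X,\infty)\setminus\{\infty\}$, so after shrinking $W$ I may assume $W\cap r(C)=\emptyset$. Setting $V:=r^{-1}(W)$, continuity of $r$ and compactness of $\alpha X$ give (i), while $V\cap\core(\alpha X,\infty)=W$ (since $r$ is the identity on the core) gives (ii). For (iii), $V=\bigcup_k r^{-1}(D^{(k)})$ is automatic, and $r^{-1}(D^{(k)})\cap r^{-1}(D^{(k')})=r^{-1}(\{\infty\})=\{\infty\}$ for $k\neq k'$ by the key observation.

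For (iv), I would show each $V^{(k)}=r^{-1}(D^{(k)}\setminus\{\infty\})$ is path-connected: given $p\in V^{(k)}$, the unique arc from $p$ to $r(p)\in D^{(k)}\setminus\{\infty\}$ from Lemma~\ref{lemma arc to core} has interior disjoint from $\core(\alpha X,\infty)$, so Lemma~\ref{lemma arc to core}(ii) applied to any subarc ending at $r(p)$ yields $r\equiv r(p)$ along the arc, placing it inside $r^{-1}(r(p))\subseteq V^{(k)}$. Combined with path-connectedness of $D^{(k)}\setminus\{\infty\}\subseteq V^{(k)}$, this makes $V^{(k)}$ path-connected. The identity $V\setminus\{\infty\}=\bigsqcup_k V^{(k)}$ follows from (iii), and since the finitely many $V^{(k)}$ are each closed in $V\setminus\{\infty\}$ (preimages of closed sets under $r$), they are also open, hence form the connected components. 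Property (v) is then immediate: a path in $V$ avoiding $\infty$ is a connected subset of the disjoint union $\bigsqcup_k V^{(k)}$ of clopen pieces, and therefore lies in a single $V^{(k)}$.
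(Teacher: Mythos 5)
Your proof is correct and follows essentially the same strategy as the paper: both hinge on establishing $r^{-1}(\{\infty\})=\{\infty\}$ via a clopen-plus-connectedness argument and then take $V$ to be the $r$-preimage of a small star neighbourhood of $\infty$ in the finite graph $\core(\alpha X,\infty)$, from which (i)--(v) follow. The only divergence is local: for openness of $r^{-1}(\{\infty\})\cap X$ the paper uses isometric arcs in the geodesic metric while you use local path-connectedness together with Lemma \ref{lemma arc to core}(ii), and for (i) the paper uses the geodesic inequality $d(x,\infty)\leq\epsilon\Rightarrow d(r(x),\infty)\leq\epsilon$ while you use plain continuity of $r$; both variants are sound.
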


\begin{proof}
We first note that $r^{-1}(\{\infty\})\cap X$ is open. Assume there is $x\in X$ with $r(x)=\infty$ and $d(x,\infty)=r>0$. Then given any $y\in X$ with $d(x,y)< r$ we choose an isometric arc $\alpha\colon [0,d(x,y)]\rightarrow \alpha X$ from $x$ to $y$. Now the arc from $y$ to $\infty$ given by first following $\alpha$ in reverse direction and then going along the unique arc from $x$ to $\infty$ must run through $r(y)$ by \ref{def fpm}. Since every point on the second arc gets mapped to $\infty$ by $r$, we find either $r(y)=\infty$ or there is $0<t<d(x,y)$ such that $\alpha(t)=r(y)\in \core(\alpha X,\infty)$. In the second case, the arc $\alpha_{|[0,t]}$ must run through $r(x)=\infty$ which, using the fact that $\alpha$ was isometric, gives the contradiction $d(x,\infty)<t<d(x,y)<r$. Since $r^{-1}(\{\infty\})$ is also closed, connectedness of $X$ implies in fact $r^{-1}(\{\infty\})=\{\infty\}.$

By definition of $K$ (see section \ref{section 1-ANR}), the closed set  $\{x\in\core(\alpha X,\infty)\colon d(x,\infty)\leq\epsilon\}$ satisfies the description in (ii) for all sufficiently small $\epsilon>0$. We set 
\[V=\{x\in \alpha X\colon d(r(x),\infty)\leq\epsilon\},\]
then $V\cap \core(\alpha X,\infty)=r(V)=\{x\in \core(\alpha X,\infty)\colon d(x,\infty)\leq\epsilon\}$ so that condition (ii) is satisfied. For (i), we observe that $d(x,\infty)\leq\epsilon$ implies $d(r(x),\infty)\leq\epsilon$ since $d$ is geodesic and every arc from $x$ to $\infty$ runs through $r(x)$. Since $\infty\notin r(C)$, we have $\min\{d(r(x),\infty)\colon x\in C\}>0$ and therefore $V\cap C=\emptyset$ for $\epsilon$ sufficiently small. Condition (iii) follows immediately from the definition of $V$. The sets $V^{(k)}$ are connected and open by construction, so that (iv) holds. (v) follows from (iv). 
\end{proof}

We now collect some information about the corona space $\chi(X)$ in the case of connected $X$. These observations are mostly based on the work of Grove and Pedersen in \cite{GP84} and the graph-like structure of one-dimensional ANR-spaces.

\begin{lemma}\label{lemma corona}
Let $X$ be a connected, non-compact space such that its one-point compactification $\alpha X$ is a one-dimensional ANR-space. Then the corona space $\chi(X)$ has covering dimension at most 1 and its number of connected components is given by $K=\order(\infty,\core(\alpha X,\infty))<\infty$. In particular, if $\alpha X$ is a one-dimensional AR-space, then $\chi(X)$ is connected.
\end{lemma}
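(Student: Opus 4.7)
Plan: The dimension claim is quick: by Nagami's theorem (as already cited in Lemma \ref{lemma lift unitary}) we have $\dim\beta X=\dim X\leq\dim\alpha X\leq 1$, and $\chi(X)$ is a closed subspace of $\beta X$. The finiteness of $K$ is built into Lemma \ref{lemma 1-ANR at infinity}, using that the core of a one-dimensional ANR Peano continuum is a finite graph (Theorem \ref{thm ST}).

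To count components, I would apply Lemma \ref{lemma 1-ANR at infinity} to a compact set $C\subset X$ and obtain a closed neighborhood $V$ of $\infty$ with $V\setminus\{\infty\}=\bigsqcup_{k=1}^K V^{(k)}$, each $V^{(k)}$ connected, clopen in $V\setminus\{\infty\}$ and closed in $X$. Setting $Z^{(k)}:=\overline{V^{(k)}}^{\beta X}$, the plan is to establish the partition $\chi(X)=\bigsqcup_{k=1}^K(Z^{(k)}\cap\chi(X))$ into non-empty clopen pieces. The covering follows from compactness of $X\setminus V$ in $X$ (hence closed in $\beta X$ and disjoint from $\chi(X)$); non-emptiness follows from $V^{(k)}$ being non-precompact, since its closure in $\alpha X$ contains $\infty$; pairwise disjointness comes from Tietze on the (separable metric, hence normal) space $X$: extend the indicator function of $V^{(k)}$ on the closed subset $V\setminus\{\infty\}$ to $f_k\in\C_b(X)$, then density and continuity force the Stone--\v Cech extension to be $\equiv 1$ on $Z^{(k)}$ and $\equiv 0$ on $Z^{(k')}$ for $k'\neq k$.

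The main obstacle is showing each piece is connected. Since $V^{(k)}$ is closed in the normal space $X$ it is $C^*$-embedded, so $Z^{(k)}=\beta V^{(k)}$ and $Z^{(k)}\cap\chi(X)=\chi(V^{(k)})$. I would then show $V^{(k)}$ is one-ended. The subspace $V^{(k)}\cup\{\infty\}\subset\alpha X$ inherits the absence of arbitrarily small circles, so Theorem \ref{thm ward} makes it a one-dimensional ANR; via the first-point map it deformation retracts onto the arc $D^{(k)}$, hence is contractible, i.e.\ a one-dimensional AR whose extended core at $\infty$ is a single arc meeting $\infty$ with order $1$. Applying Lemma \ref{lemma 1-ANR at infinity} to $\alpha V^{(k)}$ then yields that sufficiently small closed neighborhoods of $\infty$ in $V^{(k)}\cup\{\infty\}$ are connected. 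Any non-trivial clopen partition $\chi(V^{(k)})=A\sqcup B$ would, via a continuous $[0,1]$-valued separating function on $\beta V^{(k)}$ restricted to $V^{(k)}$, produce two disjoint non-precompact open subsets of $V^{(k)}$ covering $V^{(k)}$ outside some compact set, contradicting one-endedness.

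Finally, the AR statement is the special case $K=1$: for contractible $\alpha X$, $\core(\alpha X)$ is a single point, so $\core(\alpha X,\infty)$ is a single arc ending at $\infty$, of order $1$ there.
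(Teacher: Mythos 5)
Your overall architecture matches the paper's: both decompose $\chi(X)=\bigsqcup_{k=1}^K\chi(V^{(k)})$ using the neighborhood $V$ from Lemma \ref{lemma 1-ANR at infinity} (you do this via Stone--\v Cech closures and Urysohn/Tietze separation, the paper via the isomorphism $\C(\chi(X))\cong\bigoplus_k\C(\chi(V^{(k)}))$ -- these are equivalent and your version is fine), get the dimension bound from Nagami, and then reduce connectedness of each $\chi(V^{(k)})$ to a ``connectedness at infinity'' property of $V^{(k)}$. Your hand-made derivation of ``clopen partition of the corona $\Rightarrow$ two disjoint non-precompact open sets covering the complement of a compact set'' is a correct reproof of the relevant direction of \cite[Proposition 3.5]{GP84}, which is what the paper cites.

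The genuine gap is in how you establish one-endedness of $V^{(k)}$. You apply Theorem \ref{thm ward} to $V^{(k)}\cup\{\infty\}$, but Ward's theorem has \emph{two} hypotheses: the space must be a Peano space \emph{and} contain no small circles. You only address the second. Local connectedness is not inherited by closed subspaces of Peano continua (a convergent sequence of arcs, or the closure of the set of ``hair tips'', already fails), so the claim that $V^{(k)}\cup\{\infty\}=r^{-1}(D^{(k)})$ is a Peano continuum requires a genuine argument about the branch structure of $\alpha X$ over the arc $D^{(k)}$ -- in particular local connectedness at points $x$ with $r(x)$ equal to the outer endpoint of $D^{(k)}$ is not obvious. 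The same unverified hypothesis is needed again when you re-apply Lemma \ref{lemma 1-ANR at infinity} to $\alpha V^{(k)}$, since that lemma assumes the one-point compactification is a one-dimensional ANR. The paper avoids this entirely: it verifies connectedness at infinity of $V^{(k)}$ directly from the first point map $r\colon V^{(k)}\cup\{\infty\}\to D^{(k)}\cong[0,1]$, taking $C_2=r^{-1}([t,1])$ (compact) and observing that $V^{(k)}\setminus C_2=r^{-1}((0,t))$ is path-connected by the defining property of the first point map -- no ANR or Peano property of $V^{(k)}\cup\{\infty\}$ is ever used. Replacing your detour through Ward's theorem by this direct first-point-map argument closes the gap and shortens the proof.
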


\begin{proof}
Apply Lemma \ref{lemma 1-ANR at infinity} to $(\alpha X,\infty)$. It is straightforward to check that the map 
\[\C(\chi(X))=\C_b(X)/\C_0(X)\rightarrow\bigoplus_{k=1}^K\C_b(V^{(k)})/\C_0(V^{(k)})=\bigoplus_{k=1}^K\C(\chi(V^{(k)}))\]
is an isomorphism. Therefore we find $\chi(X)=\bigsqcup_{k=1}^K\chi(V^{(k)})$ and it suffices to check that each $\chi(V^{(k)})$ is connected. By Proposition 3.5 of \cite{GP84}, it is now enough to show that each $V^{(k)}$ is connected at infinity. So let a compact set $C_1\subset V^{(k)}$ be given and denote by $r\colon V^{(k)}\cup\{\infty\}\rightarrow D^{(k)}$ the first point map. Using the identification $[0,1]\cong D^{(k)}$ where the point $0$ corresponds to the point $\infty$, we find $t>0$ such that $r(C_1)\subset [t,1]$. But $C_2:=r^{-1}([t,1])$ is easily seen to be compact while $V^{(k)}\backslash C_2=r^{-1}((0,t))$ is pathconnected by definition of $r$. For the dimension statement we note that $\dim(\chi(X))\leq\dim(\beta X)=\dim(X)\leq 1$ by \cite[Theorem 9.5]{Nag70}.
\end{proof}

\begin{remark}\label{remark component}
The assumption that $X$ is connected in \ref{lemma corona} is necessary. If we drop it, the corona space $\chi(X)$ may no longer have finitely many connected components, but the following weaker statement holds: If $\alpha X$ is a one-dimensional ANR-space, so will be $\alpha X_i$ for any connected component $X_i$ of $X$. However, it follows from \ref{thm ward} that all but finitely many components lead to contractible spaces $\alpha X_i$, i.e. to one-dimensional AR-spaces. Since in this case $\core(\alpha X_i,\infty)$ is just an arc $[x,\infty]$ for some $x\in X_i$, we see from Lemma \ref{lemma corona} that $\chi(X_i)$ is connected for almost every component $X_i$ of $X$.
\end{remark}

We will now see that, in the situation described in the beginning of this section, the set-valued retract map $R$ has pointwise finite image, i.e. $|R(z)|<\infty$ for all $z\in \prim(A)$. The cardinality of these sets is in fact uniformly bounded and we give an upper bound which only depends on $N$ and the structure of the finite graph $\core(\alpha X,\infty)$.

\begin{proposition}\label{prop finite boundary}
Let $A$ be a semiprojective $C^*$-algebra containing an ideal of the form $\C_0(X,\M_N)$. If $\alpha X=X\cup\{\infty\}$ is a one-dimensional ANR-space, then every connected component $C$ of $X$ has finite boundary $\partial C=\overline{C}\backslash C$ in $\prim(A)$. More precisely, we find 
\[|\partial C|\leq N\cdot \order(\infty,(\alpha C,\infty))<\infty.\]
\end{proposition}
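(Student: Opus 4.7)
Since ANR-spaces are locally connected, the connected component $C$ is clopen in $X$, so $I_C := \C_0(C,\M_N)$ is a closed ideal of $A$. After replacing $A$ by the quotient $A/I_C^\perp$, we may assume that $I_C$ is essential in $A$, which makes the associated Busby map $\tau\colon B \to \mathcal{Q}(I_C) = \C(\chi(C),\M_N)$ injective, where $B := A/I_C$. By Lemma~\ref{lemma Busby vs boundary}, $\partial C$ is in canonical bijection with $\prim(\tau(B))$, and since $I_C$ is $N$-subhomogeneous, bounding $|\partial C|$ is equivalent to bounding this set of primitive ideals.

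The map $\alpha X \to \alpha C$ that sends points of $C$ to themselves and collapses $\alpha X\setminus C$ to $\infty$ is a retraction, so $\alpha C$ inherits the structure of a one-dimensional ANR. Applying Lemma~\ref{lemma corona} to the connected, non-compact space $C$ produces a decomposition $\chi(C) = \bigsqcup_{k=1}^K Z_k$ into $K = \order(\infty, \core(\alpha C, \infty))$ connected components, which correspond to the $K$ prongs of $\alpha C$ at infinity described in Lemma~\ref{lemma 1-ANR at infinity}. Correspondingly, $\C(\chi(C),\M_N) = \bigoplus_{k=1}^K \C(Z_k,\M_N)$.

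The heart of the proof, and what I anticipate will be the main technical obstacle, is to use the semiprojectivity of $A$ to force the image of $\tau$ into the locally constant subalgebra
\[\tau(B) \subseteq \bigoplus_{k=1}^K \M_N\cdot 1_{Z_k}\;\cong\;\M_N^K.\]
The approach is by contradiction: assume that $\tau(b)$ is non-constant on some $Z_{k_0}$ for an element $b \in B$. Exploiting the end-structure of the corresponding prong $V^{(k_0)}$ provided by Lemma~\ref{lemma 1-ANR at infinity}, in particular the first-point retract $r\colon V^{(k_0)} \to D^{(k_0)}$ onto an arc tending to infinity, one organizes the continuous variation of $\tau(b)$ along $D^{(k_0)}$ into an unsolvable lifting problem for $A$ in the spirit of Lemma~\ref{lemma ideal Peano}. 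The contradiction is ultimately extracted via Proposition~\ref{prop nslp} by producing a Toeplitz-type obstruction inside the corona of the homogeneous (hence strongly quasidiagonal) ideal $I_C$.

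Once the main claim is established, $\tau(B)$ is a $C^*$-subalgebra of $\M_N^K$. For each coordinate projection $\pi_k\colon \M_N^K \to \M_N$, the composition $\pi_k\circ\tau$ is a representation of $B$ of total dimension $N$, hence decomposes into at most $N$ irreducibles. Since every primitive ideal of $\tau(B)$ arises as the kernel of an irreducible summand of some $\pi_k\circ\tau$, the bound
\[|\partial C| = |\prim(\tau(B))| \leq NK = N\cdot\order(\infty, \core(\alpha C,\infty))\]
follows, completing the proof.
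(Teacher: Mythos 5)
Your setup agrees with the paper where it is concrete: the identification of $\partial C$ with $\prim(\tau(B))$ via Lemma \ref{lemma Busby vs boundary}, the decomposition of $\chi(C)$ into $K=\order(\infty,\core(\alpha C,\infty))$ connected components via Lemmas \ref{lemma 1-ANR at infinity} and \ref{lemma corona}, and the closing count of at most $N$ irreducible summands per component are all sound. The gap is exactly at the step you yourself flag as the main obstacle, and the mechanism you propose for it does not work. You want to refute non-constancy of $\tau(b)$ on a corona component $Z_{k_0}$ by manufacturing an unsolvable lifting problem and invoking Proposition \ref{prop nslp}. That proposition is a \emph{circle} obstruction: it needs a $^*$-homomorphism whose image nearly contains the unitary $z\otimes 1_n$ of $\C(S^1)\otimes\M_n$, i.e.\ an (approximately) embedded circle with winding. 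Non-constancy of $\tau(b)$ on $Z_{k_0}$ only says that $b$ admits a representative $a\in A$ whose values $a(x)$ fail to converge as $x\to\infty$ inside the prong $V^{(k_0)}$; this is an oscillation (failure of local connectedness at infinity) phenomenon and produces no circles at all. The example of Remark \ref{remark comm retract}(a) — the $\sin(1/x)$-continuum with its limit segment — has $N=1$, contractible $\alpha X$, infinite boundary, and contains no embedded circle anywhere, so no Toeplitz-type obstruction can be the thing that rules it out. (A smaller issue: even when $\partial C$ is finite, $\tau(B)$ need not literally lie in $\bigoplus_k\M_N\cdot 1_{Z_k}$; Proposition \ref{prop diagonal Busby} only arranges this after conjugation by a unitary of $\C(\beta X,\M_N)$. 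That does not affect your count, but the literal inclusion you state is too strong.)

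The paper's proof uses semiprojectivity \emph{positively} rather than by contradiction. Given a sequence $(x_n)\subset C$ tending to $\infty$, it glues comb-like planar spaces $Y_n$ to $A$ along the sequence, forming pullbacks $A_n=A\oplus_c\C(Y_n,\M_N)$ with $\varinjlim A_n\cong A$ and surjective connecting maps; semiprojectivity yields a partial lift $A\to A_n$, whose restriction to the ideal induces a continuous map $Y_n\to\alpha X$ sending the teeth of the comb to the points $x_m$ and the base point to $\infty$. The arcs of the comb then give paths in $\alpha X$ from $x_m$ to $\infty$ lying in $V^{(k)}$, which by Lemma \ref{lemma arc to core}(ii) must contain the chosen points $x_l^{(k)}$ of the core arc $D^{(k)}$ for all large $l$; this forces every limit representation $\lim_n\ev_{x_n}$ to share its irreducible summands with the fixed representation $\pi^{(k)}$ obtained along $D^{(k)}$, whence $\partial C\subseteq\bigcup_{k=1}^K S^{(k)}$ with $|S^{(k)}|\leq N$. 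To repair your outline you would need to replace the Toeplitz step with an argument of this kind; as written, the central claim is unsupported.
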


\begin{proof}
Since $X$ is locally connected, the connected components of $X$ are clopen and $\alpha C$ is again a one-dimensional ANR-space for every component $C$ of $X$. Hence we may assume that $C=X$. Fix a geodesic metric $d$ on $\alpha X=X\cup\{\infty\}$ and let $V$ be a neighborhood of $\infty$ as constructed in Lemma \ref{lemma 1-ANR at infinity}, satisfying $\{x\in\alpha X\colon d(x,\infty)\leq\epsilon\}\subseteq V$ for some $\epsilon>0$. We further choose sequences $(x_n^{(k)})_n\subseteq D^{(k)}\backslash\{\infty\}$ converging to $\infty$ and write $x_\infty^{(1)}=\dots=x_\infty^{(K)}=\infty$. By compactness of the unit ball in $\M_N$ and separability of $A$, we may assume that the representation 
\[\pi^{(k)}\colon A\rightarrow\M_N,\quad a\mapsto\lim_{n\rightarrow\infty}a (x_{n}^{(k)})\]
exists for all $1\leq k\leq K$. Here, $a(x)$ denotes the image of $a\in A$ under the extension of the point evaluation $\ev_x\colon\C_0(X,\M_N)\rightarrow \M_N$ to $A$. For a sequence $(x_n)_n$ in $X\subseteq\prim(A)$ we write $\Lim (x_n)=\{z\in \prim(A)\colon x_n\rightarrow z\}$. Our goal is then to show that there exists a finite set $S\subset\prim(A)$ such that $\Lim (x_n)\subset S$ for every sequence $(x_n)_n\subset X$ with $x_n\rightarrow\infty$ in $\alpha X$. We will show that each $S^{(k)}:=\Lim (x_n^{(k)})$ consists of at most $N$ elements and that $S:=\bigcup_{k=1}^K S^{(k)}$ has the desired property described above. First observe that 
\[S^{(k)}=\left\{\left[\pi_1^{(k)}\right],\dots,\left[\pi_{r(k)}^{(k)}\right]\right\}\]
holds, where $\pi^{(k)}\simeq\pi_1^{(k)}\oplus\cdots\oplus\pi_{r(k)}^{(k)}$ is the decomposition of $\pi^{(k)}$ into irreducible summands. The $\supseteq$-inclusion is immediate, for the other direction assume that $x_n^{(k)}\rightarrow\ker(\varrho)$ for some irreducible representation $\varrho$ with $\varrho\not\simeq\pi_i^{(k)}$ for all $i$. Since all $x_n^{(k)}$ correspond to $N$-dimensional representations, we also have $\dim(\varrho)\leq N$. Therefore all $\pi_i^{(k)}$ and $\varrho$ drop to irreducible representations of the maximal $N$-subhomogeneous quotient $A_{\leq N}$ of $A$ (cf. section \ref{section subhomogeneous}). Because $\prim(A_{\leq N})$ is a $T_1$-space, the finite set $\{[\pi_1^{(k)}],\dots,[\pi_{r(k)}^{(k)}]\}$ is closed and $[\varrho]$ can be separated from it. In terms of \ref{lemma top prim}, this means that there exists $a\in A$ such that $\|\varrho(a)\|>1$ while $\|\pi_i^{(k)}(a)\|\leq 1$ for all $i$. On the other hand, we find
\[\|\varrho(a)\|\leq\liminf_{n\rightarrow\infty}\left\|a(x_n^{(k)})\right\|=\left\|\pi^{(k)}(a)\right\|=\max\limits_{i=1...r(k)}\left\|\pi_i^{(k)}(a)\right\|\leq1,\]
using \ref{lemma top prim} again. Hence $[\varrho]=[\pi_i^{(k)}]$ for some $i$ and in particular $\left|S^{(k)}\right|=r(k)\leq N$ for every $k$.

It now suffices to show that $\Lim (x_n)\subseteq S^{(k)}$ for sequences $(x_n)\subset X$ with $x_n\rightarrow\infty$ such that $r(x_n)\in D^{(k)}$ for some fixed $k$ and all $n$. Let such a sequence $(x_n)_n$ for some fixed $k$ be given and pick $z\in\Lim (x_n)$. In order to show that $z\in S^{(k)}$, we consider the compact spaces
\[Y_n:=\left\{(t,t)|0\leq t\leq\frac{1}{n}\right\}\cup\bigcup_{m\geq n} \left(\left\{\frac{1}{m}\right\}\times\left[0,\frac{1}{m}\right]\right)\subset\mathbb{R}^2.\]
Note that $Y_{n+1}\subset Y_n$ and $\bigcap_n Y_n=(0,0)$. We will now 'glue' $\C(Y_n,\M_N)$ to $A$ in the following way: As before, we may assume that $x_n\rightarrow z$ in $\prim(A)$ and that $\pi_\infty(a)=\lim_n a(x_n)$ exists for every $a\in A$. In particular, we find $z=[\pi_{i,\infty}]$ for some $i$ where $\pi_\infty\simeq\pi_{1,\infty}\oplus\cdots\oplus\pi_{r_\infty,\infty}$ is the decomposition of $\pi_\infty$ into irreducible summands. Let $c$ denote the $C^*$-algebra of convergent $\M_N$-valued sequences, we can then form the pullback $A_n:=A\oplus_c\C(Y_n,\M_N)$ over the two $^*$-homomorphisms
\[\begin{array}{ccc}
  A\longrightarrow c &\text{and}&\C(Y_n,\M_N)\longrightarrow c . \\
  a\mapsto (a(x_n),a(x_{n+1}),a(x_{n+2}),\dots)&& f\mapsto f((\frac{1}{n},0),f(\frac{1}{n+1},0),f(\frac{1}{n+2},0),\dots)
\end{array}\]
These pullbacks form an inductive system in the obvious way. Further note that the connecting maps $A_n\rightarrow A_{n+1}$ are all surjective. The limit $\varinjlim A_n$ can be identified with $A$ via the isomorphism induced by the projections $A_n=A\oplus_c\C(Y_n,\M_N)\rightarrow A$ onto the left summand. Using semiprojectivity of $A$, we can find a partial lift to some finite stage $A_n$ of this inductive system:
\[\xymatrix{
  &&A_n=A\oplus_c\C(Y_n,\M_N) \ar[r] \ar@{->>}[d] & \C(Y_n,\M_N) \\
  \C_0(X,\M_N) \ar[r]^(.6)\subseteq & A \ar[r]^{\cong} \ar@{-->}[ur] & \varinjlim A_n 
}\]
Let $\varphi\colon A\rightarrow\C(Y_n,\M_N)$ be the composition of this lift with the projection $A_n\rightarrow\C(Y_n,\M_N)$ to the right summand. The restriction of $\varphi$ to the ideal $\C_0(X,\M_N)$ then induces a continuous map $\varphi^*\colon Y_n\rightarrow\alpha X$ with $\varphi^*\left(\frac{1}{m},0\right)=x_m$ for all $m\geq n$ and $\varphi^*(0,0)=\infty$. Denote by $h$ the strictly positive element of $\C_0(X,\M_N)$ given by $h(x)=d(x,\infty)\cdot 1_{\M_N}$. After increasing $n$, we may assume that $\|\varphi(h)\|<\epsilon$ holds. For $m\geq n$, we consider the paths $\alpha_m\colon\left[0,\frac{2}{m}\right]\rightarrow Y_n$ given by
\[\alpha_m(t)=\begin{cases} 
  \left(\frac{1}{m},t\right) & \text{if}\;\;0\leq t\leq\frac{1}{m} \\
  \left(\frac{2}{m}-t,\frac{2}{m}-t\right) & \text{if}\;\;\frac{1}{m} \leq t\leq\frac{2}{m}.
\end{cases}\]
Set $t_{\infty,m}=\min\{t\colon\varphi(h)(\alpha_m(t))=0\}$, then $0<t_{\infty,m}\leq\frac{2}{m}$ because of $\|\varphi(h)(\alpha_m(0))\|=\|\varphi(h)(\frac{1}{m},0)\|=\|h(x_m)\|=d(x_m,\infty)>0$ and $\varphi(h)(\alpha_m(\frac{2}{m}))=\varphi(h)(0,0)=h(\infty)=0$. By setting $\beta_m(t)=\varphi^*(\alpha_m(t))$ we obtain paths $\beta_m\colon[0,t_{\infty,m}]\rightarrow\alpha X$ which have the properties
\[\begin{array}{l}
  (1)\;\beta_m(0)=x_m,\\
  (2)\;\beta_m(t)=\infty\;\text{if and only if}\;t=t_{\infty,m},\\
  (3)\;\image(\beta_m)\subseteq V^{(k)}\;\text{for all}\;m,\\
  (4)\;x_l^{(k)}\in\image(\beta_m)\;\text{for fixed}\; m \;\text{and all sufficiently large}\;l. 
\end{array}\]
The first property is clear while the second one follows directly from the definition of $t_{\infty,m}$. In order to verify properties $(3)$ and $(4)$ we have to involve the structure of the neighborhood $V$ and by that the special structure of $\alpha X$ as a one-dimensional ANR-space. From $\|\varphi(h)\|<\epsilon$ we obtain $\image(\beta_m)\subseteq\image(\alpha_m) \subseteq \{x\in\alpha X\colon d(x,\infty)\leq\epsilon\}\subseteq V$, it then follows from (1), (2) and property (v) in Lemma \ref{lemma 1-ANR at infinity} that $\image(\beta_m)\subseteq V^{(k)}$. For (4), observe that $\image(\beta_m)$ contains $r(\image(\beta_m))$ by part (ii) of Lemma \ref{lemma arc to core}, where $r$ is the first-point map $\alpha X\rightarrow\core(\alpha X,\infty)$. Under the identification $D^{(k)}\cong [0,1]$, the connected set $r(\image(\beta_m))$ corresponds to a proper interval containing the $0$-endpoint and hence it contains $x_l^{(k)}$ for almost every $l$.

Now set $\pi_m=\ev_{\beta(t_{\infty,m})}\circ\varphi\colon A\rightarrow\M_N$ and let $\pi_m\simeq\pi_{1,m}\oplus\dots\oplus\pi_{r_m,m}$ be the decomposition into irreducible summands. We claim that the identity 
\[S^{(k)}=\left\{\left[\pi_{1,m}\right],\cdots,\left[\pi_{r_m,m}\right]\right\}\]
holds for all $m$. Involving property (4) for the path $\beta_m$, we find
\[\|\pi_m(a)\|=\lim\limits_{t\nearrow t_{\infty,m}}\left\|(\ev_{\beta(t)}\circ\varphi)(a)\right\|=\lim\limits_{l\rightarrow\infty}\left\|a\left(x_l^{(k)}\right)\right\|=\left\|\pi^{(k)}(a)\right\|\]
for every fixed $m$ and all $a\in A$. Now the same separation argument as in the beginning of the proof shows that the finite-dimensional representations $\pi^{(k)}$ and $\pi_m$ share the same irreducible summands for every $m$. Since $\beta_m(t_{\infty,m})\rightarrow (0,0)$ in $Y_n$, we find $\pi_m=\ev_{\beta(t_{\infty,m})}\circ\varphi\rightarrow \ev_{(0,0)}\circ\varphi=\pi_\infty$ pointwise. Hence by the above identity, $\pi_\infty$ and $\pi^{(k)}$ also share the same irreducible summands. In particular, we find $z\in S^{(k)}$ which finishes the proof.
\end{proof}

Next, we show that in our situation the set-valued retract map $R$ is also lower semicontinuous in the sense of \ref{def lsc set map}.

\begin{proposition}\label{prop R is lsc}
Let $0\rightarrow\C_0(X,\M_N)\rightarrow A\rightarrow B\rightarrow 0$ be a short exact sequence of separable $C^*$-algebras. If $\alpha X$ is a one-dimensional ANR-space and $A$ is semiprojective, then the associated retract map $R$ as in \ref{def R} is lower semicontinuous.
\end{proposition}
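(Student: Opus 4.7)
The plan is to use the sequence characterization of lower semicontinuity from Definition-Lemma \ref{def lsc set map}(iii) (valid since $A$ separable makes $\prim(A)$ second countable) and to reduce the problem to a single essential case. The reductions are routine: if $z\in X_i$, then $z_n\to z$ forces $z_n\in X_i$ eventually (a sequence in $\prim(B)$ cannot converge into the ideal, since $\C_0(X,\M_N)\subseteq z_n$ persists in any Jacobson limit), so $R(z_n)=R(z)$; if $z\in\prim(B)$ and $z_n\in\prim(B)$, the approximating sequence is $z_n$ itself; and if infinitely many $z_n$ lie in a fixed component $X_i$, then $z\in\overline{X_i}\cap\prim(B)=\partial X_i=R(z_n)$ and a constant sequence works. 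Only the following critical case remains: a sequence $x_n\in X_{i_n}$ with pairwise distinct $i_n$ converges to some $z\in\prim(B)$, and one must show $z\in\overline{\bigcup_n\partial X_{i_n}}$. I argue by contradiction, using semiprojectivity of $A$ in the spirit of Proposition \ref{prop finite boundary}.

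The first key observation is that, since $\alpha X$ is a 1-ANR (hence Peano, hence locally connected), the components of $X$ are clopen in $X$; consequently a sequence $x_n$ in pairwise distinct components cannot have a limit point in $X$, because such a limit would lie in some open component $X_j$ and capture $i_n=j$ for infinitely many $n$. Hence $x_n\to\infty$ in $\alpha X$. After passing to a subsequence, I arrange that $\pi_\infty(a):=\lim_n(\ev_{x_n}\circ\iota)(a)$ exists for every $a\in A$, where $\iota\colon A\to\mathcal{M}(\C_0(X,\M_N))=\C_b(X,\M_N)$ is the canonical embedding. The lower semicontinuity of the norm functions $\check{a}$ (Lemma \ref{lemma top prim}) combined with the $T_1$-property of $\prim(A_{\leq N})$ identifies $\pi_z$, up to unitary equivalence, with an irreducible summand of the at most $N$-dimensional representation $\pi_\infty$, and $x_n\to\infty$ forces $\pi_\infty(h)=\lim_n h(x_n)=0$ for any strictly positive $h\in\C_0(X,\M_N)$.

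The remainder of the proof mirrors the semiprojectivity argument of Proposition \ref{prop finite boundary}. Form the pullbacks $A_k=A\oplus_c\C(Y_k,\M_N)$ over the evaluations at $x_m$ and at the points $(1/m,0)\in Y_k$ for $m\geq k$, obtaining a surjective inductive system with limit canonically isomorphic to $A$. Semiprojectivity yields a partial lift whose projection to the right factor produces a $^*$-homomorphism $\varphi\colon A\to\C(Y_k,\M_N)$ satisfying $\ev_{(1/m,0)}\circ\varphi=\ev_{x_m}\circ\iota$ for $m\geq k$ and $\ev_{(0,0)}\circ\varphi=\pi_\infty$. Since $\varphi(h)(0,0)=0$, the construction of the paths $\alpha_m$ and of the times $t_{\infty,m}\in(0,2/m]$ from the cited proof carries over verbatim, yielding a continuous path $\beta_m\colon[0,t_{\infty,m}]\to\alpha X$ from $x_m$ to $\infty$. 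The new point specific to our setting is that, since $X_{i_m}$ is clopen in $X$ and $[0,t_{\infty,m})$ is connected, $\beta_m$ cannot leave $X_{i_m}$ into any other component, so $\beta_m([0,t_{\infty,m}))\subseteq X_{i_m}$.

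Therefore $\pi_m:=\ev_{\beta_m(t_{\infty,m})}\circ\varphi$ factors through $B$ and is a pointwise limit of evaluations $\ev_{\beta_m(t)}\circ\iota$ at points of $X_{i_m}$, so every irreducible summand of $\pi_m$ lies in $\overline{X_{i_m}}\cap\prim(B)=\partial X_{i_m}$. From $\alpha_m(t_{\infty,m})\to(0,0)$ we get $\pi_m\to\pi_\infty$ pointwise, and with $\pi_z$ a summand of $\pi_\infty$ this gives, for every $a\in A$, the estimate
\[\|a\|_z\leq\|\pi_\infty(a)\|=\lim_m\|\pi_m(a)\|\leq\sup_{m,j}\|a\|_{[\sigma_m^{(j)}]},\]
where the irreducible summands $[\sigma_m^{(j)}]$ of $\pi_m$ all lie in $\partial X_{i_m}$. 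By the basis of opens from Lemma \ref{lemma top prim}(ii) this puts $z$ into the closure of $\bigcup_m\partial X_{i_m}$, contradicting our assumption. The main obstacle I foresee is the careful confinement of $\beta_m$ to the correct component $X_{i_m}$ and the identification of the summands of $\pi_m$ with points of $\partial X_{i_m}$; both reduce to routine topological arguments once $x_n\to\infty$ in $\alpha X$ has been established.
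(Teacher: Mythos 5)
Your proof is correct and follows essentially the same route as the paper: reduce to the sequential criterion of Definition--Lemma \ref{def lsc set map}(iii), dispose of the cases where the limit lies in $X$ or where infinitely many terms share a component, and in the remaining case run the ``gluing $Y_k$ to $A$ along $(x_n)$'' construction from the proof of Proposition \ref{prop finite boundary} to produce the paths $\beta_m$ and the representations $\pi_m$. The paper compresses this last step into a single reference to that construction, whereas you supply the details it omits --- in particular the clean observation that $\beta_m([0,t_{\infty,m}))$ stays in $X_{i_m}$ by connectedness, which replaces the finer branch-at-infinity analysis of Lemma \ref{lemma 1-ANR at infinity} that is not actually needed here.
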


\begin{proof}
Let $X=\bigsqcup_{i\in I}X_i$ denote the decomposition of $X$ into its connected components. By separability of $A$ it suffices to verify condition (iii) of Lemma \ref{def lsc set map} for a given sequence $x_n\rightarrow z$ in $\prim(A)$. The case $z\in X$ is trivial since $X$ is locally connected and therefore has open connected components. The critical case is when $x_n\in X$ for all $n$ but $z\in\prim(B)$. 
In this case, we write $x_n\in X_{i_n}$ and we may assume that $\pi_\infty(a)=\lim_n a(x_n)$ is well defined for all $a\in A$. In particular, $z$ corresponds to the kernel of an irreducible summand $\pi_{j,\infty}$ of $\pi_\infty\simeq\pi_{1,\infty}\oplus\cdots\oplus\pi_{r,\infty}$, as we have already seen in the beginning of the proof of Proposition \ref{prop finite boundary}. Using exactly the same construction of 'gluing the space $Y$ to $A$ along the sequence $(x_n)$' as in the proof of \ref{prop finite boundary}, one now shows that 
\[\{[\pi_{1,\infty}],\cdots,[\pi_{r,\infty}]\}\subseteq\overline{\bigcup\limits_n \partial X_{i_n}}.\]
Hence we find $y_n\in\partial X_{i_n}=R(x_n)$ with $y_n\rightarrow [\pi_{j,\infty}]=z$ showing that the retract map $R$ is in fact lower semicontinuous.
\end{proof}

\subsection{Existence of limit structures}\label{section existence limit structures}

Consider an extension of separable $C^*$-algebras 
\[0\rightarrow\C_0(X,\M_N)\rightarrow A\rightarrow B\rightarrow 0\]
where the one-point compactification of $X$ is assumed to be a one-dimensional ANR-space. We know from Theorem \ref{thm ST} that in this case $\alpha X$ comes as a inverse limit of finite graphs over a surprisingly simple system of connecting maps. Here we show that under the right assumptions on the set-valued retract map $R\colon \prim(A)\rightarrow 2^{\prim(B)}$ associated to the sequence above, this limit structure for $\alpha X$ is compatible with the extension of $B$ by $\C_0(X,\M_N)$ in the following sense: We prove the existence of a direct limit structure for $A$ which describes it as the $C^*$-algebra $B$ with a sequence of non-commutative finite graphs (1-NCCW's) attached. The connecting maps of this direct system are obtained from the limit structure for $\alpha X$ and hence can be described in full detail.    

\begin{lemma}\label{lemma limit structure}
Let a short exact sequence of separable $C^*$-algebras $0\rightarrow\C_0(X,\M_N)\rightarrow A\rightarrow B\rightarrow 0$ with Busby map $\tau$ be given. Assume that $\alpha X$ is a one-dimensional ANR-space and that the associated set-valued retract map $R\colon \prim(A)\rightarrow 2^{\prim(B)}$ as in \ref{def R} is lower semicontinuous and has pointwise finite image. Then $A$ is isomorphic to the direct limit $B_\infty$ of an inductive system 
\[\xymatrix{
    B_0 \ar[r]_{s_0^1}& B_1 \ar[r]_{s_1^2} \ar@/_1pc/@{->>}[l]_{r_1^0}& B_2 \ar[r] \ar@/_1pc/@{->>}[l]_{r_2^1} & 
    \cdots \ar[r]_{s_{i-1}^i} \ar@/_1pc/@{->>}[l]& B_i \ar[r] \ar@/_1pc/[rr]_{s_i^\infty} \ar@/_1pc/@{->>}[l]_{r_i^{i-1}}& \cdots \ar[r] & 
    B_\infty=\varinjlim \left(B_i,s_i^{i+1}\right) \ar@/_1pc/@{->>}[ll]_{r_\infty^i}
}\] 
where 
\begin{itemize}
  \item $B_0$ is given as a pullback $B\oplus_F D$ with $D$ a 1-NCCW and $\dim(F)<\infty$. Furthermore, if $\alpha X$ is contractible, we may even arrange $B_0\cong B$.
\end{itemize} 
and 
\begin{itemize}
  \item for every $i\in\mathbb{N}$ there is a representation $\pi_i\colon B_i\rightarrow\M_N$ such that $B_{i+1}$ is defined as the pullback  
  $\xymatrix{
      B_{i+1} \ar@{..>>}[d]^{r_{i+1}^i} \ar@{..>}[r]& \C([0,1],\M_N) \ar@{->>}[d]^{ev_0} \\ 
      B_i \ar@/^1pc/[u]^{s_i^{i+1}} \ar[r]^{\pi_i}& \M_N.
   }$\\ 
The map $s_i^{i+1}\colon B_i\rightarrow B_{i+1}$ is given by $a\mapsto (a,\pi_i(a)\otimes 1_{[0,1]})$ and hence satisfies $r_{i+1}^i\circ s_i^{i+1}=\id_{B_i}$.
\end{itemize}
\end{lemma}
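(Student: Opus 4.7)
The plan is to derive the inductive system directly from the topological limit structure of $\alpha X$ supplied by Theorem \ref{thm ST}, realizing each $B_k$ as a pullback of $B$ and a 1-NCCW built from the finite graph $Y_k$.

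First I would apply Theorem \ref{thm ST} to the pointed Peano continuum $(\alpha X,\infty)$ with distinguished subcontinuum $Y_0:=\core(\alpha X,\infty)$; choosing the fixed point in the definition of $\core(\alpha X)$ to be $\infty$ in the contractible case makes $Y_0=\{\infty\}$. This produces an increasing sequence of finite subgraphs $Y_0\subseteq Y_1\subseteq\cdots$ of $\alpha X$, with each $Y_{k+1}$ obtained from $Y_k$ by attaching an arc at a single point $p_k\in Y_k$, and with first point maps $r_k\colon\alpha X\to Y_k$ converging uniformly to $\id_{\alpha X}$. Set $X_k:=Y_k\setminus\{\infty\}$. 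The hypotheses of Theorem \ref{thm semiprojective split} hold by combining Lemma \ref{lemma corona} and Remark \ref{remark component} with $\dim X\leq 1$, so there is a completely positive split $s\colon B\to A$ multiplicative outside a compact set $K\subset X$; enlarging $Y_0$ if necessary I arrange $K\subseteq X_0$. An application of Proposition \ref{prop diagonal Busby} further lets me assume, after an inner automorphism of the extension, that the Busby map $\tau$ takes locally constant values on $\chi(X)$.

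Second, I would construct $B_k$ as an explicit pullback. Let $Y'_k$ be the finite graph obtained from $Y_k$ by splitting $\infty$ into $K_k:=\order(\infty,Y_k)$ distinct vertices, one per half-edge at $\infty$; then $X_k$ embeds as an open subset of $Y'_k$ and $D_k:=\C(Y'_k,\M_N)$ is a 1-NCCW (with $D_0=0$ when $Y_0=\{\infty\}$). With $F_k:=\M^{\oplus K_k}_N\cong\C(\chi(X_k),\M_N)$, with evaluation $\ev_k\colon D_k\to F_k$ at the new vertices, and with $\tau_k\colon B\to F_k$ the restriction of $\tau$ along the canonical inclusion $\chi(X_k)\hookrightarrow\chi(X)$, I set $B_k:=B\oplus_{F_k}D_k$. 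This realizes $B_0$ as the pullback $B\oplus_F D$ from the statement with $F:=F_0$ finite-dimensional and $D:=D_0$ a 1-NCCW; in the contractible case $B_0=B$. For the step from $B_k$ to $B_{k+1}$, a case analysis on whether $p_k\in X_k$ or $p_k=\infty$ gives $D_{k+1}=D_k\oplus_{\M_N}\C([0,1],\M_N)$ via evaluation at the attachment point and $\ev_0$, which unwinds to $B_{k+1}=B_k\oplus_{\M_N}\C([0,1],\M_N)$ with the prescribed form of $\pi_k$: evaluation at $p_k$ through the $D_k$-coordinate when $p_k\in X_k$, and the composition $B_k\to B\xrightarrow{\ev_\xi\circ\tau}\M_N$ through the newly created end $\xi\in\chi(X)$ when $p_k=\infty$. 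The connecting maps $s_k^{k+1}$ and $r_{k+1}^k$ then have the stated form by inspection.

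Finally, I would produce compatible injections $\phi_k\colon B_k\to A$ and verify that $\phi_\infty=\varinjlim\phi_k$ is an isomorphism. Using the pullback description $A\cong\{(b,f)\in B\oplus\C_b(X,\M_N):[f]=\tau(b)\text{ in }\C(\chi(X),\M_N)\}$, the map $\phi_k(b,d)$ will have $\C_b(X,\M_N)$-coordinate equal to $d|_{X_k}\circ r_k$ on $X_k$ and to $\iota(s(b))$ on the complementary hairs, with a continuous interpolation on a small collar. The local constancy of $\tau$ ensures both pieces have Busby value $\tau(b)$ at every $\xi\in\chi(X)$, so that $\phi_k(b,d)\in A$. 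Compatibility with $s_k^{k+1}$ is checked on generators, injectivity of $\phi_\infty$ is built into the pullback structure of the $B_k$, and surjectivity follows from the uniform convergence $r_k\to\id$ (which makes the images of the $D_k$-parts dense in $\C_0(X,\M_N)\subset A$) together with the fact that $\phi_\infty$ covers $s$, hence every coset modulo the ideal. The main technical obstacle is verifying that $\phi_k$ is genuinely a $^*$-homomorphism rather than merely a completely positive map: the product $\phi_k(b,d)\phi_k(b',d')$ matches $\phi_k(bb',dd')$ on $X_k$ (where both reduce to multiplication in $D_k$) and on the region where the cutoff makes the second piece dominate (where $s$ is multiplicative by Theorem \ref{thm semiprojective split}), but a delicate choice of interpolation is needed to eliminate errors on the transition region. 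Overcoming this obstacle is precisely where the two hypotheses on $R$ enter the proof, via their use in producing the split $s$ with controlled non-multiplicativity and the unitary diagonalization of $\tau$.
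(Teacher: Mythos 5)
Your skeleton matches the paper's proof: you invoke Theorem \ref{thm ST} to exhaust $\alpha X$ by finite graphs $Y_k$ starting from $\core(\alpha X,\infty)$, use Proposition \ref{prop diagonal Busby} to make the Busby map locally constant on the corona, and define each $B_k$ as a pullback of $B$ and a 1-NCCW over a finite-dimensional algebra recording boundary values. (A small caveat: $\C(\chi(X_k),\M_N)$ is the full corona algebra of $\C_0(X_k,\M_N)$ and is nowhere near finite-dimensional; what you mean for $F_k$ is its subalgebra of locally constant functions.) The genuine gap is in the embedding $\phi_k\colon B_k\to A$. You take the $\C_b(X,\M_N)$-coordinate of $\phi_k(b,d)$ to be $d$ on $X_k$ and $\iota(s(b))$ on the complementary hairs, glued by an interpolation on a collar, where $s$ is the completely positive split from Theorem \ref{thm semiprojective split} --- and you then concede you cannot verify multiplicativity on the transition region. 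This is not a removable technicality of your construction: $s(b)$ has no reason to converge to $d(p)$ as one moves along a hair toward its attaching point $p\in X_k$, so some interpolation is unavoidable, and on that region the resulting map is genuinely only completely positive; compatibility with the maps $s_k^{k+1}$ would break for the same reason.

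The paper's construction avoids this entirely: the correct coordinate is $d\circ r_k$ \emph{globally}, i.e.\ one pulls the 1-NCCW coordinate back along the first point map (the inclusion $(\varrho^i_{\infty|C_j})^*\otimes\id_{\M_N}\colon\C(\alpha_j(X_i\cap C_j),\M_N)\hookrightarrow\C(\beta C_j,\M_N)$ in the paper's notation), so every hair carries the constant value of $d$ at its attaching point, and on the components $C_j$ disjoint from $X_k$ one uses the constant $\tau_j(b)$. Being a composition of $^*$-homomorphisms, this is automatically multiplicative, continuous at the attaching points, and compatible with the inductive system; no split $s$, no collar, and no appeal to Theorem \ref{thm semiprojective split} are needed. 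The hypotheses on $R$ then enter not through the split but through Proposition \ref{prop diagonal Busby} (pointwise finiteness, which makes the pullbacks sit over finite-dimensional algebras) and through the argument of Lemma \ref{lemma approx unitary}/Corollary \ref{cor projective split} (lower semicontinuity, used to arrange that elements of $A$ are asymptotically constant on the cofinitely many components with connected corona, which is what yields $\overline{\bigcup_k B_k}=A$). To repair your proof you should replace the hybrid definition of $\phi_k$ by the pullback along $r_k$ and delete the detour through the completely positive split.
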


\begin{proof}
Let $X=\bigsqcup_{j\in J}C_j$ be the decomposition of $X$ into its clopen connected components. Denote by $J_1\subseteq J$ the subset of those indices for which the corona space $\chi(C_j)$ is connected and note that $J_0:=J\backslash J_1$ is finite by Remark \ref{remark component}. We have the canonical commutative diagram 
\[\xymatrix{
    0 \ar[r] & \C_0(X,\M_N) \ar[r] \ar@{=}[d] & A \ar[r] \ar[d]^{\iota_0\oplus\iota_1} & B \ar[r] \ar[d]_{\tau_0\oplus\tau_1} & 0 \\ 
    0 \ar[r] & \bigoplus\limits_{*=0,1}\bigoplus\limits_{j\in J_*}\C_0(C_j,\M_N) \ar[r] & 
    \bigoplus\limits_{*=0,1}\prod\limits_{j\in J_*}\C(\beta C_j,\M_N) \ar[r]^{q_0\oplus q_1} & \bigoplus\limits_{*=0,1}\frac{\prod_{j\in J_*}\C(\beta C_j,\M_N)}{\bigoplus_{j\in J_*}\C_0(C_j,\M_N)} \ar[r] &  0
}\]
where $\tau_0\oplus\tau_1$ is the Busby map $\tau$ and the right square is a pullback diagram. Since we can pass to any strongly unitarily equivalent extension (in the sense of \cite[II.8.4.12]{Bla06}) without changing the retract map $R$, we can, by Proposition \ref{prop diagonal Busby} and the finiteness condition on $R$, assume that for every $j$ the image of 
\[\tau_j\colon B\xrightarrow{\tau}\frac{\prod_{j'}\C(\beta C_{j'},\M_N)}{\bigoplus_{j'}\C_0(C_{j'},\M_N)}\rightarrow\frac{\C(\beta C_j,\M_N)}{\C_0(C_j,\M_N)}=\C(\chi(C_j),\M_N)\]
is locally constant on $\chi(C_j)$, and even constant if $j\in J_1$. Furthermore, using lower semicontinuity of $R$ and arguing as in the proof of Corollary \ref{cor projective split}, we may assume that 
\[\iota_1(A)\subseteq \prod_{j\in J_1}\M_N\cdot 1_{\beta C_j}+\bigoplus_{j\in J_1}\C_0(C_j,\M_N).\] 

Next, we write $\alpha X=X\cup\{\infty\}$ as a limit of finite graphs. By Theorem \ref{thm ST} we can find a sequence of finite graphs $X_i\subset X_{i+1} \subset \alpha X$ such that $X_0=\core(\alpha X,\infty)$ (in the sense of \ref{def extended core}) and each $X_{i+1}$ is obtained from $X_i$ by attaching a line segment $[0,1]$ at the $0$-endpoint to a single point $y_{i}$ of $X_i$. Furthermore we have $\varprojlim X_i = \alpha X$ along the sequence of first point maps $\varrho_\infty^i\colon\alpha X\rightarrow X_i$. We need to fix some notation: Denote the inclusion of $X_i$ into $X_{i+1}$ by $\iota_i^{i+1}$ and the retract from $X_{i+1}$ to $X_i$ by collapsing the attached interval to the attaching point $y_i$ by $\varrho_{i+1}^i$. An analogous notation is used for the inclusion $X_i\subseteq\alpha X$: 
\[\xymatrix{
    X_i \ar[r]_{\iota_i^{i+1}}& X_{i+1}\ar@{->>}@/_1pc/[l]_{\varrho_{i+1}^i} \ar[r]_{\iota_{i+1}^{\infty}}& \alpha X\ar@{->>}@/_1pc/[l]_{\varrho_{\infty}^{i+1}}
}\]

Now for every pair of indices $i,j$ we have $X_i\cap C_j$ sitting inside $C_j$. Note that $X_{i+1}\backslash X_i\cap C_{j(i)}\neq\emptyset$ for a unique $j(i)\in J$ since $\infty\in X_0$. 
We define suitable compactifications $\alpha_j(X_i\cap C_j)$ of $X_i\cap C_j$ as follows: if $X_0\cap C_j=\emptyset$, we let $\alpha_j(X_i\cap C_j)=\alpha(X_i\cap C_j)$ be the usual one-point compactification for any $i\in\mathbb{N}$. In the case $X_0\cap C_j\neq\emptyset$, which will occur only finitely many times, we have an inclusion $\C_b(X_i\cap C_j)\subseteq \C_b(C_j)$ induced by the surjective retract $\varrho^i_{\infty|C_j}\colon C_j\rightarrow X_i\cap C_j$ and we define $\alpha_j(X_i\cap C_j)$ via 
\[\C(\alpha_j(X_i\cap C_j))=\left\{f\in\C_b(X_i\cap C_j)\subseteq\C_b(C_j)=\C(\beta C_j)\colon \begin{array}{c}f\;\text{is locally} \\ \text{constant on }\;\chi(C_j)\end{array}\right\}.\] 
Since the corona space $\chi(C_j)$ has only finitely many connected components by Lemma \ref{lemma corona}, $\alpha_j(X_i\cap C_j)$ will be a finite-point compactification of $X_i\cap C_j$ (meaning that $\alpha_j(X_i\cap C_j)\backslash(X_i\cap C_j)$ is a finite set). In particular, $\alpha_j(X_i\cap C_j)$ is a finite graph for any pair of indices $i$ and $j$. We are now ready to iteratively define the $C^*$-algebras $B_i$ as the pullbacks over 
\[\xymatrix{
    B_i \ar@{..>}[r] \ar@{..>}[d] & B \ar[d]^\tau\\
    \prod_j\C(\alpha_j(X_i\cap C_j),\M_N) \ar[r]^(0.6)q & \frac{\prod_j\C(\beta C_j,\M_N)}{\bigoplus_j\C_0(C_j,\M_N)}
}\]
with respect to the inclusions $(\varrho^i_{\infty|C_j})^*\otimes\id_{\M_N}\colon\C(\alpha_j(X_i\cap C_j),\M_N)\subseteq\C(\beta C_j,\M_N)$. Let us first simplify the description of $B_i$. For every fixed $i$, the set $X_i\cap C_j$ is empty for almost every $j\in J$ so that $\C(\alpha_j(X_i\cap C_j),\M_N)=\M_N\cdot 1_{\beta C_j}$ for almost every $j$. Given $((f_j)_j,b)\in B_i$, this implies $f_j=\tau_j(b)\cdot 1_{\beta C_j}$ for almost every $j$. Hence $B_i$ is isomorphic to the pullback
\[\xymatrix{
    B_i \ar@{..>}[r] \ar@{..>}[d] & B \ar[d]^{\mathop{\oplus}\limits_{j\in J(i)} \tau_j} \\ 
    \bigoplus\limits_{j\in J(i)} \C(\alpha_j (X_i\cap C_j),\M_N) \ar[r]^(0.6)q & \bigoplus\limits_{j\in J(i)}\frac{\C(\beta C_j,\M_N)}{\C_0(C_j,\M_N)}
}\]
for the finite set $J(i)=\{j\in J\colon X_i\cap C_j\neq\emptyset\}\subseteq J$. Since every $\alpha(X_i\cap C_j)$ is a finite graph, the $C^*$-algebra on the lower left side is a 1-NCCW. One also checks that the pullbacks are taken over finite-dimensional $C^*$-algebras because $(\oplus_{j\in J(i)}\tau_j)(B)$ consists of locally constant functions on the space $\bigsqcup_{j\in J(i)}\chi(C_j)$ which has only finitely many connected components by Lemma \ref{lemma corona}.

Next, we specify the inductive structure, i.e. the connecting maps $s_i^{i+1}\colon B_i\rightarrow B_{i+1}$ and retracts $r_{i+1}^i\colon B_{i+1}\rightarrow B_i$. By definition, we find $B_i\subseteq B_{i+1}\subseteq A$ with the inclusions coming from $(\varrho^i_{i+1})^*\otimes\id_{\M_N}$ resp. by $(\varrho^{i+1}_\infty)^*\otimes\id_{\M_N}$. We denote them by $s^{i+1}_i$ resp. by $s_i^\infty$. Since $\overline{\bigcup_i\C(\alpha_j(X_i\cap C_j),\M_N)}\supseteq\overline{\bigcup_i\C_0(X_i\cap C_j,\M_N)}=\C_0(X\cap C_j,\M_N)$ for every $j\in J$, we find $\C_0(X,\M_N)\subseteq\overline{\bigcup_i B_i}$. One further checks that $\bigoplus_{j\in J_0}\C(\alpha_j(X_0\cap C_j),\M_N)$ surjects via $q$ onto the locally constant functions on $\bigsqcup_{j\in J_0}\chi(C_j)$. Together with $\tau_1(B)\subseteq q_1(\prod_{j\in J_1}\M_N\cdot 1_{\beta C_j})\subseteq q_1(\prod_{j\in J_1}\C(\alpha_j(X_0\cap C_j),\M_N))$ it follows that $\overline{\bigcup_iB_i}$ is the pullback over $\tau$ and $q$, and hence $\overline{\bigcup_i B_i}=A$.

It remains to verify the description of the connecting maps $s_i^{i+1}$. We have $X_i\cap C_j=X_{i+1}\cap C_j$ if $j\neq j(i)$ and $\alpha_j(X_i\cap C_{j(i)})\subseteq \alpha_j(X_{i+1}\cap C_{j(i)})\cong \alpha_j(X_i\cap C_{j(i)})\cup_{\{y_i\}=\{0\}}[0,1]$. This means there is a pullback diagram 
\[\xymatrix{
    \C(\alpha_j(X_{i+1}\cap C_{j(i)}),\M_N) \ar@{..>}[r] \ar@{..>>}[d] & \C([0,1],\M_N) \ar@{->>}[d]^{\ev_0} \\ 
    \C(\alpha_j(X_i\cap C_{j(i)}),\M_N) \ar[r]^(0.6){\ev_{y_i}} \ar@/^1pc/[u]^{(\varrho_{i+1}^i)^*\otimes \id_{\M_N}}& \M_N
}\]
where $(\varrho_{i+1}^i)^*\otimes \id_{\M_N}$ corresponds to $f\mapsto(f,f(y_i)\otimes 1_{[0,1]})$ in the pullback picture and the downward arrow on the left side comes from the inclusion $\alpha_j(X_i\cap C_{j(i)})\subseteq\alpha_j(X_{i+1}\cap C_{j(i)})$. This map induces a surjection $B_{i+1}\rightarrow B_i$ which will be denoted by $r_{i+1}^i$ and gives the claimed pullback diagram.

Finally, if $\alpha X$ is an AR-space, the core $X_0=\core(\alpha X,\infty)=[x_0,\infty]$ is nothing but an arc from some point $x_0\in X$ to $\infty$. In this case the finite set $J(0)$ consists of a single element $j(0)$, namely the index corresponding to the component containing $x_0$. By definition, $B_0$ comes as a pullback  
\[\xymatrix{
    B_0 \ar@{..>}[r] \ar@{..>}[d] & B \ar[d]^{\tau_{j(0)}} \\ 
    \C([x_0,\infty],\M_N) \ar[r]^{\ev_\infty} & \M_N\cdot 1_{\chi(C_{j(0)})}
}\] 
and hence an index shift allows us to start with $B_0\cong B$.
\end{proof}

The procedure of forming extensions by $C^*$-algebras of the form $\C_0(X,\M_N)$ can of course be iterated. The next proposition shows that, if all the attached spaces $X$ are one-dimensional ANRs up to compactification, the limit structures which we get from Lemma \ref{lemma limit structure} for each step can be combined into a single one.

\begin{proposition}\label{prop iteration}
Let a short exact sequence of separable $C^*$-algebras $0\rightarrow\C_0(X,\M_N)\rightarrow A\rightarrow B\rightarrow 0$ be given. Assume that $\alpha X$ is a one-dimensional ANR-space and that the associated set-valued retract map $R\colon \prim(A)\rightarrow 2^{\prim(B)}$ as in \ref{def R} is lower semicontinuous and has pointwise finite image. Further assume that there exists a direct limit structure for $B$
\[\xymatrix{
  B_0 \ar[r]_{s_0^1}& B_1 \ar[r]_{s_1^2} \ar@/_1pc/@{->>}[l]_{r_1^0}& B_2 \ar[r] \ar@/_1pc/@{->>}[l]_{r_2^1} & 
  \cdots \ar[r]_{s_{i-1}^i} \ar@/_1pc/@{->>}[l]& B_i \ar[r] \ar@/_1pc/[rr]_{s_i^\infty} \ar@/_1pc/@{->>}[l]_{r_i^{i-1}}& \cdots \ar[r] & B \ar@/_1pc/@{->>}[ll]_{r_\infty^i}
}\]
such that all $B_i$ are 1-NCCWs and at each stage there is a representation $p_i\colon B_i\rightarrow\M_{n_i}$ such that $B_{i+1}$ is defined as the pullback 
\[\xymatrix{
  B_{i+1} \ar@{..>>}[d]^{r_{i+1}^i} \ar@{..>}[r]^(0.3){t_{i+1}}& \C([0,1],\M_{n_i}) \ar@{->>}[d]^{\ev_0} \\ 
  B_i \ar@/^1pc/[u]^{s_i^{i+1}} \ar[r]^{p_i}& \M_{n_i}
}\]
and $s_i^{i+1}\colon B_i\rightarrow B_{i+1}$ is given by $a\mapsto (a,p_i(a)\otimes 1_{[0,1]})$.

Then $A$ is isomorphic to the limit $A_\infty$ of an inductive system 
\[\xymatrix{
  A_0 \ar[r]_{\sigma_0^1}& A_1 \ar[r]_{\sigma_1^2} \ar@/_1pc/@{->>}[l]_{\varrho_1^0}& A_2 \ar[r] \ar@/_1pc/@{->>}[l]_{\varrho_2^1} & 
  \cdots \ar[r]_{\sigma_{i-1}^i} \ar@/_1pc/@{->>}[l]& A_i \ar[r] \ar@/_1pc/[rr]_{\sigma_i^\infty} \ar@/_1pc/@{->>}[l]_{\varrho_i^{i-1}}& \cdots \ar[r] & A_\infty \ar@/_1pc/@{->>}[ll]_{\varrho_\infty^i}
}\] 
where all $A_i$ are 1-NCCWs and at each stage there is a representation $\pi_i\colon A_i\rightarrow\M_{m_i}$ such that $A_{i+1}$ is defined as the pullback  
\[\xymatrix{
  A_{i+1} \ar@{..>>}[d]^{\varrho_{i+1}^i} \ar@{..>}[r]& \C([0,1],\M_{m_i}) \ar@{->>}[d]^{\ev_0} \\
  A_i \ar@/^1pc/[u]^{\sigma_i^{i+1}} \ar[r]^{\pi_i}& \M_{m_i}
}\] 
and $\sigma_i^{i+1}\colon A_i\rightarrow A_{i+1}$ is given by $a\mapsto (a,\pi_i(a)\otimes 1_{[0,1]})$. Furthermore, if $\alpha X$ is an AR-space we may even arrange $A_0\cong B_0$.
\end{proposition}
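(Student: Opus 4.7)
The plan is to apply Lemma \ref{lemma limit structure} to the given extension in order to obtain an intermediate limit structure for $A$ built over $B$, and then to refine the base of this intermediate structure using the given 1-NCCW decomposition of $B$. Writing $(C_j,\sigma_j^{j+1})_{j\ge 0}$ for the system produced by Lemma \ref{lemma limit structure}, we have $A\cong\varinjlim C_j$ with $C_0=B\oplus_F D$ for a 1-NCCW $D$ and a finite-dimensional $C^*$-algebra $F$ (with $C_0\cong B$ when $\alpha X$ is contractible), and each $C_{j+1}$ is the pullback of $C_j$ and $\C([0,1],\M_N)$ over a representation $q_j\colon C_j\to\M_N$ which, by the explicit construction in the proof of Lemma \ref{lemma limit structure}, is an evaluation at a point of the finite graph $X_j\subseteq\alpha X$ and factors through the $\C$-component (not the $B$-component) of the pullback defining $C_j$.

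Next I would refine the base by substituting the given limit structure for $B$. Setting $C_0^{(i)}:=B_i\oplus_F D$ with gluing map $B_i\xrightarrow{s_i^\infty}B\to F$, a standard property of 1-NCCWs (pullbacks of 1-NCCWs over a finite-dimensional $C^*$-algebra are again 1-NCCWs, cf.\ \cite{ELP98}) guarantees that each $C_0^{(i)}$ is a 1-NCCW, and clearly $\varinjlim_i C_0^{(i)}\cong C_0$. The connecting map $C_0^{(i)}\to C_0^{(i+1)}$ is exactly an edge attachment in the sense required by the proposition, with attaching representation $C_0^{(i)}\twoheadrightarrow B_i\xrightarrow{p_i}\M_{n_i}$ inherited from the given structure on $B$. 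Iterating this refinement, for each $j\ge 0$ and every sufficiently large $i$ (so that $X_j$ has appeared by stage $i$) I would define $C_j^{(i)}$ by performing the first $j$ edge attachments of the $C$-system starting from $C_0^{(i)}$ rather than from $C_0$.

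Since the representations $q_0,\dots,q_{j-1}$ used to pass from $C_0^{(i)}$ to $C_j^{(i)}$ factor through the $\C$-component of the underlying pullback, while the representations $p_0,\dots,p_{i-1}$ used to pass from $C_j^{(0)}$ to $C_j^{(i)}$ factor through the $B$-component, the two families of edge attachments commute as iterated pullbacks. This yields a commutative $\mathbb{N}\times\mathbb{N}$-indexed diagram of 1-NCCWs, and selecting any increasing diagonal path $(i_n,j_n)_n$ with $i_n,j_n\to\infty$ produces an inductive system $A_n:=C_{j_n}^{(i_n)}$ of 1-NCCWs in which each $A_{n+1}$ arises from $A_n$ by attaching a single one-sided edge (of either $p$- or $q$-type). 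The limit is
\[\varinjlim_n A_n=\varinjlim_{i,j}C_j^{(i)}=\varinjlim_j C_j\cong A.\]
In the AR case, $D$ is trivial and $C_0^{(i)}=B_i$, so beginning the diagonal path with a run of pure $B$-steps yields $A_0\cong B_0$ as required.

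The main technical obstacle is verifying the commutation of the two kinds of edge attachment at the pullback level, which underlies the consistency of $C_j^{(i)}$. This reduces to the observation that the $q_j$'s and $p_i$'s act on disjoint factors of the underlying pullback data -- the $q_j$'s live on the commutative $\M_N$-valued piece built from the graphs $X_j$, while the $p_i$'s live on the given $B$-system -- and this is transparent from the explicit construction in Lemma \ref{lemma limit structure}. A subsidiary technicality is that if some $q_j$ corresponds to an interior evaluation on an edge of $C_0^{(i)}$, one should first subdivide that edge at the evaluation point so that each intermediate $C_j^{(i)}$ remains in standard 1-NCCW form, but this can be arranged for every finite $i$.
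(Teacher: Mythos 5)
Your proposal is correct and follows essentially the same route as the paper: apply Lemma \ref{lemma limit structure} to obtain an intermediate system $\overline{A}_i$ built over $B$, refine each stage along the given system $(B_j)$ to produce a commuting $\mathbb{N}\times\mathbb{N}$-grid of 1-NCCW pullbacks, and pass to a diagonal whose limit is $A$. The only cosmetic difference is that the paper sidesteps your ``disjoint factors''/edge-subdivision concerns by defining $A_{i+1,j}$ directly as the pullback over the composite representation $\overline{p}_i\circ\sigma_{i,j}^{i,\infty}$, which makes the compatibility of the two kinds of edge attachment automatic.
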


\begin{proof}
By Lemma \ref{lemma limit structure}, we know that $A$ can be written as an inductive limit
\[\xymatrix{
  \overline{A}_0 \ar[r]_{\overline{s}_0^1}& \overline{A}_1 \ar[r]_{\overline{s}_1^2} \ar@/_1pc/@{->>}[l]_{\overline{r}_1^0}& \overline{A}_2 \ar[r] \ar@/_1pc/@{->>}[l]_{\overline{r}_2^1} & 
  \cdots \ar[r]_{\overline{s}_{i-1}^i} \ar@/_1pc/@{->>}[l]& \overline{A}_i \ar[r] \ar@/_1pc/[rr]_{\overline{s}_i^\infty} \ar@/_1pc/@{->>}[l]_{\overline{r}_i^{i-1}}& \cdots \ar[r] & 
  A \ar@/_1pc/@{->>}[ll]_{\overline{r}_\infty^i}
}\]
with a pullback structure 
\[\xymatrix{
  \overline{A}_{i+1} \ar@{..>>}[d]^{\overline{r}_{i+1}^i} \ar@{..>}[r]& \C([0,1],\M_N) \ar@{->>}[d]^{\ev_0} \\ 
  \overline{A}_i \ar@/^1pc/[u]^{\overline{s}_i^{i+1}} \ar[r]^{\overline{p}_i}& \M_N
}\]
at every stage and $\overline{s}_i^{i+1}\colon\overline{A}_i\rightarrow \overline{A}_{i+1}$ given by $a\mapsto (a,\overline{p}_i(a)\otimes 1_{[0,1]})$. The starting algebra $\overline{A}_0$ comes as a pullback 
\[\xymatrix{
  \overline{A}_0 \ar[r] \ar[d] & D \ar[d]^\varphi \\ 
  B \ar[r]^\psi & F
}\] 
with $D$ a 1-NCCW and $\dim(F)<\infty$. 
In the case of $\alpha X$ being an AR-space, we may choose $\overline{A}_0=B$, i.e. $D=0$. For $j\in\mathbb{N}$ we now define $A_{0,j}$ to be the pullback
\[\xymatrix{
  A_{0,j} \ar@{..>}[r] \ar@{..>}[d]_{\varrho_{0,j}} & D \ar[d]^\varphi \\ 
  B_j \ar[r]^{\psi\circ s_j^{\infty}} & F.
}\]
The maps $s_j^{j+1}$,$s_j^\infty$ induce compatible homomorphisms $\sigma_{0,j}^{0,j+1}\colon A_{0,j}\rightarrow A_{0,j+1}$ and $\sigma_{0,j}^{0,\infty}\colon A_{0,j}\rightarrow\overline{A}_0$, leading to an inductive limit structure with $\varinjlim_j(A_{0,j},\sigma_{0,j}^{0,j+1})=\overline{A}_0$. We proceed iteratively, defining $A_{i+1,j}$ to be the pullback
\[\xymatrix{
  A_{i+1,j} \ar@{..>}[r] \ar@{..>}[d]^{\varrho_{i+1,j}^{i,j}} & \C([0,1],\M_N) \ar[d]^{\ev_0} \\ 
  A_{i,j} \ar[r]_{\overline{p}_i\circ\sigma_{i,j}^{i,\infty}} \ar@/^1pc/@{..>}[u]^{\sigma_{i,j}^{i+1,j}}& \M_N
}\]
with $\sigma_{i,j}^{i+1,j}\colon A_{i,j}\rightarrow A_{i+1,j}$ given by $a\mapsto (a,(\overline{p}_i\circ\sigma_{i,j}^{i,\infty})(a)\otimes 1_{[0,1]})$. It is then checked that $\sigma_{i,j}^{i,j+1}$ and $\sigma_{i,j}^{i,\infty}$ induce compatible homomorphisms $\sigma_{i+1,j}^{i+1,j+1}\colon A_{i+1,j}\rightarrow A_{i+1,j+1}$ and $\sigma_{i+1,j}^{i+1,\infty}\colon A_{i+1,j}\rightarrow \overline{A}_{i+1}$ with $\varinjlim_j (A_{i+1,j},\sigma_{i+1,j}^{i+1,j+1})=\overline{A}_{i+1}$. Observing that for every $i$ and $j$
\[\xymatrix{
  A_{i,j+1} \ar[rrr]^{t_{j+1}\circ\varrho_{0,j+1}\circ\varrho_{i,j+1}^{0,j+1}} \ar[d]^{\varrho_{i,j+1}^{i,j}} &&& \C([0,1],\M_{n_j}) \ar[d]^{\ev_0} \\ 
  A_{i,j} \ar[rrr]_{p_j\circ\varrho_{0,j}\circ\varrho_{i,j}^{0,j}} \ar@/^1pc/[u]^{\sigma_{i,j}^{i,j+1}} &&& \M_{n_j}
}\]
is indeed a pullback diagram, we get the desired limit structure for $A$ by following the diagonal in the commutative diagram
\[\xymatrix{
  A_{00} \ar[r] \ar@{..>}[d] & A_{01} \ar[r] \ar[d] & A_{02} \ar[r] \ar[d] & A_{03} \ar[r] \ar[d] & \cdots \ar[r] & \overline{A}_0  \ar[d]\\
  A_{10} \ar@{..>}[r] \ar[d] & A_{11} \ar[r] \ar@{..>}[d] & A_{12} \ar[r] \ar[d] & A_{13} \ar[r] \ar[d] & \cdots \ar[r] & \overline{A}_1 \ar[d] \\
  A_{20} \ar[r] \ar[d] & A_{21} \ar@{..>}[r] \ar[d] & A_{22} \ar[r] \ar@{..>}[d] & A_{23} \ar[r] \ar[d] & \cdots \ar[r] & \overline{A}_2 \ar[d] \\
  A_{30} \ar[r] \ar[d] & A_{31} \ar[r] \ar[d] & A_{32} \ar@{..>}[r] \ar[d] & A_{33} \ar[r] \ar@{..>}[d] & \cdots \ar[r] & \overline{A}_3 \ar[d]\\
  \vdots & \vdots & \vdots & \vdots & \ddots &\vdots
}\]
as indicated. Note that, since all connecting maps are injective, the limit over the diagonal equals $\varinjlim\overline{A}_n=A$.
\end{proof}

\subsection{Keeping track of semiprojectivity}\label{section keeping track}

We now reap the fruit of our labor in the previous sections and work out a '2 out of 3'-type statement describing the behavior of semiprojectivity with respect to extensions by homogeneous $C^*$-algebras. While for general extensions the behavior of semiprojectivity is either not at all understood or known to be rather bad, Theorem \ref{thm 2 out of 3} gives a complete and satisfying answer in the case of homogeneous ideals. It is the very first result of this type and allows to understand semiprojectivity for $C^*$-algebras which are built up from homogeneous pieces, see chapter \ref{section main}. 

\begin{proposition}\label{prop ideal ANR}
Let $0\rightarrow\C_0(X,\M_N)\rightarrow A\rightarrow B\rightarrow 0$ be a short exact sequence of $C^*$-algebras. 
If both $A$ and $B$ are (semi)projective, then the one-point compactification $\alpha X$ is a one-dimensional A(N)R-space.
\end{proposition}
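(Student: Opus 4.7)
Since $A$ is semiprojective, Lemma \ref{lemma ideal Peano} already gives that $\alpha X$ is a Peano space of dimension at most one. By Ward's Theorem \ref{thm ward} it therefore suffices to rule out embeddings $S^1\hookrightarrow\alpha X$ of arbitrarily small diameter, and of \emph{any} diameter in the projective case. The fully projective case is immediate from Corollary \ref{cor projective ideal}: if both $A$ and $B$ are projective, then $\C_0(X,\M_N)$ is projective, and Theorem \ref{thm comm case} yields that $\alpha X$ is a $1$-AR. What remains is the semiprojective case.

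Assume toward contradiction that $\alpha X$ contains small circles. After passing to a subsequence of embeddings $\iota_n\colon S^1\hookrightarrow\alpha X$ with $\diam(\iota_n(S^1))\to 0$, I may take the images to be pairwise disjoint and to accumulate at a single point $p\in\alpha X$. The plan is to construct an unsolvable lifting problem for the strongly quasidiagonal homogeneous ideal $I=\C_0(X,\M_N)$ in the spirit of Proposition \ref{prop nslp}. If $p\in X$, I argue essentially as in the proof of Lemma \ref{lemma ideal Peano}: via the canonical embedding $A\hookrightarrow\C_b(X,\M_N)=\mathcal{M}(I)$, the pointwise evaluations $a\mapsto a\circ\iota_n$ assemble into a $*$-homomorphism $A\to(\bigoplus_n\C(S^1))^+\otimes\M_N$, the unitization being necessary because $a\circ\iota_n\to a(p)\cdot 1$ in norm. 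Writing the target as $(\bigoplus_n\mathcal{T})^+/(\bigoplus_n\mathbb{K})\otimes\M_N$ and using the chain of ideals $\bigoplus_{n\leq k}\mathbb{K}\otimes\M_N$, semiprojectivity of $A$ produces a partial lift; restricting it to $I$ and projecting to the $(k+1)$-st coordinate gives a $*$-homomorphism $I\to\mathcal{T}\otimes\M_N$ lifting the Tietze-surjection $I\twoheadrightarrow\C(\iota_{k+1}(S^1),\M_N)=\C(S^1)\otimes\M_N$, contradicting Proposition \ref{prop nslp}.

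The hard part is the case $p=\infty$, when the circles leave every compactum in $X$ and so $a\circ\iota_n\to 0$ in norm for every $a\in I$. Here the evaluation map $\Phi\colon A\to\prod_n\C(S^1)\otimes\M_N$ sends $I$ into $\bigoplus_n\C(S^1)\otimes\M_N$ and induces $\phi\colon B\to Q$ where $Q=\prod_n/\bigoplus_n$. Writing $Q$ as the direct limit of the tail quotients $\prod_{n>k}\C(S^1)\otimes\M_N$ along the surjective truncations, semiprojectivity of $B$ produces a partial lift $\widetilde\phi\colon B\to\prod_{n>k}\C(S^1)\otimes\M_N$ for some $k$. I then assemble $\Phi$ and $\widetilde\phi\circ\pi$ into the pullback $A\oplus_Q\prod_{n>k}\C(S^1)\otimes\M_N$ and apply the extended lifting Lemma \ref{lemma extended lifting} to the surjective inductive system of Toeplitz-type extensions obtained from the chain $\bigoplus_{k<n\leq m}\mathbb{K}\otimes\M_N$ inside $\bigoplus_{n>k}\mathcal{T}\otimes\M_N$. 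The resulting further partial lift, restricted to $I$ and projected onto one of the tail $\mathcal{T}$-factors, once again contradicts Proposition \ref{prop nslp}. The technical heart of the proof is arranging this combined lifting problem so that Lemma \ref{lemma extended lifting} can be invoked using both semiprojectivities simultaneously.
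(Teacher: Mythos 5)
Your reduction to excluding small circles, your treatment of the projective case, and your treatment of circles accumulating at a point $p\in X$ all agree with the paper (the last is exactly the argument of Lemma \ref{lemma ideal Peano}, which the paper simply quotes). The substance of the proposition is the case $p=\infty$, and there your argument has a genuine gap at precisely the step you defer as ``the technical heart'': the invocation of Lemma \ref{lemma extended lifting}. That lemma requires a commutative diagram of extensions, i.e.\ compatible $^*$-homomorphisms $\overline{\varphi}\colon A\rightarrow\varinjlim D_m$ and $\overline{\overline{\varphi}}\colon B\rightarrow\varinjlim E_m$ into a surjective inductive system $0\rightarrow C_m\rightarrow D_m\rightarrow E_m\rightarrow 0$; you never construct $D_m$ and $E_m$, let alone these maps. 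For your system $C_m=(\bigoplus_{n>k}\mathcal{T}\otimes\M_N)/(\bigoplus_{k<n\leq m}\mathbb{K}\otimes\M_N)$ the natural candidates are $D_m=\mathcal{M}(C_m)$ and $E_m=\mathcal{Q}(C_m)$, but then one only has a map $A\rightarrow\mathcal{M}(\varinjlim C_m)=\prod_{n>k}\C(S^1)\otimes\M_N$, and the surjection $p_{\mathcal{M}}\colon\varinjlim\mathcal{M}(C_m)\rightarrow\mathcal{M}(\varinjlim C_m)$ is not injective. Worse, for this particular system it admits no splitting: a splitting would send the unitary $(z\otimes 1_N)_{n>k}$ to (an element close to) a unitary of some $\mathcal{M}(C_m)=\prod_{k<n\leq m}\C(S^1)\otimes\M_N\times\prod_{n>m}\mathcal{T}\otimes\M_N$ whose Toeplitz coordinates are unitaries lifting $z\otimes 1_N$, which the index obstruction forbids. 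So the diagram required by Lemma \ref{lemma extended lifting} is not produced, and producing it is not a routine matter of ``assembling'' $\Phi$ with the partial lift of $B$; by the mechanism of Proposition \ref{prop busby lifting} it amounts to lifting a Busby map through $p_{\mathcal{Q}}$, which you do not address.

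The paper avoids Toeplitz algebras at infinity altogether. It restricts $I$ to $\bigoplus_n\C_0((0,1)_n,\M_N)$, the parts of the circles lying in $X$ --- note that the circles need not be pairwise disjoint and may all pass through $\infty$, two points your reduction glosses over; in the latter sub-case your $\Phi$ into $\prod_n\C(S^1)\otimes\M_N$ is not even defined and Proposition \ref{prop nslp} does not apply to the surjection onto $\C_0((0,1),\M_N)$. It then uses Brown's mapping telescope $T_k$ of this direct sum, for which Lemma \ref{lemma telescope} supplies genuine splittings of $p_{\mathcal{M}}$ and $p_{\mathcal{Q}}$; these splittings are exactly what produce the commutative diagram feeding Lemma \ref{lemma extended lifting}. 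The conclusion is only that the restriction map factors, up to homotopy, through the first $k$ circles, and the contradiction is then extracted from $K_1$: a unitary supported near the $(k+1)$-st circle has image $N\neq 0$ in $K_1(\C_0((0,1)_{k+1},\M_N))$. If you wish to salvage the Toeplitz/Lin's-theorem route in the case $p=\infty$, you must first solve the multiplier-versus-limit compatibility problem for your system, and nothing in your sketch does so.
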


\begin{proof}
The projective case follows directly from Corollary \ref{cor projective ideal} and Theorem \ref{thm comm case} while the semiprojective case requires some more work. By Lemma \ref{lemma ideal Peano} we know that $\alpha X$ is a Peano space of dimension at most 1. The proof of \ref{lemma ideal Peano} further shows that there are no small circles accumulating in $X$. However, in order to show that $\alpha X$ is an ANR-space we have to exclude the possibility of smaller and smaller circles accumulating at $\infty\in\alpha X$, see Theorem \ref{thm ward}. Assume that we find a sequence of circles with diameters converging to $0$ (with respect to some fixed geodesic metric) at around $\infty\in\alpha X$. After passing to a subsequence, there are two possible situations: either each circle contains the point $\infty$ or none of them does. Both cases are treated exactly the same, for the sake of brevity we only consider the situation where $\infty$ is contained in all circles. In this case have a $^*$-homomorphism $\varphi\colon\C_0(X,\M_N)\rightarrow\bigoplus_{n=1}^\infty\C_0((0,1)_n,\M_N)$ where $(0,1)_n\cong(0,1)$ is the part of the $n$-th circle contained in $X$. Note that each coordinate projection gives a surjection $\varphi_n\colon\C_0(X,\M_N)\rightarrow\C_0((0,1),\M_N)$ while $\varphi$ itself is not necessarily surjective (because the circles might intersect in $X$). We make use of Brown's mapping telescope associated to $\bigoplus_{n=1}^\infty\C_0((0,1)_n,\M_N)$, i.e. 
\[T_k=\left\{f\in\C([k,\infty],\bigoplus_{n=1}^\infty\C_0((0,1)_n,\M_N))\colon t\leq l\Rightarrow f(t)\in\bigoplus_{n=1}^l\C_0((0,1)_n,\M_N)\right\}\]
with the obvious (surjective) restrictions as connecting maps giving $\varinjlim T_k=\bigoplus_{n=1}^\infty\C_0((0,1)_n,\M_N)$. Using Lemma \ref{lemma telescope}, we find a commutative diagram 
\[\xymatrix{
  0 \ar[r] & \varinjlim T_k \ar[r] & \varinjlim\mathcal{M}(T_k) \ar[r] & \varinjlim\mathcal{Q}(T_k) \ar[r] & 0 \\ 
  0 \ar[r] & \C_0(X,\M_N) \ar[r] \ar[u]^\varphi & A \ar[r] \ar[u]^{\overline{\varphi}} & B \ar[r] \ar[u]^{\overline{\overline{\varphi}}} & 0
}.\]
It now follows from the semiprojectivity assumptions and Lemma \ref{lemma extended lifting} that $\varphi$ lifts to $T_k$ for some $k$, which is equivalent to a solution of the original lifting problem 
\[\xymatrix{
  & \bigoplus\limits_{n=1}^k \C_0((0,1)_n,\M_N) \ar[d]^\subseteq \\
  \C_0(X,\M_N) \ar[r]^(0.4)\varphi \ar@/^1pc/@{..>}[ur] & \bigoplus\limits_{n=1}^\infty\C_0((0,1)_n,\M_N)
}\]
up to homotopy. This, however, implies
\[\image(K_1(\varphi))\subseteq K_1\left(\bigoplus_{n=1}^k\C_0((0,1)_n,\M_N)\right)=\sum_{n=1}^k\mathbb{Z}\subset\sum_{n=1}^\infty\mathbb{Z} =K_1\left(\bigoplus_{n=1}^\infty\C_0((0,1)_n,\M_N)\right)\]
which gives a contradiction as follows. Because $\varphi_{k+1}$ is surjective and $\dim(\alpha X)\leq 1$, we can extend the canonical unitary function from $\alpha((0,1)_n)$ to a unitary $u$ on all of $\alpha X$ by \cite[Theorem VI.4]{HW48}. This unitary then satisfies $u-1\in\C_0(X)$ and $K_1(\varphi)([u\otimes 1_{\M_N}])=N\in\mathbb{Z}=K_1(\C_0((0,1)_{k+1},\M_N))$. This shows that there are no small circles at around $\infty$ in $\alpha X$ and hence that $\alpha X$ is a one-dimensional ANR-space by Theorem \ref{thm ward}. 
\end{proof}

\begin{theorem}\label{thm 2 out of 3}
Let a short exact sequence of $C^*$-algebras $0\rightarrow I\rightarrow A\rightarrow B\rightarrow 0$ be given and assume that the ideal $I$ is a $N$-homogeneous $C^*$-algebra with $\prim(I)=X$. Denote by $(X_i)_{i\in I}$ the connected components of $X$ and consider the following statements:
\renewcommand{\labelenumi}{(\Roman{enumi})}\begin{enumerate}
  \item $I$ is (semi)projective.
  \item $A$ is (semi)projective.
  \item $B$ is (semi)projective and the set-valued retract map $R\colon\prim(A)\rightarrow 2^{\prim(B)}$ given as in \ref{def R} by
\[R(z)=\begin{cases}
  \;z & \text{if}\;z\in\prim(B), \\
  \partial X_i=\overline{X_i}\backslash X_i & \text{if}\;z\in X_i\subseteq X=\prim(I)
\end{cases}\]
is lower semicontinuous and has pointwise finite image.
\end{enumerate}
If any two of these statements are true, then the third one also holds.
\end{theorem}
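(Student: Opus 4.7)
The plan is to prove the three implications (II)+(III)$\Rightarrow$(I), (I)+(III)$\Rightarrow$(II), and (I)+(II)$\Rightarrow$(III) separately, each relying on different technology from the preceding sections.

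For (II)+(III)$\Rightarrow$(I) the argument is essentially formal. Since both $A$ and $B$ are (semi)projective, Proposition \ref{prop ideal ANR} directly gives that $\alpha X=\alpha\prim(I)$ is a one-dimensional A(N)R-space. As $I$ is $N$-homogeneous over a base of covering dimension at most one, Lemma \ref{lemma trivial bundles} forces the associated $\M_N$-bundle to be trivial, yielding $I \cong \C_0(X, \M_N)$. Applying Theorem \ref{thm homogeneous case} then gives (semi)projectivity of $I$.

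For (I)+(III)$\Rightarrow$(II) I would invoke the limit-structure machinery of section \ref{section existence limit structures}. Assumption (I) provides $I \cong \C_0(X, \M_N)$ with $\alpha X$ a one-dimensional A(N)R, while (III) supplies the regularity of $R$. These are exactly the hypotheses of Lemma \ref{lemma limit structure}, so we obtain a limit description $A \cong \varinjlim B_i$ in which $B_0$ is a pullback of $B$ with a $1$-NCCW over a finite-dimensional algebra (or simply $B_0\cong B$ in the AR case), and each $B_{i+1}$ arises from $B_i$ by a pullback with $\C([0,1],\M_N)$ over a representation $\pi_i\colon B_i\to\M_N$. Since $B$ is (semi)projective by (III), the pullback $B_0$ inherits (semi)projectivity, and by induction together with Proposition \ref{prop wcp examples}(1) every $B_i$ is (semi)projective. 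The connecting maps $s_i^{i+1}$ are precisely the canonical splits shown to be weakly conditionally projective in Proposition \ref{prop wcp examples}(2), so Lemma \ref{lemma limit criterium} concludes that $A$ is (semi)projective.

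For (I)+(II)$\Rightarrow$(III) the regularity of $R$ is handled by the two main results of section \ref{section retract maps}: Proposition \ref{prop finite boundary} delivers pointwise finite image and Proposition \ref{prop R is lsc} delivers lower semicontinuity. The remaining task, that $B$ is (semi)projective, is the delicate part. In the projective (AR) case it is clean: by Lemma \ref{lemma limit structure} one has $B_0\cong B$, and the compatible system of retractions $r_{i+1}^i\colon B_{i+1}\to B_i$ (satisfying $r_{i+1}^i\circ s_i^{i+1}=\id_{B_i}$) descends to a well-defined retraction $A\to B_0\cong B$, exhibiting $B$ as a retract of the projective algebra $A$. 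In the semiprojective case $B_0 = B\oplus_F D$ with $D$ a nontrivial $1$-NCCW, so the telescoping retractions only exhibit $B_0$ as a retract of $A$; a multiplicative section $B\to B_0$ need not exist because it would require lifting $\psi\colon B\to F$ through $\varphi\colon D\to F$, which is obstructed by connectedness of the attached finite graphs. Instead I would work directly at the lifting level: given $\rho\colon B\to D/J_\infty$, use semiprojectivity of $A$ to obtain a partial lift $\sigma\colon A\to D/J_n$ of $\rho\circ\pi$, note that $\sigma|_I$ takes values in the ideal $J_\infty/J_n=\overline{\bigcup_m J_m/J_n}$ of $D/J_n$, and invoke semiprojectivity of $I$ combined with the specific structure $I\cong\C_0(X,\M_N)$ to arrange that after composing with some further $\pi_n^m$ the map $\pi_n^m\sigma$ vanishes on $I$ and therefore descends to the desired partial lift $B\to D/J_m$ of $\rho$. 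Making this absorbing step rigorous, by exploiting the one-dimensional ANR structure of $\alpha X$ from Theorem \ref{thm ST} together with stability properties of semiprojective $C^*$-algebras, is the main obstacle in the whole argument.
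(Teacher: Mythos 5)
Your implications (I)+(III)$\Rightarrow$(II) and (II)+(III)$\Rightarrow$(I) follow the paper's route essentially verbatim (limit structure from Lemma \ref{lemma limit structure} plus Proposition \ref{prop wcp examples} and Lemma \ref{lemma limit criterium} for the former; the Peano/triviality/ANR chain for the latter). One small ordering issue in the latter: Proposition \ref{prop ideal ANR} is stated for extensions whose ideal is already of the form $\C_0(X,\M_N)$, so you must first invoke Lemma \ref{lemma ideal Peano} and Lemma \ref{lemma trivial bundles} to trivialize the bundle before you may apply it; this is easily repaired. Your projective case of (I)+(II)$\Rightarrow$(III), exhibiting $B\cong B_0$ as a retract of $A$ via $r_\infty^0$, is a legitimate variant of the paper's argument (which instead uses the multiplicative split from Corollary \ref{cor projective split}), and both require first establishing the regularity of $R$ via Propositions \ref{prop finite boundary} and \ref{prop R is lsc}, as you do.

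The genuine gap is the semiprojective case of (I)+(II)$\Rightarrow$(III), and the repair you sketch is the wrong one. You propose to pass far enough down the quotient tower that $\pi_n^m\circ\sigma$ vanishes on all of $I$; but $\sigma(I)$ is merely a $C^*$-subalgebra of $J_\infty/J_n=\overline{\bigcup_m J_m/J_n}$, and there is no reason for it to be contained in any single $J_m/J_n$. Semiprojectivity of $I$ produces a \emph{different} homomorphism $I\rightarrow J_m/J_n$ lifting $\pi_n^\infty\circ\sigma|_I=0$ (namely the zero map, trivially); it does not let you replace $\sigma|_I$ by it while keeping compatibility with $\sigma$ on the rest of $A$. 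The quotient $B$ simply is not obtained from $A$ by killing $I$ at a finite stage. What the paper does instead is the content of Theorem \ref{thm semiprojective split}: using the regularity of $R$ (already established from (I)+(II)) it produces a completely positive split $s\colon B\rightarrow A$ whose composition into $\C_b(X,\M_N)$ is multiplicative outside an open set $U$ contained in a \emph{compact} $K\subset X$. One then kills only the compactly supported part $\C_0(U,\M_N)$ of the ideal (this \emph{is} achievable after increasing $n$, precisely because of compactness of $K$), so that $\mathcal{M}(\psi)$ factors through $\C_b(X\backslash U,\M_N)$ and $s'=r\circ\iota\circ s$ becomes an honest $^*$-homomorphism; the pair $(\varphi,\mathcal{M}(\psi)'\circ s')$ then defines the desired partial lift of $\varphi$ via the Busby pullback description of $D/J_n$ over $\mathcal{Q}(J/J_n)$. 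This almost-multiplicative splitting machinery (all of section \ref{section lifting busby}, culminating in Theorem \ref{thm semiprojective split}) is absent from your proposal and is the essential missing ingredient; the ``absorbing step'' you flag as the main obstacle cannot be made rigorous in the form you state it.
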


\begin{proof}
(I)+(II)$\Rightarrow$(III): By Theorem \ref{thm homogeneous case}, we know that the sequence is isomorphic to an extension 
\[\xymatrix{0\ar[r]&\C_0(X,\M_N)\ar[r]&A\ar[r]^\pi&B\ar[r]&0}\]
with the one-point compactification of $X$ a one-dimensional A(N)R-space. The set-valued retract map $R$ is then lower semicontinuous by Proposition \ref{prop R is lsc} and has pointwise finite image by Proposition \ref{prop finite boundary}. But now Theorem \ref{thm semiprojective split} applies and shows that there is a completely positive split $s$ for the quotient map $\pi$ such that the composition $B\xrightarrow{s} A\xrightarrow{\iota}\C_b(X,\M_N)$ is multiplicative outside of an open set $U\subset K\subset X$ where $K$ is compact.

Let a lifting problem $\varphi\colon B\xrightarrow{\sim} D/J=\varinjlim D/J_n$ be given. Since $A$ is semiprojective, we can solve the resulting lifting problem for $A$, meaning we find $\psi\colon A\rightarrow D/J_n$ for some $n$ with $\pi_n\circ\psi=\varphi\circ\pi$. Restricting to $\her_{D/J_n}(\psi(\C_0(X,\M_N)))+\psi(A)\subseteq D/J_n$, we may assume that $\psi_{|\C_0(X,\M_N)}$ is proper as a $^*$-homomorphism to $J/J_n$ 
and hence induces a map $\mathcal{M}(\psi)$ between multiplier algebras. Since the restriction of $\pi_n\circ\psi$ to the ideal $\C_0(X,\M_N)$ vanishes, we may use compactness of $K$ to assume that $\psi$ maps $\C_0(U,\M_N)$ to $0$ (after increasing $n$ if necessary). This further implies that $\mathcal{M}(\psi)$ factors through $r\colon\C_b(X,\M_N)\rightarrow\C_b(X\backslash U,\M_N)$. We then find $s':=r\circ\iota\circ s$ to be multiplicative and hence a $^*$-homomorphism: 
\[\xymatrix{
  A\ar[dr]^\iota \ar[ddd]^\pi \ar[rrr]^\psi & & & D/J_n \ar[dr]^{\iota_n}\ar@{->>}[ddd]^(0.6){\pi_n}|!{[dll];[dr]}\hole|!{[ddl];[dr]}\hole \\ 
  & \C_b(X,\M_N) \ar@{->>}[dr]+(-12.5,3)^(.65)r \ar[rrr]^{\mathcal{M}(\psi)} & & & \mathcal{M}(J/J_n) \ar@{->>}[dd]^{\varrho_n}\\ 
  & & \C_b(X\backslash U,\M_N)\ar@{-->}[urr]^(0.35){\mathcal{M}(\psi)'} \\ B \ar[rrr]^\varphi \ar@/^2pc/[uuu]^s \ar@{-->}[urr]^{s'}& & & D/J \ar[r]^(0.4){\tau_n}& \mathcal{Q}(J/J_n)
}\]
The inclusion of $J/J_n$ as an ideal in $D/J_n$ gives canonical $^*$-homomorphisms $\iota_n$ and $\tau_n$ as in the diagram above. One now checks that $\varrho_n\circ(\mathcal{M}(\psi)'\circ s')=\tau_n\circ\varphi$ holds. Combining this with the fact that the trapezoid on the right is a pullback diagram, we see that the pair $(\varphi,(\mathcal{M}(\psi)'\circ s'))$ defines a lift $B\rightarrow D/J_n$ for $\varphi$. This shows that the quotient $B$ is semiprojective.

For the projective version of the statement, one uses Corollary \ref{cor projective split} to see that the sequence admits a multiplicative split $s\colon B\rightarrow A$ rather than just a completely positive one.\\

(I)+(III)$\Rightarrow$(II): We know that $I\cong\C_0(X,\M_N)$ with $\alpha X$ a one-dimensional A(N)R-space by Theorem \ref{thm homogeneous case}. Now Lemma \ref{lemma limit structure} applies and we obtain a limit structure for $A$
\[\xymatrix{
    B_0 \ar[r]_{s_0^1}& B_1 \ar[r]_{s_1^2} \ar@/_1pc/@{->>}[l]_{r_1^0}& B_2 \ar[r] \ar@/_1pc/@{->>}[l]_{r_2^1} & \cdots \ar[r]_{s_{i-1}^i} \ar@/_1pc/@{->>}[l]& B_i \ar[r] \ar@/_1pc/[rr]_{s_i^\infty} \ar@/_1pc/@{->>}[l]_{r_i^{i-1}}& \cdots \ar[r] & \varinjlim \left(B_i,s_i^{i+1}\right)\cong A \ar@/_1pc/@{->>}[ll]_{r_\infty^i}
}\]
with $B_0$ given as a pullback of $B$ and a 1-NCCW $D$ over a finite-dimensional $C^*$-algebra. In particular, $B_0$ is semiprojective by \cite[Corollary 3.4]{End14}. In the projective case, we can take $B_0=B$ to be projective. In both cases, the connecting maps in the system above arise from pullback diagrams
\[\xymatrix{
  B_{i+1} \ar@{..>>}[d]^{r_{i+1}^i} \ar@{..>}[r]& \C([0,1],\M_N) \ar@{->>}[d]^{\ev_0} \\ 
  B_i \ar@/^1pc/[u]^{s_i^{i+1}} \ar[r]^{\pi_i}& \M_N
}\]
with $s_i^{i+1}(a)=(a,\pi_i(a)\otimes 1_{[0,1]})$. 
Since these maps are weakly conditionally projective by Proposition \ref{prop wcp examples}, we obtain (semi)projectivity of $A$ from Lemma \ref{lemma limit criterium}.\\

(II)+(III)$\Rightarrow$(I): This implication holds under even weaker hypothesis. More precisely, we show that (semi)projectivity of both $A$ and $B$ implies $I$ to be (semi)projective. The assumption on the retract map $R$ is not needed here. 

First we apply Lemma \ref{lemma ideal Peano} to find the one-point compactification of $\prim(I)$ to be a Peano space of dimension at most 1, and hence $I$ is trivially homogeneous by Lemma \ref{lemma trivial bundles}. Now Proposition \ref{prop ideal ANR} shows that $\alpha X$ is in fact an ANR-space which, together with Theorem \ref{thm homogeneous case}, means that $I$ is semiprojective. The projective version is Corollary \ref{cor projective ideal}.
\end{proof}

\begin{remark}\label{remark comm retract}
Theorem \ref{thm 2 out of 3} shows that regularity properties of the retract map $R\colon\prim(A)\rightarrow 2^{\prim(B)}$ are crucial for semiprojectivity to behave nicely with respect to extensions by homogeneous $C^*$-algebras. This can already be observed and illustrated in the commutative case. Given an extension of commutative $C^*$-algebras 
\[0\rightarrow\C_0(X)\rightarrow\C_0(Y)\rightarrow\C_0(Y\backslash X)\rightarrow 0,\]
the following holds: If both the ideal $\C_0(X)$ and the quotient $\C_0(Y\backslash X)$ are (semi)projective, then the extension $\C_0(Y)$ is (semi)projective if and only if the retract map $R\colon Y\rightarrow 2^{Y\backslash X}$ is lower semicontinuous and has pointwise finite image. The following examples show that both properties for $R$ are not automatic: 

(a) An examples with $R$ not having pointwise finite image is contained as example 5.5 in \cite{LP98}, we include it here for completeness. Let $X=\{(x,\sin x^{-1}):0<x\leq 1\}\subset\mathbb{R}^2$ and $Y=X\cup\{(0,y)\colon -1\leq y< 2\}$, then we get an extension isomorphic to 
\[0\rightarrow\C_0(0,1]\rightarrow\C_0(Y)\rightarrow\C_0(0,1]\rightarrow 0.\] 
Here both the ideal and the quotient are projective, but the extension $\C_0(Y)$ is not (because $\alpha Y$ is not locally connected and hence not an AR-space). In this example, we find $R(x)=\{(0,y)\colon -1\leq y\leq 1\}$ to be infinite for all $x\in X$.

(b) The following is an example where $R$ fails to be lower semicontinuous. Consider $Y=\{(x,0)\colon 0\leq x<1\}\cup\bigcup_n C_n\subset\mathbb{R}^2$ with $C_n=\{(t,(1-t)/n)\colon 0\leq t<1\}$ the straight line from $(0,1/n)$ to $(1,0)$ with the endpoint $(1,0)$ removed. With $X=\bigcup_n C_n\subset Y$ we obtain an extension isomorphic to 
\[0\rightarrow\bigoplus_n\C_0(0,1]\rightarrow \C_0(Y)\rightarrow\C_0(0,1]\rightarrow 0.\]
Here both the ideal and the quotient are projective while the extension $\C_0(Y)$ is not (again because $\alpha Y$ is not locally connected). We also find $(0,1/n)\rightarrow (0,0)$ in $Y$ but $R((0,1/n))=\emptyset$ for all $n$, which shows that $R$ is not lower semicontinuous. The descriptive reason for $\C_0(Y)$ not being projective in this case is that the length of the attached intervals $C_n$ does not tend to $0$ as $n$ goes to infinity.  
\end{remark}

\section{The structure of semiprojective subhomogeneous $C^*$-algebras}\label{section structure}

\subsection{The main result}\label{section main}

With Theorem \ref{thm 2 out of 3} at hand, we are now able to keep track of semiprojectivity when decomposing a subhomogeneous $C^*$-algebra into its homogeneous subquotients. On the other hand, Theorem \ref{thm 2 out of 3} also tells us in which manner homogeneous, semiprojective $C^*$-algebras may be combined in order to give subhomogeneous $C^*$-algebras which are again semiprojective. This leads to the main result of this chapter, Theorem \ref{thm structure}, which gives two characterizations of projectivity and semiprojectivity for subhomogeneous $C^*$-algebras.

\begin{lemma}\label{lemma Prim_max}
Let $A$ be a $N$-subhomogeneous $C^*$-algebra. If $A$ is semiprojective, then the maximal $N$-homogeneous ideal of $A$ is also semiprojective.
\end{lemma}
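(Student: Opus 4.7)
The approach is to apply the implication (II)$+$(III)$\Rightarrow$(I) of Theorem~\ref{thm 2 out of 3} to the short exact sequence
\[0\longrightarrow I\longrightarrow A\longrightarrow A_{\leq N-1}\longrightarrow 0,\]
in which $I$ corresponds to the open subset $\prim_N(A)\subseteq\prim(A)$. As noted in the proof of Theorem~\ref{thm 2 out of 3}, that implication requires, apart from semiprojectivity of $A$, only semiprojectivity of the quotient $A_{\leq N-1}$; the regularity of the retract map $R$ is not used. Semiprojectivity of $A_{\leq N-1}$ is not yet available, so the plan is to replay the proof of that implication directly and to show that the particular form of the extension compensates for the missing hypothesis.

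Combining Lemma~\ref{lemma ideal Peano} (which applies since $I$ is an $N$-homogeneous ideal in the semiprojective algebra $A$) with Lemma~\ref{lemma trivial bundles}, one obtains $I\cong\C_0(X,\M_N)$ with $X=\prim(I)$ and $\alpha X$ a one-dimensional Peano space. By Theorem~\ref{thm homogeneous case}, the statement reduces to showing that $\alpha X$ is an ANR, which by Theorem~\ref{thm ward} amounts to ruling out arbitrarily small circles. Small circles accumulating at an interior point $x\in X$ are excluded verbatim as in the proof of Lemma~\ref{lemma ideal Peano}: local triviality of $I$ around $x$ combined with semiprojectivity of $A$ produces a lifting problem that contradicts Proposition~\ref{prop nslp}.

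The main obstacle is to exclude small circles accumulating at $\infty\in\alpha X$. After unitizing $A$, the space $\prim(A)$ is quasi-compact, and such a sequence accumulates at some point $p\in\prim_{\leq N-1}(A)$, whose associated irreducible representation $\pi_p$ has dimension strictly less than $N$. Following the strategy of Proposition~\ref{prop ideal ANR}, the circles give rise to the telescope diagram
\[\xymatrix{
  0\ar[r] & \varinjlim T_k \ar[r] & \varinjlim\mathcal{M}(T_k) \ar[r] & \varinjlim\mathcal{Q}(T_k) \ar[r] & 0 \\
  0 \ar[r] & I \ar[r] \ar[u]^\varphi & A \ar[r] \ar[u]^{\overline{\varphi}} & A_{\leq N-1} \ar[r] \ar[u]^{\overline{\overline{\varphi}}} & 0
}\]
and the only place where the proof of Proposition~\ref{prop ideal ANR} invokes semiprojectivity of the quotient is to lift $\overline{\overline{\varphi}}$ to some finite stage $\mathcal{Q}(T_{k_0})$.

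The hard part is to produce this finite-stage lift of $\overline{\overline{\varphi}}$ directly, using the dimension inequality $\dim\pi_p<N$. Via the split of the quotient map $\varinjlim\mathcal{Q}(T_k)\to\mathcal{Q}(\varinjlim T_k)$ from Lemma~\ref{lemma telescope}, $\overline{\overline{\varphi}}$ factors through the corona of the direct sum, and the strict inequality $\dim\pi_p<N$ forces its image to lie, in each coordinate, in a proper finite-dimensional subalgebra of $\M_N$ dictated by the irreducible decomposition of $\pi_p$. After a unitary conjugation in the spirit of Propositions~\ref{prop diagonal Busby} and~\ref{prop almost split 2}, this image becomes locally constant, and the lift at finite stage can then be constructed by hand. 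With that in place, Lemma~\ref{lemma extended lifting} applies using only semiprojectivity of $A$, and the unlifted circles yield the same $K_1$-contradiction as in Lemma~\ref{lemma ideal Peano}, completing the argument.
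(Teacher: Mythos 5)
Your overall scaffolding matches the paper's: reduce via Lemma \ref{lemma ideal Peano}, Lemma \ref{lemma trivial bundles} and Theorem \ref{thm ward} to excluding small circles at $\infty$, set up Brown's telescope as in Proposition \ref{prop ideal ANR}, and aim to apply Lemma \ref{lemma extended lifting} followed by the $K_1$-contradiction. You also correctly identify the one genuinely hard point: $\overline{\overline{\varphi}}$ must be lifted to a finite stage $\mathcal{Q}(T_{k_0})$ without knowing that $A_{\leq N-1}$ is semiprojective. But your resolution of that point has a real gap. You claim that the circles accumulate at a single $p\in\prim_{\leq N-1}(A)$ and that $\dim\pi_p<N$ forces the image of $\overline{\overline{\varphi}}$ to lie coordinatewise in a proper \emph{finite-dimensional} subalgebra of $\M_N$, which can then be made locally constant \`a la Proposition \ref{prop diagonal Busby}. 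Neither assertion is available here. Finite-dimensionality of the coordinate images of a Busby-type map is equivalent, via Lemma \ref{lemma Busby vs boundary}, to finiteness of the relevant boundary sets, i.e.\ to the pointwise finiteness of the retract map --- and Propositions \ref{prop finite boundary} and \ref{prop R is lsc} establish that only under the hypothesis that $\alpha X$ is already a one-dimensional ANR, which is exactly what you are trying to prove. Without it, the limit set of a sequence tending to $\infty$ in $\alpha X$ can be infinite (compare Remark \ref{remark comm retract}(a)), each point of the corona $\chi((0,1)_n)$ merely gives a representation with blocks of size $<N$, and the algebra of all such over the (large, connected) corona need not be finite-dimensional. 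Even granting local constancy, ``the lift at finite stage can then be constructed by hand'' skips the actual lifting problem from $\varinjlim\mathcal{Q}(T_k)$ to a finite stage, which is a semiprojectivity-type problem in its own right.

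The paper's mechanism, which your proposal discards, is the \emph{relative} semiprojectivity of $A_{\leq N-1}$ with respect to $(N{-}1)$-subhomogeneous $C^*$-algebras: this \emph{is} inherited from semiprojectivity of $A$ by the universal property of the maximal subhomogeneous quotient, even though plain semiprojectivity of $A_{\leq N-1}$ is not yet known. To exploit it, the paper builds an explicit completely positive lift $\psi(a)=(f_n\otimes\iota_n(a))_n$ of $\overline{\varphi}$ into $\mathcal{M}(T_k)$ that sends $\C_0(X,\M_N)$ into $T_k$, so that the induced $\psi'\colon A_{\leq N-1}\rightarrow\mathcal{Q}(T_k)$ has the property that every irreducible representation of $C^*(\psi'(A_{\leq N-1}))$ is a scalar multiple of a representation of $A_{\leq N-1}$; hence this subalgebra is $(N{-}1)$-subhomogeneous by the algebraic criterion of \cite[IV.1.4.6]{Bla06}. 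Replacing the $\mathcal{Q}(T_m)$ by these subhomogeneous subalgebras turns the lift of $\overline{\overline{\varphi}}$ into a lifting problem that relative semiprojectivity solves, after which Lemma \ref{lemma extended lifting} and the $K_1$-argument go through as you describe. You would need to either supply this relative-semiprojectivity argument or find a genuinely new way to produce the finite-stage lift; the dimension count $\dim\pi_p<N$ alone does not do it.
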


\begin{proof}
By Lemma \ref{lemma ideal Peano} we know that the one-point compactification of $X=\prim_N(A)$ is a one-dimensional Peano space. Since any locally trivial $\M_N$-bundle over $X$ is globally trivial by Lemma \ref{lemma trivial bundles}, we are concerned with an extension of the form 
\[\xymatrix
  {0\ar[r]&\C_0(X,\M_N)\ar[r]&A\ar[r]^(0.4)\pi&A_{\leq N-1}\ar[r]&0
}\]
where $A_{\leq N-1}$ denotes the maximal ($N$-1)-subhomogeneous quotient of $A$. Since $A$ is semiprojective, $A_{\leq N-1}$ will be semiprojective with respect to ($N$-1)-subhomogeneous $C^*$-algebras. In order to show that $\C_0(X,\M_N)$ is semiprojective, it remains to show that $\alpha X=X\cup\{\infty\}$ does not contain small circles at around $\infty$, cf. Theorem \ref{thm ward}. The proof for this is similar to the one of \ref{prop ideal ANR}. We use notations from \ref{lemma ideal Peano} and follow the proof there to arrive a commutative diagram
\[\xymatrix{
  0\ar[r]&\varinjlim T_k \ar[r] & \varinjlim\mathcal{M}(T_k) \ar[r] & \varinjlim\mathcal{Q}(T_k) \ar[r] & 0 \\ 
  0 \ar[r] & \C_0(X,\M_N) \ar[r] \ar[u]^\varphi & A \ar[r]^(0.4)\pi \ar[u]^{\overline{\varphi}} & A_{\leq N-1} \ar[r] \ar[u]^{\overline{\overline{\varphi}}} & 0
}.\]
We may not solve the lifting problem for $A_{\leq N-1}$ directly since the algebras $\mathcal{Q}(T_k)$ are not ($N$-1)-subhomogeneous. Instead we will replace the $\mathcal{Q}(T_k)$ by suitable ($N$-1)-subhomogeneous subalgebras which will then lead to a solvable lifting problem for $A_{\leq N-1}$. Let $\iota_n$ denote the $n$-th coordinate of the map $A\rightarrow\C_b(X,\M_N)\rightarrow\prod_n\C_b((0,1)_n,\M_N)$. We then have a lift of $\overline{\varphi}$ given by
\[\begin{array}{rcccl}
    A & \rightarrow & \C([k,\infty],\prod_n\C_b((0,1),\M_N)) & \rightarrow & \mathcal{M}(T_k) \\
    a & \mapsto & 1_{[k,\infty]}\otimes(\iota_n(a))_{n=1}^\infty
\end{array}\]
where the map on the right is induced by the inclusion of $T_k$ as an ideal in $\C([k,\infty],\prod_n\C_b((0,1),\M_N))$. Consider in there the central element $f=(f_n)_{n=1}^\infty$ with $f_n$ the scalar function that equals 0 on $[k,n]$, 1 on $[n+1,\infty]$ and which is linear in between. Then 
\[\begin{array}{rcccl}
   \psi\colon A & \rightarrow & \C([k,\infty],\prod_n\C_b((0,1),\M_N)) & \rightarrow & \mathcal{M}(T_k) \\
   a & \mapsto & (f_n\otimes\iota_n(a))_{n=1}^\infty
  \end{array}\]
is a completely positive lift of $\overline{\varphi}$ which sends $\C_0(X,\M_N)$ to $T_k$. Hence $\psi$ induces a completely positive lift $\psi'\colon A_{\leq N-1}\rightarrow \mathcal{Q}(T_k)$ of $\overline{\overline{\varphi}}$. We claim that $C^*(\psi'(A_{\leq N-1}))$ is in fact ($N$-1)-subhomogeneous. To see this, we use the algebraic characterization of subhomogenity as described in \cite[IV.1.4.6]{Bla06}. It suffices to check that $\gamma(C^*(\psi'(A_{\leq N-1})))$ satisfies the polynomial relations $p_{r(N-1)}$ for every irreducible representation $\gamma$ of $\mathcal{Q}(T_k)$. By definition of $\psi$, we find $\gamma\circ\psi'(\pi(a))=t\cdot\gamma'(\iota(a))$ for some representation $\gamma'$ of $\iota(A)$, some $t\in[0,1]$ and every $a\in A$. Moreover, since $\psi'$ maps $\C_0(X,\M_N)$ to 0, we obtain $\gamma\circ\psi'(\pi(a))=t\cdot\gamma''(\pi(a))$ for some representation $\gamma''$ of $A_{\leq N-1}$. Using ($N$-1)-subhomogeneity of $A_{\leq N-1}$, it now follows easily that the elements of $\gamma(C^*(\psi'(A_{\leq N-1})))$ satisfy the polynomial relations $p_{r(N-1)}$ from \cite[IV.1.4.6]{Bla06}. Knowing that the image of $\overline{\overline{\varphi}}$ has a ($N$-1)-subhomogenous preimage in $\mathcal{Q}(T_k)$, we may now solve the lifting problem for $A_{\leq N-1}$. It then follows from Lemma \ref{lemma extended lifting} (and its proof) that $\varphi$ lifts to $T_k$ for some $k$. The remainder of the proof is exactly the same as the one of Proposition \ref{prop ideal ANR}.
\end{proof}

We now present two characterizations of projectivity and semiprojectivity for subhomogeneous $C^*$-algebras. The first one describes semiprojectivity of these algebras in terms of their primitive ideal spaces. The second description characterizes them as those $C^*$-algebras which arise from 1-NCCWs by adding a sequence of non-commutative edges (of bounded dimension), cf. section \ref{section adding edges}.

\begin{theorem}\label{thm structure}
Let $A$ be a $N$-subhomogeneous $C^*$-algebra, then the following are equivalent:
\renewcommand{\labelenumi}{(\arabic{enumi})}
\begin{enumerate}
\item $A$ is semiprojective (resp. projective).
\item For every $n=1,...,N$ the following holds:
	\begin{itemize}
		\item The one-point compactification of $\prim_n(A)$ is an ANR-space (resp. an AR-space) of dimension at most 1.
		\item If $(X_i)_{i\in I}$ denotes the family of connected components of $\prim_n(A)$, then the set-valued retract map \[R_n\colon\prim_{\leq n}(A)\rightarrow 2^{\prim_{\leq n-1}(A)}\] given by
		      \[\begin{array}{rl}
			  z&\mapsto
			      \begin{cases}
				  z & \;\text{if}\;z\in \prim_{\leq n-1}(A) \\ 
				  \partial X_i & \;\text{if}\;z\in X_i\subset \prim_n(A)
			      \end{cases}
		      \end{array}\]
		      is lower semicontinuous and has pointwise finite image.
	\end{itemize}
	\item $A$ is isomorphic to the direct limit $\varinjlim_k(A_k,s_k^{k+1})$ of a sequence of 1-NCCW's
	\[\xymatrix{\cdots \ar[r] & A_k \ar[r]_{s_k^{k+1}} \ar@/_1pc/@{->>}[l] & A_{k+1} \ar[r] \ar@/_1pc/@{->>}[l]_{r_{k+1}^k} & \cdots \ar@/_1pc/@{->>}[l]}\]
	(with $A_0=0$) such that for each stage there is a pullback diagram 
	    \[\xymatrix{
		A_{k+1} \ar@{..>}[r] \ar@{..>>}[d]^{r_{k+1}^k}& \C([0,1],\M_n) \ar[d]^{\ev_0} \\
		A_k \ar[r]^{\pi_k} \ar@/^1pc/[u]^{s_k^{k+1}} & \M_n
	    }\]
	with $n\leq N$ and $s_k^{k+1}$ given by $a\mapsto(a,\pi_k(a)\otimes 1_{[0,1]})$.
\end{enumerate}
\end{theorem}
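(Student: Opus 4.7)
The plan is to prove the cyclic sequence (1) $\Rightarrow$ (2) $\Rightarrow$ (3) $\Rightarrow$ (1), with the main technical content already bundled inside Theorem \ref{thm 2 out of 3} and Proposition \ref{prop iteration}; the argument is then essentially a careful iteration.

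For (1) $\Rightarrow$ (2), I would proceed by downward induction on $n$, starting from $n = N$ and descending. At each stage, inspect the extension
\[0 \to J_n \to A_{\leq n} \to A_{\leq n-1} \to 0,\]
where $J_n$ is the maximal $n$-homogeneous ideal of $A_{\leq n}$. Assuming inductively that $A_{\leq n}$ is (semi)projective (the base case $n = N$ being just $A_{\leq N} = A$), Lemma \ref{lemma Prim_max} yields (semi)projectivity of $J_n$, while Theorem \ref{thm homogeneous case} combined with Lemma \ref{lemma trivial bundles} identifies $J_n$ with $\C_0(\prim_n(A), \M_n)$ and forces $\alpha\prim_n(A)$ to be a 1-A(N)R of dimension at most one. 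The direction (I)+(II)$\Rightarrow$(III) of Theorem \ref{thm 2 out of 3} then delivers both the (semi)projectivity of $A_{\leq n-1}$, feeding the induction, and the required lower semicontinuity and pointwise finiteness of the retract map $R_n$. The subtlest bookkeeping point is precisely that $A_{\leq n-1}$ must remain (semi)projective in order to feed the next step, and this is exactly what the 2-out-of-3 format of Theorem \ref{thm 2 out of 3} is engineered to deliver. The induction stops at $n = 1$, where $R_1$ is vacuously empty-valued.

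For (2) $\Rightarrow$ (3), I would proceed by upward induction on $n$ using Proposition \ref{prop iteration} as the engine, with base case $A_{\leq 0} = 0$ carrying the trivial single-stage limit structure with $A_0 = 0$. At step $n$, the proposition applies to the extension $0 \to \C_0(\prim_n(A), \M_n) \to A_{\leq n} \to A_{\leq n-1} \to 0$, whose hypotheses are exactly the conditions that (2) imposes on $\prim_n(A)$ and $R_n$, combined with the limit structure for $A_{\leq n-1}$ produced at the previous step; it outputs a limit structure for $A_{\leq n}$ of precisely the pullback form required in (3). After $N$ iterations the algebra $A = A_{\leq N}$ admits such a presentation.

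Finally, (3) $\Rightarrow$ (1) is immediate from Lemma \ref{lemma limit criterium}: each $A_k$ is a 1-NCCW and hence (semi)projective by Example \ref{ex 1-nccw}, while each connecting map $s_k^{k+1}$ is exactly the canonical split produced by Proposition \ref{prop wcp examples}(2) and is therefore weakly conditionally projective. Starting from $A_0 = 0$, the limit inherits (semi)projectivity.
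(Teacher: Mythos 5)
Your proposal matches the paper's proof essentially step for step: the same peeling-off induction through the quotients $A_{\leq n}$ using Lemma \ref{lemma Prim_max}, Theorem \ref{thm homogeneous case} and the implication (I)+(II)$\Rightarrow$(III) of Theorem \ref{thm 2 out of 3} for $(1)\Rightarrow(2)$; the same iteration of Proposition \ref{prop iteration} (together with Lemma \ref{lemma trivial bundles} to trivialize the homogeneous ideals) for $(2)\Rightarrow(3)$, where the paper anchors the induction at $N=1$ via Theorem \ref{thm ST} rather than at $A_{\leq 0}=0$ --- an immaterial difference; and the same appeal to Proposition \ref{prop wcp examples} and Lemma \ref{lemma limit criterium} for $(3)\Rightarrow(1)$. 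The argument is correct in exactly the sense the paper's own proof is.
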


\begin{proof}
$(1)\Rightarrow (2)$: We prove the implication by induction over $N$. The base case $N=1$ is given by Theorem \ref{thm comm case}. Now given a $N$-subhomogeneous, (semi)projective $C^*$-algebra $A$, we know by Lemma \ref{lemma Prim_max} that the maximal $N$-homogeneous ideal $A_N$ of $A$ is (semi)projective as well. This forces $\alpha\prim_N(A)$ to be a one-dimensional A(N)R-space by Theorem \ref{thm homogeneous case}. Applying Theorem \ref{thm 2 out of 3} to the sequence 
\[0\rightarrow A_N\rightarrow A\rightarrow A_{\leq N-1}\rightarrow 0\]
now shows that the retract map $R_N\colon\prim_N(A)\rightarrow 2^{\prim_{\leq N-1}(A)}$ is lower semicontinuous, has pointwise finite image and that the maximal ($N$-1)-subhomogeneous quotient $A_{\leq N-1}$ is (semi)projective. The remaining statements follow from the induction hypothesis applied to $A_{\leq N-1}$.

$(2)\Rightarrow (3)$: By Lemma \ref{lemma trivial bundles}, we know that the maximal $N$-homogeneous ideal $A_N$ of $A$ is of the form $\C_0(\prim_N(A),\M_N)$. Using induction over $N$, the statement then follows from Proposition \ref{prop iteration} applied to the sequence 
\[0\rightarrow\C_0(\prim_N(A),\M_N)\rightarrow A\rightarrow A_{\leq N-1}\rightarrow 0.\] 
The base case $N=1$ is given by Theorem \ref{thm ST}.

$(3)\Rightarrow (1)$: Note that the connecting maps are weakly conditionally projective by Proposition \ref{prop wcp examples}, then apply Lemma \ref{lemma limit criterium}.
\end{proof}

\begin{remark}
The most prominent examples of subhomogeneous, semiprojective $C^*$-algebras are the one-dimensional non-commutative CW-complexes (1-NCCWs, see Example \ref{ex 1-nccw}). The structure theorem \ref{thm structure} shows that these indeed play a special role in the class of all subhomogeneous, semiprojective $C^*$-algebras. By part (2) of \ref{thm structure}, they  are precisely those subhomogeneous, semiprojective $C^*$-algebras for which the spaces $\alpha\prim_n$ are all finite graphs rather than general one-dimensional ANR-spaces. Hence 1-NCCWs should be thought of as the elements of 'finite type' in the class of subhomogeneous, semiprojective $C^*$-algebras. Moreover, part (3) of \ref{thm structure} shows that every subhomogeneous, semiprojective $C^*$-algebra can be constructed from 1-NCCWs in a very controlled manner. Therefore these algebras share many properties with 1-NCCWs, as we will see in section \ref{section properties} in more detail.
\end{remark}

\subsection{Applications}\label{section applications}

Now we discuss some consequences of Theorem \ref{thm structure}. First we collect some properties of semiprojective, subhomogeneous $C^*$-algebras which follow from the descriptions in \ref{thm structure}. This includes information about their dimension and $K$-theory as well as details about their relation to 1-NCCWs and some further closure properties.

At least in principle one can use the structure theorem \ref{thm structure} to test any given subhomogeneous $C^*$-algebra $A$ for (semi)projectivity. Since this would require a complete computation of the primitive ideal space of $A$, it is not recommended though. Instead one might use \ref{thm structure} as a tool to disprove semiprojectivity for a candidate $A$. In fact, showing directly that a $C^*$-algebra $A$ is not semiprojective can be surprisingly difficult. One might therefore take one of the conditions from \ref{thm structure} which are easier to verify and test $A$ for those instead. We illustrate this strategy in section \ref{section A4} by proving the quantum permutation algebras to be not semiprojective. This corrects a claim in \cite{Bla04} on semiprojectivity of universal $C^*$-algebras generated by finitely many projections with order and orthogonality relations.

\subsubsection{Further structural properties}\label{section properties}
By part (3) of Theorem \ref{thm structure}, we know that any semiprojective, subhomogeneous $C^*$-algebra comes as a direct limit of 1-NCCWs. Since the connecting maps are explicitly given and of a very special nature, it is possible to show that these limits are approximated by 1-NCCWs in a very strong sense. The following corollary makes this approximation precise. 

\begin{corollary}[Approximation by 1-NCCWs]\label{cor approx nccw}
Let $A$ be a subhomogeneous $C^*$-algebra. If $A$ is semiprojective, then for every finite set $\mathcal{G}\subset A$ and every $\epsilon>0$ there exist a 1-NCCW $B\subseteq A$ and a $^*$-homomorphism $r\colon A\rightarrow B$ such that $\mathcal{G}\subset_\epsilon B$ and $r$ is a strong deformation retract for $B$, meaning that there exists a homotopy $H_t$ from $H_0=\id_A$ to $H_1=r$ with $H_{t|B}=\id_B$ for all $t$. In particular, $A$ is homotopy equivalent to a one-dimensional non-commutative CW-complex.
\end{corollary}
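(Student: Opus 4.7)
The plan is to apply part (3) of Theorem \ref{thm structure} to realize $A$ as an inductive limit $\varinjlim(A_k,s_k^{k+1})$ of 1-NCCWs with the specific pullback structure described there, truncate this limit at a finite stage that approximates $\mathcal{G}$, and exhibit the truncation as a strong deformation retract of $A$.

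First I would fix $\mathcal{G}$ and $\epsilon$ and pick $k$ so that $\mathcal{G}\subset_\epsilon s_k^\infty(A_k)$; then $B:=s_k^\infty(A_k)$ is a 1-NCCW since $s_k^\infty$ is injective. The retracts $r_{m+1}^m$ from Theorem \ref{thm structure}(3) satisfy $r_{m+1}^m\circ s_m^{m+1}=\id_{A_m}$, so the compositions $r_m^k:=r_{k+1}^k\circ\cdots\circ r_m^{m-1}$ are compatible with the connecting maps and induce a $*$-homomorphism $r_\infty^k\colon A\to A_k$ with $r_\infty^k\circ s_k^\infty=\id_{A_k}$; setting $r:=s_k^\infty\circ r_\infty^k$ gives the required retraction.

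The central task is to build a continuous family $H_t\colon A\to A$ of $*$-homomorphisms from $\id_A$ to $r$ fixing $B$ pointwise. The building block at each stage is the reparametrization $h_t^m\colon A_{m+1}\to A_{m+1}$, $(a,f)\mapsto(a,f_t)$, where $f_t(s):=f((1-t)s)$; this is well-defined since $f_t(0)=f(0)=\pi_m(a)$ is preserved, equals the identity at $t=0$ and $s_m^{m+1}\circ r_{m+1}^m$ at $t=1$, and fixes $s_m^{m+1}(A_m)$ pointwise because constant functions are invariant under reparametrization. I would then inductively construct $H_t^m\colon A_m\to A_m$ starting from $H_t^k=\id_{A_k}$ and satisfying $H_0^m=\id$, $H_1^m=s_k^m\circ r_m^k$, and the key compatibility $H_t^{m+1}\circ s_m^{m+1}=s_m^{m+1}\circ H_t^m$, which allows them to descend to a well-defined homotopy $H_t$ on the inductive limit $A$. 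Using nested time intervals, the inductive step performs the stage-$m$ reparametrization $h_t^m$ during $[1-2^{-(m-k+1)},\,1-2^{-(m-k)}]$ and then sets $H_t^{m+1}:=s_m^{m+1}\circ H_t^m\circ r_{m+1}^m$ on the final segment $[1-2^{-(m-k)},1]$; continuity at the transition and the compatibility relation both follow from $r_{m+1}^m\circ s_m^{m+1}=\id$, and telescoping with $H_t^k=\id_{A_k}$ automatically gives $H_t|_B=\id_B$.

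The main obstacle this phased construction is designed to overcome is that a naive simultaneous reparametrization of all attached intervals need not define a $*$-homomorphism: the representation $\pi_m\colon A_m\to\M_{n_m}$ at stage $m$ may evaluate at an interior point of an earlier interval, so reparametrizing that earlier interval changes $\pi_m(a)$ and breaks the pullback compatibility $\pi_m(a)=f(0)$ at stage $m$. Collapsing the outer interval to a constant first preserves the attaching values at $0$, and only then does the inner retraction take over via the formula $s_m^{m+1}\circ H_t^m\circ r_{m+1}^m$, which automatically transports the stage-$m$ attaching value from $\pi_m(a)$ to $\pi_m(H_t^m(a))$. Joint continuity of $(t,a)\mapsto H_t(a)$ on $[0,1]\times A$ then follows from pointwise continuity on the dense subset $\bigcup_m s_m^\infty(A_m)$ together with contractivity of each $H_t$, while $H_1=r$ is immediate from the induction. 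The concluding statement that $A$ is homotopy equivalent to a 1-NCCW follows by applying this approximation for any fixed $\mathcal{G}$ and $\epsilon$, since the resulting strong deformation retract exhibits $A$ and $B$ as homotopy equivalent via $r$ and the inclusion $B\hookrightarrow A$.
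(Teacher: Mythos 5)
Your proposal is correct and takes essentially the same route as the paper, which simply writes $A=\varinjlim A_k$ via Theorem \ref{thm structure}(3), takes $B=A_{n_0}$, and asserts that the retracts $r_n^{n_0}$ "straightforwardly" assemble into a strong deformation retract; you supply exactly the details the paper omits, correctly identifying both the key compatibility $H_t^{m+1}\circ s_m^{m+1}=s_m^{m+1}\circ H_t^m$ and the reason a simultaneous reparametrization fails. One small slip: your time windows $[1-2^{-(m-k+1)},\,1-2^{-(m-k)}]$ are written with decreasing endpoints and place the higher-stage collapses \emph{later}, whereas the compatibility relation forces the stage-$(m{+}1)$ collapse to occupy a subinterval on which $H_t^m$ is still the identity, i.e.\ the phases must be nested so that outer collapses happen first (accumulating at the endpoint where $H_t=\id_A$) — a harmless re-indexing fixes this.
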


\begin{proof}
Use part (3) of Theorem \ref{thm structure} to write $A=\varinjlim A_n$ and find a suitable 1-NCCW $B=A_{n_0}$ which almost contains the given finite set $\mathcal{G}$. It is straightforward to check that the strong deformation retracts $r_n^{n_0}\colon A_n\rightarrow A_{n_0}$ give rise to a strong deformation retract $r\colon \varinjlim A_n\rightarrow A_{n_0}$.
\end{proof}

In particular, 1-NCCWs and semiprojective, subhomogeneous $C^*$-algebras share the same homotopy invariant properties. For example, we obtain the following restrictions on the $K$-theory of these algebras:

\begin{corollary}\label{cor k-theory}
Let $A$ be a subhomogeneous $C^*$-algebra. If $A$ is semiprojective, then its $K$-theory is finitely generated and $K_1(A)$ is torsion free.
\end{corollary}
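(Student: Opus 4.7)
The plan is to use Corollary \ref{cor approx nccw} to reduce the problem to one-dimensional non-commutative CW-complexes, and then compute the $K$-groups from the six-term exact sequence attached to the defining pullback of a 1-NCCW. Since $K$-theory is a homotopy invariant, and by Corollary \ref{cor approx nccw} every semiprojective, subhomogeneous $C^*$-algebra $A$ is homotopy equivalent to a 1-NCCW $B$, it suffices to prove the corollary for $B$.

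Writing $B$ as the pullback of $\ev_0\oplus\ev_1\colon \C([0,1],F)\to F\oplus F$ along a $^*$-homomorphism $G\to F\oplus F$ with $F,G$ finite-dimensional (cf.\ Example \ref{ex 1-nccw}), the kernel of the canonical projection $B\twoheadrightarrow G$ is isomorphic to the suspension $\C_0((0,1),F)$. The associated six-term exact sequence in $K$-theory, combined with the facts that $K_0(F)$ and $K_0(G)$ are free abelian of finite rank, that $K_1(F)=K_1(G)=0$, and the Bott isomorphism $K_i(\C_0((0,1),F))\cong K_{i+1}(F)$, collapses to
\[
 0\longrightarrow K_0(B)\longrightarrow K_0(G)\xrightarrow{\;\partial\;}K_0(F)\longrightarrow K_1(B)\longrightarrow 0.
\]

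Finite generation of $K_0(B)$ (as a subgroup of the free abelian group $K_0(G)$) and of $K_1(B)$ (as a quotient of the finitely generated group $K_0(F)$) is immediate, which proves the first part of the corollary. For the torsion-free claim for $K_1(B)$, the cokernel of an arbitrary homomorphism between free abelian groups need not be torsion-free, so the main obstacle is to exploit the particular structure of the boundary map $\partial$. The most promising route is to invoke the explicit inductive description given in Theorem \ref{thm structure}(3), which presents $B$ as an iterated pullback where each stage attaches a single interval algebra $\C([0,1],\M_n)$ along one evaluation map $\ev_0$. A careful analysis of how each such attachment contributes to the presentation matrix of $K_1$, together with induction on the number of stages, should yield torsion-freeness throughout.
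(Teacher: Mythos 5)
Your reduction via Corollary \ref{cor approx nccw} is exactly the route the paper takes: the paper gives no further argument, treating the corollary as an instance of ``1-NCCWs and semiprojective, subhomogeneous $C^*$-algebras share the same homotopy invariant properties.'' Your six-term computation is correct: writing a 1-NCCW $B$ as the pullback of $\C([0,1],F)\xrightarrow{\ev_0\oplus\ev_1}F\oplus F\xleftarrow{\gamma}G$, the surjection $B\rightarrow G$ has kernel $\C_0((0,1),F)$ and one obtains
\[
0\longrightarrow K_0(B)\longrightarrow K_0(G)\xrightarrow{\;\partial\;}K_0(F)\longrightarrow K_1(B)\longrightarrow 0,
\]
with $\partial=\pm(\gamma_{0*}-\gamma_{1*})$ for the two components $\gamma_0,\gamma_1\colon G\rightarrow F$ of $\gamma$. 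This proves finite generation of $K_*(A)$, so the first half of the statement is settled, by essentially the paper's own (implicit) argument made explicit.

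The second half is a genuine gap, and the honest answer is that it cannot be closed: the exact sequence you derived shows that it is $K_0(B)$ that is automatically torsion free (a subgroup of the finitely generated free abelian group $K_0(G)$), while $K_1(B)=\operatorname{coker}(\partial)$ can very well have torsion. Concretely, the dimension drop interval $I_n=\{f\in\C([0,1],\M_n)\colon f(0),f(1)\in\mathbb{C}\cdot 1_n\}$ is the 1-NCCW with $F=\M_n$, $G=\mathbb{C}\oplus\mathbb{C}$ and $\gamma(a,b)=(a1_n,b1_n)$; here $\partial(a,b)=\pm n(a-b)$ has image $n\Z$ in $K_0(\M_n)\cong\Z$, so $K_1(I_n)\cong\Z/n\Z$. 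Since $I_n$ is semiprojective and subhomogeneous, this contradicts torsion-freeness of $K_1$ as stated; what your argument actually establishes is that $K_0(A)$ is torsion free (indeed free abelian of finite rank). Your proposed fallback --- inducting over the interval attachments of Theorem \ref{thm structure}(3) --- cannot rescue the claim either: each attachment $A_{k+1}=A_k\oplus_{\M_n}\C([0,1],\M_n)$ over $\ev_0$ has contractible kernel $\C_0((0,1],\M_n)$ and is split by $s_k^{k+1}$, so it is a homotopy equivalence and contributes nothing to $K$-theory; all of $K_*(A)$ is carried by the base 1-NCCW, for which $K_1$ is an arbitrary finitely generated abelian group. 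So your computation, unlike the paper's one-line appeal to homotopy invariance, actually exposes that the roles of $K_0$ and $K_1$ in the printed statement appear to be interchanged.
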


Another typical phenomenon of (nuclear) semiprojective $C^*$-algebras is that they appear to be one-dimensional in some sense. In the context of subhomogeneous $C^*$-algebras, we can now make this precise, using the notion of topological dimension given by $\topdim(A)=\max_n \dim(\prim_n(A))$. 

\begin{corollary}\label{cor dimension}
Let $A$ be a subhomogeneous $C^*$-algebra. If $A$ is semiprojective, then $A$ has stable rank 1 and $\topdim(A)\leq 1$.
\end{corollary}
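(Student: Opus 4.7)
The plan is that both assertions follow directly from the structure characterization provided by Theorem \ref{thm structure}. The topological dimension bound is immediate from part (2) of that theorem: the one-point compactification $\alpha\prim_n(A)$ is an ANR-space of dimension at most one for every $n$, so $\dim\prim_n(A)\leq\dim(\alpha\prim_n(A))\leq 1$. By definition $\topdim(A)=\max_n\dim\prim_n(A)\leq 1$.

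For the stable rank assertion, I would invoke part (3) of Theorem \ref{thm structure} to realize $A$ as the direct limit $\varinjlim(A_k,s_k^{k+1})$ of an inductive system of $1$-NCCWs, with $A_0=0$ and with connecting maps of the specific pullback form. The argument then has two steps: show that each individual $A_k$ has stable rank one, and then pass to the inductive limit.

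For the first step, I would argue by induction along the recursive pullback description of the $A_k$. The base case $A_0=0$ is trivial. For the inductive step, $A_{k+1}$ is the pullback
\[\xymatrix{
A_{k+1} \ar[r] \ar[d]_{r_{k+1}^k} & \C([0,1],\M_{n_k}) \ar[d]^{\ev_0} \\
A_k \ar[r]^{\pi_k} & \M_{n_k}
}\]
over a finite-dimensional algebra, with $\ev_0$ surjective. By induction $A_k$ has stable rank one, while $\C([0,1],\M_{n_k})=\C([0,1])\otimes\M_{n_k}$ has stable rank one since $\dim[0,1]\leq 1$ (Rieffel's formula $\text{sr}(\C(X))=\lfloor\dim(X)/2\rfloor+1$). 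Applying the standard estimate for the stable rank of a pullback where one of the maps is onto a finite-dimensional algebra, we conclude $\text{sr}(A_{k+1})=1$. Alternatively, one can observe that every $A_k$ is a recursive subhomogeneous $C^*$-algebra of topological dimension at most one and apply Phillips' stable rank bound for this class directly, which yields $\text{sr}(A_k)\leq 1$ for topological dimension $\leq 1$.

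With stable rank one established at every finite stage, Rieffel's inequality $\text{sr}(\varinjlim A_k)\leq\liminf_k\text{sr}(A_k)$ for direct limits gives $\text{sr}(A)\leq 1$, hence $\text{sr}(A)=1$. The main technical obstacle is the pullback estimate for stable rank in the inductive step, since the sharpest general statements require careful verification of hypotheses on the connecting maps; invoking Phillips' formula for recursive subhomogeneous algebras is the cleanest way to avoid this bookkeeping.
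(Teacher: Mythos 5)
Your proposal is correct and follows essentially the same route as the paper: the dimension bound is read off from part (2) of Theorem \ref{thm structure}, and the stable rank statement rests on the realization of $A$ as a limit of 1-NCCWs coming from part (3) (which the paper accesses via Corollary \ref{cor approx nccw}) together with the fact that 1-NCCWs have stable rank one and that stable rank one passes to inductive limits. Your explicit verification that each building block $A_k$ has stable rank one (via Phillips' bound for recursive subhomogeneous algebras) merely fills in a step the paper leaves implicit.
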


\begin{proof}
The statement on the stable rank of $A$ follows from Corollary \ref{cor approx nccw}, while the topological dimension can be estimated using part (2) of Theorem \ref{thm structure}.
\end{proof}

Our structure theorem can also be used to study permanence properties of semiprojectivity when restricted to the class of subhomogeneous $C^*$-algebras. In fact, these turn out to be way better then in the general situation. This can be illustrated by the following longstanding question by Blackadar and Loring: Given a short exact sequence of $C^*$-algebras
\[\xymatrix{0 \ar[r] & I \ar[r] & A \ar[r] & F \ar[r] & 0}\]
with finite-dimensional $F$, does the following hold? 
\[I\;\text{semiprojective}\Leftrightarrow A\;\text{semiprojective}\]
While we showed the '$\Leftarrow$'-implication to hold in general in \cite{End14}, S. Eilers and T. Katsura proved the '$\Rightarrow$'-implication to be wrong (\cite{EK}), even in the case of split extensions by $\mathbb{C}$. We refer the reader to \cite{Sor12} for counterexamples which involve infinite $C^*$-algebras. However, when one restricts to the class of subhomogeneous $C^*$-algebras, this implication holds:

\begin{corollary}\label{cor blalor}
Let a short exact sequence of $C^*$-algebras 
\[\xymatrix{0 \ar[r]&I\ar[r]&A\ar[r]^\pi &F\ar[r]&0}\]
with finite-dimensional $F$ be given. If $I$ is subhomogeneous and semiprojective, then $A$ is also semiprojective.
\end{corollary}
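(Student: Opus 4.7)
The plan is to verify condition (2) of Theorem \ref{thm structure} for $A$ and then apply the implication $(2) \Rightarrow (1)$. Since $F$ is finite-dimensional, the $C^*$-algebra $A$ is itself subhomogeneous: if $I$ is $N$-subhomogeneous and $F = \bigoplus_j \M_{m_j}$, then $A$ is $\max(N, \max_j m_j)$-subhomogeneous. I proceed by induction on the subhomogeneity degree $N$ of $I$. The base case $N = 0$ reduces to $A = F$, which is semiprojective.

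For the inductive step with $N \geq 1$, consider the maximal $N$-homogeneous ideal $I_N \subseteq I$. Since $\prim(I_N) = \prim_N(I)$ is open in $\prim(A)$, $I_N$ is also an ideal of $A$. The quotient $A/I_N$ fits into a short exact sequence $0 \to I_{\leq N-1} \to A/I_N \to F \to 0$, where $I_{\leq N-1} = I/I_N$ is semiprojective and $(N-1)$-subhomogeneous; this latter fact follows by applying Theorem \ref{thm 2 out of 3} to the SES $0 \to I_N \to I \to I_{\leq N-1} \to 0$, using that condition (2) of Theorem \ref{thm structure} holds for the semiprojective $I$. The inductive hypothesis then yields that $A/I_N$ is semiprojective.

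It now suffices to apply Theorem \ref{thm 2 out of 3} to $0 \to I_N \to A \to A/I_N \to 0$: the ideal $I_N$ is $N$-homogeneous and semiprojective, the quotient $A/I_N$ is semiprojective, and what remains is to verify that the associated retract map $R\colon \prim(A) \to 2^{\prim(A/I_N)}$ is lower semicontinuous with pointwise finite image. Pointwise finite image follows directly: for each connected component $X_i$ of $X := \prim(I_N)$, the boundary $\partial X_i$ in $\prim(A)$ splits as $(\partial X_i \cap \prim(I_{\leq N-1})) \cup (\partial X_i \cap \prim(F))$, and both pieces are finite --- the first by the retract map property established for $I$ via condition (2) of Theorem \ref{thm structure}, the second since $\prim(F)$ is finite.

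The main technical step is verifying lower semicontinuity of $R$ at points of $\prim(F)$: given $x_k \in X_{i_k}$ converging to $p \in \prim(F)$ in $\prim(A)$, one must exhibit $y_k \in \partial X_{i_k}$ with $y_k \to p$. The cases when the limit lies in $X$ (trivial, since connected components of $X$ are clopen) or in $\prim(I_{\leq N-1})$ (reducing to the lsc of the retract map for the semiprojective $I$, via openness of $\prim(I) \subseteq \prim(A)$) are straightforward. For limits in $\prim(F)$, one exploits the one-dimensional ANR structure of $\alpha X$ (a consequence of $I$ being semiprojective) together with the Busby map $\tau\colon F \to \mathcal{Q}(I_N)$ for the extension: the attachment of the finitely many $\prim(F)$-points to $\prim(I_N)$ is sufficiently rigid that accumulation of components $X_{i_k}$ at $p$ forces their boundaries to accumulate at $p$ as well. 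Once lsc is secured, Theorem \ref{thm 2 out of 3} concludes that $A$ is semiprojective.
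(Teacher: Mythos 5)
Your overall architecture is sound and is essentially a repackaging of the paper's argument: the paper verifies condition (2) of Theorem \ref{thm structure} for $A$ directly, level by level in $k$, while you induct on the subhomogeneity degree and invoke Theorem \ref{thm 2 out of 3} at the top level; these come to the same thing, since both reduce the corollary to showing that adjoining the finitely many points of $\prim(F)$ does not destroy the regularity of the retract maps. The reductions you perform --- $I_N$ an ideal of $A$ and semiprojective (Lemma \ref{lemma Prim_max}), $I_{\leq N-1}$ semiprojective via Theorem \ref{thm 2 out of 3}, pointwise finiteness of $R$, and the two easy cases of lower semicontinuity --- are all correct.

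The gap is exactly where you flag ``the main technical step'': lower semicontinuity of $R$ at points $p\in\prim(F)$. Your justification --- that the attachment of the $\prim(F)$-points is ``sufficiently rigid'' because $\alpha X$ is a one-dimensional ANR --- is a restatement of the claim, not an argument, and it points at the wrong mechanism: the ANR structure of $\alpha X$ plays no role in this step (only the clopenness of the components does). What actually makes it work, and what the paper isolates as Lemma \ref{lemma finite extension}(2) (which you could simply have cited to close the proof), is a $C^*$-algebraic obstruction: if distinct components $X_{i_k}$ accumulate at $p$ while the boundaries $\partial X_{i_k}$ all stay inside a closed set $K$ not containing $p$, then the induced map from the finite-dimensional summand $F$ into $\prod_k\C(\beta X_{i_k},\M_N)\big/\bigoplus_k\C_0(X_{i_k},\M_N)$ is forced to land in the ideal $\prod_k\C_0(X_{i_k},\M_N)\big/\bigoplus_k\C_0(X_{i_k},\M_N)$, which is projectionless; since $F$ is spanned by projections, that map vanishes, and one concludes that $p$ was in fact not in the closure of $\bigcup_k X_{i_k}$, contradicting $x_k\to p$. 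Without this idea --- or an explicit appeal to Lemma \ref{lemma finite extension} --- your proof is incomplete at its decisive step.
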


\begin{proof}
We verify condition (2) in Theorem \ref{thm structure} for $A$. By assumption, each $\prim_k(I)$ is a one-dimensional ANR-space after compactification and the same holds for any space obtained from $\prim_k(I)$ by adding finitely many points (\cite[Theorem 6.1]{ST12}). Hence the one-point compactifications of $\prim_k(A)$ are 1-dimensional ANRs for all $k$. If we assume $F=\M_n$, then the set-valued retract maps $R_k$ are unchanged for $k<n$. For $k=n$, regularity of $R_k$ follows from regularity of the retract map for $I$ and the fact that $\{[\pi]\}$ is closed in $\prim_{\leq k}(A)=\prim_{\leq k}(I)\cup\{[\pi]\}$. For $k>n$, we apply Lemma \ref{lemma finite extension} to
\[\xymatrix{
  && 0 \ar[d] & 0 \ar[d] \\
  0 \ar[r] & \C_0(\prim_k(I),\M_k) \ar[r] \ar@{=}[d] & I_{\leq k} \ar[r] \ar[d] & I_{\leq k-1} \ar[r] \ar[d] & 0 \\
  0 \ar[r] & \C_0(\prim_k(A),\M_k) \ar[r] & A_{\leq k} \ar[r] \ar[d] ^\pi & A_{\leq k-1} \ar[r] \ar[d] & 0 \\
  && F \ar[d] \ar@{=}[r] & F \ar[d] \\
  && 0 & 0
}\]
and see that $R_k\colon\prim_{\leq k}(A)\rightarrow 2^{\prim_{\leq k-1}(A)}$ is again lower semicontinuous and has pointwise finite image.
\end{proof}

\subsubsection{Quantum permutation algebras}\label{section A4}
We are now going to demonstrate how the structure theorem \ref{thm structure} can be used to show that certain $C^*$-algebras fail to be semiprojective. We would like to thank T. Katsura for pointing out to us the quantum permutation algebras (\cite{Wan98}, \cite{BC08}) as a testcase: 

\begin{definition}[\cite{BC08}]\label{definition A4}
For $n\in\mathbb{N}$, the quantum permutation algebra $A_s(n)$ is the universal $C^*$-algebra generated by $n^2$ elements $u_{ij}$, $1\leq i,j\leq n$, with relations
\[\begin{array}{rcl}u_{ij}=u_{ij}^*=u_{ij}^2 & \& & \sum_j u_{ij}=\sum_i u_{ij} =1.\end{array}\] 
\end{definition}

It is not clear from the definition whether the $C^*$-algebras $A_s(n)$ are semiprojective or not. For $n\in\{1,2,3\}$ one easily finds $A_s(n)\cong\mathbb{C}^{n!}$ so that we have semiprojectivity in that cases. 
For higher $n$ one might expect semiprojectivity of $A_s(n)$ because of the formal similarity to graph $C^*$-algebras. In fact, their definition only involves finitely many projections and orthogonality resp. order relations between them. Since graph $C^*$-algebras associated to finite graphs are easily seen to be semiprojective, one might think that we also have semiprojectivity for the quantum permutation algebras. This was even erroneously claimed to be true in \cite[example 2.8(vi)]{Bla04}. In this section we will show that the $C^*$-algebras $A_s(n)$ are in fact not semiprojective for all $n\geq 4$.

One can reduce the question for semiprojectivity of these algebras to the case $n=4$. The following result of Banica and Collins shows that the algebra $A_s(4)$ is 4-subhomogeneous, so that our machinery applies. The idea is to get enough information about the primitive spectrum of $A_s(4)$ to show that it contains closed subsets of dimension strictly greater than 1. This will then contradict part (2) of \ref{thm structure}, so that $A_s(4)$ cannot be semiprojective.\\

We follow notations from \cite{BC08} and denote the Pauli matrices by
\[\begin{array}{cccc}
  c_1=\begin{pmatrix}1 & 0 \\ 0 & 1\end{pmatrix},&c_2=\begin{pmatrix}i & 0 \\0 & -i\end{pmatrix},&c_3=\begin{pmatrix}0&1\\-1&0\end{pmatrix},&c_4=\begin{pmatrix}0&i\\i&0\end{pmatrix}.
\end{array}\]
Set $\xi_{ij}^x=c_ixc_j$ and regard $\M_2$ as a Hilbert space with respect to the scalar product $<a|b>=\tr(b^*a)$. Then for any $x\in SU(2)$ we find $\{\xi^x_{ij}\}_{j=1..4}$ and $\{\xi^x_{ij}\}_{i=1..4}$ to be a basis for $\M_2$. Under the identification $\M_4\cong\mathcal{B}(\M_2)$, Banica and Collins studied the following representation of $A_s(4)$: 

\begin{proposition}[Theorem 4.1 of \cite{BC08}]\label{prop pauli rep}
The $^*$-homomorphism given by
\[\begin{array}{rl}
  \pi\colon A_s(4)\longrightarrow & \C(SU(2),\M_4) \\ u_{ij}\mapsto & \left(x\mapsto\text{rank one projection onto}\; \mathbb{C}\cdot \xi^x_{ij}\right)
\end{array}\]
is faithful. It is called the Pauli representation of $A_s(4)$.
\end{proposition}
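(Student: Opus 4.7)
The statement has two components: well-definedness of the map $\pi$, and faithfulness. For well-definedness, the plan is to verify directly that the matrix $(p_{ij}(x))$ satisfies the magic-matrix relations of Definition 5.2.1. The entries are rank-one projections by construction, so only the row/column sum relations require work. For this I would compute the Hilbert--Schmidt inner products
\[\langle \xi_{ij}^x,\xi_{ij'}^x\rangle = \tr(c_{j'}^* x^* c_i^* c_i x c_j) = \tr(c_{j'}^* c_j),\]
using that each $c_k$ is unitary and $x^*x = I$ for $x\in SU(2)$. A direct $4\times 4$ verification on the Pauli matrices shows $\tr(c_{j'}^* c_j) = 2\delta_{jj'}$, so for fixed $i$ and $x$ the vectors $\xi_{i1}^x,\dots,\xi_{i4}^x$ are orthogonal of common squared norm $2$, hence form an orthogonal basis of the four-dimensional Hilbert space $\M_2$. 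Consequently, the rank-one projections onto them sum to $1_{\M_2} = 1_{\M_4}$, and the same computation with the roles of $i$ and $j$ swapped handles $\sum_i p_{ij}(x)$. Continuity of $x\mapsto p_{ij}(x)$ is immediate.

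For faithfulness, I would work inside Woronowicz's framework of compact matrix quantum groups: $A_s(4)$ is the universal C*-algebra of the quantum symmetric group $S_4^+$, with fundamental unitary corepresentation $u=(u_{ij})$ and a Haar state $h$. The plan is to reinterpret the Pauli representation as dual to a morphism of compact quantum groups from $SU(2)$ into $S_4^+$, arising from the adjoint action of $SU(2)$ on $\M_2$: conjugation by $x\in SU(2)$ permutes the one-dimensional subspaces $\mathbb{C}\xi_{ij}^x$ of $\M_2$ in a way compatible with the defining relations of $A_s(4)$. Faithfulness of $\pi$ then factors into two subclaims --- that the image $\pi(A_s(4))$ exhausts the full C*-algebra of the image quantum group, and that $S_4^+$ is coamenable so that its full and reduced C*-algebras coincide --- and standard CQG techniques (comparing the Haar state $h$ on $A_s(4)$ with the pullback of the natural tracial state on $\pi(A_s(4))$) then yield faithfulness from these.

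The hardest step, and the one I would expect to be the main obstacle, is the identification of intertwiners via Tannaka--Krein duality. Concretely, one needs to match the spaces $\mathrm{Hom}(u^{\otimes k},u^{\otimes l})$: on the $A_s(4)$-side these are spanned, by Banica's description, by operators indexed by non-crossing partitions; on the Pauli side they arise from the Temperley--Lieb category of $SU(2)$-intertwiners on tensor powers of $\M_2$. The numerical coincidence distinguishing $n=4$ is that the dimension counts on the two sides agree precisely at this parameter, so the categorical equivalence has a chance of holding. Turning this combinatorial match into an honest algebraic identification of intertwiner spaces --- and from there, via a Tannaka--Krein reconstruction, into the equality $\ker\pi = 0$ --- is where essentially all of the technical work is concentrated, and this is also where coamenability of $S_4^+$ (equivalent to that of its dual $\widehat{SO(3)}$) comes into play.
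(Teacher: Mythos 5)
First, a point of reference: the paper does not prove this proposition at all --- it is imported verbatim as Theorem 4.1 of [BC08], so there is no internal proof to compare against. Judged on its own terms, your well-definedness argument is correct and complete: the computation $\langle \xi_{ij}^x,\xi_{ij'}^x\rangle=\tr(c_{j'}^*c_j)=2\delta_{jj'}$ (and its transpose for the column sums) does show that for each fixed $i$ (resp.\ $j$) the vectors $\xi_{ij}^x$ form an orthogonal basis of $\M_2$ of constant norm $\sqrt{2}$, so the rank-one projections in each row and column sum to $1$, and $\pi$ is a well-defined $^*$-homomorphism into $\C(SU(2),\M_4)$.

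The faithfulness half, however, has a genuine gap and one outright false step. The false step is the assertion that conjugation by $x\in SU(2)$ permutes the lines $\mathbb{C}\xi_{ij}^x$ and hence exhibits $\pi$ as dual to a morphism of compact quantum groups $SU(2)\to S_4^+$: for generic $y\in SU(2)$ the element $y\,c_i x c_j\,y^*$ is not proportional to any $c_{i'}xc_{j'}$ (only the finite Clifford group normalizes the Pauli basis projectively), and more fundamentally a CQG morphism into the \emph{commutative} algebra $\C(SU(2))$ could never be injective on the noncommutative $A_s(4)$. The correct structure here is a matrix model $A_s(4)\to \C(SU(2))\otimes\M_4$, and its faithfulness is precisely the nontrivial content of [BC08]. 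Beyond that, the two pillars you lean on --- the identification of intertwiner spaces (equivalently, the Weingarten-type computation showing that the pullback of $\int_{SU(2)}\otimes\,\tr$ is the Haar state of $S_4^+$) and the coamenability of $S_4^+$ (Banica's theorem that $S_n^+$ is coamenable exactly for $n\leq 4$, needed to pass from faithfulness on the reduced algebra to faithfulness on the universal one) --- are each substantial theorems that the proposal names but does not establish. As a roadmap your outline does track the actual Banica--Collins strategy, but as a proof it reduces the statement to results at least as deep as the statement itself; since the paper treats the proposition as a citation, the honest course is to do the same rather than to gesture at a proof.
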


For the remainder of this section let $S$ denote the following subset of $SU(2)$:
\[S:=\left\{\begin{pmatrix}\lambda & -\overline{\mu} \\ \mu &\overline{\lambda}\end{pmatrix}\in SU(2)\colon\min\left\{|\Re(\lambda\mu)|,|\Im(\lambda\mu)|,|\Re(\overline{\lambda}\mu)|,|\Im(\overline{\lambda}\mu)|,\left||\lambda|-|\mu|\right|\right\}=0\right\}\]
We will now study the representations of $A_s(4)$ obtained by composing the Pauli representation with a point evaluation. As we will see, most points of $SU(2)$ lead to irreducible representations which are furthermore locally pairwise inequivalent.

\begin{lemma}\label{lemma A4 irreducible}
The representation $\pi_x=\ev_x\circ\pi\colon A_s(4)\rightarrow\M_4$ is irreducible for every $x\in SU(2)\backslash S$.
\end{lemma}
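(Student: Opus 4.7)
\emph{Plan.} Under the Pauli identification $\M_4 \cong \mathcal{B}(\M_2)$, $\pi_x$ sends each generator $u_{ij}$ to the rank-one projection $p^x_{ij}$ onto $\xi^x_{ij} = c_i x c_j$. By Schur's lemma, irreducibility of $\pi_x$ is equivalent to the commutant of $\pi_x(A_s(4))$ in $\mathcal{B}(\M_2)$ being trivial. Since $T \in \mathcal{B}(\M_2)$ commutes with a rank-one projection $p^x_{ij}$ iff $\xi^x_{ij}$ is an eigenvector of $T$, the task is to show that for $x \in SU(2)\setminus S$, any operator $T$ having all 16 vectors $\xi^x_{ij}$ as eigenvectors is a scalar multiple of $I$. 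Since the commutant is a $*$-algebra, we may further assume $T$ selfadjoint.

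\emph{Main argument.} For each fixed $i$ (resp. fixed $j$) the vectors $\{\xi^x_{ij}\}_{j=1}^4$ (resp. $\{\xi^x_{ij}\}_{i=1}^4$) form an orthogonal basis of $\M_2$; thus $T$ is simultaneously diagonal in $8$ different orthogonal bases. Writing the spectral decomposition $T = \sum_k \lambda_k Q_k$ yields an orthogonal decomposition $\M_2 = \bigoplus_k V_k$ with $V_k = \image Q_k$, and each $\xi^x_{ij}$ must lie in some single $V_k$. Using unitarity of $c_i$, the condition $\xi^x_{ij} = c_i(xc_j) \in V_k$ is equivalent to $xc_j \in c_i^{-1}V_k$. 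In particular, for each fixed $i$, the basis $\{xc_j\}_{j=1}^4$ of $\M_2$ splits across the subspaces $\{c_i^{-1}V_k\}_k$ with multiplicities equal to those of the eigenvalues of $T$ (independent of $i$). Assume for contradiction that $T$ is not scalar. I would enumerate the finitely many eigenvalue-multiplicity patterns $(3,1)$, $(2,2)$, $(2,1,1)$, $(1,1,1,1)$ for $T$, and for each pattern the finitely many combinatorial assignments $(i,j) \mapsto k$ compatible with the required row- and column-splittings. Each such assignment translates, using the explicit $2\times 2$ form of $c_1,\ldots,c_4$ and of $x = \begin{pmatrix}\lambda & -\bar\mu\\ \mu & \bar\lambda\end{pmatrix}$, into a system of polynomial equations in $\lambda,\mu,\bar\lambda,\bar\mu$ under the constraint $|\lambda|^2+|\mu|^2=1$. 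The verification to perform is that each such system forces at least one of the five defining equalities of $S$, namely $\Re(\lambda\mu)=0$, $\Im(\lambda\mu)=0$, $\Re(\bar\lambda\mu)=0$, $\Im(\bar\lambda\mu)=0$, or $|\lambda|=|\mu|$. Once this is done, $x \notin S$ excludes every nontrivial splitting and hence forces $T \in \mathbb{C}\cdot I$.

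\emph{Main obstacle.} The bulk of the proof is the case analysis in the previous step. The generic pattern $(2,2)$ admits the widest range of combinatorial splittings of the 16 vectors into two 2-dimensional eigenspaces with $2{+}2$ multiplicity in every row and column, and is therefore the most involved. The individual computations are elementary $2\times 2$-matrix manipulations aided by the quaternion-like relations $c_ic_j = \pm c_k$ among the Pauli matrices; the challenge is the combinatorial bookkeeping together with the careful verification that, in every case, the resulting algebraic constraints collapse \emph{exactly} to one of the five equations cutting out $S$, rather than to some strictly weaker condition that would force the exceptional set to be larger than $S$.
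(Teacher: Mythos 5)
Your reduction is sound as far as it goes: passing to the commutant, noting that a selfadjoint $T$ commutes with the rank-one projection onto $\mathbb{C}\xi^x_{ij}$ iff $\xi^x_{ij}$ is an eigenvector of $T$, and observing that the $8$ orthogonal bases $\{\xi^x_{ij}\}_j$ and $\{\xi^x_{ij}\}_i$ must each be distributed among the eigenspaces of $T$. But the proof stops exactly where the mathematical content begins. The entire burden of the lemma is the claim that every nontrivial eigenspace pattern forces one of the five equalities cutting out $S$, and you neither carry out the enumeration nor verify a single case; you explicitly defer it and even flag the risk that some splitting might only yield a condition strictly weaker than membership in $S$ (which, if it happened, would make the lemma false as stated). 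Since the statement to be proved is precisely ``$x\notin S$ $\Rightarrow$ trivial commutant,'' leaving that verification open is a genuine gap, not a routine omission. Moreover, the pattern $(2,2)$ with $2{+}2$ multiplicities in all eight bases is a nontrivial combinatorial problem in its own right, so the deferred work is substantial.

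For comparison, the paper avoids the enumeration entirely. Since $\pi_x(u_{1j})$ is the projection onto $\mathbb{C}\xi^x_{1j}$ and $\bigl\{\tfrac{1}{\sqrt2}\xi^x_{1j}\bigr\}_{j=1}^4$ is an orthonormal basis of $\M_2\cong\mathbb{C}^4$, any element of the commutant is diagonal in that single basis. It then suffices to exhibit three nonzero off-diagonal matrix entries among the $\pi_x(u_{ij})$ linking index $1$ to indices $2,3,4$: the paper computes
\[
\bigl(\pi_x(u_{23})\bigr)_{12}=4\,\Re(\lambda\mu)\Im(\lambda\mu),\quad
\bigl(\pi_x(u_{22})\bigr)_{13}=2\,\Re(\lambda\mu)\bigl(|\lambda|^2-|\mu|^2\bigr),\quad
\bigl(\pi_x(u_{22})\bigr)_{14}=-2\,\Im(\lambda\mu)\bigl(|\lambda|^2-|\mu|^2\bigr),
\]
each a product of two of the five quantities whose nonvanishing defines $SU(2)\backslash S$; a diagonal matrix commuting with matrices having these entries nonzero must be scalar. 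If you want to salvage your approach, the minimal fix is to replace the full enumeration by this observation: one distinguished basis makes $T$ diagonal, and three inner-product computations of the form $\langle\xi^x_{1k}|\xi^x_{ij}\rangle\langle\xi^x_{ij}|\xi^x_{11}\rangle$ finish the argument.
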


\begin{proof}
Let $x=\begin{pmatrix}\lambda &-\overline{\mu}\\ \mu & \overline{\lambda}\end{pmatrix}\in SU(2)\backslash S$ be given, we show that the commutant of $\pi_x(A_s(4))$ equals the scalars. Therefore we will check the matrix entries of the elements $\pi_x(u_{ij})$ with respect to the orthonormal basis 
$\left\{\frac{1}{\sqrt{2}}\xi_{11}^x,\frac{1}{\sqrt{2}}\xi_{12}^x,\frac{1}{\sqrt{2}}\xi_{13}^x,\frac{1}{\sqrt{2}}\xi_{14}^x\right\}$ of $M_2\cong\mathbb{C}^4$. 
Since in this picture $\pi_x(U_{1i})$ equals the elementary matrix $e_{ii}$, every element in $\left(\pi_x(A_s(4))\right)'$ is diagonal. But we also find
\[\begin{array}{rl}
  \left(\pi_x(U_{23})\right)_{12} &=\frac{1}{2}<\pi_x(U_{23})\xi^x_{12}|\xi^x_{11}> \\ &=\frac{1}{4}<\xi^x_{12}|\xi^x_{23}><\xi^x_{23}|\xi^x_{11}>=4\cdot \Re(\lambda\mu)Im(\lambda\mu)\neq 0, \\ 
  \left(\pi_x(U_{22})\right)_{13} &=\frac{1}{4}<\xi^x_{13}|\xi^x_{22}><\xi^x_{22}|\xi^x_{11}>=2\cdot \Re(\lambda\mu)(|\lambda|^2-|\mu|^2)\neq 0, \\  
  \left(\pi_x(U_{22})\right)_{14} &=\frac{1}{4}<\xi^x_{14}|\xi^x_{22}><\xi^x_{22}|\xi^x_{11}>=-2\cdot \Im(\lambda\mu)(|\lambda|^2-|\mu|^2)\neq 0.
\end{array}\]
So the only elements of $\M_4$ commuting with all of $\pi_x(A_s(4))$ are the scalars.
\end{proof}

\begin{proposition}\label{prop A4 inequivalent}
Every $x\in SU(2)\backslash S$ admits a small neighborhood $V\subseteq SU(2)\backslash S$ such that for all distinct $y,y'\in V$ the representations $\pi_y$ and $\pi_{y'}$ are not unitarily equivalent.
\end{proposition}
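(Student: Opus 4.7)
The plan is to separate the representations $\pi_y$ near $x_0$ by means of unitary invariants. Since for every $a\in A_s(4)$ the number $\tr(\pi_y(a))$ is a unitary invariant of $\pi_y$ and depends continuously on $y$, it suffices to exhibit a finite collection of such invariants whose values locally determine $y$.

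Concretely, I would write $y=\begin{pmatrix}\lambda & -\overline{\mu}\\ \mu & \overline{\lambda}\end{pmatrix}$ and study
\[g_{kl}(y):=\tr\bigl(\pi_y(u_{11})\pi_y(u_{kl})\bigr).\]
Using that $\pi_y(u_{ij})$ is the rank-one projection onto $\mathbb{C}\cdot\xi_{ij}^y$ inside $\M_2$ equipped with the Hilbert-Schmidt inner product, one gets $g_{kl}(y)=\tfrac{1}{4}|\langle\xi_{11}^y,\xi_{kl}^y\rangle|^2$, and a direct computation with the Pauli matrices (in the same spirit as those in the proof of Lemma \ref{lemma A4 irreducible}) should yield
\[g_{22}(y)=(|\lambda|^2-|\mu|^2)^2, \quad g_{23}(y)=4\Im(\lambda\mu)^2, \quad g_{24}(y)=4\Re(\lambda\mu)^2,\]
\[g_{32}(y)=4\Im(\overline{\lambda}\mu)^2, \quad g_{42}(y)=4\Re(\overline{\lambda}\mu)^2.\]
By the very definition of $S$, all five of these squared quantities are strictly positive at $x_0$, and hence by continuity they remain nonzero on some connected open neighborhood $V_0\subseteq SU(2)\setminus S$ of $x_0$. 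Shrinking $V_0$ further, I may also arrange $V_0\cap(-V_0)=\emptyset$, which is possible since $x_0\neq -x_0$.

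On $V_0$ the underlying unsigned quantities $|\lambda|^2-|\mu|^2$, $\Re(\lambda\mu)$, $\Im(\lambda\mu)$, $\Re(\overline{\lambda}\mu)$, $\Im(\overline{\lambda}\mu)$ are continuous and nowhere zero, hence of constant sign, and those signs can be read off from their values at $x_0$. Consequently, all five become continuous functions of the unitary equivalence class $[\pi_y]$ on $V_0$. Combining with the constraint $|\lambda|^2+|\mu|^2=1$ recovers $|\lambda|$ and $|\mu|$; knowing $\lambda\mu$ and $\overline{\lambda}\mu$ then determines $\arg(\lambda)$ and $\arg(\mu)$ modulo $\pi$, so altogether $y$ is reconstructed up to the single sign ambiguity $y\mapsto -y$.

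Since $\xi_{ij}^{-y}=-\xi_{ij}^{y}$, the representations $\pi_{-y}$ and $\pi_y$ literally coincide as $^*$-homomorphisms, so this residual ambiguity is harmless on $V:=V_0$ by the choice $V\cap(-V)=\emptyset$. This will give injectivity of $y\mapsto[\pi_y]$ on $V$, which is exactly the statement. The only real obstacle is the explicit trace computation above; once these formulas are in hand the rest is a routine continuity-and-connectedness argument, and the observation $\pi_{-y}=\pi_y$ conveniently spares us from having to analyze non-trivial intertwining unitaries.
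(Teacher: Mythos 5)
Your proposal is correct and follows essentially the same strategy as the paper's proof: both extract the five quantities $|\lambda|^2-|\mu|^2$, $\Re(\lambda\mu)$, $\Im(\lambda\mu)$, $\Re(\overline{\lambda}\mu)$, $\Im(\overline{\lambda}\mu)$ from unitary invariants of products $\pi_y(u_{11})\pi_y(u_{kl})$ of the generating projections and then shrink the neighborhood to kill the residual sign ambiguities. The only (cosmetic) differences are that you use traces where the paper uses operator norms of the same products, and you resolve the ambiguities via constant signs on a connected neighborhood together with the symmetry $\pi_{-y}=\pi_y$ and $V\cap(-V)=\emptyset$, where the paper instead uses explicit $\epsilon/3$ metric estimates; your stated trace formulas check out.
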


\begin{proof}
Let $x=\begin{pmatrix}\lambda_0&-\overline{\mu_0}\\ \mu_0&\overline{\lambda_0}\end{pmatrix}\in SU(2)\backslash S$ be given, then 
\[\epsilon:=\min\left\{|\Re(\lambda_0\mu_0)|,|\Im(\lambda_0\mu_0)|,|\Re(\overline{\lambda_0}\mu_0)|,|\Im(\overline{\lambda_0}\mu_0)|,\left||\lambda_0|-|\mu_0|\right|,|\lambda_0|\right\}>0.\] 
Define a neighborhood $V\subseteq SU(2)\backslash S$ of $x$ by
\[V=\left\{\begin{pmatrix}\lambda &-\overline{\mu}\\ \mu & \overline{\lambda}\end{pmatrix}\in SU(2)\backslash S:|\lambda-\lambda_0|<\frac{\epsilon}{3},|\mu-\mu_0|<\frac{\epsilon}{3}\right\}.\]
Now let $y,y'\in V$ with unitarily equivalent representations $\pi_y$ and $\pi_{y'}$ be given. We compute the value
\[\begin{array}{rl}
  \|\pi_{y}(U_{11}U_{22})\| &=\frac{1}{4}\|(<-\;|\xi^{y}_{11}>\xi^{y}_{11})\circ(<-\;|\xi^{y}_{22}>\xi^{y}_{22})\| \\ 
  &=\frac{1}{4}|<\xi^{y}_{22}|\xi^{y}_{11}>|\cdot\|(<-\;|\xi^{y}_{22}>\xi^{y}_{11})\| \\ &=\frac{1}{4}|<\xi^{y}_{22}|\xi^{y}_{11}>|\cdot\|\xi^{y}_{22}\|\|\xi^{y}_{11}\| \\ &=\left||\lambda|^2-|\mu|^2\right|
\end{array}\]
which is invariant under unitary equivalence. So we find $\left||\lambda|^2-|\mu|^2|\right|=\left||\lambda'|^2-|\mu'|^2\right|$. This implies
\[\begin{array}{rcl}
  \left(|\lambda|=|\lambda'|\wedge |\mu|=|\mu'|\right) &\vee &\left(|\lambda|=|\mu'|\wedge |\mu|=|\lambda'|\right)
\end{array}\]
because of $|\lambda|^2+|\mu|^2=1=|\lambda'|^2+|\mu'|^2$. By definition of $V$ we have
\[\left||\lambda|-|\mu'|\right|\geq\left||\lambda_0|-|\mu_0|\right|-\left||\lambda|-|\lambda_0|\right|-\left||\mu'|-|\mu_0|\right|>\frac{\epsilon}{3}>0,\]
so that we can exclude the second case. Analogously, computing the invariants $\|\pi_{y}(U_{13}U_{22})\|$ and $\|\pi_{y}(U_{14}U_{22})\|$ gives
\[\begin{tabular}{rcl}
  $|\Re(\lambda\mu)|=|\Re(\lambda'\mu')|$&and&$|\Im(\lambda\mu)|=|\Im(\lambda'\mu')|$
\end{tabular}\]
and checking $\|\pi_{y}(U_{11}U_{42})\|$ and $\|\pi_{y}(U_{11}U_{32})\|$ shows
\[\begin{tabular}{rcl}
  $|\Re(\overline{\lambda}\mu)|=|\Re(\overline{\lambda'}\mu')|$&and&$|\Im(\overline{\lambda}\mu)|=|\Im(\overline{\lambda'}\mu')|$
\end{tabular}.\]
The last four equalities imply $\lambda\mu=\lambda'\mu'$ and $\overline{\lambda}\mu=\overline{\lambda'}\mu'$ by the choice of $V$. Together with $|\lambda|=|\lambda'|$ and $|\mu|=|\mu'|$ we find $(\lambda,\mu)=(\lambda',\mu')$ or $(\lambda,\mu)=(-\lambda',-\mu')$. In the second case we get $|\lambda-\lambda'|=2|\lambda|\geq 2|\lambda_0|-2|\lambda-\lambda_0|\geq \frac{4\epsilon}{3}$ contradicting $|\lambda-\lambda'|\leq|\lambda-\lambda_0|+|\lambda'-\lambda_0|<\frac{2\epsilon}{3}$ by the choice of $V$. It follows that $y=y'$.
\end{proof}

By now we have obtained enough information about $\prim(A_s(4))$ to show that it does not satisfy condition (2) of Theorem \ref{thm structure}. Hence we find:  

\begin{theorem}\label{thm A4 not sp}
The $C^*$-algebra $A_s(4)$ is not semiprojective.
\end{theorem}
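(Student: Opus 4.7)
The plan is to invoke Theorem~\ref{thm structure}~(2): semiprojectivity of a subhomogeneous $C^*$-algebra forces the one-point compactification of $\prim_n(A)$ to be at most one-dimensional for every $n$. It will therefore be enough to exhibit a subset of $\prim_4(A_s(4))$ of dimension strictly greater than one. Note first that the Pauli representation from Proposition~\ref{prop pauli rep} realizes $A_s(4)$ as a subalgebra of $\C(SU(2),\M_4)$, so $A_s(4)$ is $4$-subhomogeneous and Theorem~\ref{thm structure} indeed applies.

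The main step is to construct a continuous embedding of an open subset of $SU(2)$ into $\prim_4(A_s(4))$ via the map $\Phi\colon x\mapsto\ker(\pi_x)$, where $\pi_x=\ev_x\circ\pi$. By Lemma~\ref{lemma A4 irreducible}, $\Phi$ is defined on the open dense set $SU(2)\setminus S$ and lands in $\prim_4(A_s(4))$. Continuity of $\Phi$ will follow from norm-continuity of $x\mapsto\pi_x(a)$ combined with the lower semicontinuity criterion of Lemma~\ref{lemma top prim}: $x_n\to x$ forces $\|\pi_{x_n}(a)\|\to\|\pi_x(a)\|$, hence $\check a(\Phi(x_n))\to\check a(\Phi(x))$ for every $a\in A_s(4)$, which suffices for convergence in the Jacobson topology.

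Next, I would fix a point $x_0\in SU(2)\setminus S$, invoke Proposition~\ref{prop A4 inequivalent} to obtain a neighborhood $V\subseteq SU(2)\setminus S$ of $x_0$ on which $\Phi$ is injective, and then shrink to a compact neighborhood $K\subseteq V$ of $x_0$ with nonempty interior in $SU(2)$. Since $\prim_4(A_s(4))$ is Hausdorff by Theorem~\ref{thm prim}~(iii), the continuous injection $\Phi|_K$ from the compact space $K$ into the Hausdorff space $\prim_4(A_s(4))$ is automatically a homeomorphism onto its image. Hence $\prim_4(A_s(4))$ contains a closed subspace homeomorphic to $K$, and $K$ has covering dimension $3$ as a compact neighborhood in the $3$-manifold $SU(2)$. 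Consequently $\dim\prim_4(A_s(4))\geq 3>1$, and $A_s(4)$ fails condition~(2) of Theorem~\ref{thm structure}.

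The subtle point is the passage from the merely \emph{local} injectivity provided by Proposition~\ref{prop A4 inequivalent} to an honest topological embedding of a $3$-dimensional compact set into $\prim_4(A_s(4))$; this is where Hausdorffness of $\prim_n$ and the compact-to-Hausdorff trick come in essentially. Once that embedding is in hand, the argument bypasses any analysis of the retract maps $R_n$ or of the ANR structure on the compactified primitive spaces: the dimensional obstruction alone rules out semiprojectivity.
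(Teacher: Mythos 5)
Your proof is correct and follows essentially the same route as the paper: both arguments use the Pauli representation together with Lemma \ref{lemma A4 irreducible} and Proposition \ref{prop A4 inequivalent} to exhibit a $3$-dimensional subset of $\prim_4(A_s(4))$ and then contradict the dimension condition in Theorem \ref{thm structure}(2). The one place where you diverge is the mechanism for producing that subset: the paper restricts $\pi$ to a ball $\mathbb{D}^3\subseteq V$ and uses pointwise surjectivity plus Kaplansky's Stone-Weierstra{\ss} theorem to conclude that $\varphi\colon A_s(4)\rightarrow\C(\mathbb{D}^3,\M_4)$ is onto, so that $\mathbb{D}^3$ sits inside $\prim_4(A_s(4))$ as the spectrum of a quotient; you instead show directly that $\Phi\colon x\mapsto\ker(\pi_x)$ is a continuous injection on a compact neighborhood $K$ and apply the compact-to-Hausdorff trick. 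Both are sound. Your version avoids the Stone-Weierstra{\ss} input at the cost of verifying continuity of $\Phi$ by hand --- which your appeal to Lemma \ref{lemma top prim} does correctly, since $\check{a}(\ker(\pi_x))=\|\pi_x(a)\|$ depends continuously on $x$ --- and of two standard facts you use implicitly: finite-dimensional irreducible representations with equal kernels are unitarily equivalent (so Proposition \ref{prop A4 inequivalent} really does give injectivity of $\Phi$ on $V$), and covering dimension is monotone under passing to subspaces of separable metric spaces (so the embedded copy of $K$ forces $\dim(\alpha\prim_4(A_s(4)))\geq 3$).
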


\begin{proof}
Choose a point $x_0\in SU(2)\backslash S$ and a neighborhood $V$ of $x_0$ as in Proposition \ref{prop A4 inequivalent}. Since $SU(2)$ is a real 3-manifold, there is a neighborhood of $x_0$ contained in $V$ which is homeomorphic to $\mathbb{D}^3=\{x\in\mathbb{R}\colon\|x\|\leq 1\}$. The restriction of the Pauli representation $\pi$ to this neighborhood gives a $^*$-homomorphism $\varphi\colon A_s(4)\rightarrow\C(\mathbb{D}^3,\M_4)$ with the property that $\ev_x\circ\varphi$ and $\ev_y\circ\varphi$ are irreducible but not unitarily equivalent for all distinct $x,y\in\mathbb{D}^3$. The pointwise surjectivity of $\varphi$ given by Lemma \ref{lemma A4 irreducible} and a Stone-Weierstra{\ss} argument (\cite[Theorem 3.1]{Kap51}) show that $\varphi$ is in fact surjective. This implies that $\prim_4(A_s(4))$ contains a closed 3-dimensional subset and hence $\dim(\prim_4(A_s(4)))\geq 3$. As a consequence, $A_s(4)$ cannot be semiprojective because it is subhomogeneous by Proposition \ref{prop pauli rep} but fails to satisfy condition $(2)$ of Theorem \ref{thm structure}.
\end{proof}

It is not hard to show that semiprojectivity of $A_s(n)$ for some $n>4$ would force $A_s(4)$ to be semiprojective. Since we have just shown that this is not the case, we obtain:

\begin{corollary}\label{cor An not sp}
The $C^*$-algebras $A_s(n)$ are not semiprojective for $n\geq 4$.
\end{corollary}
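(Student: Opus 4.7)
The strategy is to reduce to the case $n=4$ handled in Theorem~\ref{thm A4 not sp}. For $n \geq 5$ the plan is to exhibit $A_s(4)$ as a retract of $A_s(n)$ in the $C^*$-algebraic sense: once I have a surjection $\pi_n \colon A_s(n) \twoheadrightarrow A_s(4)$ together with a $^*$-homomorphism section $s_n \colon A_s(4) \to A_s(n)$ satisfying $\pi_n \circ s_n = \id_{A_s(4)}$, semiprojectivity transfers from $A_s(n)$ to $A_s(4)$ by the standard retract argument: given $\varphi \colon A_s(4) \to D/\overline{\bigcup_k J_k}$, lift the composite $\varphi \circ \pi_n$ to some $\tilde\psi \colon A_s(n) \to D/J_k$ via semiprojectivity of $A_s(n)$, and then $\tilde\psi \circ s_n \colon A_s(4) \to D/J_k$ is a partial lift of $\varphi$. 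This would contradict Theorem~\ref{thm A4 not sp}.

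I would define the surjection $\pi_n$ on the generating magic unitary by sending $u_{ij}^{(n)} \mapsto u_{ij}^{(4)}$ for $i,j \leq 4$, $u_{ii}^{(n)} \mapsto 1$ for $i \geq 5$, and $u_{ij}^{(n)} \mapsto 0$ otherwise. Verifying the defining relations of $A_s(n)$ is routine: for rows and columns indexed by $i \geq 5$ the sum relations collapse to the tautology $u_{ii}^{(n)} \mapsto 1$, while for $i,j \leq 4$ they reduce to the $A_s(4)$ relations because every cross term with an index $\geq 5$ vanishes.

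The delicate step will be producing the section $s_n$, equivalently realizing a $4 \times 4$ magic unitary $(v_{ij})_{i,j \leq 4}$ inside $A_s(n)$ whose image under $\pi_n$ is the canonical generating magic unitary of $A_s(4)$. This is the quantum-group analogue of the classical retraction $S_n \to S_4$ which identifies $S_4$ with the pointwise stabiliser of $\{5,\dots,n\}$, and it is where the main work of the argument lies. The naive choice $v_{ij} = u_{ij}^{(n)}$ fails because $\sum_{j=1}^4 u_{ij}^{(n)} = 1 - \sum_{j=5}^n u_{ij}^{(n)}$ is not $1$ in general, so any correct choice must add correction terms drawn from $\ker \pi_n$ while simultaneously preserving projectivity of each $v_{ij}$ and all row and column sum relations. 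Once $s_n$ is produced, the retract argument of the first paragraph closes out the proof.
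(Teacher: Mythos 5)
Your reduction to the case $n=4$ via the surjection $\varrho_n\colon A_s(n)\twoheadrightarrow A_s(4)$ (defined exactly as in the paper) is the right first move, but the mechanism you propose for transferring semiprojectivity has a genuine gap: the entire argument rests on producing a $^*$-homomorphism section $s_n\colon A_s(4)\rightarrow A_s(n)$, i.e. a $4\times 4$ magic unitary inside $A_s(n)$ lifting the canonical one, and you do not construct it. You correctly observe that the naive choice fails and that one would have to add correction terms from $\ker\varrho_n$ while keeping every entry a projection and every row and column summing to $1$; but there is no evident candidate, and it is not clear that such a section exists at all (the classical splitting $S_4\hookrightarrow S_n$ only gives a section of the abelianizations $\C(S_n)\rightarrow\C(S_4)$, which says nothing about the quantum level). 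Deferring "the main work of the argument" to an unconstructed object at precisely the point where the difficulty sits means the proof is incomplete.

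The paper closes this gap by avoiding the need for a section altogether. Observe that $\ker\varrho_n$ is the ideal generated by the finitely many projections $\{u_{ij}^{(n)}\colon\varrho_n(u_{ij}^{(n)})=0\}$: killing exactly these generators in the universal picture forces $u_{ii}^{(n)}\mapsto 1$ for $i>4$ and recovers precisely the defining relations of $A_s(4)$ on the remaining generators. By \cite[Proposition 3]{Sor12}, which extends \cite[Proposition 5.19]{Neu00}, semiprojectivity passes to quotients by ideals generated by finitely many projections, so semiprojectivity of $A_s(n)$ would force semiprojectivity of $A_s(4)$, contradicting Theorem \ref{thm A4 not sp}. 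If you want to keep your retract-style argument you would first have to settle the existence of the splitting, which is a separate (and apparently nontrivial) question about the quantum permutation groups; the permanence result makes this unnecessary.
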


\begin{proof}
For $n\geq 4$ there is a canonical surjection $\varrho_n\colon A_s(n)\rightarrow A_s(4)$ given by 
\[u^{(n)}_{ij}\mapsto\begin{cases}u^{(4)}_{ij} & \text{if}\; 1\leq i,j\leq 4\\ 1 & \text{if}\; i=j>4 \\ 0 & \text{otherwise}\end{cases}.\]
The kernel of $\varrho_n$ is generated by the finite set of projections $\left\{u_{ij}^{(n)}\colon\varrho_n\left(u_{ij}^{(n)}\right)=0\right\}$. It follows from \cite[Proposition 3]{Sor12}, which extends the idea of \cite[Proposition 5.19]{Neu00}, that semiprojectivity of $A_s(n)$ would imply semiprojectivity of $\varrho_n(A_s(n))=A_s(4)$. Since this is not the case by Theorem \ref{thm A4 not sp}, $A_s(n)$ cannot be semiprojective for all $n\geq 4$.
\end{proof}


\end{document}